\def\jobis#1{FF\fi
  \def\predicate{#1}%
  \edef\predicate{\expandafter\strip@prefix\meaning\predicate}%
  \edef\job{\jobname}%
  \ifx\job\predicate
}
\if\jobis{proposal}%
\DeclareMathOperator{\Bs}{Bs}
\DeclareMathOperator{\Supp}{Supp}
\DeclareMathOperator{\Fix}{Fix}
\DeclareMathOperator{\Mov}{Mov}
\DeclareMathOperator{\codim}{codim}
\DeclareMathOperator{\Proj}{Proj}
\DeclareMathOperator{\Spec}{Spec}
\DeclareMathOperator{\exc}{exc}
\DeclareMathOperator{\Hom}{Hom}
\DeclareMathOperator{\Pic}{Pic}
 \newcommand{\C}{\mathbb C}
 \newcommand{\N}{\mathbb N}
 \newcommand{\PP}{\mathbb P}
 \newcommand{\Q}{\mathbb Q}
 \newcommand{\R}{\mathbb R}
 \newcommand{\Z}{\mathbb Z}
 \newcommand{\bir}{\dashrightarrow}
 \newcommand{\rddown}[1]{\left\lfloor{#1}\right\rfloor} % round-down
 \numberwithin{equation}{subsection}
 \numberwithin{footnote}{subsection}
 \newtheorem{cor}[subsection]{Corollary}
 \newtheorem{lem}[subsection]{Lemma}
 \newtheorem{thm}[subsection]{Theorem}
 \newtheorem{conj}[subsection]{Conjecture}
\newtheorem{prob}[subsection]{Problem}
    \newtheoremstyle{upright}%
        {8pt plus2pt minus4pt}%
        {8pt plus2pt minus4pt}%
        {\upshape}%
        {}%
        {\bfseries\scshape}%
        {}%
        {1em}%
        {}%
\theoremstyle{upright}
 \newtheorem{defn}[subsection]{Definition}
 \newtheorem{exa}[subsection]{Example}
 \newtheorem{exa-cr}[subsection]{Example--Construction}
 \newtheorem{rem}[subsection]{Remark}
 \newtheorem{exe}[subsection]{Exercise}
 \newtheorem{jimare}[subsection]{}
 \newcommand{\ke}[1]{$\acute{\mbox{e}}$}
 \newcommand{\ku}[1]{$\acute{\mbox{u}$}}
 \newcommand{\kl}[1]{$\acute{\mbox{l}}$}
 \newcommand{\kh}[1]{$\acute{\mbox{h}}$}
 \newcommand{\kr}[1]{$\acute{\mbox{r}}$}
 \newcommand{\kx}[1]{$\acute{\mbox{x}}$}
 \newcommand{\ki}[1]{${\^\i}$}
 \newcommand{\toover}{\xrightarrow}
\title{Lectures on birational geometry}\thanks{
These are the lecture notes of a course on birational geometry that I taught
at College de France, Paris, in Winter 2011. Warning: this is essentially identical to the first 
draft and I have not been through it again. So, it certainly contains mistakes (hopefully only minor.)}
\author{Caucher Birkar}\thanks{Email: c.birkar@dpmms.cam.ac.uk}
\begin{document}
\maketitle

\tableofcontents
%%%%%%%%%%%%%%%%%%%%%%%%
\vspace{2cm}
\clearpage

%%%%%%%%%%%%%%%%%%%%%%%%%%%%%%%%%%%%%%%%%%%%%%%
~
\vspace{2cm}
\section{\textbf{Introduction: overview}}
\vspace{1cm}
%%%%%%%%%%%%%%%%%%%%%%%%%%%%%%%%%%%%%%%%%%%%%%

All varieties in this lecture are assumed to be algebraic over some 
algebraically closed field $k$ (and this is going to be $\mathbb{C}$ 
for much of the course).\\

\emph{What classification means?} In every branch of mathematics, the problem of
classifying the objects arises naturally as the ultimate understanding of the
subject. If we have finitely many objects, then classification means trying to
see which object is isomorphic to another, whatever the isomorphism means. In this case,
the problem shouldn't be too difficult. However, if we have infinitely many objects, then
classification has a bit different meaning. Since each human can live for a finite
length of time, we may simply not have enough time to check every object in the theory.
On the other hand, objects in a mathematical theory are closely related, so one can
hope to describe certain properties of all the objects using only finitely many or
some of the objects which are nice in some sense.

For example let $V$ be a vector space over a field $k$. If $\dim V<\infty$, then
we can take a basis $\{v_1,\dots,v_n\}$ for this vector space and then every $v\in V$
can be uniquely written as $v=\sum a_iv_i$ where $a_i\in k$. We can say that we have
reduced the classification of elements of $V$ to the classification of the basis.
Formally speaking, this is what we want to do in any branch of mathematics but the
notion of "basis" and the basis "generating" all the objects could be very different.

In algebraic geometry, objects are varieties and relations are described using
maps and morphisms. One of the main driving forces of algebraic geometry is the following

\begin{prob}[Classification]
Classify varieties up to isomorphism.
\end{prob}

Some of the standard techniques to solve this problem, among other things, include
\begin{itemize}
\item Defining \emph{invariants} (e.g. genus, differentials, cohomology) so that
one can distinguish nonisomorphic varieties,
\item \emph{Moduli} techniques, that is, parametrising varieties by objects which are themselves
varieties,
\item \emph{Modifying} the variety (e.g. make it smooth) using certain operations
(eg birational, finite covers).
\end{itemize}

It is understood that this problem is too difficult even for curves so one needs to
try to solve a weaker problem.

\begin{prob}[Birational classification] Classify projective varieties up to birational isomorphism.
\end{prob}

By Hironaka's resolution theorem, each projective variety is \emph{birational} to a
smooth projective variety. So, we can try to classify
smooth projective varieties up to birational isomorphism. In fact, smooth birational
varieties have good common properties such as common plurigenera,
Kodaira dimension and irregularity.

In this stage, one difficulty is that in each birational class, except in dimension one,
there are too many smooth varieties. So, one needs to look for a more special and
subtle representative.
 
Before considering the simplest kind of varieties let me introduce an object that 
plays a fundamental role in the theory. For a smooth projective variety $X$ the canonical 
sheaf $\omega_X$ which is the same as the dualising sheaf can be defined as 
$\omega_X:=\wedge^d \Omega$ where $\Omega$ is the sheaf of regular differential forms. 
There is a divisor $K_X$ (unique up to linear equivalence) which gives $\omega$, that 
is, $\omega_X=\mathcal{O}_X(K_X)$.\\

\begin{jimare}{\bf{Curves}.} Projective curves are one dimensional projective varieties
(i.e. compact Riemann surfaces if $k=\C$). In each birational class, there is a unique 
smooth projective curve. So, the search for a special representative is quickly 
over and we can look at moduli spaces. Each curve $X$ in $\mathbb{P}^2$ is defined by a single homogeneous
polynomial $F$.
A natural and important "invariant" is the \emph{degree} defined as $\deg(X)=\deg F$. The degree
is actually not an invariant because a line and a conic have different degrees but they
could be isomorphic.
However, using the degree we can simply define an invariant: \emph{genus}, which is defined as
$$
g(X)=\frac{1}{2}(\deg(X)-1)(\deg(X)-2)
$$

There are other interpretations of the genus. When $k=\C$, $g(X)$ is the number of handles on $X$ 
when we consider $X$ as a Riemann surface. 
On the other hand, $g(X)=h^0(X,\omega_X)$. These new definitions make sense for every 
curve not just curves in $\PP^2$.

For each $g$, there is a moduli space $\mathcal{M}_g$ of
smooth projective curves of genus $g$. Studying these moduli spaces 
is still a hot topic in algebraic geometry. Moreover, constructing and studying 
moduli spaces is not just for the fun of it. Moduli spaces often provide information 
about families of the objects they parametrise (cf. see Kawamata's use of moduli spaces 
to prove his subadjunction formula [\ref{Kawamata-subadjunction}]).
\end{jimare}

\emph{More on genus and the canonical divisor.}  It is worth to note that 
the genus says a lot about a curve. In fact, 

\begin{displaymath}
g(X) = \left\{ \begin{array}{ll}
0 & \textrm{iff $X\simeq \mathbb{P}^1$}\\
1 & \textrm{iff $X$ is elliptic}\\
\ge 2 & \textrm{iff $X$ is of general type}
\end{array} \right.
\end{displaymath}
and these correspond to
 \begin{displaymath}
g(X) = \left\{ \begin{array}{ll}
0 & \textrm{iff $X$ has positive curvature}\\
1 & \textrm{iff $X$ has zero curvature}\\
\ge 2 & \textrm{iff $X$ has negative curvature}
\end{array} \right.
\end{displaymath}

And in terms of the canonical divisor we have:

 \begin{displaymath}
g(X) = \left\{ \begin{array}{ll}
0 & \textrm{iff $\deg K_X<0$}\\
\ge 1 & \textrm{iff $\deg K_X\ge 0$}
\end{array} \right.
\end{displaymath}

However, it turns out that in higher dimensions the genus is not 
such a useful thing. We need to consider not only the genus but a 
whole sequence of numbers. These numbers are determined by the canonical sheaf.

\begin{defn} For a smooth projective variety $X$, define the $m$-th plurigenus as
$$
P_m(X):=h^0(X,\omega_X^{\otimes m})
$$
Note that $P_1(X)=g(X)$.
Define the Kodaira dimension $\kappa(X)$ of $X$ as the largest integer $a$ 
satisfying 
$$
0<\limsup_{m\to\infty} \frac{P_m(X)}{m^a}
$$
if $P_m(X)>0$ for some $m>0$. Otherwise let $\kappa(X)=-\infty$.
\end{defn}

If $\dim X=d$, then $\kappa(X)\in\{-\infty, 0, 1, \dots, d\}$.
Moreover, the Kodaira dimension and the plurigenera $P_m(X)$ are all birational invariants.\\

\begin{exa} If $\dim X=1$, then
 \begin{displaymath}
\kappa(X) = \left\{ \begin{array}{ll}
-\infty & \textrm{iff $\deg K_X<0$}\\
0 & \textrm{iff $\deg K_X=0$}\\
> 0 & \textrm{iff $\deg K_X> 0$}
\end{array} \right.
\end{displaymath}
\vspace{0.1cm}
\end{exa}

\begin{jimare}{\bf{Classical MMP for surfaces.}}\label{rem-classical MMP} 
To get the above classification for surfaces 
one can use the classical \emph{minimal model program} (MMP) as
follows. Pick a smooth projective surface $X$ over $k$. 
If there is a $-1$-curve $E$ (i.e. $E\simeq \PP^1$ and $E^2=-1$) on $X$,
then by Castelnuovo theorem we can contract $E$ by a birational morphism $f\colon X\to X_1$
where $X_1$ is also smooth. Now replace $X$ with $X_1$ and continue the process. In each step,
the Picard number $\rho(X)$ drops by $1$. Therefore, after finitely many steps, we get a
smooth projective variety $Y$ with no $-1$-curves. Such a $Y$ turns out to have strong 
numerical properties. In fact, it is not hard to show that $Y=\PP^2$ or $Y$ is a ruled surface 
over some curve or that $K_Y$ is nef.
\end{jimare}

\emph{Enriques classification of surfaces.}
The $Y$ obtained in the process can be classified to a great extent.
 The following description of $Y$ in characteristic zero worked out by Enriques,  
Castelnuovo, Severi, Zariski, etc is now considered a classical result 
(Mumford and Bombieri obtained a similar statement in positive characteristic 
[\ref{Mumford-Bombieri}][\ref{Mumford-Bombieri2}][\ref{Mumford-Bombieri3}]). We have:\\\\
$\bullet$ If $\kappa(Y)=-\infty \implies$ $Y=\PP^2$ or $Y$ is a ruled surface 
over some curve.\\
$\bullet$ If $\kappa(Y)=0 \implies$ $Y$ is a K3 surface, an Enriques surface or an \'etale
quotient of an abelian surface.\\
$\bullet$ If $\kappa(Y)=1 \implies$ $Y$ is a minimal elliptic surface, that is, it 
is fibred over a curve with the general fibre being an elliptic curve.\\
$\bullet$ If $\kappa(Y)=2 \implies$ $Y$ is of general type.\\
$\bullet$ Moreover, the last three cases correspond to the situtaion when $K_Y$ is nef. 
Here the linear system $|mK_Y|$ is base point free for some 
$m>0$.\\

Except for the case $\kappa(Y)=2$, there are detailed classifications of $Y$ (see [\ref{Beauville}]).\\

\begin{jimare}{\bf{Higher dimension.}}
In dimension $>2$, the story is a lot more involved. The works of Fano, 
Iskovskikh, Iitaka, Ueno, Shokurov, Reid, etc, suggested that there must be 
a minimal model program for varieties in higher dimensions similar to that of surfaces.
However, this would not be without difficulties. One of the major 
obstacles was that it was not clear how to generalise $-1$-curves and 
their contractions to higher dimension. This problem was essentially 
solved by Mori who replaced $-1$-curves by the so-called extremal rays.
Another conceptual progress due to extremal rays was the fact that 
one could define analogues of ruled surfaces in higher dimension 
called Mori fibre spaces. A \emph{Mori fibre space} 
is defined as a fibre type contraction $Y\to Z$ which is a
 $K_Y$-negative extremal fibration with connected fibres. And a \emph{minimal}
variety is defined as $Y$ having $K_Y$ nef.

\end{jimare}

\begin{conj}[Minimal model] Let $X$ be a smooth projective variety. Then,
\begin{itemize}
\item If $\kappa(X)=-\infty$, then $X$ is birational to a Mori fibre space $Y\to Z$.\\
\item If $\kappa(X)\ge 0$, then $X$ is birational to a minimal variety $Y$.\\
\end{itemize}
\end{conj}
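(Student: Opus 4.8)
The plan is to realise $Y$ by running the \emph{minimal model program} (MMP) starting from $X$ itself, the higher-dimensional replacement for the classical surface procedure of \ref{rem-classical MMP}: instead of contracting $-1$-curves one contracts $K_X$-negative extremal rays. First I would decide which of the two bullets applies from the position of $K_X$ in the pseudo-effective cone. By Boucksom--Demailly--P\u{a}un--Peternell, a smooth projective $X$ is uniruled if and only if $K_X$ is not pseudo-effective; and $\kappa(X)\ge 0$ trivially forces $K_X$ to be pseudo-effective, while the converse is the non-vanishing statement. Granting non-vanishing, the case $\kappa(X)=-\infty$ is exactly the case ``$K_X$ not pseudo-effective'' (hence $X$ uniruled), and $\kappa(X)\ge 0$ is exactly ``$K_X$ pseudo-effective'', so it suffices to produce, in the first case a Mori fibre space birational to $X$, and in the second a minimal variety birational to $X$.

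Now for the engine. If $K_X$ is already nef we are done in the second case; otherwise the Cone and Contraction Theorem produces a $K_X$-negative extremal ray $R\subseteq\NEbar(X)$ together with its contraction morphism $f\colon X\to Z$ of relative Picard number one. Three things can happen. If $\dim Z<\dim X$ the contraction is of fibre type, and --- after noting the general fibre has anti-ample canonical class and the fibres are connected --- $X\to Z$ is already a Mori fibre space; this is the expected termination of the program when $K_X$ is not pseudo-effective, and it closes the first bullet. If $f$ is birational and its exceptional set is a divisor, then $Z$ inherits the mild singularities ($\Q$-factorial terminal, or more generally klt) needed to iterate, $\rho$ has dropped by one, and we replace $X$ by $Z$ and repeat. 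The genuinely new phenomenon is the \emph{small} contraction: $\exc(f)$ has codimension $\ge 2$, $K_Z$ fails to be $\Q$-Cartier, and to continue one must pass from $f$ to its \emph{flip} $f^{+}\colon X^{+}\to Z$, the small modification on which $K_{X^{+}}$ is relatively ample.

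The two substantial inputs are therefore \emph{existence of flips} and \emph{termination of flips}. For existence I would follow the reduction of Shokurov and of Hacon--McKernan: any flip is built from finitely many \emph{pl-flips}; a pl-flip exists as soon as a certain restricted adjoint algebra along the flipping locus is finitely generated; and that finite generation is extracted, by induction on dimension, from the existence of minimal models for varieties of log general type --- the Birkar--Cascini--Hacon--McKernan package, itself proved by an intertwined induction running the MMP with scaling. Granting existence, each step of the program is a genuine birational map to a variety in the same class of singularities, with $\rho$ never increasing and strictly decreasing at divisorial steps; the only way the program can fail to stop is an infinite sequence of flips.

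That obstruction --- \emph{termination} --- is the main difficulty, and is where I expect the real work (and, in full generality, a gap) to lie. In dimension $\le 3$ it is Shokurov's theorem, proved by exhibiting a monotone invariant built from minimal discrepancies; in higher dimension the \emph{special termination} trick reduces termination of a sequence of flips to termination in strictly lower dimension along the flipping loci, but unconditional termination in dimension $\ge 4$ remains open and is the crux of the whole conjecture. The standard way around it is to run the MMP \emph{with scaling} of a fixed ample divisor $A$: BCHM prove this variant terminates whenever $K_X$ (or $K_X+\Delta$) is big, and the general pseudo-effective case is then obtained by letting the scaling parameter tend to its critical value and invoking finiteness of minimal models in a fixed family. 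When the program halts, its output $Y$ either has $K_Y$ nef --- a minimal variety, settling the second bullet --- or carries a Mori fibre space structure --- settling the first, using non-vanishing to exclude this outcome when $\kappa(X)\ge 0$ and to force it when $\kappa(X)=-\infty$; and since every step was birational, $Y$ is birational to $X$, as required.
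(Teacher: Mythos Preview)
The statement you are ``proving'' is a \emph{conjecture} in the paper, not a theorem; the paper does not give a proof of it and does not claim one. What the paper does is exactly what you do: explain that the LMMP is the mechanism expected to produce the model~$Y$, and then spend the rest of the notes developing the pieces of that mechanism (cone and contraction, pl flips, termination with scaling, nonvanishing) in the special situations where they are known, chiefly the big/general-type case of Theorem~\ref{t-main-BCHM}. So your proposal is not a proof but a faithful summary of the conjectural strategy, and you correctly flag the two genuine gaps: unconditional termination of flips in dimension~$\ge 4$, and nonvanishing (that $K_X$ pseudo-effective forces $\kappa(X)\ge 0$). Both remain open, and the paper says so explicitly (see the discussion after Conjecture~6.5 and the Nonvanishing Conjecture~6.7).

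One small correction to your dichotomy: you cannot cleanly split the two bullets by pseudo-effectivity of $K_X$ without already assuming nonvanishing. You write ``Granting non-vanishing, the case $\kappa(X)=-\infty$ is exactly the case $K_X$ not pseudo-effective''---but nonvanishing is itself one of the main open problems, so this is circular as a reduction. The honest statement is: if $K_X$ is not pseudo-effective then BCHM gives a Mori fibre space (this part is unconditional, Theorem~\ref{t-main-BCHM}(2)); if $K_X$ is pseudo-effective then one needs both nonvanishing and termination to reach a minimal model, and neither is known in general.
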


\begin{conj}[Abundance] Let $Y$ be a minimal variety. Then,
there is a fibration $\phi\colon Y\to S$ with connected fibres
 and an ample divisor $H$ on $S$ such that
$$
mK_Y=\phi^* H
$$
for some $m>0$. Moreover, \\\\
$\bullet$ $\phi(C)=\rm pt. \Longleftrightarrow  K_Y\cdot C=0$ for any curve $C$ on $Y$.\\
$\bullet$ $\dim S=\kappa(Y)\ge 0$.
\end{conj}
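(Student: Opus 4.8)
The conjecture says that for a minimal variety $Y$, i.e. one with $K_Y$ nef, the divisor $K_Y$ is in fact \emph{semiample}: $|mK_Y|$ is base point free for some $m>0$. Granting this, everything claimed falls out formally. Set $S=\Proj\bigoplus_{j\ge 0}H^0\big(Y,\mathcal O_Y(jmK_Y)\big)$ and let $\phi\colon Y\to S$ be the morphism attached to $|mK_Y|$, replaced by its Stein factorisation so that it has connected fibres; then $mK_Y=\phi^*H$ for the tautological ample $H$ on $S$, a curve $C$ is contracted by $\phi$ exactly when $mK_Y\cdot C=0$, i.e. when $K_Y\cdot C=0$, and $\dim S=\kappa(Y)\ge 0$ by the very definition of the Kodaira dimension. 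So the whole problem is the semiampleness of a nef canonical divisor, and I would run an induction on $d=\dim Y$, splitting according to $\kappa:=\kappa(Y)$ into the cases $\kappa=d$, $\kappa=0$, and $0<\kappa<d$.

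Case $\kappa=d$ (general type). Now $K_Y$ is nef \emph{and big}, and the Base Point Free Theorem of Kawamata--Shokurov applies directly to $K_Y$ (taking boundary $0$), yielding that $mK_Y$ is base point free for $m\gg 0$. This is the only case that is essentially free once the standard machinery of the program is in place, so I would dispose of it first.

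Case $\kappa=0$. The goal is $mK_Y\sim 0$. The first — and genuinely hard — sub-step is the numerical assertion $K_Y\equiv 0$, equivalently $\nu(K_Y)=\kappa(K_Y)=0$; some form of this ``numerical abundance'' input cannot be dodged, since a priori a nef $K_Y$ could have positive numerical dimension and no sections. Once $K_Y\equiv 0$ is known, I would pass to the Albanese morphism $Y\to\operatorname{Alb}(Y)$ and use the structure theory of varieties with numerically trivial canonical class — Kawamata's analysis of the Albanese map, the triviality of a numerically trivial nef divisor on an abelian variety, and induction on the generic fibre — to upgrade $K_Y\equiv 0$ to torsion, $mK_Y\sim 0$.

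Case $0<\kappa<d$, and the main obstacle. After a birational modification (handled carefully so as to keep track of $K$, since $Y$ was already minimal) the Iitaka fibration becomes a morphism $f\colon Y\to Z$ with $\dim Z=\kappa$ and general fibre $F$ satisfying $\kappa(F)=0$, so by the previous case in dimension $d-\kappa<d$ we have $K_F\sim_{\Q}0$. Invoke Kawamata's canonical bundle formula, in the Fujino--Mori form: $K_Y\sim_{\Q}f^*\big(K_Z+B_Z+M_Z\big)$ with $B_Z$ the discriminant and $M_Z$ the nef moduli part. One checks $K_Z+B_Z+M_Z$ is big because $\kappa(Y)=\dim Z$, and then the plan is to descend semiampleness from $Z$ (lower-dimensional) by induction. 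The obstruction is exactly here: one needs an effective semiampleness statement for the moduli part $M_Z$ (semiampleness of the moduli $b$-divisor), and because $Z$ carries a boundary the induction must actually be set up from the start for the \emph{log} (and really the generalised-pair) version of the conjecture, not the smooth one stated above. So, honestly, this is a reduction rather than a proof: general type is handled by the Base Point Free Theorem; $\kappa=0$ is reduced to numerical abundance plus the Albanese argument; and $0<\kappa<d$ is reduced, via the Iitaka fibration and the canonical bundle formula, to the lower-dimensional log case together with semiampleness of moduli parts — and those last two ingredients are precisely what remains open in dimension $\ge 4$.
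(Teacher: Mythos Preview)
This statement is a \emph{conjecture} in the paper, not a theorem; the paper offers no proof and explicitly treats it as open (indeed the surrounding discussion says ``It is now understood that abundance is essentially the main problem''). So there is no paper-proof to compare against, and the relevant question is whether your write-up is a proof or merely an outline of the expected strategy.

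You are admirably honest on this point: you say yourself that ``this is a reduction rather than a proof''. Your case split into $\kappa=d$, $\kappa=0$, and $0<\kappa<d$ is exactly the standard trichotomy, and the $\kappa=d$ case is genuinely complete via the base point free theorem (this is the one piece the paper does prove, see the remark after the proof of Theorem~\ref{t-base-point-free}). But the other two cases contain real gaps that you correctly flag: in the $\kappa=0$ case, the step ``$\nu(K_Y)=0$'' (numerical abundance) is precisely the hard content and is not established; in the intermediate case, semiampleness of the moduli part $M_Z$ and the need to run the induction in the log/generalised-pair category are both open in general. One additional subtlety you gloss over: in the $0<\kappa<d$ case you write ``after a birational modification \dots\ the Iitaka fibration becomes a morphism'', but on a \emph{minimal} $Y$ the Iitaka map need not be a morphism, and passing to a resolution destroys nefness of $K$; making this step rigorous already requires the existence of relative minimal models over the Iitaka base, which is itself nontrivial input. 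So your proposal is a correct and well-organised survey of the known reduction strategy, but it is not a proof --- nor does the paper claim one.
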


The two conjectures in particular say that every variety birationally admits a fibration  
the general fibre $F$ of which satisfies: $-K_F$ is ample or $K_F\sim_\Q 0$ or 
$K_F$ is ample. This resembles the three cases in the classification of curves.  
So, these extreme types of varieties are the building blocks of varieties.

As expected there are more surprises in the geometry of higher dimensional varieties. 
If we start with a smooth projective vartiety $X$, then after the contraction 
$X\to X_1$ of an extremal ray, singularities 
may appear on $X_1$. So we have to have a whole singularity theory at hand to be able to continue. 
Moreover, some times the singularities of $X_1$ are too bad, i.e. $K_{X_1}$ is not $\Q$-Cartier, that 
we have to modify the situation to get to the $\Q$-Cartier case without turning back to the starting point. 
Here, one needs a flip, a diagram $X\to X_1\leftarrow X_2$ such that $K_{X_2}$ is $\Q$-Cartier 
and ample over $X_2$. Now it makes sense to continue with $X_2$ as before. Another problem 
is that we should prove that the program terminates. Finally, even if we  
arrive at a minimal model, we need to deal with abundance. 
It is now understood that abundance 
is essentially the main problem.

Characteristic zero: in dimension $3$, the program has been established due the works of Mori, 
Shokurov, Kawamata, Koll\'ar, Reid, etc. In higher dimensions, major progress 
has been made due to the works of Shokurov, Hacon, McKernan, Birkar, Cascini, 
Ambro, Fujino, etc. It is worth to mention that Mori's methods which revolutionised 
the subject were replaced by mainly cohomological methods. This is because 
Mori's methods work well only for smooth (or very similar) varieties.

\emph{Tools.} The following theorems and their generalisations 
are the building blocks of the techniques used in minimal model program.

\begin{thm}[Adjunction]
Let $X$ be a smooth variety and $S\subset X$ a smooth subvariety.
Then,
$$
(K_X+S)\vert_S=K_S
$$
\end{thm}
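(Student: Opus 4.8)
The plan is to read $S\subset X$ as a smooth \emph{divisor} (a smooth hypersurface), since that is the only situation in which the symbol $(K_X+S)\vert_S$ makes sense, and to translate the statement into a statement about invertible sheaves. Setting $d=\dim X$ and writing $\omega_X=\mathcal{O}_X(K_X)$, $\omega_S=\mathcal{O}_S(K_S)$, the assertion becomes the sheaf isomorphism $\omega_S\cong\omega_X\vert_S\otimes\mathcal{O}_X(S)\vert_S$. First I would recall the conormal exact sequence: if $\mathcal{I}\subset\mathcal{O}_X$ is the ideal sheaf of $S$, then because $S$ is smooth inside the smooth variety $X$ the sequence
$$0\longrightarrow \mathcal{I}/\mathcal{I}^2\longrightarrow \Omega_X\vert_S\longrightarrow \Omega_S\longrightarrow 0$$
is exact \emph{including on the left}, with the three terms locally free of ranks $1$, $d$, $d-1$.

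Next I would pass to top exterior powers. For any short exact sequence $0\to A\to B\to C\to 0$ of locally free sheaves on $S$ there is a canonical isomorphism $\det B\cong \det A\otimes \det C$; applying this to the conormal sequence gives
$$\omega_X\vert_S=\wedge^d(\Omega_X\vert_S)\cong(\mathcal{I}/\mathcal{I}^2)\otimes\wedge^{d-1}\Omega_S=(\mathcal{I}/\mathcal{I}^2)\otimes\omega_S.$$
Then I would identify the conormal sheaf with a line bundle restricted from $X$: since $S$ is Cartier, $\mathcal{I}\cong\mathcal{O}_X(-S)$, hence $\mathcal{I}/\mathcal{I}^2\cong\mathcal{O}_X(-S)\vert_S$. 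Substituting and twisting by $\mathcal{O}_X(S)\vert_S$ yields $\omega_S\cong\omega_X\vert_S\otimes\mathcal{O}_X(S)\vert_S=\mathcal{O}_X(K_X+S)\vert_S$, which is exactly $K_S=(K_X+S)\vert_S$ at the level of divisor classes.

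The one genuinely substantive point, as opposed to the formal determinant bookkeeping, is the exactness of the conormal sequence on the left, i.e. that $\mathcal{I}/\mathcal{I}^2$ is locally free of rank $1$ and injects into $\Omega_X\vert_S$; this is precisely where the smoothness of both $S$ and $X$ is used (locally $S=\{f=0\}$ with $df$ nowhere vanishing along $S$). An alternative route that sidesteps the determinant formalism is to construct the Poincaré residue map directly: work in a local chart where $S=\{f=0\}$, and send a local section $\tfrac{df}{f}\wedge\eta$ of $\omega_X(S)\vert_S$ to $\eta\vert_S\in\omega_S$. I would check this is independent of the chosen local equation $f$ and of the lift $\eta$, that it is an isomorphism on each chart, and that the local isomorphisms glue to a global one. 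Either way the crux is the codimension-one local structure of $S$ in $X$; everything else is standard multilinear algebra of locally free sheaves.
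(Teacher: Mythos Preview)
Your argument is correct and is the standard proof of the adjunction formula via the conormal sequence; your observation that the statement only makes literal sense when $S$ is a smooth divisor (rather than a subvariety of arbitrary codimension) is also apt, and that is indeed how the paper uses the result throughout.

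However, there is nothing to compare against: the paper does not prove this theorem. It is stated in the introductory overview as one of the foundational tools (``building blocks of the techniques used in minimal model program''), alongside Kodaira vanishing and Hironaka resolution, all of which are quoted without proof. The paper takes adjunction as a black box and moves on to use it (and its generalizations to singular pairs, as in Remark~\ref{rem-adjunction-dlt-pairs}) in the development of the LMMP. So your proposal supplies a proof where the paper supplies none.
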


\vspace{0.5cm}

\begin{thm}[Kodaira vanishing]
Let $X$ be a smooth projective variety over $k$ with ${\rm{char}}~ k=0$, and $H$ an ample divisor on $X$. Then,
$$
H^i(X,K_X+H)=0
$$
for any $i>0$.
\end{thm}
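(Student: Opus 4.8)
\emph{The plan.} I would deduce the statement from Hodge theory. Two standard routes are available: Kodaira's original analytic argument through harmonic forms, and the algebraic argument of Deligne--Illusie through reduction modulo $p$; I will take the latter as the main line. Write $n=\dim X$ and $L=\mathcal{O}_X(H)$. By Serre duality the assertion is equivalent to $H^j(X,L^{-1})=0$ for $j<n$, and it is cleaner to aim at the sharper Akizuki--Nakano vanishing $H^j(X,\Omega^i_X\otimes L^{-1})=0$ for all $i+j<n$, the case $i=0$ being what we need. To pass to positive characteristic one first spreads out: choose a finitely generated $\Z$-subalgebra $A\subset k$, smooth over $\Z$, a smooth projective morphism $X_A\to\Spec A$ and a relatively ample divisor $H_A$ restricting to $(X,H)$ over the generic point. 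Shrinking $\Spec A$, every closed fibre $X_s$ is smooth projective over a perfect field $\kappa(s)$ of characteristic $p=p(s)$ with $H_s$ ample; since $A$ is smooth over $\Z$ each $X_s$ lifts to $W_2(\kappa(s))$, and since closed points of arbitrarily large residue characteristic are dense we may restrict to those $s$ with $p(s)$ large --- larger than $n$ and than a Serre-vanishing bound depending only on the family.

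\emph{The positive-characteristic step.} Fix such a fibre, written $(X,L)$ over a perfect field of characteristic $p>n$ admitting a $W_2$-lift. Here I invoke the Deligne--Illusie theorem: the relative Frobenius $F\colon X\to X'$ induces, in the derived category of $X'$, a decomposition $F_*\Omega^\bullet_{X/k}\cong\bigoplus_{i=0}^{n}\Omega^i_{X'/k}[-i]$ (the hypothesis $p>n$ renders the truncation $\tau_{<p}$ vacuous, and the $W_2$-lift provides the splitting). Twisting by the pullback $(L')^{-1}$ of $L^{-1}$ to $X'$ and applying the projection formula with $F^*L'\cong L^{\otimes p}$, the left side computes $\bigoplus_{i+j=m}H^j(X,\Omega^i_X\otimes L^{-1})$ (after identifying $X'$ with $X$ as abstract schemes), while the right side computes the hypercohomology $\mathbb{H}^m(X,\Omega^\bullet_X\otimes L^{-p})$. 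In the Hodge--de Rham spectral sequence of the latter the terms $E_1^{i,j}=H^j(X,\Omega^i_X\otimes L^{-p})$ vanish for $j<n$ by Serre duality and Serre vanishing, so $\mathbb{H}^m(X,\Omega^\bullet_X\otimes L^{-p})=0$ for $m<n$. Matching dimensions forces $H^j(X,\Omega^i_X\otimes L^{-1})=0$ for $i+j<n$; taking $i=0$ gives $H^j(X,L^{-1})=0$ for $j<n$, and Serre duality returns $H^j(X,\omega_X\otimes L)=0$ for $j>0$.

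\emph{Descending.} The function $s\mapsto\dim_{\kappa(s)}H^j(X_s,\omega_{X_s}\otimes L_s)$ is upper semicontinuous on $\Spec A$ and vanishes on the dense set of closed points of large residue characteristic; hence its nonvanishing locus is a proper closed subset avoiding the generic point, and flat base change gives $H^j(X,\omega_X\otimes L)=0$ for $j>0$ over $k$. (Alternatively, over $\C$ one runs Kodaira's proof directly: put a metric of positive curvature $\Theta$ on $L$, take the Kähler form $\tfrac{i}{2\pi}\Theta$, identify $H^q(X,K_X+H)$ with the $L$-valued harmonic $(n,q)$-forms, and apply the Bochner--Kodaira--Nakano identity, whose curvature term $[i\Theta,\Lambda]$ is strictly positive on $(n,q)$-forms for $q>0$ and therefore kills them; a general $k$ of characteristic zero reduces to $\C$ by the Lefschetz principle.)

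\emph{Main difficulty.} On the algebraic route essentially all the content is packaged in the Deligne--Illusie decomposition theorem; granting it, the remaining steps --- the spreading out, the Frobenius and projection-formula bookkeeping, the degeneration and dimension count, and the semicontinuity --- are formal, the only delicate point being that the locus of good reduction carrying a $W_2$-lift genuinely meets every sufficiently large residue characteristic. On the analytic route the hard part is the positivity of the curvature term on $(n,q)$-forms, which is exactly where both the ampleness of $H$ and the restriction to the top exterior power $\Omega^n=\omega_X$ come in.
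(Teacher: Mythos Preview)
The paper does not prove this theorem. It is stated in the introductory overview as one of the basic tools of the subject (alongside adjunction and Hironaka resolution) and is invoked later only through its generalisation, the Kawamata--Viehweg vanishing theorem, which is likewise stated without proof. So there is no argument in the paper to compare your proposal against.

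On its own merits your sketch is a correct outline of the two standard proofs. The Deligne--Illusie line is accurately summarised: spread out over a smooth $\Z$-algebra so that closed fibres admit $W_2$-lifts, use the decomposition of $F_*\Omega^\bullet$ for $p>n$, twist by $L^{-1}$ and push the vanishing through the projection formula and Serre vanishing for $L^{-p}$, then descend by semicontinuity. The one point I would tighten is the Serre-vanishing input: you need a single integer $m_0$ such that $H^j(X_s,\Omega^i_{X_s}\otimes L_s^{-m})=0$ for all $i$, all $j<n$, all closed $s$, and all $m\ge m_0$, uniformly over the base; this is fine after shrinking $\Spec A$ since the relevant sheaves are flat and the base is noetherian, but it is worth saying explicitly, since otherwise the condition ``$p(s)$ larger than a Serre-vanishing bound'' is circular. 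The analytic alternative via Bochner--Kodaira--Nakano is also correctly sketched. Either route is far more than the paper itself supplies.
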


From the adjunction formula one sees that $K_S$ is closely related to $K_X+S$ 
rather than $K_X$. This is one the main reasons that in birational geometry we 
consider pairs $(X,B)$ rather than just a variety where $B$ is a $\Q$-divisor 
(even $\R$-divisor) on $X$ having coefficients in $[0,1]$. When one of the components $S$ of $B$ 
has coefficient $1$, we usually get an adjunction formula $K_X+B|_S=K_S+B_S$ 
for a certain $B_S$ on $S$. This frequently allows us to do induction on 
dimension. 

On the other hand, many of the problems and statements of birational geometry 
have a cohomological nature. In particular, We often have a restriction map 
$$
H^0(X,m(K_X+B)) \to H^0(S,m(K_S+B_S))
$$
and we like this map to be surjective. This surjectivity holds if 
$$
H^1(X,m(K_X+B)-S)=0
$$
 and this is where clones of the Kodaira vanishing 
come into play if $k=\C$. We artificially create a situation in which $m(K_X+B)-S$ 
is ample hence allowing us to use the vanishing theorem. 

\begin{defn}[Log resolution]
Let $X$ be a variety and $D$ a divisor on $X$. A projective birational morphism
$f\colon Y\to X$ is a log resolution of $X,D$ if $Y$ is smooth,
$\exc(f)$ is a divisor and $\exc(f)\cup f^{-1}(\Supp D)$ is a simple normal crossing divisor.
\end{defn}

\begin{thm}[Hironaka]
Let $X$ be a variety  and $D$ a divisor on $X$ where we assume ${\rm{char}}~ k=0$. 
Then, a log resolution of $X,D$ 
exists.
\end{thm}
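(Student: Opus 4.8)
The plan is to deduce the statement from two more basic forms of resolution, both proved by the same inductive machine, and to exploit $\mathrm{char}\,k=0$ throughout. First I would invoke \emph{resolution of singularities}: there is a projective birational morphism $X'\to X$ with $X'$ smooth, an isomorphism over the smooth locus, and $\exc$ a divisor. Granting this, replace $X$ by $X'$ and $D$ by the union of the strict transform of $D$ with the new exceptional divisors; the problem becomes \emph{embedded resolution}: given a smooth $Y$ and a divisor $E$ on it, find a composition of blow-ups along smooth centres after which the total transform of $E$ is simple normal crossing. In fact both tasks follow from \emph{principalization}: given a smooth $Y$ and a coherent ideal sheaf $\mathcal I$, produce a composition of blow-ups along smooth centres so that the pulled-back ideal becomes locally monomial in the accumulated exceptional divisors, with each centre contained in the locus where a suitable invariant is maximal (so that all exceptional divisors meet transversally and the strict transforms behave).

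The engine is induction on $\dim Y$ together with an upper-semicontinuous local invariant. At a point $p$ the leading entry of the invariant is $\mathrm{ord}_p\,\mathcal I$. Because $\mathrm{char}\,k=0$, the locus where this order is maximal, say $b$, is cut out (locally near $p$, after shrinking $Y$) by a smooth \emph{hypersurface of maximal contact}: some partial derivative of order $b-1$ of a local generator has order exactly $1$, and its zero set contains the top-order locus of $\mathcal I$ and is preserved, after strict transform, by the admissible blow-ups. Restricting an appropriately weighted \emph{coefficient ideal} built from $\mathcal I$ to this hypersurface lowers the dimension, and one recurses. Blowing up the (smooth) top-order locus, one checks the order does not increase; after finitely many blow-ups it strictly drops, and then one passes to the next hypersurface, or the next entry of the invariant, or peels off the monomial part. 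The full invariant is a finite lexicographically-ordered string of such numbers, and the induction terminates because it strictly decreases.

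The two steps that require real work are \emph{globalization} and \emph{strict decrease}. The hypersurface of maximal contact and the coefficient ideal are a priori only local, so one must prove that the resulting centres are intrinsic --- independent of all choices --- in order to glue them into a single global smooth centre; the standard device is to replace $\mathcal I$ by a \textbf{homogenized} ideal whose top-order locus is canonically defined, following Villamayor, Bierstone--Milman, Encinas--Villamayor, W\l odarczyk and Koll\'ar. The combinatorial bookkeeping that forces the invariant down after finitely many steps --- separating the "old" exceptional divisors (which already meet $E$ normally) from the "new" part of the ideal, so that monomial contributions in the old divisors are split off and do not obstruct the descent --- is the genuinely delicate core, and is where almost all of the length of a complete proof lives. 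I expect that to be the main obstacle; everything else is either the characteristic-zero input (differentiation to produce maximal contact) or formal induction on dimension.
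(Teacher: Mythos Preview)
The paper does not prove this statement at all: it is simply quoted as Hironaka's theorem, one of the standard tools of the subject, and is used as a black box throughout. So there is no ``paper's own proof'' to compare against; your outline is a genuine sketch of the modern constructive approach (principalization via maximal contact, coefficient ideals, and a well-ordered invariant, in the style of Villamayor, Bierstone--Milman, Encinas--Villamayor, W{\l}odarczyk, and Koll\'ar), which goes well beyond what the paper attempts. Your sketch is accurate as an overview and correctly identifies the two hard points --- canonicity of centres (globalization) and strict decrease of the invariant --- but of course at this level of detail it is a roadmap rather than a proof; in the context of these lecture notes, citing Hironaka (or one of the modern expositions you mention) is the intended argument.
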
\begin{defn}[Log resolution]
Let $X$ be a variety and $D$ a $\Q$-divisor on $X$. A projective birational morphism
$f\colon Y\to X$ is a log resolution of $X,D$ if $Y$ is smooth,
$\exc(f)$ is a divisor and $\exc(f)\cup f^{-1}(\Supp D)$ is a simple normal crossing divisor.
\end{defn}

\emph{Linear systems.} To find 
an extremal ray and construct its contraction one can pick an ample divisor 
$H$ and take the smallest $a\ge 0$ for which $K_X+aH$ is nef. It turns out that 
$a$ is a rational number. If $K_X$ is not already 
nef, and if the linear system $|m(K_X+aH)|$ is base point free for some $m>0$, then 
there is a $K_X$-negative extremal face of the Mori cone defining a contraction. 
Such linear systems occur frequently in the subject.\\

\emph{Rationally connected varieties.} A theorem of Castelnuovo provides a simple test
of rationality of a smooth projective surface $X$ in characteristic zero: 
$P_2(X)=h^0(2K_X)=0$ and $h^1(0)=0$ 
iff $X$ is rational. In higher dimension, the rationality problem is notoriously 
difficult. Iskovskikh-Manin proved that there are unirational but not rational 3-folds.
However, unirationality is not easy to deal with either. Instead, Koll\'ar-Mori-Miyaoka, 
and Campana came up with the idea of rationally connected varieties which are much 
easier to study. A variety $X$ is rationally connected if every two points can be 
connected by a chain of rational curves. This is a place where Mori's original methods 
have proved most useful.\\

\emph{Explicit classification.} There are lots of results concerning detailed classification 
of varieities in dimension $3$, eg the works of Iskovskikh, Prokhorov, Reid, Corti, Cheltsov, 
etc. Most of these results concern Fano varieties.\\

\emph{Positive characteristic.} In higher dimension, there are only partial  
results when $k$ has positive characteristic. One reason is that the resolution of 
singularities conjecture is not proved yet. Another major reason is that the 
Kodaira vanishing theorem does not hold in this case. So, without these vital 
elements it is difficult to prove any thing unless one finds alternatives. 
Ironically though Mori's early results which opened up the field were mainly 
based on positive characteristic methods.\\

\emph{Schemes.} We have only discussed birational geometry for smooth varieties 
or alike. There is almost nothing known about birational geometry of schemes in 
general. Of course one can consider various levels of generality. For example, 
one may talk about schemes over an algebraically closed field, or any field, or 
any ring, or any base scheme.

\clearpage

%%%%%%%%%%%%%%%%%%%%%%%%%%%%%%%%%%%%%%%%%%%%%%%
~ \vspace{2cm}
\section{\textbf{Preliminaries}}
\vspace{1cm}
%%%%%%%%%%%%%%%%%%%%%%%%%%%%%%%%%%%%%%%%%%%%%%%

\begin{defn}
By a \emph{variety} we mean an irreducible quasi-projective variety over $k$ and 
from now on we take $k=\C$ unless stated otherwise.
Two varieties $X,X'$ are called \emph{birational} if there is a rational map $f\colon X\bir X'$
which has an inverse, or equivalently if $X,X'$ have isomorphic open subsets.
\end{defn}

\begin{defn}[Contraction]
A contraction is a projective morphism $f\colon X\to Y$ such that
$f_*\mathcal{O}_X=\mathcal{O}_Y$ which in particular means that $f$ has connected fibres. If 
$Y$ is normal and $f$ surjective, then $f$ being a contraction is equivalent to the connectedness of the fibres 
by the Stein factorisation. 
\end{defn}

\begin{rem}[Stein factorisation]
Let $f\colon X\to Y$ be a projective morphism. Then, it can be factored
through $g\colon X\to Z$ and $h\colon Z\to Y$ such that $g$ is a contraction
and $h$ is finite.
\end{rem}

\begin{exa}[Zariski's main theorem]
Let $f\colon X\to Y$ be a projective birational morphism where $Y$ is normal.
Then, $f$ is a contraction.
\end{exa}

\begin{exe} Give a morphism which is not a contraction.
\end{exe}

\begin{defn}[Exceptional set]
Let $f\colon X\to Y$ be a birational morphism of varieties. $\exc(f)$ is the set of those
$x\in X$ such that $f^{-1}$ is not regular at $f(x)$.
\end{defn}

\begin{defn}
Let $f\colon X\bir Y$ be a birational map of normal varieties and $V$ a prime cycle on $X$.
Let $U\subset X$ be the open subset where $f$ is regular. If $V\cap U\neq \emptyset$, then define
the \emph{birational transform} of $V$ to be the closure of $f(U\cap V)$ in $Y$. If
$V=\sum a_iV_i$ is a cycle and $U\cap V_i\neq \emptyset$, then the birational transform
of $V$ is defined to be $\sum a_iV_i^{\sim}$ where $V_i^{\sim}$ is the birational transform
of the prime component $V_i$.
\end{defn}

\begin{defn}
Let $X$ be a normal variety. 
A \emph{divisor} (resp. \emph{$\Q$-divisor, $\R$-divisor}) is
as $\sum_{i=1}^md_iD_i$ where $D_i$ are prime divisors and $d_i\in\Z$ (resp.
$d_i\in\Q$, $d_i\in\R$). A $\Q$-divisor $D$ is called \emph{$\Q$-Cartier} if
$mD$ is Cartier for some $m\in\N$; equivalently $D\in \Pic_\Q(X):=\Pic(X)\otimes_\Z \Q$.
 We say $X$ is 
\emph{$\Q$-factorial} if
every $\Q$-divisor is $\Q$-Cartier.

 An $\R$-divisor $D$ is called \emph{$\R$-Cartier} if
$D=\sum a_iE_i$ for some $a_i\in\R$ and some Cartier divisors $E_i$; equivalently 
$D\in \Pic_\R(X)=\Pic(X)\otimes_\Z \R$. 
We say that $D$ is \emph{ample} if $D=\sum a_iA_i$ for certain positive real numbers $a_i$ 
and usual ample divisors $A_i$.
\end{defn}

\begin{defn}
Let $X$ be a normal variety and $f\colon X\to Z$ a projective morphism. 
Two $\R$-divisors $D,D'$ on $X$ are called
\emph{$\R$-linearly equivalent$/Z$}, denoted by $D\sim_{\R}D'/Z$ or $D\sim_{\R,Z}D'$, if 
$$
D=D'+\sum a_i(\alpha_i)+f^*G
$$ 
for some $a_i\in \R$, rational functions $\alpha_i\in K(X)$, and $\R$-Cartier divisor $G$ 
on $Z$. 
\end{defn}

\begin{exe}
Prove that a $\Q$-divisor $D$ is $\R$-Cartier iff it is $\Q$-Cartier. 
Prove that two $\Q$-divisors $D,D'$ are 
\emph{$\R$-linearly equivalent/$Z$} iff $mD\sim mD'$
for some $m\in\N$. In this case we say that $D,D'$ are 
\emph{$\Q$-linearly equivalent$/Z$}, denoted by $D\sim_{\Q}D'/Z$.
\end{exe}

\begin{defn}
Let $X$ be a normal variety and $D$ a divisor on $X$. Let $U=X-X_{sing}$ be the smooth
subset of $X$ and $i\colon U\to X$ the inclusion. Since $X$ is normal,
$\dim X_{sing}\le \dim X-2$. So, every divisor on $X$ is uniquely determined
by its restriction to $U$. 
For a divisor $D$, we associate the sheaf $\mathcal{O}_X(D):=i_*\mathcal{O}_U(D)$.
Such sheaves are reflexive. 
We define the \emph{canonical divisor} $K_X$
of $X$ to be the closure of the canonical divisor $K_U$. Of course $K_X$ is not 
unique as a divisor but it is unique up to linear equivalence.
It is well-known that $\mathcal{O}_X(K_X)$ is the same as the dualising sheaf
$\omega_X$ if $X$ is projective [\ref{Kollar-Mori}, Proposition 5.75].
\end{defn}

\begin{exa}
The canonical divisor of $X=\PP^n$ is just $-(n+1)H$ where $H$ is a hyperplane. 
If $Y$ is a smooth hypersurface of degree $d$ in $\PP^n$, then the adjunction formula 
allows us to calculate the canonical divisors 
$$
K_Y=(K_X+Y)|_Y\sim (-(n+1)H+dH)|_Y=(d-n-1)H|_Y
$$
\end{exa}

\begin{defn}
Let $f\colon X\to Z$ be a projective morphism from a normal variety. 
Let $Z_1(X/Z)$ be the abelian group generated by curves $\Gamma\subseteq X$ 
contracted by $f$, that is,  with $f(\Gamma)={\rm{pt}}$.
An $\R$-Cartier divisor $D$ on $X$ is called
\emph{nef$/Z$} if $D\cdot C\ge 0$ for any $C\in Z_1(X/Z)$. 
Two $\R$-divisors $D,D'$ are called
\emph{numerically equivalent$/Z$}, denoted by $D\equiv D'/Z$ or $D\equiv_Z D'$, 
if $D-D'$ is $\R$-Cartier and $(D-D')\cdot C=0$ for
any $C\in Z_1(X/Z)$. Now we call $V,V'\in Z_1(X/Z)\otimes_\Z \R$ numerically
equivalent, denoted by $V\equiv V'$ if $D\cdot V=D\cdot V'$ for any $\R$-Cartier divisor
$D$ on $X$.
\end{defn}

\begin{defn}\label{d-free-ample-divisor/Z}
Let $f\colon X\to Z$ be a projective morphism of normal varieties.. 
We call a Weil divisor $D$ \emph{free over $Z$} if for each $P\in Z$, there is an affine open neighbourhood 
$P\in U\subseteq Z$
such that $D$ is free on $f^{-1}U$ in the usual sense, or equivalently if the natural
morphism $f^*f_*\mathcal{O}_X(D) \to \mathcal{O}_X(D)$ is surjective. 
 $D$ is called \emph{very ample over $Z$} if there is an embedding $i\colon X\to \PP_Z$ over $Z$
such that $\mathcal{O}_X(D)=i^*\mathcal{O}_{\PP_Z}(1)$.

Now assume that $D$ is an $\R$-Cartier divisor. We say that 
$D$ is \emph{ample over $Z$} if for each $P\in Z$, there is an affine open neighbourhood 
$P\in U\subseteq Z$ such that $D$ is ample on $f^{-1}U$. We say that $D$ is 
\emph{semi-ample over $Z$}, if there is a projective morphism $\pi\colon X\to Y/Z$ and an ample$/Z$ $\R$-Cartier 
divisor $A$ on $Y$ such that $D\sim_\R \pi^*A/Z$. 
Finally,  we say that $D$ is \emph{big$/Z$} 
if $D\sim_\R G+A/Z$ for some $\R$-divisor $G\ge 0$ and some ample$/Z$ $\R$-divisor $A$. 
Note that this is compatible with another definition of bigness that 
we will see later (see Kodaira's Lemma \ref{l-Kodaira-lemma}).
\end{defn}

%%%%%%%%%%%%%%%%%%%%%%%%%%%%%%%%%%%%%%%%%%%%%%%
\vspace{2cm}
\section{\textbf{Contractions and extremal rays}}
\vspace{1cm}
%%%%%%%%%%%%%%%%%%%%%%%%%%%%%%%%%%%%%%%%%%%%%%%

\begin{defn}
Let $X$ be a projective variety. We define\\\\
 $\bullet$ $N_1(X/Z):=Z_1(X/Z)\otimes_\Z \R/\equiv$\\
 $\bullet$ $NE(X/Z):=\mbox{the cone in $N_1(X/Z)$ generated by effective $\R$-1-cycles}$\\
 $\bullet$ $\overline{NE}(X/Z):=\mbox{closure of $NE(X/Z)$ inside $N_1(X/Z)$} $\\
 $\bullet$ $N^1(X/Z):= (\Pic_\R(X)/\equiv_Z)\simeq (\Pic(X)/\equiv_Z)\otimes_{\Z} \R$\\
 Intersection numbers give a natural pairing 
$$
N^1(X/Z)\times N_1(X/Z)\to \R
$$
which immediately gives injections 
$$
N^1(X/Z)\to N_1(X/Z)^* ~~\mbox{and} ~~N_1(X/Z)\to N^1(X/Z)^*
$$ 
On the other hand by the so-called N\'eron-Severi theorem, $N^1(X/Z)$ is a finite 
dimensional $\R$-vector space. Therefore, 
$$
\dim N_1(X/Z)=\dim N^1(X/Z)<\infty
$$ 
and  
this number is called the \emph{Picard number} of $X/Z$ and denoted by $\rho(X/Z)$. 
We refer to $N^1(X/Z)$ as the \emph{N\'eron-Severi space} of $X/Z$ and to $\overline{NE}(X/Z)$ 
as the \emph{Mori-Kleiman cone} of $X/Z$. 
\end{defn}

\begin{defn}
Let $C\subset \R^n$ be a cone with vertex at the origin.
A subcone $F\subseteq C$ is called an \emph{extremal face} of $C$ if
for any $x,y\in C$, $x+y\in F$ implies that $x,y\in F$. If $\dim F=1$,
we call it an \emph{extremal ray}.
\end{defn}

\begin{thm}[Kleiman ampleness criterion]
Let $f\colon X\to Z$ be a projective morphism of varieties and $D$ a $\Q$-Cartier divisor 
on $X$. Then, $D$ is ample$/Z$ iff $D$ is positive on $\overline{NE}(X/Z)-\{0\}$.
\end{thm}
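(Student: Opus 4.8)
\emph{Proof proposal.} The plan is to prove the two implications separately; the forward one is a short compactness argument, while the converse carries the real content.

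Suppose first that $D$ is ample$/Z$. Then $D\cdot C>0$ for every curve $C$ contracted by $f$, because a positive multiple of $D$ is very ample$/Z$ and $D\cdot C$ is then a positive multiple of the degree of $C$ in the fibrewise embedding. Hence $D\ge 0$ on $NE(X/Z)$, and by continuity $D\ge 0$ on $\overline{NE}(X/Z)$. To upgrade this to strict positivity away from the origin, choose Cartier divisors $E_1,\dots,E_\rho$ whose classes span $N^1(X/Z)$; for $m\gg 0$ both $mD+E_i$ and $mD-E_i$ are ample$/Z$, hence $\ge 0$ on $\overline{NE}(X/Z)$, so any $v\in\overline{NE}(X/Z)$ with $D\cdot v=0$ satisfies $\pm E_i\cdot v\ge 0$, i.e.\ $E_i\cdot v=0$ for all $i$, and therefore $v=0$ by nondegeneracy of the pairing $N^1(X/Z)\times N_1(X/Z)\to\R$. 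Thus $D>0$ on $\overline{NE}(X/Z)\setminus\{0\}$.

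For the converse, assume $D$ is positive on $\overline{NE}(X/Z)\setminus\{0\}$. Ampleness$/Z$ is local on $Z$, so we may take $Z$ affine, and then --- passing to the absolute situation (either by compactifying or by carrying the argument out with the relative Nakai--Moishezon criterion) --- it is enough to treat $X$ a projective variety and $D$ a $\Q$-Cartier divisor positive on $\overline{NE}(X)\setminus\{0\}$. Fix a norm on $N_1(X)$ with unit sphere $S$. The set $\Sigma=\overline{NE}(X)\cap S$ is compact and $D$ is positive on it, so $D\cdot v\ge\delta>0$ on $\Sigma$ for some $\delta$; fix an ample divisor $A$ with $|A\cdot v|\le M$ on $\Sigma$ and a rational $\epsilon_0\in(0,\delta/M)$. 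Then $(D-\epsilon_0 A)\cdot v>0$ on $\Sigma$, hence $D-\epsilon_0 A$ is nef; consequently $D$ is nef, as is $D-\epsilon A$ for every rational $\epsilon\in(0,\epsilon_0]$. By the Nakai--Moishezon criterion it now suffices to prove $D^{\dim V}\cdot V>0$ for every subvariety $V\subseteq X$, including $V=X$, and I would do this by induction on $d=\dim V$. The case $d=1$ is the hypothesis applied to the nonzero class $[V]\in\overline{NE}(X)$. For $d\ge 2$, assume $D^{\dim W}\cdot W>0$ for every subvariety $W$ of dimension $<d$. Representing $A|_V$ by an effective divisor and applying the inductive hypothesis to its $(d-1)$-dimensional components yields $D^{d-1}\cdot A\cdot V>0$; moreover $(D-\epsilon A)^{d}\cdot V\ge 0$ for $\epsilon\in(0,\epsilon_0]$, since $(D-\epsilon A)|_V$ is nef on $V$ and a nef $\Q$-Cartier divisor $L$ on a projective variety satisfies $L^{\dim W}\cdot W\ge 0$ for all subvarieties $W$ (Kleiman). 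Expanding $(D-\epsilon A)^{d}$ and using $(D-\epsilon A)^{d}\cdot V\ge 0$ gives
\[
D^{d}\cdot V\ \ge\ d\epsilon\,D^{d-1}\cdot A\cdot V+\sum_{k=2}^{d}(-1)^{k+1}\binom{d}{k}\epsilon^{k}\,D^{d-k}\cdot A^{k}\cdot V ,
\]
whose right-hand side is positive once $\epsilon\in(0,\epsilon_0]$ is taken small, because $D^{d-1}\cdot A\cdot V$ is a fixed positive number and the remaining terms are $O(\epsilon^{2})$. Hence $D^{d}\cdot V>0$, which closes the induction; by Nakai--Moishezon, $D$ is ample.

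The step I expect to be the main obstacle is the auxiliary fact used above, that a nef $\Q$-Cartier divisor $L$ on a projective variety satisfies $L^{\dim W}\cdot W\ge 0$ for every subvariety $W$ --- equivalently, that the sum of a nef and an ample divisor is ample. I would isolate this as a separate lemma and prove it by a parallel induction on dimension, again feeding through the Nakai--Moishezon criterion, which is really the one essential external input. The rest --- the reduction from the relative to the absolute case, and the manipulations with the projection formula and general hyperplane sections (in particular when $V$ is singular) --- is routine but should be written out with some care.
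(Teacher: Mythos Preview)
The paper states the Kleiman criterion without proof --- it is quoted as a classical input and no argument is supplied in the text. So there is nothing in the paper to compare your proposal against.

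That said, your sketch is essentially the standard derivation of Kleiman's criterion from the Nakai--Moishezon criterion (as in Koll\'ar--Mori [\ref{Kollar-Mori}, 1.18] or Lazarsfeld's \emph{Positivity I}), and the logic is sound. A couple of small remarks: the ``compactifying'' option in your reduction to the absolute case is not really viable, since after compactifying $Z$ you cannot in general extend $D$ while preserving positivity on the cone; the correct route is the one you parenthetically mention, namely to run the Nakai--Moishezon argument directly in the relative setting over an affine base. And you are right to flag the lemma that a nef $\Q$-Cartier divisor has $L^{\dim W}\cdot W\ge 0$ on every subvariety as the genuine content --- this is Kleiman's original theorem and is proved by the same Nakai--Moishezon induction; just be careful to structure the two inductions so that they are not circular (prove the lemma first, for arbitrary nef divisors, then feed it into your argument).
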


\begin{rem}
Let $f\colon X\to Z$ be a projective morphism of varieties
 and $D$ a $\Q$-Cartier divisor. If $D$ is semi-ample$/Z$, 
 that is, if there is $m>0$ such that $mD$ is free locally 
 over $Z$, then $D$ defines a projective contraction 
 $g\colon X \to Y$ over $Z$ such that $D\sim_\Q g^*H/Z$ for 
 some ample $\Q$-divisor $H$ on $Y$. Now if $D$ is not already 
 ample, then $D$ should be numerically trivial on some curves of 
 $X$ and $g$ contracts exactly those curves: in other words, 
 $D$ is numerically trivial on some  extremal face $F$ of $\overline{NE}(X)$ 
 and $g$ contracts $F$.   
 Conversely, if we are given $g$ first, then one can take $D$ 
to be the pullback of some ample$/Z$ divisor on $Y$ so that 
$D$ defines the contraction $g$.   
 
 We are somewhat mostly interested in the converse of the above, that is, 
 we pick an extremal face $F$ and we like to know when this face can be contracted. 
 The first step is to look for a divisor that is nef$/Z$ but numerically 
 trivial on $F$ and positive on the rest of $\overline{NE}(X)$. 
 Assume that $D$ is such a a divisor. In general $F$ may not contain the class 
of any curve on $X$ in which case $F$ cannot 
be contracted. Even if there are curves whose classes belong to 
$F$, it is not in general possible to contract such curves (see 
Examples \ref{exa-nef-not-ample} and \ref{exa-uncontractible-curves}). 
Therefore, only nefness is not enough. We should look for a $D$ such that 
$mD$ is free$/Z$ for some $m>0$ giving a morphism contracting $F$.

When $K_X$ is negative on $F$, there are ways to find the desired $D$.
For example, suppose that $D=K_X+aA$ where $A$ is an ample divisor such that 
$D$ is nef$/Z$ but $D-\epsilon A$ is not nef$/Z$ for any $\epsilon>0$.  
Further assume that $D$ is numerically trivial exactly on $F$. 
Such $D$ turn out to be semi-ample (at least when $X$ has good singularities) 
and that is exactly how we will find extremal rays and contractions. 
\end{rem}

\begin{exa}
Let $X$ be a smooth projective curve. Then we have a natural exact sequence
$0\to \Pic^0(X) \to \Pic(X) \to \Z\to 0$. So, $\Pic/\Pic^0\simeq \Z$. Therefore,
$N_1(X)\simeq \R$ and $\overline{NE}(X)$ is just $\R_{\ge 0}$.
\end{exa}

\begin{exa}
Let $X$ be a normal projective variety with $\rho(X)=1$, for example $X=\PP^n$. 
Then, $N_1(X)\simeq \R$ and $\overline{NE}(X)$ is just $\R_{\ge 0}$.
\end{exa}

\begin{exa}
Let $Y$ be a smooth projective variety and $\mathcal{E}$ a locally free sheaf on $Y$. 
Let $X=\PP(\mathcal{E})$ and $f\colon X\to Y$ the structure morphism. Then, $f$ is 
the contraction of an extremal ray generated by the curves in the fibres of $f$. 
Indeed, it is well-known that $\Pic(X)\simeq \Pic(Y)\times \Z$ 
(cf. [\ref{Hart}, II, Exercise 7.9]). More precisely, 
each divisor $D$ on $X$ is as $D\sim f^*G+mH$ where $G$ is some divisor on $Y$ and 
$H$ is the divisor corresponding to $\mathcal{O}_X(1)$. This in particular also says 
that $D\cdot C=mH\cdot C$ for any curve $C$ in a fibre of $f$. So, if $C'$ is any other 
curve in a fibre of $f$, then $D\cdot C'=mH\cdot C'$ hence $D\cdot C/D\cdot C'$ does not 
depend on $m$ assuming $m\neq 0$. Thus, the classes $C$ and $C'$ belong to the same 
ray.  
\end{exa}

\begin{exa}
Let $Y$ be a smooth projective variety, $S$ a smooth subvariety, and $X$ the 
blow up of $Y$ along $S$ with the structure mophism $f\colon X \to Y$. Obviosuly, 
$f$ is the contraction of some extremal face $F$ of $\overline{NE}(X)$. 
 The exceptional divisor of $f$ is a $\PP^r$-bundle 
over $S$ hence all the curves in the fibres belong to the same ray in $\overline{NE}(X)$. 
Thus, $F$ is an extremal ray.
\end{exa}

The following theorem is our first systematic attempt in locating those curves 
which generate an extremal ray.

\begin{thm}\label{t-surfaces-rays}
Let $X$ be a normal projective surface and $C$ an irreducible curve which is $\Q$-Cartier 
as a divisor.  If $C^2<0$, then
$C$ generates an extremal ray of $\overline{NE}(X)$. But if $C^2>0$, then the class of $C$ 
cannot belong to any extremal ray unless $\rho(X)=1$. 
\end{thm}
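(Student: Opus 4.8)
The plan is to handle the two halves by quite different, elementary means, using throughout only that $\overline{NE}(X)$ is a pointed cone generated by effective $1$-cycles (pointedness coming from Kleiman's criterion applied to an ample $H$, which is then strictly positive on $\overline{NE}(X)\setminus\{0\}$), together with two consequences of $C$ being $\Q$-Cartier: its class $[C]$ lies in $N_1(X)$ and $C^2$ is defined; and for every irreducible curve $C'\neq C$ one has $C\cdot C'\ge 0$, seen by restricting to $C'$ a section of $\mathcal{O}_X(mC)$ ($m$ chosen so $mC$ is Cartier), which is not identically zero since $C'\not\subseteq C$.

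For the case $C^2<0$, the step I expect to be the main obstacle is the following claim: if $z\in\overline{NE}(X)$ satisfies $C\cdot z<0$, then $z=a[C]+z'$ with $a>0$ real and $z'\in\overline{NE}(X)$ satisfying $C\cdot z'\ge 0$. I would prove it by writing $z=\lim z_k$ with $z_k$ effective $1$-cycles, splitting $z_k=a_k[C]+z_k'$ into the $[C]$-coefficient and the rest (so $C\cdot z_k'\ge 0$ by the remark above), noting that $C\cdot z_k=a_kC^2+C\cdot z_k'<0$ forces $a_k>0$ and a positive lower bound $a_k\ge (C\cdot z_k)/C^2$, while $a_k(H\cdot C)\le H\cdot z_k$ gives an upper bound; a subsequence of $(a_k)$ then converges to some $a>0$, whence $z_k'\to z':=z-a[C]\in\overline{NE}(X)$ with $C\cdot z'\ge 0$. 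Granting this, to show $\R_{\ge 0}[C]$ is an extremal ray I take $[C]=x+y$ with $x,y\in\overline{NE}(X)$, which I may assume both nonzero, and since $C\cdot x+C\cdot y=C^2<0$ I may assume $C\cdot x<0$. The claim gives $x=a[C]+x'$ ($a>0$, $x'\in\overline{NE}(X)$, $C\cdot x'\ge 0$), so $y+x'=(1-a)[C]$, and pointedness forces $1-a\ge 0$, in fact $\lambda:=1-a>0$ (else $x'=y=0$). Then $C\cdot y=\lambda C^2-C\cdot x'<0$, so the claim applies to $y$, giving $y=b[C]+y'$ ($b>0$, $y'\in\overline{NE}(X)$, $C\cdot y'\ge 0$), and $x'+y'=(\lambda-b)[C]$ with $\lambda-b\ge 0$; but $\lambda-b>0$ would give $C\cdot(x'+y')=(\lambda-b)C^2<0$, contradicting $C\cdot x',C\cdot y'\ge 0$. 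Hence $b=\lambda$, so $x'=y'=0$ and $x=a[C]$, $y=(1-a)[C]$. (Alternatively one could build the nef divisor $H+t_0C$ with $t_0=(H\cdot C)/(-C^2)$ and appeal to the Hodge index theorem, but then one still has to analyze its null locus; the bouncing argument above avoids this.)

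For the case $C^2>0$, I would instead show $[C]$ lies in the interior of $\overline{NE}(X)$ and then use a convexity triviality. The open positive cone $Q=\{\alpha\in N_1(X): \alpha^2>0,\ \alpha\cdot H>0\}$ is contained in $\overline{NE}(X)$ — a rational class in $Q$ has an effective multiple by Riemann–Roch (applied on a resolution, say), and rational classes are dense in $Q$ — so $Q\subseteq\operatorname{int}\overline{NE}(X)$, and $C^2>0$ with $C\cdot H>0$ puts $[C]$ there. If $\rho(X)\ge 2$, choose $w\in N_1(X)$ not proportional to $[C]$: for small $\epsilon>0$ both $[C]\pm\epsilon w\in\overline{NE}(X)$, and their sum $2[C]$ lies in any ray $R=\R_{\ge 0}[C]$ through $[C]$, so extremality of $R$ would force $[C]+\epsilon w\in R$, making $w$ proportional to $[C]$ — a contradiction. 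Hence $[C]$ lies on no extremal ray. When $\rho(X)=1$ there is no such $w$, the argument collapses, and indeed $\overline{NE}(X)=\R_{\ge 0}[C]$ is itself the only ray; so the exception in the statement is genuine. $\qquad\square$
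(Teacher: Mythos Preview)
Your proof is correct. The underlying ideas match the paper's --- for $C^2<0$, the key input is that $C\cdot C'\ge 0$ for every irreducible curve $C'\neq C$; for $C^2>0$, it is bigness of $C$ via Riemann--Roch on a resolution --- but the packaging differs. For $C^2<0$ the paper argues more directly: since every effective $1$-cycle splits as $a[C]+z'$ with $a\ge 0$ and $C\cdot z'\ge 0$, the cone $\overline{NE}(X)$ is generated by $[C]$ together with the subcone $\mathcal{C}=\overline{NE}(X)\cap\{C\cdot\alpha\ge 0\}$, and the hyperplane $\{C\cdot\alpha=0\}$ strictly separates $[C]$ from $\mathcal{C}$, so $[C]$ is extremal. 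This one-line separation argument replaces your iterated ``bouncing'' between $x$ and $y$. For $C^2>0$ the paper produces an explicit relation $mIC\sim A+C'$ with $A$ very ample and $C'\ge 0$, by comparing the growth of $h^0(mIC)$ against $h^0(A,mIC|_A)$ in the restriction sequence; your route via the open positive cone $Q\subseteq\operatorname{int}\overline{NE}(X)$ is equivalent and arguably tidier, though you should note that $\alpha^2$ is to be read in $N^1(X)$ (identified with $N_1(X)$ via the intersection pairing), which is legitimate here precisely because $C$ is $\Q$-Cartier.
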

\begin{proof}
First assume that $C^2<0$. 
Let $\mathcal{C}$ be the subcone of $\overline{NE}(X)$ consisting of those 
classes $\alpha$ for which $C\cdot \alpha\ge 0$ and let $H$ be the hyperplane 
in $N_1(X)$ where $C$ is numerically zero. In particular, if $C'$ 
is any curve other than $C$, then the class of $C'$ is in $\mathcal{C}$. 
Moreover, $\overline{NE}(X)$ is nothing but the convex hull of $[C]$ and 
$\mathcal{C}$. Therefore, $[C]$ generates an extremal ray of $\overline{NE}(X)$ 
as $[C]$ is on one side of $H$ and $\mathcal{C}$ on the opposite side.

Now the second statement: assume that $IC$ is Cartier and 
let $f\colon Y\to X$ be a resolution of singularities. 
Then, $(f^*C^2)>0$ and the Riemann-Roch theorem shows that $h^0(mIf^*C)$ grows 
like $m^2$ hence the same holds for $h^0(mIC)$. Pick a general very ample divisor 
$A$ and consider the exact sequence 
$$
0\to H^0(X,mIC-A)\to H^0(X,mIC)\to H^0(A,mIC|_A)
$$
Since $A$ is a smooth curve, $h^0(A,mIC|_A)$ grows at most like $m$ 
which shows that $h^0(X,mIC-A)$ grows like $m^2$ hence $mIC\sim A+C'$ 
for some $m>0$ and some effective divisor $C'$. In particular, the classes 
of both $A$ and $C'$ are in $\overline{NE}(X)$. This implies that the class 
of $C$  cannot generate 
any extremal ray unless $\overline{NE}(X)$ is just a half-line and $\rho(X)=1$.
\end{proof}

\begin{exa}\label{exa-rays-surfaces}
Here we list a few simple examples of extremal rays and contractions on surfaces.\\
$\bullet$ Let $X=\PP^1\times \PP^1$ be the quadric surface. $N_1(X)\simeq \R^2$ and $\overline{NE}(X)$
has two extremal rays, each one is generated by fibres of one of the two natural projections.

$\bullet$ If $X$ is the blow up of
$\PP^2$ at a point $P$, then $N_1(X)\simeq \R^2$ and $\overline{NE}(X)$ has two extremal
rays. One is generated by the exceptional curve of the blow up and the other one
is generated by the birational transform of all the lines passing through $P$.

$\bullet$ If $X$ is a cubic surface, it is well-known that it contains exactly 27 lines. 
By definition, $X\subset \PP^3$ and $K_X=-H|_X$ where $H$ is a hyperplane. 
If $L$ is one of the lines, then $L^2=-1$ hence by Theorem \ref{t-surfaces-rays}, 
$L$ generates an extremal ray of $\overline{NE}(X)$. On the other hand, more explicit 
calculations show that    
$N_1(X)\simeq \R^7$ and that $\overline{NE}(X)$ has no more extremal rays.

$\bullet$ If $X$ is an abelian surface, one can prove that $\overline{NE}(X)$ has a round shape,
that is it does not look like polyhedral unless $\rho(X)\le 2$. This happens because an abelian variety
is in some sense homogeneous.

$\bullet$ Finally, there are surfaces $X$ which have infinitely many $-1$-curves. So,
they have infinitely many extreml rays. An example of such a surface is the blow up of the
projective plane at nine points which are the base points of a general pencil of cubics.
\end{exa}

\begin{exa}\label{exa-nef-not-ample}
There is a curve $C$ of genus at least $2$ and a locally free sheaf $\mathcal{E}$ 
of rank $2$ on $C$ such that the divisor $D$ corresponding to the invertible sheaf 
$\mathcal{O}_{\PP(\mathcal{E})}(1)$ is strictly 
nef (i.e. positively intersects every curve on $X=\PP(\mathcal{E})$) but not ample 
[\ref{Hartshorne}, I-10-5]. 
By Kleiman criterion, $D$ is numerically trivial on some extremal face of $\overline{NE}(X)$. 
Since $\rho(X)=2$ and since $D$ is not numerically trivial, $\overline{NE}(X)$ has exactly 
two extremal rays and $D$ is trivial on one of them, say $R_1$. Therefore, 
since $D$ is strictly nef, $R_1$ does not contain the class of any curve on $X$. 
The other extremal ray $R_2$ is generated by the curves in the fibres of $X\to C$.   
\end{exa} 

\begin{exa}\label{exa-uncontractible-curves}
Let $C$ be an elliptic curve in $\PP^2$, and $p_1,\dots,p_{12}$ be 
distinct points on $C$, and $X\to \PP^2$ the blow up of $\PP^2$ at 
the above points.  If $E\subset X$ is the birational transform of $C$, then $E^2=-3$ 
hence $E$ generates an extremal ray $R$ of $\overline{NE}(X)$ by Theorem \ref{t-surfaces-rays}. 
However, $R$ cannot be contracted. In fact, let $f\colon X\to Y$ be a 
projective morphism contracting $R$ and let $L$ be the pullback of 
an ample divisor on $Y$. Note that there is no other curve on $X$ whose class 
belong to $R$ hence $f$ contracts only $E$.  Now  $L|_E\sim 0$ and we can write 
$L\sim hf^*H+\sum e_iE_i$ for certain $h,-e_i\in \Z^{>0}$ where $H$ is a 
line on $\PP^2$. Therefore, we have $0\sim L|_E\sim hH|_C+\sum e_ip_i$ 
where we identify $E$ and $C$. But if we choose the $p_i$ general enough 
then the latter linear equivalence is not possible (see below) which gives a contradiction. 
In particular, the extremal ray generated by $E$ is not contractible.

We choose the $p_i$ as follows. First we can assume that $H$ intersects $C$ 
at a single point $c$ so that $H|_C=3c$ (equivalently, we can first choose a point $c$ 
and then consider the linear system $|3c|$ which is free and gives an embedding 
of $C$ into $\PP^2$ making $3c$ the restriction of some line). If $p$ is any point on $C$ other than 
$c$, then $c-p$ is numerically trivial but not linearly trivial. 
Suppose that we have chosen $p_1,\dots,p_r$ so that if $\alpha_{i}=c-p_i$ 
then the $\alpha_{i}$ are $\Z$-linearly independent in $\Pic^0(C)$. 
Now choose $p_{r+1}$ so that no positive interger multiple of $\alpha_{r+1}$ belongs to the 
subgroup generated by $\alpha_1,\dots,\alpha_r$ (such $p_{r+1}$ 
exists otherwise $\Pic^0(C)$ would be countable which is not the case). This ensures that 
$\alpha_1,\dots,\alpha_{r+1}$ are $\Z$-linearly independent. Inductively, 
we find $p_1,\dots,p_{12}$. Now if $hH|_C+\sum e_ip_i\sim 0$, then we can write 
$hH|_C+\sum e_ip_i=\sum a_i\alpha_i$ for certain $a_i\in \Z$. However, 
this contradicts the independence of the $\alpha_i$. 
\end{exa}

\begin{exa}
Let $S=\PP^2$ and $X=\PP(\mathcal{E})$ where 
$$
\mathcal{E}=\mathcal{O}_S\oplus \mathcal{O}_X(1)\oplus \mathcal{O}_S(1)
$$ 
and $\pi\colon X\to S$ the structure map. 
Then, $\rho(X)=2$ and $\overline{NE}(X)$ has two extremal rays. Let $A$ be the divisor 
corresponding to $\mathcal{O}_X(1)$. Since $\pi_*\mathcal{O}_X(A)=\mathcal{E}$ and since 
$\mathcal{E}$ is generated by global sections, $\mathcal{O}_X(A)$ is generated 
by global sections as $\pi^*\mathcal{E}\to \mathcal{O}_X(A)$ is surjective.    
So, $A$ defines a contraction $f\colon X\to Y$ which is birational as $A$ is big. 
The summand $\mathcal{O}_S$ 
gives an embedding $S\subset X$ such that $A|_S\sim 0$ which means that $S$ is 
contracted by $f$. This in particular implies that if $L$ is a line on $S$, then its class 
belongs to the extremal ray, say $R_1$, which is $A$-trivial (the other extremal, $R_2$, 
is generated by the curves in the 
fibres of $\pi$). 

Now let $C$ be a curve having a non-constant morphism $g\colon C\to S$. Then, 
$$
g^*\mathcal{E}=\mathcal{O}_C \oplus \mathcal{O}_C(1) \oplus \mathcal{O}_C(1)
$$
Any morphism $h\colon C\to X$ over $S$ such that $h^*A\sim 0$ corresponds to a 
quotient $g^*\mathcal{E}\to \mathcal{O}_C$. Here two quotients are considered 
the same if they have the same kernels. On the other hand, we see that 
$$
\Hom(g^*\mathcal{E},\mathcal{O}_C)\simeq \Hom(\mathcal{O}_C,\mathcal{O}_C)\simeq \C
$$
hence there is only one quotient (as the quotients are just given by quotients 
$\mathcal{O}_C\to \mathcal{O}_C$ which are nothing but multiplication by elements of $\C$) 
so we have only one possible morphism $h$ 
satisfying the required assumptions. This implies that if $C$ is any curve 
whose class is in $R_1$, then $C$ should already be inside $S$. Therefore, 
$f$ contracts exactly $S$.

Furthermore, $K_X$ is negative on $R_1$ so this gives an example of a flipping contraction 
in dimension $4$. Finally, note that if we replace $\PP^2$ by $\PP^1$ and take a 
similar $\mathcal{E}$ and $X=\PP(\mathcal E)$, then $K_X$ would be numerically 
trivial on the contraction defined by $A$ giving a flop rather than a flip.
\end{exa}

\begin{exa}
Let $X$ be a smooth projective variety such that $-K_X$ is ample. Such $X$ are called 
\emph{Fano} varieties. In this case, $K_X$ is negative on the whole $\overline{NE}(X)$. 
It turns out that $\overline{NE}(X)$ is as nice as possible, that is, 
it is a polyhedral cone generated by finitely many extremal rays. The cubic surface 
mentioned above is an example of a Fano variety. More generally, if $X$ is a hypersurface 
in $\PP^d$ of degree $\le d$, then the adjunction formula says that $X$ is a Fano variety.
\end{exa}

\begin{exa}
Let $Z\subset \C^4$ be the 3-fold defined by the quadratic equation $xy-zu=0$. 
Obviously, $Z$ is singular at the origin. Blow up $\C^4$ at the origin and let $X$ be 
the birational transform of $Z$ with $f\colon X\to Z$ the induced morphism. It turns out 
that $X$ is smooth and that $f$ has a single exceptional divisor $E$ isomorphic to 
$\PP^1\times \PP^1$. The two projections of $E$ onto $\PP^1$ determine two 
extremal rays of $\overline{NE}(X/Z)$ giving contractions onto smooth 3-folds. 
\end{exa}

\begin{exa}
Let $X$ be a projective toric variety corresponding to a fan $\Delta$ in $\R^d$. 
Then, $\overline{NE}(X)$ is a rational polyhedral cone and each of its extremal rays  
is generated by the curve corresponding to some $(d-1)$-dimensional cone $\sigma$ 
in $\Delta$. Furthermore, assume that $X$ is $\Q$-factorial. This just corresponds to 
the cones in $\Delta$ being simplicial. Then, each extremal ray $R$ of $\overline{NE}(X)$ 
can be contracted in the following way. Obtain a new fan out of $\Delta$ as follows: 
if $\sigma$ is a $(d-1)$-dimensional cone in $\Delta$ such that the class of the corresponding 
curve belongs to $R$, then remove $\sigma$ from the cone. It turns out by removing all 
such $\sigma$, we get a new fan which we denote by $\Delta'$. This induces a contraction  
$X\to Y$ which is the contraction of the extremal ray $R$. For more details see 
[\ref{Matsuki}, Chapter 14].
\end{exa}

\clearpage

%%%%%%%%%%%%%%%%%%%%%%%%%%%%%%%%%%%%%%%%%%%%%%%%%%%%%
~ \vspace{2cm}
\section{\textbf{Pairs and their singularities}}
\vspace{1cm}
%%%%%%%%%%%%%%%%%%%%%%%%%%%%%%%%%%%%%%%%%%%%%%%%%%%%%%

\begin{defn}
Let $X$ be a variety and $D$ a divisor on $X$. We call $X,D$ \emph{log smooth} if
$X$ is smooth and the components of $D$ have simple normal crossings.
A \emph{pair} $(X/Z,B)$ consists of a projective morphism $X\to Z$ between normal 
varieties, and an $\R$-divisor $B$ on $X$ with
coefficients in $[0,1]$ such that $K_X+B$ is $\R$-Cartier. When 
 we are only interested in $X$ and $B$ and not in the morphism $X\to Z$, we usually 
 drop $Z$ (it is also customary to drop $Z$ when it is a point but this will be clear 
 from the context, eg by saying that $X$ is projective).
  
Now let $f\colon Y\to X$ be a log resolution of a pair $(X,B)$. Then, we can write

$$
K_Y=f^*(K_X+B)+A
$$
where we assume that $f_*K_Y=K_X$ as divisors for some choice of canonical divisors 
$K_Y$ and $K_X$ (i.e. choice as a specific Weil divisor not as a linear equivalence class). 

 For a prime divisor $E$ on $Y$, we define the \emph{discrepancy} of $E$ with respect
 to $(X,B)$ denoted by $d(E,X,B)$ to be the coefficient of $E$ in $A$. Note that
if $E$ appears as a divisor on any other resolution, then $d(E,X,B)$ is the same.
\end{defn}

\begin{rem}[Why pairs?]
The main reason for considering pairs is the various kinds of adjunction, that is,
relating the canonical divisor of two varieties which are closely related. We have already
seen the adjunction formula $(K_X+S)\vert_S=K_S$ where $X,S$ are smooth and $S$ is a prime
divisor on $X$. It is natural to consider $(X,S)$ rather than just $X$.

Now let $f\colon X\to Z$ be a finite morphism. It often happens that $K_X=f^*(K_Z+B)$
for some boundary $B$. For example, if $f\colon X=\PP^1\to Z=\PP^1$ is a finite map of 
degree $2$ ramified at two points $P$ and $P'$, then $K_X\sim f^*(K_Z+\frac{1}{2}P+\frac{1}{2}P')$.
Similarly, when $f$ is a contraction and $K_X\sim_{\R} 0/Z$, 
then under good conditions $K_X\sim_{\R}f^*(K_Z+B)$
for some boundary $B$ on $Z$. Kodaira's canonical bundle formula for
an elliptic fibration of a surface is a clear example.
\end{rem}

\begin{defn}[Singularities]
Let $(X,B)$ be a pair. We call it \emph{terminal} (resp. \emph{canonical}) if $B=0$ and
there is a log resolution $f\colon Y\to X$ for which $d(E,X,B)>0$ (resp. $\ge 0$) for
any exceptional prime divisor $E$ of $f$. We call the pair \emph{Kawamata log terminal}
 ( resp. \emph{log canonical}) if there is a log resolution $f$ for which $d(E,X,B)>-1$ (resp. $\ge -1$)
 for any prime divisor $E$ on $Y$ which is exceptional for $f$ or the birational transform of
 a component of $B$. The pair is called \emph{divisorially log terminal} if there is a log resolution
 $f$ for which $d(E,X,B)>-1$ for any exceptional prime divisor $E$ of $f$. We
 usually use abbreviations klt, dlt and lc for Kawamata log terminal, divisorially log terminal
 and log canonical respectively.
\end{defn}

\begin{rem}\label{r-discrepancy-blowup}
Let $X$ be a smooth variety and $D$ an $\R$-divisor on $X$. Let $V$ be a
smooth subvariety of $X$ of codimension $\ge 2$ and $f\colon Y\to X$ the
blow up of $X$ at $V$, and $E$ the exceptional divisor. Then, the coefficient
of $E$ in $A$ is $\codim V -1-\mu_V D$ where

$$
K_Y=f^*(K_X+D)+A
$$
and $\mu$ stands for multiplicity.
\end{rem}

\begin{lem}
Definition of all kind of singularities (except dlt) is independent of the choice of the log resolution.
\end{lem}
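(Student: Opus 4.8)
The plan is to show that, for each of terminal, canonical, klt and lc, the defining inequality on discrepancies is equivalent to a condition that mentions no resolution at all, namely: the inequality holds for every divisorial valuation over $X$ whose centre on $X$ has codimension $\ge 2$ — together with, in the klt and lc cases, the (manifestly intrinsic) requirement on the coefficients of $B$, and, in the canonical and terminal cases, the condition $B=0$, which again does not involve any resolution. Granting this equivalence, independence of the log resolution is immediate. To prove it I would first reduce to comparing two log resolutions, one of which dominates the other. Given log resolutions $f\colon Y\to X$ and $f'\colon Y'\to X$, Hironaka's theorem supplies a log resolution $g\colon W\to X$ with projective birational morphisms $h\colon W\to Y$ and $W\to Y'$ over $X$. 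Writing $K_W=g^*(K_X+B)+A_W$, $K_Y=f^*(K_X+B)+A_Y$ and $A_{W/Y}:=K_W-h^*K_Y$, one has $A_W=h^*A_Y+A_{W/Y}$ with $A_{W/Y}$ effective and $h$-exceptional (since $h$ is a birational morphism of smooth varieties). If a prime divisor $E$ on $W$ is not $h$-exceptional it is the birational transform of a prime divisor $E_Y$ on $Y$, and the two formulas give $d(E,X,B)=d(E_Y,X,B)$; this recovers the well-definedness of discrepancy noted after the definition, and shows that a $g$-exceptional divisor on $W$ is either the birational transform of an $f$-exceptional divisor of $Y$ — for which there is nothing to prove — or is $h$-exceptional. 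So the whole problem reduces to: assuming the inequality on $Y$, establish it for every $h$-exceptional divisor.

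For this last step I would use the explicit blow-up discrepancy formula. Since $\Supp A_Y\subseteq\exc(f)\cup f^{-1}(\Supp B)$, the divisor $A_Y$ is supported on a simple normal crossing divisor, and the hypothesis on $Y$ says exactly that every coefficient of $A_Y$ is $>-1$ (klt), $\ge -1$ (lc), or that $A_Y\ge 0$ (canonical and terminal). Let $F$ be an $h$-exceptional prime divisor. By Hironaka's theorem, in the form that a log resolution can be obtained by a sequence of blow-ups of smooth centres having simple normal crossings with the total transform of the boundary, I can realise $F$ on a model $Y_n$ reached from $Y$ by a chain $Y=Y_0\leftarrow Y_1\leftarrow\cdots\leftarrow Y_n$ in which $Y_{i+1}\to Y_i$ is the blow-up of a smooth centre $V_i$ having simple normal crossings with $\Supp A_{Y_i}$. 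By Remark \ref{r-discrepancy-blowup}, the coefficient of the exceptional divisor of $Y_{i+1}\to Y_i$ in $A_{Y_{i+1}}$ equals $\mult_{V_i}(A_{Y_i})+\codim V_i-1$; because $\Supp A_{Y_i}$ is simple normal crossing and $V_i$ is transverse to it, at most $\codim V_i$ components of $A_{Y_i}$ pass through the generic point of $V_i$, whence $\mult_{V_i}(A_{Y_i})>-\codim V_i$ (resp. $\ge -\codim V_i$, resp. $\ge 0$) and so the new coefficient is $>-1$ (resp. $\ge -1$, resp. $\ge\codim V_i-1\ge 1$). Since the birational transforms of the old components keep their coefficients and $\Supp A_{Y_{i+1}}$ is again simple normal crossing, the inductive hypothesis propagates; after $n$ steps $d(F,X,B)$ has the required sign, and since it depends only on the valuation $F$ it is the value computed on $W$. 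This settles terminal, canonical, klt and lc.

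Finally, dlt is left out because its defining inequality is imposed only on the exceptional divisors of one resolution, not on the birational transforms of the components of $B$; hence $A_Y$ may legitimately carry a component of coefficient exactly $-1$, and a blow-up centred inside that component produces an exceptional divisor of discrepancy exactly $-1$, still violating ``$>-1$'', so dlt genuinely does depend on the choice of resolution. I expect the main obstacle to be the estimate in the middle paragraph: one must see that the a priori arbitrarily negative pull-back term $\mult_{V_i}(A_{Y_i})$ is always compensated by the $\codim V_i-1$ coming from the relative canonical class, and this is precisely where the simple normal crossing property built into ``log resolution'' is indispensable; the rest is bookkeeping together with appeals to Hironaka's theorem.
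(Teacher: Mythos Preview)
Your proof is correct and follows essentially the same strategy as the paper's: reduce to the case of one log resolution dominating another via a sequence of smooth blow-ups, then control the discrepancy of each new exceptional divisor using the SNC property of $\Supp A_Y$ together with the blow-up formula of Remark~\ref{r-discrepancy-blowup}. The paper is terser (it treats only the lc case explicitly and says ``similar arguments apply''), while you spell out the estimate $\mu_{V_i}(A_{Y_i})\ge -\codim V_i$ and its variants more carefully, but the core computation is identical.
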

\begin{proof}
Suppose that $(X,B)$ is lc with respect to a log resolution $f\colon Y\to X$. Let $f'\colon Y'\to X$ be
another log resolution. There is a smooth $W$ with morphisms $g\colon W\to Y$ and 
$g'\colon W\to Y'$ such that $g$ is a sequence of smooth blow ups. It is enough to show that 
$(X,B)$ is also lc with respect to $fg$, and using induction we may assume that $g$ is the  
blow up of some smooth $V\subset Y$ of codimension $c$. Write $K_Y=f^*(K_X+B)+A$. Then,  
 we have 
$$
K_W=g^*K_Y+(c-1)E=g^*f^*(K_X+B)+g^*A+(c-1)E
$$ 
where $E$ is the exceptional divisor of $g$. Now the fact that $A$ has simple normal crossing 
singularities and the fact that its coefficients are all at least $-1$ imply that 
the coefficient of $E$ in $g^*A$ is at least $-c$. Thus, the coefficients of $g^*A+(c-1)E$ 
are at least $-1$ and this shows $(X,B)$ is lc with respect to $fg$. 
Similar arguments appy to klt, etc cases.
\end{proof}

\begin{exe}
 Let $(X,B)$ be a pair and $f\colon Y\to X$ a log resolution. Let $B_Y$ be the divisor on $Y$
 for which $K_Y+B_Y=f^*(K_X+B)$. Prove that $(X,B)$ is\\
$\bullet$ terminal iff $B_Y\le 0$ and $\Supp B_Y=\exc(f)$,\\
$\bullet$ canonical iff $B_Y\le 0$,\\
$\bullet$ klt iff each coefficient of $B_Y$ is $<1$,\\
$\bullet$ lc iff each coefficient of $B_Y$ is $\le 1$.
\end{exe}

\begin{exa}
Let $(X,B)$ be a pair of dimension $1$. Then, it is lc (or dlt) iff each coefficient of $B$ is
$\le 1$. It is klt iff each of coefficient of $B$ is $<1$. It is canonical (or terminal) iff
$B=0$.
\end{exa}

\begin{exa}[Log smooth pairs]
When $(X,B)$ is log smooth, i.e. $X$ is smooth and $\Supp B$ has simple normal crossing singularities, 
then we have the most simple yet crucial kind of singularity. It is
easy to see what type of singularities this pair has:  $(X,B)$ is lc iff it is dlt iff each coefficients of 
$B$ is $\le 1$;  $(X,B)$ is klt iff each coefficients of $B$ is $<1$; 
\end{exa}

\begin{exe} Prove that a smooth variety is terminal.
\end{exe}

\begin{exe} Prove that terminal $\implies$ canonical $\implies$ klt $\implies$ dlt $\implies$ lc.
\end{exe}

\begin{exe} If $(X,B+B')$ is terminal (resp. canonical, klt, dlt or lc) then so is $(X,B)$ where
$B,B'\ge 0$ are $\R$-Cartier.
\end{exe}

\begin{exe} Prove that if $(X,B)$ is not lc and $\dim X>1$, then for any integer $l$ there is $E$
such that $d(E,X,B)<l$.
\end{exe}

\begin{exa}
Let $(\PP^2,B)$ be a pair where $B$ is a curve with one nodal singularity. 
This pair is lc but not dlt. On the other hand, the pair $(\PP^2,B')$ where 
$B'$ is a curve with one cuspidal singularity is not lc.
\end{exa}

\begin{exa}
  Let's see what terminal, etc. mean for some of the simplest
surface singularities. Let $Y$ be a smooth surface containing
a curve $E=\PP^1$ with $E^2 = -a$, $a>0$. (Equivalently, the normal
bundle to $E$ in $Y$ has degree $-a$.) It's known that one can
contract $E$ to get a possibly singular surface $X$ via some contraction 
$f\colon Y\to X$. Explicitly,
$X$ is locally analytically the cone over the rational normal curve
    $\PP^1 < \PP^a$;
so for $a=1$, $X$ is smooth, and for $a=2$, $X$ is the surface node
${x^2+y^2-z^2=0}$ in $\mathbb{A}^3$ (a canonical singularity).

Then $K_E = (K_Y+E)\vert_E$, and $K_E$ has degree $-2$ on $E$
since $E$ is isomorphic to $\PP^1$, so
   $K_Y \cdot E = -2+a$.
This determines the discrepancy c in
   $K_Y = f^*(K_X) + cE$,
because $f^*(K_X)\cdot E = K_X\cdot (f_*(E)) = K_X \cdot 0=0$. Namely,
  $c = (a-2)/(-a)$.
So for $a=1, c=1$ and $X$ is terminal (of course, since it's smooth);
for $a=2, c=0$ and $X$ is canonical (here $X$ is the node); and for $a\ge 3$,
$c$ is in $(-1,0)$ and $X$ is klt.

Just for comparison:
if you contract a curve of genus 1, you get an lc singularity
which is not klt; and if you contract a curve of genus at least 2,
it is not even lc.
\end{exa}

\begin{lem}[Negativity lemma]\label{l-negativity}
Let $f\colon Y\to X$ be a projective birational morphism of normal varieties.
Let $D$ be an $\R$-Cartier divisor on $Y$ such that $-D$ is nef over $X$. Then,
$D$ is effective iff $f_*D$ is.
\end{lem}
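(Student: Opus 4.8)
One implication is trivial: if $D\ge 0$ then $f_*D\ge 0$, since pushing forward an effective cycle gives an effective cycle. So my plan is to concentrate on the converse. Assume $f_*D\ge 0$, and write $D=D^+-D^-$ with $D^+,D^-\ge 0$ having no common components; the goal is to prove $D^-=0$. The first observation I would make is that $D^-$ is $f$-exceptional: if a prime component $V$ of $D^-$ were not contracted by $f$, then $f(V)$ would be a prime divisor on $X$ and, $f$ being birational, $V$ would be the only prime divisor of $Y$ dominating it; hence the coefficient of $f(V)$ in $f_*D$ would equal the coefficient of $V$ in $D$, which is $<0$ since $V\not\subseteq\Supp D^+$ — contradicting $f_*D\ge 0$.

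Next I would reduce to the case where $X$ is a surface and $Y$ is smooth. Replacing $Y$ by a resolution and $D$ by its pull-back changes nothing essential: nefness of $-D$ over $X$ and the inequality $f_*D\ge 0$ are preserved, and so is the (non)vanishing of $D^-$, since pull-back followed by push-forward is the identity on divisors. So assume $Y$ smooth and, for contradiction, $D^-\ne 0$. Cutting $Y$ with $\dim Y-2$ general hyperplane sections produces a smooth surface $S\subseteq Y$ with, for general choice, $S\not\subseteq\Supp D$, and such that the nonempty divisor $\Supp D^-$ meets $S$ in a nonempty curve, so that $(D|_S)^-=D^-|_S\ne 0$. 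Let $T=f(S)$ be its (two-dimensional) image, $\nu\colon T^\nu\to T$ the normalisation, and $\phi\colon S\to T^\nu$ the induced projective birational morphism. Then $-D|_S$ is nef over $T^\nu$ (the curves contracted by $\phi$ are exactly the $f$-contracted curves lying in $S$), and $\phi_*(D|_S)\ge 0$ (for general $S$ it pushes forward to $(f_*D)|_T\ge 0$, and $\nu_*$ reflects effectivity because $\nu$ is finite birational). So the lemma applied to $\phi\colon S\to T^\nu$ and $D|_S$ — which we establish in dimension $2$ below — would force $D^-|_S=0$, a contradiction; hence $D^-=0$, granting the surface case.

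It then remains to treat the surface case, which is short. Let $E_1,\dots,E_m$ be the irreducible curves contracted by $f$. Curves contracted to distinct points of $X$ are disjoint, and over a fixed point the intersection matrix of the contracted curves is negative definite (a classical fact for a birational morphism from a smooth surface onto a normal surface); hence $(E_i\cdot E_j)$ is negative definite. Write $D^-=\sum_i a_iE_i$ with $a_i\ge 0$. For each $j$ we have $D\cdot E_j\le 0$, since $-D$ is nef over $X$ and $E_j$ is $f$-contracted, while $D^+\cdot E_j\ge 0$, since $D^+\ge 0$ and $E_j\not\subseteq\Supp D^+$; subtracting gives $D^-\cdot E_j=D^+\cdot E_j-D\cdot E_j\ge 0$. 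Therefore $(D^-)^2=\sum_j a_j\,(D^-\cdot E_j)\ge 0$, which by negative-definiteness forces $D^-=0$, as required.

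The genuinely delicate point is the reduction to surfaces: one must be sure that cutting down to dimension two does not destroy $D^-$ — which is why I would cut $Y$ rather than $X$, so that the general complete intersection automatically meets the divisor $\Supp D^-$ — and that after normalising the image one still has a birational morphism onto a \emph{normal} surface, to which the two-dimensional case genuinely applies. Modulo the standard general-position facts invoked in that step, the entire mathematical content sits in the surface case, namely the classical negative-definiteness of the exceptional intersection form.
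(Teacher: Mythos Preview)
Your proof is correct, and the reduction to the surface case is essentially the same as the paper's: both take general hyperplane sections on $Y$ (after passing to a resolution) and work with the induced birational morphism of surfaces. Your treatment of this reduction is in fact more carefully stated than the paper's sketch.

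Where you genuinely diverge is in the surface case. You invoke the classical fact that the intersection matrix $(E_i\cdot E_j)$ of the exceptional curves is negative definite, and then finish with the clean computation $D^-\cdot E_j\ge 0$ for each component $E_j$ of $D^-$, hence $(D^-)^2\ge 0$, hence $D^-=0$. The paper instead avoids quoting negative definiteness: it constructs an auxiliary effective exceptional divisor $E$ which is antinef over $X$ (obtained as $f^*H-H^\sim$ for an effective Cartier divisor $H$ on $X$ whose support contains $f(\exc f)$), takes the smallest $e\ge 0$ with $D+eE\ge 0$, and derives a contradiction from connectedness of the exceptional fibre together with $(D+eE)\cdot C\le 0$ for the exceptional curve $C$ on which $D+eE$ vanishes. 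Your route is the standard textbook one and is shorter once negative definiteness is granted; the paper's route is more self-contained in that it does not appeal to that theorem, and the auxiliary-antinef-divisor trick it introduces reappears elsewhere in birational geometry. Either argument is perfectly acceptable here.
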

\begin{proof}
First by localising the problem on $X$ and taking hyperplane sections on $Y$ we can assume
that $X,Y$ are surfaces and that $f$ is not an isomorphism.
Now after replacing $Y$ with a
resolution we can find
an effective exceptional divisor $E$ which is antinef/$X$ whose support 
contains $\exc(f)$. To find such $E$ 
we could take a non-zero effective Cartier divisor $H$ on $Y$ whose support 
contains the image of $\exc(f)$. Then, $f^*H=H^\sim+E$ where $H^\sim$ 
is the birational transform of $H$ and $E$ an exceptional effective divisor. 
Obviously, $H^\sim$ is nef$/Y$ hence 
$E$ is antinef$/Y$.

Let $e$ be the minimal non-negative number for which $D+eE\ge 0$. If $D$ is not
effective, then $D+eE$ has coefficient zero at some exceptional curve $C$. 
On the other hand, locally over $X$, 
$E$ is connected. So, if $D+eE\neq 0$, we can choose $C$ so that it intersects 
some component of $D+eE$. But in that case $(D+eE)\cdot C>0$ which contradicts the 
assumptions. Therefore, $D+eE=0$ which is not possible otherwise $D$ and $E$ 
would be both numerically trivial over $X$.
\end{proof}

\begin{defn}[Minimal resolution]
Let $X$ be a normal surface and $f\colon Y\to X$ a resolution. We call $f$ or $Y$ 
the \emph{minimal resolution} of $X$ if any other resolution $f'\colon Y'\to X$ 
factors through $f$.
\end{defn}

\begin{rem} It is well-known that the matrix $[E_i\cdot E_j]$ of a 
minimal resolution $Y\to X$ is negative definite, in particular, has non-zero determinant 
where $E_i$ denote the exceptional curves. 
So, if $D$ is any $\R$-divisor on $Y$, we can find an 
$\R$-divisor $G$ such that $D+G\equiv 0/X$ and such that $G$ is exceptional$/X$. 
We can find $G=\sum e_iE_i$   
by requiring  $(D+G)\cdot E_i=0$ for each $i$. These equations give us a unique 
solution for the $e_i$.
\end{rem}

\begin{lem}
Let $f\colon Y\to X$ be the minimal resolution of a normal surface $X$ and let 
$G$ be the exceptional divisor satisfying $K_Y+G\equiv 0/X$. Then, $G\ge 0$.
\end{lem}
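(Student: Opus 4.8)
The plan is to combine the genus formula on the smooth surface $Y$ with the Negativity Lemma (Lemma~\ref{l-negativity}), the only input from minimality being a lower bound on $K_Y\cdot E_i$ for the exceptional curves $E_i$ of $f$.

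First I would isolate where minimality is used. Since $f$ is the minimal resolution, there is no exceptional curve $E$ of $f$ with $E\cong\PP^1$ and $E^2=-1$: otherwise Castelnuovo's contraction theorem would let us contract $E$ and produce a resolution through which $f$ does not factor. From this I would deduce $K_Y\cdot E_i\ge 0$ for every exceptional prime divisor $E_i$ of $f$. Indeed, the intersection matrix $[E_i\cdot E_j]$ is negative definite (as recorded in the remark preceding the statement), so $E_i^2\le -1$; combining this with the genus formula $2p_a(E_i)-2=E_i^2+K_Y\cdot E_i$, a hypothetical $K_Y\cdot E_i<0$ would force $E_i^2=-1$ and $p_a(E_i)=0$, i.e.\ $E_i$ would be a $(-1)$-curve, which is excluded.

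Next I would translate the defining relation $K_Y+G\equiv 0/X$ into intersection numbers. The irreducible curves contracted by $f$ are exactly the $E_i$, so $(K_Y+G)\cdot E_i=0$, hence $G\cdot E_i=-K_Y\cdot E_i\le 0$ for all $i$; equivalently, $-G$ is nef over $X$. Now I would apply the Negativity Lemma to $D=G$: the divisor $G$ is $\R$-Cartier because $Y$ is smooth (note that $G$ is defined purely through intersection numbers, so no Cartier hypothesis on $K_X$ is needed), $-G$ is nef$/X$ by the previous sentence, and $f_*G=0$ since $G$ is $f$-exceptional. The lemma then gives that $G$ is effective iff $f_*G=0$ is effective, which it trivially is; hence $G\ge 0$.

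The main obstacle is not analytic: it is simply the inequality $K_Y\cdot E_i\ge 0$, the one place that genuinely uses ``minimal'' rather than ``some'' resolution, and it rests on the negative-definiteness of the exceptional intersection matrix together with the genus formula. Everything afterwards is a direct invocation of Lemma~\ref{l-negativity}. (If one prefers to avoid that lemma, one can argue by hand: writing $G=G_+-G_-$ with $G_+,G_-\ge 0$ sharing no component, if $G_-\ne 0$ then negative-definiteness yields a component $E_j$ of $G_-$ with $G_-\cdot E_j<0$, whence $G\cdot E_j=G_+\cdot E_j-G_-\cdot E_j>0$, contradicting $G\cdot E_j\le 0$; so $G_-=0$.)
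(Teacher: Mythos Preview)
Your proof is correct and follows essentially the same route as the paper: use the genus formula together with the absence of $(-1)$-curves on the minimal resolution to show $K_Y\cdot E_i\ge 0$ for every exceptional curve (i.e.\ $K_Y$ is nef over $X$), and then invoke the Negativity Lemma with $f_*G=0$ to conclude $G\ge 0$. Your parenthetical hands-on alternative to the Negativity Lemma is a pleasant extra but the main argument matches the paper's.
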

\begin{proof}
First prove that $K_Y$ is nef over $X$ using the formula $(K_Y+C)\cdot C=2p_a(C)-2$
for a proper curve $C$ on $Y$. Indeed, $K_Y\cdot C\ge -2-C^2$ hence if $C^2\le-2$, 
clearly $K_Y\cdot C\ge 0$. But if $C^2=-1$, then again $K_Y\cdot C\ge 0$ otherwise 
$K_Y\cdot C=-1$ which implies that $C$ is a $-1$-curve contradicting the fact 
that $Y\to X$ is a minimal resolution.  
Finally, the negativity lemma implies that $G\ge 0$.
\end{proof}

\begin{thm}
A surface $X$ is terminal iff it is smooth.
\end{thm}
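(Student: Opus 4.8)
The plan is to prove both directions. The implication "smooth $\implies$ terminal" is already an exercise in the excerpt, so the real content is "terminal $\implies$ smooth." Here is the strategy. Let $X$ be a terminal surface and let $f\colon Y\to X$ be the minimal resolution. Write $K_Y+G\equiv 0/X$ with $G$ exceptional, so $K_Y=f^*K_X-G$ (using that $f_*$ of both sides agree and intersection-numerically $G$ is the unique exceptional correction). By the lemma just proved, $G\ge 0$. On the other hand, terminality says $d(E,X,0)>0$ for every $f$-exceptional prime divisor $E$, i.e. the coefficient of $E$ in $K_Y-f^*K_X=-G$ is positive, so $-G>0$ on every exceptional curve, i.e. $G<0$. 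Combining $G\ge 0$ and $G\le 0$ (with strict inequality on each exceptional component) forces there to be no exceptional curves at all, hence $f$ is an isomorphism and $X$ is smooth.

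First I would set up the minimal resolution $f\colon Y\to X$ and invoke the Remark on minimal resolutions: since $[E_i\cdot E_j]$ is negative definite, there is a unique exceptional $\R$-divisor $G=\sum e_iE_i$ with $(K_Y+G)\cdot E_i=0$ for all $i$, i.e. $K_Y+G\equiv 0/X$. Next I would observe that this $G$ is exactly the discrepancy divisor: because $f^*K_X$ is the unique $\R$-Cartier pullback which is numerically trivial on all $E_i$ and has the same pushforward as $K_Y$, we get $K_Y=f^*K_X+A$ with $A=-G$, and the coefficient of $E_i$ in $A$ is $d(E_i,X,0)$. Then I would quote the preceding lemma to get $G\ge 0$, hence all $e_i\ge 0$, hence all discrepancies $d(E_i,X,0)=-e_i\le 0$.

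The main obstacle — really the only place one must be slightly careful — is reconciling "terminal" (defined via \emph{some} log resolution) with the computation on the \emph{minimal} resolution. The earlier Lemma guarantees the discrepancy condition is independent of the chosen log resolution, and any resolution factors through the minimal one; so the terminal condition gives $d(E_i,X,0)>0$ for every $E_i$ appearing on $Y$. But we just showed $d(E_i,X,0)\le 0$. If $Y\to X$ is not an isomorphism there is at least one exceptional curve $E_i$, giving the contradiction $0<d(E_i,X,0)\le 0$. Therefore $f$ is an isomorphism and $X$ is smooth. Conversely a smooth surface is terminal by the exercise (on the identity resolution there are no exceptional divisors, so the condition holds vacuously, and more generally blowing up a smooth point of a surface produces a single exceptional curve with discrepancy $\codim -1 = 1>0$, and discrepancies only increase under further blow-ups). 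This completes the proof.
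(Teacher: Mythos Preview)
Your proof is correct and follows essentially the same approach as the paper: take the minimal resolution, invoke the preceding lemma to get $G\ge 0$, and contrast this with the terminality hypothesis which forces the discrepancies to be positive (equivalently $G<0$ on every exceptional component), concluding that $f$ must be an isomorphism. The paper's version is more terse, but the logic is identical.
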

\begin{proof}
If $X$ is smooth then it is terminal. Now suppose that $X$ is terminal
and let $Y\to X$ be a minimal resolution and let $K_Y+B_Y$ be the pullback of
$K_X$. Since $X$ is terminal, $d(E,X,0)>0$ for any exceptional divisor. Thus,
$Y\to X$ is an isomorphism otherwise $B_Y<0$ giving a contradiction.
\end{proof}

\begin{cor}\label{cor-terminal} By taking hyperplane sections, one can show that terminal
varieties are smooth in codimension two.
\end{cor}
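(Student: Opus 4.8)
The plan is to slice $X$ by general hyperplane sections and reduce to the surface case already settled, namely the theorem that a terminal surface is smooth. Since being smooth in codimension two is local on $X$, I may fix a projective embedding and, for a general hyperplane $H$, write $H_X:=X\cap H$ for the corresponding hyperplane section. The whole argument then rests on one reduction step, and the rest is a dimension count.

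First I would prove the key reduction: \emph{if $(X,0)$ is terminal and $\dim X\ge 3$, then for a general hyperplane section $H_X$ the pair $(H_X,0)$ is terminal of dimension $\dim X-1$}. Let $f\colon Y\to X$ be a log resolution, so $K_Y=f^*K_X+A$ with $A=\sum a_iE_i$ and all $a_i>0$. For $H$ general one checks: (a) $H_X$ is normal and irreducible of dimension $\dim X-1$, and $K_{H_X}=(K_X+H_X)|_{H_X}$ — this is Bertini together with the adjunction formula applied over the smooth locus of $X$, extended to the normal variety $H_X$ because both sides are reflexive and agree in codimension one; (b) the birational transform $Y_H$ of $H_X$ is smooth and irreducible, and $f^*H_X=Y_H$ with no exceptional component, because $H$ meets the finitely many positive-dimensional loci $f(E_i)$ properly and avoids the finitely many points occurring among the images $f(E_i)$; (c) $\exc(f)\cap Y_H$ is a simple normal crossing divisor in $Y_H$, so $f_H:=f|_{Y_H}\colon Y_H\to H_X$ is a log resolution of $(H_X,0)$ whose exceptional divisors are exactly the nonempty $E_i\cap Y_H$. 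Restricting the identity $K_Y+Y_H=f^*(K_X+H_X)+A$ to $Y_H$, using adjunction on the smooth variety $Y$ together with (a), gives $K_{Y_H}=f_H^*K_{H_X}+A|_{Y_H}$, where $A|_{Y_H}=\sum a_i(E_i\cap Y_H)$ is effective with strictly positive coefficient along every $f_H$-exceptional divisor. Hence $(H_X,0)$ is terminal.

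Granting this, the corollary follows quickly. If $\dim X\le 2$ then $X$ is smooth — by the theorem that terminal surfaces are smooth, and because normal curves are smooth — so there is nothing to prove. If $d:=\dim X\ge 3$, cut $X$ by $d-2$ general hyperplanes; applying the key reduction repeatedly, the result $S$ is a terminal surface, hence smooth, so $\operatorname{Sing}S=\emptyset$. On the other hand, for any linear section $S$ of $X$ one has $\operatorname{Sing}X\cap S\subseteq \operatorname{Sing}S$, since cutting by one equation lowers the dimension by exactly one but the embedding dimension by at most one. Therefore, if $\dim\operatorname{Sing}X\ge d-2$, then for a general choice of the $d-2$ hyperplanes the set $\operatorname{Sing}X\cap S$ is nonempty and contained in $\operatorname{Sing}S$, contradicting the smoothness of $S$. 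Hence $\dim\operatorname{Sing}X\le d-3$, i.e. $X$ is smooth in codimension two.

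The main obstacle is the key reduction, i.e. verifying that ``terminal'' is inherited by a general hyperplane section: concretely, that the restricted morphism $f_H$ is again a log resolution, that $f^*H_X$ acquires no exceptional component for general $H$, and that the adjunction formula $K_{H_X}=(K_X+H_X)|_{H_X}$ is available even though $X$ is not assumed smooth. Everything after that is routine bookkeeping with dimensions.
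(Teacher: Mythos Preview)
Your proposal is correct and follows exactly the approach indicated in the paper, which merely states the hint ``by taking hyperplane sections'' without further detail; you have carefully fleshed out that hint, including the key reduction that terminality passes to a general hyperplane section and the dimension count via $\operatorname{Sing} X\cap S\subseteq \operatorname{Sing} S$.
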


\begin{rem}
Surface canonical singularities are classically known as Du Val singularities. 
Locally analytically a Du Val singularity is given by one of the 
following equations as a hypersurface in $\mathbb{A}^3$:\\
 A: $x^2+y^2+z^{n+1}=0$\\
 D: $x^2+y^2z+z^{n-1}=0$\\
 E$_6$: $x^2+y^3+z^4=0$\\
 E$_7$: $x^2+y^3+yz^3=0$\\
 E$_8$: $x^2+y^3+z^5=0$\\
\end{rem}

\begin{lem}
If $X$ is klt, then all the exceptional curves of the minimal resolution are smooth rational
curves.
\end{lem}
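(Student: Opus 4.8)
The plan is to combine the description of the minimal resolution by $K_Y+G\equiv 0/X$ (with $G\ge 0$ supplied by the previous lemma) with the klt hypothesis, which pins every coefficient of $G$ strictly below $1$, and then feed the resulting intersection estimate into the adjunction (genus) formula $(K_Y+C)\cdot C=2p_a(C)-2$ on the smooth surface $Y$.

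First I would fix notation: let $f\colon Y\to X$ be the minimal resolution, let $E_1,\dots,E_n$ be the irreducible exceptional curves, and write $G=\sum_j a_jE_j$ for the exceptional $\R$-divisor with $K_Y+G\equiv 0/X$. By the previous lemma $a_j\ge 0$ for all $j$. Since $X$ is klt, every $E_j$ has discrepancy $d(E_j,X,0)=-a_j>-1$: each $E_j$ is extracted by some log resolution of $X$, and by the lemma asserting that the singularity notions do not depend on the chosen log resolution the klt inequality holds on that resolution too, whence $a_j<1$. I would also recall that the intersection matrix $[E_i\cdot E_j]$ of a minimal resolution is negative definite, so $E_i^2<0$ for every $i$, while $E_i\cdot E_j\ge 0$ for $i\ne j$.

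Next, for a fixed $E_i$, the relation $(K_Y+G)\cdot E_i=0$ gives $K_Y\cdot E_i=-a_iE_i^2-\sum_{j\ne i}a_j(E_j\cdot E_i)$. The sum $\sum_{j\ne i}a_j(E_j\cdot E_i)$ is $\ge 0$ since all $a_j\ge 0$ and all $E_j\cdot E_i\ge 0$, and $-a_iE_i^2=a_i(-E_i^2)<-E_i^2$ because $0\le a_i<1$ and $-E_i^2>0$. Hence $K_Y\cdot E_i<-E_i^2$, i.e. $K_Y\cdot E_i+E_i^2<0$. Plugging this into the adjunction formula gives $2p_a(E_i)-2<0$, so $p_a(E_i)<1$; as $p_a(E_i)$ is a non-negative integer it must be $0$, and an integral projective curve over $\C$ of arithmetic genus zero is isomorphic to $\PP^1$. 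Thus every $E_i$ is a smooth rational curve.

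The computation itself is essentially a one-liner once these inputs are assembled; the only genuinely delicate point is the strict bound $a_i<1$, since the minimal resolution need not be a log resolution. That is the step where I would lean on the independence of the singularity notions from the chosen log resolution, combined with the fact that every exceptional divisor of the minimal resolution appears on some log resolution of $X$, so its discrepancy is computed there.
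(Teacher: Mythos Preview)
Your proof is correct and follows essentially the same approach as the paper: from $(K_Y+G)\cdot E_i=0$ with $G\ge 0$ and the klt bound $a_i<1$, together with $E_i^2<0$, you deduce $(K_Y+E_i)\cdot E_i<0$ and conclude via the genus formula. The paper compresses this into the single line $(K_Y+eE)\cdot E\le 0$ followed by $2p_a(E)-2+(e-1)E^2\le 0$, while you unpack the same computation more explicitly and are more careful about justifying why the discrepancy bound $a_i<1$ applies even though the minimal resolution need not itself be a log resolution.
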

\begin{proof}
Let $E$ be an exceptional curve appearing on the minimal resolution $Y$. Then, $(K_Y+eE)\cdot E\le 0$
for some $e<1$. So, 
$$
(K_Y+E+(e-1)E)\cdot E=2p_a(E)-2+(e-1)E^2\le 0
$$
 which in turn implies that
$p_a(E)\le 0$. Therefore, $E$ is a smooth rational curve.
\end{proof}

\begin{exa} \emph{Singularities in higher dimension}.
Let $X$ be defined by $x^2+y^2+z^2+u^2=0$ in $\mathbb{A}^4$. Then, by blowing up the
origin of $\mathbb{A}^4$ we get a resolution $Y\to X$ such that we have a
single exceptional divisor $E$ isomorphic to the quadric surface $\PP^1\times \PP^1$.
Suppose that $K_Y=f^*K_X+eE$. Take a fibre $C$ of the projection $E\to \PP^1$.
Either by calculation or more advanced methods, one can show that $K_Y\cdot C<0$ and
$E\cdot C<0$. Therefore, $e>0$. So, $X$ has a terminal singularity at the origin.
\end{exa}

\begin{rem}[Toric varieties]  Suppose that $X$ is the toric variety
associated to a cone $\sigma\subset N_{\R}$,
\begin{itemize}
\item $X$ is smooth iff $\sigma$ is regular, that is primitive generators of each face of $\sigma$ consists
of a part of a basis of $N$,
\item $X$ is $\Q$-factorial iff $\sigma$ is simplicial,
\item $X$ is terminal iff $\sigma$ is terminal, that is, there is $m\in M_{\Q}$ such that
$m(P)=1$ for each primitive generator $P\in \sigma \cap N$, and $m(P)>1$ for any other
$P\in N\cap \sigma -\{0\}$,
\item If $K_X$ is $\Q$-Cartier, then $X$ is klt.
\end{itemize}

See [\ref{Dais}] and [\ref{Matsuki}] for more information.
\end{rem}

\clearpage
%%%%%%%%%%%%%%%%%%%%%%%%%%%%%%%%%%%%%%%%%%%%%%%%%%%%%%%%%%%%
~ \vspace{2cm}
\section{\textbf{Kodaira dimension}}
\vspace{1cm}
%%%%%%%%%%%%%%%%%%%%%%%%%%%%%%%%%%%%%%%%%%%%%%%%%%%%%%%%%%%

\begin{rem}[Divisorial sheaves]
Let $X$ be a normal variety. We can interpret sections of divisors on $X$ 
as rational functions. If $D$ is a Weil divisor on $X$, then we can 
describe $\mathcal{O}_X(D)$ in a canonical way as 
$$
\mathcal{O}_X(D)(U)=\{f\in K(X) \mid (f)+D|_U\ge 0\}
$$
where $U$ is an open subset of $X$ and $K(X)$ is the function field of $X$.
If $D'\sim D$, then of course $\mathcal{O}_X(D')\simeq \mathcal{O}_X(D)$ but these sheaves 
are not canonically identical, i.e. they have different embedding in the constant sheaf associated 
to $K(X)$. 

If $W$ is any closed subset of $X$ such that $\codim X\setminus W\ge 2$, then 
$\mathcal{O}_X(D)=j_*\mathcal{O}_W(D|_W)$, where $j\colon W\to X$ is the inclusion, 
because 
$$
(f)+D\ge 0 ~~~~~\mbox{iff}~~~~~ (f)+D|_W\ge 0
$$
In particular, by taking $W=X\setminus X_{\rm sing}$ one can easily see that the above 
sheaves are reflexive.
\end{rem}

\begin{defn}\label{def-D-morphism}
Let $D$ be a Cartier divisor on a normal projective variety $X$. If $h^0(X,D)\neq 0$, then we define
a rational map $\phi_{D}\colon X \bir \PP^{n-1}$ as
$$
\phi_{D}(x)=(f_1(x):\dots:f_n(x))
$$
where $\{f_1,\dots,f_n\}$ is a basis for $H^0(X,D)$. 
\end{defn}

\begin{defn} A Cartier divisor $D$ on a normal variety $X$ is called \emph{free} if its base locus
$$
\Bs |D|:=\bigcap_{D\sim D'\ge 0} \Supp D'
$$
is empty. This is equivalent to saying that $\mathcal{O}_X(D)$ is generated by global sections.
For a free divisor $D$, the rational map $\phi_D\colon X\bir \PP^{n-1}$ associated
to $D$  is actually a morphism. The Stein factorisation of $\phi_D$
gives us a contraction $\psi_D\colon X\to Y$ such that $D\sim \psi_D^*H$ for some ample divisor $H$
on $Y$.
\end{defn}

\begin{rem}\label{rem-decomposition}
 Let $D$ be a Weil divisor on a normal variety $X$ with $h^0(X,D)\neq 0$. 
Let $F\ge 0$ be the biggest Weil divisor satisfying $F\le G$ for any $0\le G\sim D$. 
We call $F$ the \emph{fixed} part of $D$ and write $F=\Fix D$. 
We can write $D=M+F$ and call $M$ 
the \emph{movable} part of $D$ denoted by $\Mov D$. 
Note that $\Fix M=0$ and $H^0(X,M)=H^0(X,D)$. If $D$ is Cartier, then Hironaka's work 
yield a resolution
 $f\colon Y\to X$ such that $f^* D=M'+F'$ such that $M'=\Mov D$ is a free divisor.
 In particular, the sections $H^0(Y,M')$ define a contraction $\pi\colon  Y\to Z$ 
 such that $M\sim \pi^* A$ for some ample divisor $A$ on $Z$. To obtain $\pi$ 
 one first considers the rational map $\phi_{f^*D}\colon Y \bir \PP^{n-1}$ as
$$
\phi_{f^*D}(x)=(f_1(x):\dots:f_n(x))
$$
where $\{f_1,\dots,f_n\}$ is a basis for $H^0(Y,f^*D)$. Since $M'$ is free, the 
above map is a morphism. If $V$ is the image of $\phi_{f^*D}$, take $\pi$ to be 
the contraction given by the Stein factorisation of $Y\to V$. 
\end{rem}

\begin{defn}[Kodaira dimension] 
For a $\Q$-divisor $D$ on a normal projective variety $X$,
define the \emph{Kodaira dimension} of $D$ as the largest integer $\kappa(D)$ satisfying 
$$
0<\limsup_{{\rm integer}~~~ m\to +\infty} \frac{h^0(X,\lfloor mD\rfloor)}{m^{\kappa(D)}}
$$
if $h^0(X,\lfloor mD\rfloor)\neq 0$ for some $m>0$, otherwise let  
$\kappa(D)=-\infty$.
\end{defn}

\begin{lem}
The Kodaira dimension satisfies the following basic rules:\\
$(1)$  $\kappa(D)=\kappa(aD)$ for any positive  $a\in\Q$, and \\
$(2)$ $\kappa(D)=\kappa(D')$ if $D\sim_\Q D'$.
\end{lem}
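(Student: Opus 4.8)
The plan is to deduce both parts from the single assertion that $\kappa(kD)=\kappa(D)$ for every integer $k\ge 1$. For (1), if $a=p/q$ with $p,q\in\N$, then $q(aD)=pD$, so two applications of the integer case give $\kappa(aD)=\kappa(q(aD))=\kappa(pD)=\kappa(D)$. For (2), the exercise characterising $\Q$-linear equivalence produces $m\in\N$ with $mD\sim mD'$, and enlarging $m$ we may assume $mD$ and $mD'$ are integral Weil divisors; then $lmD\sim lmD'$ for all $l\ge 1$, so $h^0(X,\lfloor l(mD)\rfloor)=h^0(X,lmD)=h^0(X,lmD')=h^0(X,\lfloor l(mD')\rfloor)$, hence $\kappa(mD)=\kappa(mD')$, and the integer case of (1) yields $\kappa(D)=\kappa(mD)=\kappa(mD')=\kappa(D')$. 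So it remains to treat $\kappa(kD)=\kappa(D)$ for $k\ge 2$.

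The key tool is the elementary remark that if $h^0(X,E)\neq 0$, i.e.\ $0\le G\sim E$ for some $G$, then multiplication by a rational function $\phi$ with $(\phi)=G-E$ defines an injection $\mathcal{O}_X(E')\hookrightarrow\mathcal{O}_X(E'+E)$ for any Weil divisor $E'$, so $h^0(X,E')\le h^0(X,E'+E)$. Setting $N(D):=\{n>0:h^0(X,\lfloor nD\rfloor)\neq 0\}$ and combining this with the superadditivity $\lfloor xD\rfloor+\lfloor yD\rfloor\le\lfloor (x+y)D\rfloor$ gives two facts: (i) if $n\in N(D)$ then $jn\in N(D)$ for all $j\ge 1$; in particular $N(D)=\emptyset$ iff $N(kD)=\emptyset$, which handles the case $\kappa=-\infty$; and (ii) for $n\in N(D)$, $h^0(X,\lfloor knD\rfloor)\ge h^0(X,\lfloor nD\rfloor+\lfloor (k-1)nD\rfloor)\ge h^0(X,\lfloor nD\rfloor)$, where the first inequality is superadditivity and the last one uses the remark applied to $E=\lfloor (k-1)nD\rfloor$, which is in $N(D)$ by (i).

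Now assume $\kappa(D)=a\ge 0$. For $\kappa(D)\ge\kappa(kD)$, put $a'=\kappa(kD)$; since $h^0(X,\lfloor m(kD)\rfloor)=h^0(X,\lfloor (mk)D\rfloor)$, the quantity $h^0(X,\lfloor nD\rfloor)/n^{a'}$ evaluated at $n=mk$ equals $k^{-a'}h^0(X,\lfloor m(kD)\rfloor)/m^{a'}$, so its $\limsup$ over $n$ is positive and $\kappa(D)\ge a'$. For the reverse inequality, choose $c>0$ and $n_j\to\infty$ with $h^0(X,\lfloor n_jD\rfloor)\ge c\,n_j^{\,a}$ (possible because $\limsup_m h^0(X,\lfloor mD\rfloor)/m^{a}>0$ by definition of $\kappa(D)=a$); each $n_j$ lies in $N(D)$, and since $n_j(kD)=(kn_j)D$, fact (ii) gives, at $m=n_j$, $h^0(X,\lfloor m(kD)\rfloor)=h^0(X,\lfloor kn_jD\rfloor)\ge h^0(X,\lfloor n_jD\rfloor)\ge c\,n_j^{\,a}$, so $\limsup_m h^0(X,\lfloor m(kD)\rfloor)/m^{a}\ge c>0$ and $\kappa(kD)\ge a$. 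Hence $\kappa(kD)=\kappa(D)$.

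The one delicate point is the inequality $\kappa(kD)\ge\kappa(D)$: the indices $n_j$ realising the growth of $h^0(X,\lfloor nD\rfloor)$ need not be divisible by $k$, and naive floor estimates like $\lfloor kx\rfloor\ge\lfloor x\rfloor$ fail at the components where $D$ has negative coefficient, so one cannot simply restrict to multiples of $k$. Fact (ii) circumvents this by comparing $h^0$ at $n_j$ directly with $h^0$ at $kn_j$ using superadditivity of the floor together with $(k-1)n_j\in N(D)$; in particular no discussion of residues modulo $k$, or of the exponent of the semigroup $N(D)$, is needed.
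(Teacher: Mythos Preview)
Your proof is correct and follows essentially the same strategy as the paper: both reduce (1) and (2) to the integer case and both prove the key inequality $h^0(X,\lfloor kn D\rfloor)\ge h^0(X,\lfloor nD\rfloor)$ by multiplying sections (your ``key tool'' that $h^0(X,E')\le h^0(X,E'+E)$ when $E$ has a section is exactly what the paper uses when it writes $h^0(X,aG)\ge h^0(X,G)$). The only organisational difference is that the paper first reduces to the special case where $a\in\Z$ and $aD$ is integral (so that $\lfloor maD\rfloor=maD$ and the fractional part $\langle mD\rangle$ becomes integral after multiplying by $a$), whereas you handle arbitrary integers $k$ directly via the floor superadditivity $\lfloor nD\rfloor+\lfloor (k-1)nD\rfloor\le\lfloor knD\rfloor$; this lets you avoid the extra integrality reduction but is otherwise the same computation.
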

\begin{proof}
(1) First assume that $a\in \Z$ and that $aD$ is integral. 
By definition, we have  $\kappa(aD)\le \kappa(D)$. 
For the converse we argue as follows. If $\kappa(D)=-\infty$ then 
equality $\kappa(aD)=\kappa(D)$ holds obviously. So, assume $\kappa(D)\ge 0$. 
If $h^0(X,\rddown{mD})=0$ for some $m>0$, then clearly
$$
h^0(X,\rddown{maD})\ge h^0(X,\lfloor mD\rfloor)
$$
If $h^0(X,\rddown{mD})\neq 0$ for some $m>0$, then $\rddown{mD}\sim G$ for some integral 
$G\ge 0$. Thus, $mD\sim G+\langle mD\rangle$ 
hence 
$$
\rddown{maD}=maD\sim aG+a\langle mD\rangle
$$
 where $a\langle mD\rangle$ is 
effective and integral. In particular, 
$$
h^0(X,\rddown{maD})\ge h^0(X,aG)\ge h^0(X,G)= h^0(X,\lfloor mD\rfloor)
$$ 
Thus, $h^0(X,\rddown{maD})$ grows at least as much as $h^0(X,\lfloor mD\rfloor)$ which implies that 
$$
\limsup_{m\to\infty} \frac{h^0(X,\lfloor maD\rfloor)}{m^{\kappa(D)}}\ge 
\limsup_{m\to\infty} \frac{h^0(X,\lfloor mD\rfloor)}{m^{\kappa(D)}}>0
$$ 
which implies that $\kappa(D)\le \kappa(aD)$ hence $\kappa(D)=\kappa(aD)$. 

For the general case: choose positive $b,b'\in \Z$ so that $bD=b'aD$ 
is integral. Then,  $\kappa(D)=\kappa(bD)=\kappa(b'aD)=\kappa(aD)$.

(2) This follows from the fact that there is some positive $a\in \Z$ such that $aD\sim aD'$. 
\end{proof}

\begin{exe}[Contractions]
Let $f\colon Y\to X$ be a contraction of normal projective varieties and $D$ a $\Q$-Cartier divisor on $X$.
Prove that $\kappa(D)=\kappa(f^*D)$.
\end{exe}

\begin{exe}
Let $D$ be a $\Q$-Cartier divisor on a normal projective variety $X$. Prove that,\\
$\bullet$ $\kappa(D)=-\infty$ $\Longleftrightarrow$ $h^0(\rddown{mD})=0$ for any $m\in\N$.\\
$\bullet$ $\kappa(D)=0$  $\Longleftrightarrow$ $h^0(\rddown{mD})\le 1$ for any $m\in\N$ with equality for some $m$.\\
$\bullet$ $\kappa(D)\ge 1$ $\Longleftrightarrow$ $h^0(\rddown{mD})\ge 2$ for some $m\in\N$.
\end{exe}

\begin{exa}[Curves]
Let $X$ be a smooth projective curve and $D$ a $\Q$-divisor on $X$. 
Then,\\ 
$\bullet$ $\kappa(D)=-\infty$ $\Longleftrightarrow$ $\deg D<0$, or $\deg D=0$ and $D$ 
is not torsion.\\
$\bullet$ $\kappa(D)=0$  $\Longleftrightarrow$ $\deg D=0$ and $D$ is torsion.\\
$\bullet$ $\kappa(D)= 1$ $\Longleftrightarrow$ $\deg D>0$.
\end{exa}

\begin{exa}
Let $X$ be a normal projective variety and $D$ a $\Q$-divisor on $X$.\\ 
$\bullet$ If $D=0$, then $\kappa(D)=0$.\\
$\bullet$ If $D\le 0$ but $D\neq 0$, then $\kappa(D)=-\infty$.\\ 
$\bullet$ Now assume that 
we have a projective birational morphism $f\colon X\to Y$ of normal varieties such that 
$D$ is contracted by $f$ and $D\ge 0$. Then, $\kappa(D)=0$. Moreover, 
if $L$ is any $\Q$-Cartier divisor on $Y$, then $\kappa(L)=\kappa(f^*L+D)$.
\end{exa}

\begin{exa}[Ample divisors]
Assume that $D$ is an ample divisor on a normal projective variety $X$. 
Using the Riemann-Roch theorem and the Serre vanishing it is easy to see that 
$\kappa(D)=\dim X$. Alternatively, One can assume that $D$ is a general very ample 
divisor and use the exact sequence 
$$
0\to \mathcal{O}_X((m-1)D) \to \mathcal{O}_X(mD) \to \mathcal{O}_D(mD)\to 0
$$
along with Serre vanishing and induction to show that $\kappa(D)=\dim X$.
This in particular implies that $\kappa(L)\le \dim X$ for any $\Q$-divisor $L$.
\end{exa}

\begin{defn}[Big divisor]
A $\Q$-divisor $D$ on a normal projective variety $X$ is called 
\emph{big} if $\kappa(D)=\dim X$.
\end{defn}

\begin{thm}[Kodaira lemma]\label{l-Kodaira-lemma}
Let $D,L$ be $\Q$-divisors on a normal projective variety $X$ where $D$ is big and 
$\Q$-Cartier. Then,
there is a rational number $\epsilon>0$ such that $D-\epsilon L$ is big.
\end{thm}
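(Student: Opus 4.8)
The plan is to reduce to the case where the subtracted divisor is ample, and then to obtain bigness of $D-\delta A$ from a dimension count on a general hyperplane section, using the facts already recorded above that ample divisors are big and that $\kappa$ is monotone under $\ge$ and invariant under $\sim_\Q$.

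\textbf{Reduction to the ample case.} Write $L=L^{+}-L^{-}$ with $L^{\pm}\ge 0$ effective Weil divisors. Since $\epsilon L^{-}\ge 0$, one has $D-\epsilon L\ge D-\epsilon L^{+}$ as $\Q$-divisors, so $\lfloor m(D-\epsilon L)\rfloor\ge\lfloor m(D-\epsilon L^{+})\rfloor$ for every $m$ and hence $\kappa(D-\epsilon L)\ge\kappa(D-\epsilon L^{+})$. Fix any very ample $A$. By Serre's theorem the sheaf $\mathcal{O}_X(-L^{+})(qA)$ is globally generated, hence has a nonzero section, for $q\gg 0$; by the description of sections of divisorial sheaves as rational functions this gives $f\in K(X)$ with $(f)+qA-L^{+}\ge 0$. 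Then $L^{+}\le A'':=(f)+qA$, and $A''$ is effective and Cartier with $A''\sim qA$, so $\kappa(D-\epsilon L^{+})\ge\kappa(D-\epsilon A'')=\kappa\bigl(D-(\epsilon q)A\bigr)$. Thus it suffices to prove: for an ample divisor $A$, the divisor $D-\delta A$ is big when $0<\delta\ll 1$.

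\textbf{The main estimate and conclusion.} Replacing $A$ by a multiple we may assume $A$ is very ample, and we may assume $n:=\dim X\ge 2$ (the cases $n\le 1$ being immediate by degrees). Fix $m_0>0$ with $m_0D$ Cartier, and let $A'$ be a general member of $|A|$, an effective Cartier divisor of dimension $n-1$. Restricting $0\to\mathcal{O}_X(-A')\to\mathcal{O}_X\to\mathcal{O}_{A'}\to 0$ twisted by $\mathcal{O}_X(\ell m_0D)$ yields
\[
0\to\mathcal{O}_X(\ell m_0D-A')\to\mathcal{O}_X(\ell m_0D)\to\mathcal{O}_{A'}(\ell m_0D|_{A'})\to 0,
\]
so $h^0(X,\ell m_0D-A')\ge h^0(X,\ell m_0D)-h^0\bigl(A',\ell(m_0D|_{A'})\bigr)$. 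Since $\kappa(m_0D)=\kappa(D)=n$, the term $h^0(X,\ell m_0D)$ is $\ge c\,\ell^{n}$ for some $c>0$ and infinitely many $\ell$; since the Kodaira dimension of $m_0D|_{A'}$ on $A'$ is at most $\dim A'=n-1$, the term $h^0(A',\ell(m_0D|_{A'}))$ is $o(\ell^{n})$. Hence for a suitable $\ell$ we get $h^0(X,\ell m_0D-A')\ne 0$; writing $M=\ell m_0$ and using $A'\sim A$, this means $MD\sim A+G$ for some effective $G\ge 0$, so $D\sim_\Q\frac1M A+\frac1M G$. Then for $0<\delta<1/M$ we have
\[
D-\delta A\;\sim_\Q\;\Bigl(\tfrac1M-\delta\Bigr)A+\tfrac1M G ,
\]
where $(\tfrac1M-\delta)A$ is ample, hence big with Kodaira dimension $n$, and $\tfrac1M G\ge 0$; monotonicity of $\kappa$ under $\ge$ and its invariance under $\sim_\Q$ then give $\kappa(D-\delta A)\ge n$, so $\kappa(D-\delta A)=n$. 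Together with the reduction this proves that $D-\epsilon L$ is big for all sufficiently small $\epsilon>0$.

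The main obstacle is the growth comparison in the second step: one needs that $h^0$ of the big divisor $m_0D$ grows like $\ell^{n}$ along a subsequence, while $h^0$ on the $(n-1)$-dimensional section $A'$ grows strictly more slowly — this is exactly where the restriction sequence and the bound $\kappa\le\dim$ are used. By contrast the reduction to the ample case and the $\Q$-linear-equivalence bookkeeping are routine.
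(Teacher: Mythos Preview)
Your proof is correct and the heart of it—the restriction exact sequence to a general very ample section $A'$, together with the dimension bound $h^0(A',\ell m_0D|_{A'})=o(\ell^n)$ to force $h^0(X,\ell m_0D-A')>0$ and hence $MD\sim A+G$ with $G\ge 0$—is exactly the paper's argument.

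The only genuine difference is in how the arbitrary divisor $L$ is handled. The paper does not reduce to the ample case first; instead, after obtaining $mD\sim A+G$, it simply observes that $A':=A-m\epsilon L$ is ample for $\epsilon$ small and writes $D-\epsilon L\sim_\Q \tfrac1m A'+\tfrac1m G$. This is shorter, but it tacitly uses that $L$ is $\Q$-Cartier (otherwise ``$A-m\epsilon L$ ample'' has no meaning). Your preliminary reduction—bounding $L^+$ above by an effective Cartier divisor $A''\sim qA$ via a section of $\mathcal{O}_X(qA-L^+)$—is a bit longer but cleanly sidesteps that issue and makes the monotonicity of $\kappa$ do the work. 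So your route trades a one-line endgame for a more robust treatment of $L$; the core estimate is identical.
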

\begin{proof}
We may assume that $D$ is Cartier. If $\dim X=1$, the claim is trivial 
as $D$ would be ample. So, assume 
$\dim X\ge 2$. First assume that $A$ is a general very ample divisor. 
Then, the exact sequence 
$$
0\to \mathcal{O}_X(mD-A) \to \mathcal{O}_X(mD) \to \mathcal{O}_A(mD)\to 0
$$
shows that $h^0(X,mD-A)>0$ for some $m>0$ since $\kappa(D|_A)\le \dim X-1$.
So, we can find  $G\ge 0$ such that $mD\sim A+G$.
Choose $\epsilon>0$ sufficiently small so that $A':=A-m\epsilon L$ is ample. 
Then, $D-\epsilon L\sim_\Q \frac{1}{m}A'+\frac{1}{m}G$ which is big. 
\end{proof}

\begin{cor}\label{c-nef-to-ample}
 Let $D$ be a nef $\Q$-Cartier divisor on a normal projective variety $X$. 
 Then, the following are equivalent:
 
 $(1)$ $D$ is big,
 
 $(2)$ there is an effective 
 $\Q$-divisor $G$ and ample $\Q$-divisors $A_m$ such that $D\sim_\Q A_m+\frac{1}{m}G$ 
 for every $m$.
\end{cor}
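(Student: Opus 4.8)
The plan is to get $(2)\Rightarrow(1)$ almost for free and to deduce $(1)\Rightarrow(2)$ from Kodaira's lemma (Theorem \ref{l-Kodaira-lemma}) together with the Kleiman ampleness criterion by a simple convexity trick.

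For $(2)\Rightarrow(1)$ I would use only a single value of $m$, say $m=1$: then $D\sim_\Q A_1+G$ with $A_1$ ample and $G\ge 0$. Choosing $N$ so that $ND\sim NA_1+NG$ with all three divisors integral, the inclusion of sections coming from the effective divisor $NjG$ gives $h^0(X,NjD)\ge h^0(X,NjA_1)$ for all $j$, and since an ample divisor has Kodaira dimension $\dim X$ this forces $\kappa(D)=\dim X$, i.e. $D$ is big.

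For $(1)\Rightarrow(2)$ I would first fix an ample Cartier divisor $H$ on $X$ and apply Theorem \ref{l-Kodaira-lemma} with $L=H$: this produces a rational $\epsilon>0$ with $D-\epsilon H$ big, hence $\Q$-linearly equivalent to an effective $\Q$-divisor $G$, so that $D\sim_\Q \epsilon H+G$. Now for each integer $m\ge 1$ I would split off only the small piece $\tfrac{1}{m}D$:
$$
D=\left(1-\tfrac{1}{m}\right)D+\tfrac{1}{m}D\sim_\Q A_m+\tfrac{1}{m}G,\qquad A_m:=\left(1-\tfrac{1}{m}\right)D+\tfrac{\epsilon}{m}H .
$$
Here $A_m$ is a $\Q$-Cartier $\Q$-divisor which is the sum of the nef divisor $(1-\tfrac1m)D$ and the ample divisor $\tfrac{\epsilon}{m}H$, so it is positive on $\overline{NE}(X)-\{0\}$ and therefore ample by the Kleiman criterion; this is exactly the asserted decomposition.

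The one point that needs care is keeping $A_m$ ample \emph{uniformly in $m$}: the ample contribution $\tfrac{\epsilon}{m}H$ shrinks with $m$, so one cannot afford to also subtract from $D$ a fixed positive piece — splitting off precisely $\tfrac1m D$ and using nefness of $D$ (rather than ampleness) on the complementary part is what makes the argument go through for all $m$ simultaneously. The remaining ingredients — Kodaira's lemma, the elementary monotonicity $h^0(L)\le h^0(L+E)$ for $E\ge 0$, and $\kappa(\text{ample})=\dim X$ — are routine.
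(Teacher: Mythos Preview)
Your proof is correct and essentially identical to the paper's. In the paper's version of $(1)\Rightarrow(2)$ the Kodaira lemma gives $D\sim_\Q A+G$ with $A$ ample and $G\ge 0$, and then $A_m=\frac{1}{m}((m-1)D+A)$, which is exactly your $A_m=(1-\tfrac1m)D+\tfrac{\epsilon}{m}H$ once one matches $A=\epsilon H$; for $(2)\Rightarrow(1)$ the paper simply notes $\kappa(D)\ge\kappa(A_m)=\dim X$, which is your argument in compressed form.
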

\begin{proof}
(1) $\implies$ (2): By the Kodaira lemma, we can write  $D\sim_\Q A+G$ for some 
 effective $\Q$-divisor $G$ and ample $\Q$-divisor $A$. For each $m>0$, 
$$
mD\sim_\Q (m-1)D+A+G
$$ 
where $(m-1)D+A$ is ample by Kleiman's criterion. Now take 
 $A_m=\frac{1}{m}((m-1)D+A)$.
 
(2) $\implies$ (1): For each $m$, $\kappa(D)\ge \kappa(A_m)=\dim X$.

\end{proof}

One of the most useful theorems about Kodaira dimension is the existence of the so-called 
\emph{Iitaka fibration} which was proved by Iitaka. 

\begin{thm}[Iitaka fibration]
Let $D$ be a $\Q$-Cartier divisor on a normal projective variety $X$ 
with $\kappa(D)\ge 0$. Then, there are projective morphisms $f\colon W\to X$ and $g\colon W\to Z$ 
from a smooth $W$ such that\\
$\bullet$ $f$ is birational,\\
$\bullet$  $g$ is a contraction,\\ 
$\bullet$ $\kappa(D)=\dim Z$, and\\ 
$\bullet$ if $V$ is the generic fibre of $g$, then $\kappa(f^*D|_V)=0$. 
\end{thm}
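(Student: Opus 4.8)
The plan is to take for $Z$ a resolution of the image of a well-chosen Iitaka map attached to $D$, and to read the fibre statement off the stabilisation of these images. Since $\kappa$ is a birational invariant of $\Q$-Cartier divisors (by the Lemma above together with the Exercise on contractions), we may replace $D$ by a positive multiple and assume $D$ is Cartier; and the case $\kappa(D)=0$ is immediate (take $Z$ a point and $W\to X$ any resolution, so $V=W$ and $\kappa(f^*D|_V)=\kappa(f^*D)=\kappa(D)=0$), so assume $\kappa(D)\ge 1$. For each $m$ with $h^0(X,mD)\ge 2$ let $\phi_{mD}\colon X\bir Y_m\subseteq\PP^{N_m}$ be the map of Definition \ref{def-D-morphism}, with $Y_m$ the closure of the image; multiplication of sections $\Sym^k H^0(mD)\to H^0(kmD)$ gives natural dominant rational maps $Y_{km}\bir Y_m$, so the $Y_m$ form an inverse system over the cofinal parts of the semigroup of $m$ with $h^0(mD)\ne 0$.

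The first real step is the \emph{stabilisation lemma} (Iitaka): there is an $m_0$, which we take sufficiently divisible, such that $\dim Y_{m_0}=\kappa(D)$ and for every $m$ with $m_0\mid m$ the dominant map $Y_m\bir Y_{m_0}$ is birational. Granting this, choose a smooth projective $W$ with a birational morphism $f\colon W\to X$ resolving $\phi_{m_0D}$, so that $\phi_{m_0D}\circ f$ is a morphism $W\to Y_{m_0}$; enlarging $W$ we may assume in addition that in $f^*(m_0D)=M+F$ the movable part $M=\Mov f^*(m_0D)$ is free and defines this morphism. Let $g\colon W\to Z$ be the contraction coming from the Stein factorisation of $W\to Y_{m_0}$; then $Z$ is normal and (after a further resolution) smooth, $g$ is a contraction, $\dim Z=\dim Y_{m_0}=\kappa(D)$, and $M\sim g^*H$ for an ample $H$ on $Z$. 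This gives the first three bullets. For the fourth, let $V$ be the generic fibre of $g$: since $M|_V\sim(g^*H)|_V\sim 0$ we get $f^*(m_0D)|_V\sim F|_V\ge 0$, hence $\kappa(f^*D|_V)=\kappa(f^*(m_0D)|_V)\ge 0$ (the same lower bound also follows formally from the easy-addition inequality $\kappa(D)\le\kappa(f^*D|_V)+\dim Z$).

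The hard part is the reverse inequality $\kappa(f^*D|_V)\le 0$, and this is exactly where stabilisation is used. Suppose for contradiction that $\kappa(f^*D|_V)\ge 1$, so $h^0(V,s\,f^*D|_V)\ge 2$ for some $s>0$; set $m=m_0s$. Over a dense open $U\subseteq Z$ the sheaf $g_*\mathcal{O}_W(s\,f^*D)$ is locally free of rank $\ge 2$, so the induced rational map $g^{-1}(U)\bir\PP(g_*\mathcal{O}_W(s\,f^*D)|_U)$ over $U$ is non-constant on the generic fibre; composing with $\phi_{m_0D}\circ f$ (which separates the base of $g$) and passing to closures shows that the image of $\phi_{f^*(mD)}$ has dimension $\ge\dim Z+1$. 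But $f$ is birational, so $\phi_{f^*(mD)}$ and $\phi_{mD}$ have birational images, whence $\dim Y_m\ge\dim Z+1>\kappa(D)=\dim Y_{m_0}$, contradicting the stabilisation lemma. Therefore $\kappa(f^*D|_V)=0$.

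I expect the stabilisation lemma, together with the ``separating the generic fibre increases the image dimension'' step, to be the main obstacle: both require controlling the growth of $h^0(mD)$ and the geometry of the $Y_m$ as $m$ ranges over a cofinal subset of $\{m:h^0(mD)\ne 0\}$, rather than for a single $m$. A convenient way to make the fibre step precise is to regard $V$ as a projective variety over the function field $k(Z)$: sections in $H^0(V,s\,f^*D|_V)$ are then genuine $k(Z)$-rational data which, after clearing denominators, spread out to sections of $s\,f^*D$ over $g^{-1}(U)$ for some open $U$, and these can be multiplied with pullbacks of sections from $Z$ to build the extra sections of $f^*(mD)$ used above.
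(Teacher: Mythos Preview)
The paper does not actually prove this theorem: immediately after the statement it says ``For a proof see [\ref{Iitaka}]'' and only records the trivial cases $\kappa(D)=0$ and $\kappa(D)=\dim X$. Your outline is precisely the standard argument one finds in Iitaka's book---resolve $\phi_{m_0D}$, Stein-factorise to get the contraction $g$, and use stabilisation of the images $Y_m$ to bound $\kappa(f^*D|_V)$---so there is no genuine divergence of approach to compare.

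One small imprecision worth fixing: in your contradiction step you set $m=m_0s$, but the map you describe is not literally $\phi_{m_0sD}$. The sections of $s\,f^*D$ you produce over $g^{-1}(U)$ need to be twisted by a sufficiently high power $g^*H^{\otimes k}$ (equivalently, multiplied by sections of $km_0\,f^*D$ pulled back from $Z$) to extend to all of $W$; the resulting map is $\phi_{(s+km_0)D}$ for some $k\gg 0$, and it is the dimension of \emph{its} image that exceeds $\dim Z$. Your final paragraph shows you understand this, so just adjust the choice of $m$ accordingly. With that correction the sketch is sound, granting the stabilisation lemma which is indeed the substantive input.
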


For a proof see [\ref{Iitaka}]. If $\kappa(D)=0$, we can take $f$ to be any resolution 
and $g$ the constant map to a point. If $\kappa(D)=\dim X$, then again we can take $f$ 
to be any resolution and $g=f$. 

\begin{rem}
Let $D$ be a Cartier divisor on a normal projective variety $X$. It is well-known that 
$$
\kappa(D)=\max\{\dim \phi_{mD}(X)\mid h^0(X,mD)\neq 0\}
$$
if $h^0(X,mD)\neq 0$ for some $m\in \N$.
\end{rem}

\clearpage
%%%%%%%%%%%%%%%%%%%%%%%%%%%%%%%%%%%%%%%%%%%%%%%%%%%%%%%%%%%%
~ \vspace{2cm}
\section{\textbf{The log minimal model program: main conjectures}}
\vspace{1cm}
%%%%%%%%%%%%%%%%%%%%%%%%%%%%%%%%%%%%%%%%%%%%%%%%%%%%%%%%%%%

\begin{defn}[Contraction of an extremal ray]
Let $R$ be an extremal ray of $\overline{NE}(X/Z)$ of a normal variety $X/Z$. 
A contraction $f\colon X\to Y/Z$ is the contraction of $R$ if
$$
f(C)=pt. \Longleftrightarrow [C]\in R
$$
for any curve $C\subset X$.
\end{defn}

\begin{rem}[Types of contractions]
For the contraction of an extremal ray $R$ we have the following possibilities:
\begin{description}
 \item[Divisorial] $f$ is birational and it contracts at least some divisors.
 \item[Small]   $f$ is birational and it does not contract divisors.
 \item[Fibration]  $f$ is not birational hence $\dim X>\dim Y$.
\end{description}
\end{rem}

\begin{defn}[Minimal model-Mori fibre space] \label{d-mmodel}
 Let $(X/Z,B)$, $(Y/Z,B_Y)$ be lc pairs and $\phi\colon X\bir Y/Z$ a birational map whose inverse does not
 contract any divisors such that $B_Y=\phi_*B$. Moreover, assume that  
 
 $\bullet$ $d(E,X,B)\le d(E,Y,B_Y)$ for any prime divisor $E$ on resolutions of $X$ 
 but with strict inequality if $E$ is on $X$ and contracted by $\phi$.\\
  We say that $(Y/Z,B_Y)$ is a \emph{log minimal model} for $(X/Z,B)$
 if

 $\bullet$ $K_Y+B_Y$ is nef$/Z$.\\
On the other hand, we say that $(Y/Z,B_Y)$ is a \emph{Mori fibre space}  for $(X/Z,B)$ if
 
 $\bullet$ there is a contraction $g\colon Y\to T/Z$ of a $(K_Y+B_Y)$-negative extremal ray 
 with $\dim Y>\dim T$.
\end{defn}

\begin{rem}
There is no consensus on what the definition of  minimal models and Mori fibre spaces 
should be. There are varying definitions. Sometime one might like to 
add assumptions such as  $(Y/Z,B_Y)$ being $\Q$-factorial dlt, or one might like to weaken assumptions 
such as allowing $\phi^{-1}$ to contract certain divisors. However, these definitions 
are very similar and differ only in minor details. 
The above definition is most suitable for these lectures. 
\end{rem}

Now we come to the two most important problems in birational geometry.

\begin{conj}[Minimal model] Let $(X/Z,B)$ be a lc pair.
Then, $(X/Z,B)$ has a log minimal model or a Mori fibre space.
\end{conj}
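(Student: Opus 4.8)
The plan is to run the log minimal model program for $(X/Z,B)$ itself: produce a sequence of divisorial contractions and flips, each strictly improving the pair, and argue that it terminates either with $K_{Y}+B_{Y}$ nef$/Z$ (a log minimal model) or with a $(K_{Y}+B_{Y})$-negative fibration (a Mori fibre space). First I would reduce to a workable class of pairs. Taking a log resolution (Hironaka) and running a partial MMP, or performing a dlt blow-up, one replaces $(X/Z,B)$ by a $\Q$-factorial dlt pair, and since a log minimal model or Mori fibre space of the new pair produces one for the original we may assume $(X/Z,B)$ is $\Q$-factorial dlt. Peeling off the components of $\lfloor B\rfloor$ one at a time and invoking special termination, together with a small perturbation of the coefficients, one reduces further to the $\Q$-factorial klt case; and by adding a small ample divisor over $Z$ one may also assume $B$ is big$/Z$, which is what lets the Kodaira-vanishing based machinery operate.

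Now the program. If $K_{X}+B$ is nef$/Z$ we stop with a log minimal model. Otherwise the Cone and Contraction theorems supply a $(K_{X}+B)$-negative extremal ray $R$ of $\overline{NE}(X/Z)$ with contraction $f\colon X\to Y/Z$. If $f$ is a fibration we stop with a Mori fibre space. If $f$ is divisorial, replace $(X/Z,B)$ by $(Y/Z,f_{*}B)$: it is again $\Q$-factorial klt, $\rho(Y/Z)=\rho(X/Z)-1$, and the discrepancy condition of Definition~\ref{d-mmodel} is strictly improved, so we continue. If $f$ is small, the pair has no good model on $Y$ and we must perform the flip $X\bir X^{+}/Y$ instead; granting that the flip exists, $X^{+}$ is again $\Q$-factorial klt, $K_{X^{+}}+B^{+}$ is ample$/Y$, discrepancies strictly improve along $R$, and we continue with $X^{+}$. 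Since each step strictly improves the pair, any terminating run ends at a log minimal model or a Mori fibre space.

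Two inputs remain, and they are the real content. The first is \emph{existence of flips}: the flipping contraction must admit a flip, which is equivalent to finite generation of the relative log-canonical algebra $\bigoplus_{m\ge 0} f_{*}\mathcal{O}_{X}(m(K_{X}+B))$; in the $\Q$-factorial klt big$/Z$ situation this is available. The second --- and this is the main obstacle --- is \emph{termination}: one must exclude an infinite sequence of flips. Divisorial contractions drop $\rho$ and so occur only finitely often, but flips preserve $\rho$, and no unconditional termination statement is known. The standard strategy is to run the MMP \emph{with scaling} of an auxiliary ample divisor $A$, following the nef threshold $\lambda$ along the way, and to reduce termination of such a sequence --- via special termination and an analysis of how log canonical thresholds and minimal models behave in dimension $<\dim X$ --- to an induction on dimension. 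Closing that induction, i.e.\ proving termination with scaling or otherwise producing the minimal model directly, is exactly where the difficulty of the conjecture is concentrated; the rest of the argument above is by now routine.
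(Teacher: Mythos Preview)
The statement you are addressing is a \emph{conjecture} in the paper, not a theorem; the paper gives no proof of it and instead records the known partial results (dimension $\le 4$, pairs of general type via BCHM, etc.). So there is no ``paper's proof'' to compare against. Your proposal is not a proof either, and to your credit you say as much: you correctly isolate termination of flips as the missing input and describe the standard scaling/special-termination strategy without claiming to close it.

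That said, your reduction steps contain a genuine error that you should not let slide. The claim ``by adding a small ample divisor over $Z$ one may also assume $B$ is big$/Z$'' is false as a reduction: replacing $B$ by $B+\epsilon A$ changes the log canonical divisor, and a log minimal model (or Mori fibre space) for $(X/Z,B+\epsilon A)$ need not yield one for $(X/Z,B)$. The bigness hypothesis on $B$ is precisely what makes the BCHM machinery go through, and the conjecture without that hypothesis is open in part \emph{because} one cannot reduce to it. Similarly, ``peeling off components of $\lfloor B\rfloor$'' and perturbing coefficients does not reduce the lc case to the klt case in any straightforward way; the lc case is substantially harder (abundance and even the cone theorem require separate work there). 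Your outline of the LMMP mechanics---cone theorem, contraction, divisorial/flipping/fibration trichotomy, discrepancy improvement, existence of flips via finite generation---is accurate for $\Q$-factorial klt pairs, but you should present it as the \emph{strategy} underlying the conjecture rather than as a reduction to a solved problem.
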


After the contributions from many people in the 1980's notably Mori, Kawamata, Shokurov, Koll\'ar, Reid, 
etc, this conjecture was settled in dimension $3$ by Shokurov [\ref{Shokurov-log-flips}][\ref{Shokurov-log-models}] 
and Kawamata [\ref{Kawamata-termination}]. The conjecture was verified by Shokurov [\ref{Shokurov-ordered}] 
and Birkar [\ref{Birkar-mmodel-Shokurov}][\ref{Birkar-mmodel}] in dimension $4$. The latter paper also settles most cases in dimension $5$. Moreover, Birkar [\ref{Birkar-mmodel}][\ref{Birkar-mmodel-II}]
[\ref{Birkar-WZD}] proposes inductive approaches to the minimal model conjecture. 
Finally, Birkar-Cascini-Hacon-McKernan [\ref{BCHM}] settles the conjectures for pairs of general 
type in any dimension (see also [\ref{Birkar-Paun}]).

\begin{conj}[Abundance] Let $(Y/Z,B_Y)$ be a lc pair with $K_Y+B_Y$ nef$/Z$.
Then, $K_Y+B_Y$ is semi-ample$/Z$, that is, 
there is a contraction $h\colon Y\to S/Z$ 
and an ample$/Z$ $\R$-divisor $H$ on $S$ such that
$$
K_Y+B_Y\sim_\R h^* H
$$
\end{conj}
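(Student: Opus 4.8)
The plan is to organise the attack around the \emph{numerical dimension} $\nu(K_Y+B_Y)$ and to reduce aggressively before attempting anything substantive. First I would reduce to the case where $(Y/Z,B_Y)$ is $\Q$-factorial dlt: taking a log resolution and running a suitable MMP one may assume this without affecting whether $K_Y+B_Y$ is semi-ample, since both nefness and semi-ampleness of $K+B$ are preserved along such a program. Next I would reduce the dlt case to the klt case together with abundance in lower dimension: writing adjunction $K_Y+B_Y|_S=K_S+B_S$ along the components $S$ of $\lfloor B_Y\rfloor$ and invoking special termination is the standard device for this. Finally, localising on $Z$, one may assume $Z$ is a point, so that $Y$ is projective, $(Y,B_Y)$ is klt, and $K_Y+B_Y$ is nef. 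Since $\kappa(K_Y+B_Y)\le \nu(K_Y+B_Y)$ always, the conjecture will follow from the equality $\kappa=\nu$ combined with the Iitaka fibration package.

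I would then split into three cases. If $\nu=\dim Y$, then $K_Y+B_Y$ is nef and big, so by Corollary \ref{c-nef-to-ample} it is a limit of ample plus small effective divisors; Kawamata--Viehweg vanishing together with a non-vanishing statement then yield the base-point-free theorem, giving semi-ampleness (this case is essentially classical). If $\nu=0$, then $K_Y+B_Y\equiv 0$, and one must upgrade numerical triviality to $K_Y+B_Y\sim_{\R}0$; for klt pairs this is available (via the existence of minimal models for pairs of log general type, or via the work of Nakayama and Gongyo), after which semi-ampleness is trivial. The substantive case is $0<\nu<\dim Y$: here I would first prove \emph{non-vanishing}, i.e. $\kappa(K_Y+B_Y)\ge 0$, following Shokurov's strategy — on a log resolution express multiples of $K_Y+B_Y$ as ample-minus-effective perturbations, lift sections from a carefully chosen divisor using vanishing, and induct on dimension, controlling coefficients by rationality and boundedness of log canonical thresholds. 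Granting $\kappa\ge 0$, take the Iitaka fibration $f\colon W\to Y$, $g\colon W\to X$ of $K_Y+B_Y$ from the Iitaka fibration theorem above, so that $\kappa(f^*(K_Y+B_Y)|_V)=0$ on the generic fibre $V$. By induction on dimension abundance makes this restriction semi-ample, hence $\R$-linearly trivial on $V$, and the canonical bundle formula (Kodaira's, in the Fujino--Mori form of adjunction for fibre spaces) produces a klt pair $(X,B_X+M_X)$ with $f^*(K_Y+B_Y)\sim_{\R}g^*(K_X+B_X+M_X)$ and $K_X+B_X+M_X$ big. Applying the $\nu=\dim$ case on $X$, together with semi-ampleness of the moduli part $M_X$, makes $K_X+B_X+M_X$ semi-ample, and descending along $f$ and $g$ yields semi-ampleness of $K_Y+B_Y$.

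The hard part — and the reason this remains a conjecture — is concentrated in exactly two places of the third case. First, non-vanishing in the range $0<\nu<\dim Y$ when $\kappa$ is not known a priori: the inductive section-lifting argument does not close up in higher dimension without genuinely new input, and this is open beyond the low-dimensional and various special settings. Second, even granting non-vanishing, the descent through the canonical bundle formula requires the moduli divisor $M_X$ to be semi-ample — the effective adjunction, or b-semi-ampleness, conjecture — which is itself open in general; and one still has to establish the step $\kappa=\nu$ (numerical abundance), which is not formal. In dimension $\le 3$ all of these ingredients are known (Miyaoka, Kawamata, Keel--Matsuki--McKernan), and there are partial higher-dimensional results (Nakayama, Siu's analytic methods, Gongyo, Druel, Hacon, and others), but I expect that a proof in full generality would demand essentially new ideas rather than an assembly of the tools developed in these notes.
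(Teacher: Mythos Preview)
The paper does not prove this statement: it is recorded as a \emph{conjecture}, with remarks that it is known in dimension $\le 3$ (Miyaoka, Kawamata, Keel--Matsuki--McKernan) and open in general. There is therefore no ``paper's own proof'' to compare against.

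Your proposal is not a proof either, and you are aware of this: you correctly isolate the two genuine obstructions (nonvanishing for $0<\nu<\dim Y$, and b-semi-ampleness of the moduli part in the canonical bundle formula) and state explicitly that these are open. As a \emph{strategy outline} your sketch is accurate and reflects the standard reduction scheme in the literature. A few small corrections: the reduction from lc to $\Q$-factorial dlt is not by ``taking a log resolution and running an MMP'' but by a dlt blowup (Theorem~\ref{t-Q-factorial-blup} in these notes); the reduction from dlt to klt via adjunction on $\lfloor B_Y\rfloor$ is more delicate than you indicate and does not simply follow from special termination; and the passage ``localising on $Z$, one may assume $Z$ is a point'' needs justification, since relative semi-ampleness is not a purely fibrewise condition. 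None of these affect the honest assessment you give at the end: the substantive content of abundance lies exactly where you say it does, and is beyond the tools developed in these lectures.
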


The conjecture was proved in a series of papers of Miyaoka [\ref{Miyaoka-Kodaira}]
[\ref{Miyaoka-abundance}], Kawamata [\ref{Kawamata-abundance}], and 
Keel-Matsuki-McKernan [\ref{Keel-Matsuki-McKernan}] (see also 
Shokurov [\ref{Shokurov-log-models}] and Miyaoka-Peternell [\ref{Miyaoka-Peternell}]).
It is expected that analytic methods are more capable of attacking the 
abundance conjecture. For example see Demailly-Hacon-P\u{a}un [\ref{Demailly-Hacon-Paun}] 
for a recent development.

One problem that is at the heart of the two above conjectures is the following.

\begin{conj}[Nonvanishing] Let $(X/Z,B)$ be a lc pair such that $K_X+B$ is 
pseudo-effective$/Z$.
Then, $K_X+B\sim_\R M/Z$ for some $M\ge 0$.
\end{conj}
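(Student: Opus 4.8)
The plan is to argue by induction on $\dim X$, proving nonvanishing simultaneously with the existence of log minimal models, and to split the problem according to the numerical dimension of the log canonical class. First reduce to $Z$ a point: replacing $X\to Z$ by its Stein factorisation and passing to a general fibre, together with a specialisation/gluing argument, reduces to $X$ projective with $K_X+B$ pseudo-effective. Next take a log resolution $f\colon Y\to X$ and write $K_Y+B_Y=f^*(K_X+B)+E$ with $B_Y\ge 0$ the birational transform of $B$ plus the reduced $f$-exceptional divisor and $E\ge 0$ exceptional; then $(Y,B_Y)$ is $\Q$-factorial dlt, $K_Y+B_Y$ is still pseudo-effective, and a section $K_Y+B_Y\sim_\R M_Y\ge 0$ pushes down to $K_X+B\sim_\R f_*M_Y\ge 0$, so we may assume $(X,B)$ is $\Q$-factorial dlt. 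Finally, since $K_X+B$ is pseudo-effective, a log MMP for $(X/Z,B)$ --- available in lower dimension by the inductive hypothesis and in dimension $\dim X$ as part of the simultaneous induction --- cannot terminate in a Mori fibre space, whose log canonical class is not pseudo-effective, and so produces a log minimal model $(Y/Z,B_Y)$ with $K_Y+B_Y$ nef; by the discrepancy inequality of Definition \ref{d-mmodel}, a section of $K_Y+B_Y$ pulls back and pushes forward to a section of $K_X+B$. Hence it suffices to treat a $\Q$-factorial dlt pair $(Y,B_Y)$ with $K_Y+B_Y$ nef, and I would split on $\nu:=\nu(K_Y+B_Y)$.

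\textbf{The two extreme cases.} If $\nu=\dim Y$ then $K_Y+B_Y$ is nef and big; reducing to the klt case and writing $K_Y+B_Y\sim_\R A+G$ with $A$ ample and $G\ge 0$ via Theorem \ref{l-Kodaira-lemma}, nonvanishing follows from the Birkar--Cascini--Hacon--McKernan argument: one cuts by a general member, lifts pluri-log-canonical sections from a component of $G$ by a Siu-type extension theorem, and invokes the induction on dimension for the section on the lower-dimensional locus. If $\nu=0$ then $K_Y+B_Y\equiv 0$, and numerical triviality of the log canonical class of an lc pair forces $K_Y+B_Y\sim_\R 0$ by the abundance theorem for numerically trivial lc pairs. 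Both extremes are thus known.

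\textbf{The hard intermediate case.} The heart of the matter is $0<\nu<\dim Y$. The plan is to produce a fibration $g\colon Y\to W$ with $0<\dim W=\nu<\dim Y$ such that $(K_Y+B_Y)|_F\equiv 0$ on a general fibre $F$ --- built, roughly, from the null locus of the nef class $K_Y+B_Y$, or via a relative MMP once the fibres are known to behave well, which is abundance-type input on $F$ and, since $\dim F<\dim Y$, is covered by the induction. Granting $g$, the canonical bundle formula of Fujino--Mori and Ambro descends the class as $K_Y+B_Y\sim_\R g^*(K_W+B_W+M_W)$, where $B_W\ge 0$ is the discriminant, $M_W$ the (b-nef) moduli part, and $(W,B_W+M_W)$ is a log canonical pair polarised by a nef class, of dimension $\nu<\dim Y$, whose polarised log canonical class is again pseudo-effective; applying the inductive hypothesis --- necessarily stated for such generalised pairs, so that the output feeds back into it --- gives $K_W+B_W+M_W\sim_\R N\ge 0$, and hence $K_Y+B_Y\sim_\R g^*N\ge 0$.

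\textbf{Main obstacle and status.} The decisive difficulty is constructing $g$ without presupposing the conclusion: one cannot invoke the Iitaka fibration, which already requires $\kappa\ge 0$ --- precisely what is being proved --- so extracting a fibration from a nef class of intermediate numerical dimension is essentially an abundance-type statement and is the deepest ingredient. Secondary difficulties are controlling the moduli part $M_W$ (it is nef only after a birational base change of $W$, and need not be semiample), and organising the simultaneous induction over generalised pairs so that nonvanishing, existence of minimal models, and the relevant termination statements are proved together. Unconditionally this programme currently succeeds when $K_X+B$ is big (Birkar--Cascini--Hacon--McKernan), when $\nu=0$, and in dimension $\le 3$; the intermediate-numerical-dimension case in dimension $\ge 4$ remains the crux and is open in general.
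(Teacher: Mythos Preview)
The statement you are attempting is presented in the paper as an \emph{open conjecture}, not a theorem; the paper explicitly says that beyond dimension $3$ ``there does not seem to be the slightest idea to approach this conjecture.'' You correctly acknowledge this at the end of your proposal. What the paper \emph{does} prove is the special case of Theorem~\ref{t-nonvanishing}: $(X/Z,B)$ klt with $B$ big$/Z$. So there is no ``paper's own proof'' of the full conjecture to compare against; one can only compare your sketch to the paper's proof of the special case and to the known inductive architecture.

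Even as a strategy, your outline has a genuine circularity that the paper's scheme carefully avoids. You propose to first run an LMMP on $(X,B)$ in dimension $d$ to reach a minimal model, and only then split on $\nu(K_Y+B_Y)$. But in the inductive scheme of the paper (and of BCHM), constructing log minimal models in dimension $d$ \emph{requires} nonvanishing in dimension $d$ as input: Theorem~\ref{t-nonvanishing-to-mmodel} is ``from nonvanishing to minimal models,'' not the other way round. Your phrase ``available \dots in dimension $\dim X$ as part of the simultaneous induction'' does not explain how this loop is broken. The paper's proof of the klt big case (Theorems~\ref{t-to-nonvanishing}, \ref{t-nonvanishing-induction} and Lemmas~\ref{l-nv-bounded}, \ref{l-nv-unbounded}) never assumes a full minimal model in dimension $d$: it splits instead on Nakayama's $\kappa_\sigma(K_X+B)$; when $\kappa_\sigma=0$ it uses $K_X+B\equiv N_\sigma\ge 0$ and a perturbation by the ample part of $B$ to reduce to the already-assumed ``effective'' case of (1) in dimension $d$; when $\kappa_\sigma>0$ it manufactures a plt centre $S$, runs only an LMMP with scaling \emph{near} $S$ (special termination, needing only dimension $d-1$), and lifts sections from $S$ via Kawamata--Viehweg (Lemma~\ref{l-nv-extension}). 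This is a genuinely different, and more delicate, organisation than yours.

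Your treatment of the extreme cases is also off. If you have already reached a minimal model and $K_Y+B_Y$ is nef and big, then in the klt case nonvanishing is immediate from the base point free theorem --- no extension theorem is needed. And for general lc pairs, ``abundance for numerically trivial lc pairs'' ($\nu=0$) is itself a deep theorem (Gongyo et al.), not available within the paper's framework. Finally, your intermediate-$\nu$ plan via a canonical bundle formula and induction on generalised pairs is a reasonable description of one line of attack, but as you say, producing the fibration $g$ without already knowing $\kappa\ge 0$ is precisely the missing abundance-type step; this is the obstacle, not a proof.
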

 
 The conjecture is proved in dimension $3$ mainly through Miyaoka's paper [\ref{Miyaoka-Kodaira}]. 
 In higher dimension very little is known. Even worse, there does not seem to be the slightest 
 idea to approach this conjecture. Miyaoka's arguments heavily rely on the Riemann-Roch 
 theorem which becomes very complicated in dimension $\ge 4$. 
Birkar [\ref{Birkar-mmodel-II}]
proves that the nonvanishing conjecture implies the minimal model conjecture.
On the other hand, the methods of Demailly-Hacon-P\u{a}un [\ref{Demailly-Hacon-Paun}] 
seem to be capable of reducing abundance to the nonvanishing conjecture.
So, it is fair to say that the nonvanishing conjecture is currently the most central 
problem in birational geometry.
Note that if $B$ has rational coefficients, then the conjecture is saying that 
$h^0(X,m(K_X+B))\neq 0$ for some integer $m>0$. This explains the nonvanishing terminology.

In general minimal models and Mori fibre spaces are not unique.
This makes it difficult or perhaps impossible to canonically identify a minimal model or a  
Mori fibre space using data on $(X/Z,B)$. We will see later that the LMMP 
provides a way to reach these models but the path is not unique. 
However, when $K_X+B$ is rational and big$/Z$, one gets something that is quite 
close to a minimal model.

\begin{conj}[Finite generation]\label{conj-fg}
 Let $(X/Z,B)$ be a lc pair such that $K_X+B$ is a 
$\Q$-divisor. Then, the log canonical algebra 
$$
\mathcal{R}(X/Z,K_X+B):=\bigoplus_{m\ge 0} f_*\mathcal{O}_X(\rddown{m(K_X+B)}) 
$$
is a finitely generated $\mathcal{O}_Z$-algebra where $f$ is the given morphism 
$X\to Z$.
\end{conj}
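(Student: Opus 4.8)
The plan is to deduce finite generation of the log canonical algebra from the existence of good minimal models (or Mori fibre spaces) in the relevant dimension — i.e. to reduce the Finite Generation Conjecture to the Minimal Model and Abundance Conjectures, which were the two "most important problems" stated just above. So I would not attempt a direct ring-theoretic argument; instead I would run the LMMP on $K_X+B$ and read off finite generation from the output. First I would reduce to the case where $(X/Z,B)$ is $\Q$-factorial dlt, by taking a $\Q$-factorial dlt modification (a log resolution together with the observation that the log canonical algebra is a birational invariant up to the fixed part, using the Remark on decompositions $D = \Mov D + \Fix D$ and the exercise $\kappa(D)=\kappa(f^*D)$); passing to a higher model changes neither finite generation nor the graded pieces $f_*\mathcal{O}_X(\rddown{m(K_X+B)})$ in a way that matters, since a birational contraction $g\colon W\to X$ gives $g_*\mathcal{O}_W(\rddown{m(K_W+B_W)}) = \mathcal{O}_X(\rddown{m(K_X+B)})$ when $B_W$ is the crepant pullback. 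Since the statement is local over $Z$, I would also shrink $Z$ to an affine base so that "finitely generated $\mathcal{O}_Z$-algebra" becomes finite generation of a $\Q$-graded ring over a Noetherian ring.

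Next I would split according to whether $K_X+B$ is pseudo-effective$/Z$. If it is not, then by Nonvanishing (contrapositive) there is no $m>0$ with $h^0$ nonzero, so $\mathcal{R}(X/Z,K_X+B)$ is just $\mathcal{O}_Z$ in degree $0$ and is trivially finitely generated; more precisely one runs an LMMP to a Mori fibre space $Y\to T/Z$ on which $K_Y+B_Y$ is negative on the fibres, forcing all higher graded pieces to vanish over an open subset and, with a little more care, globally over $Z$. If $K_X+B$ is pseudo-effective$/Z$, I would run the $(K_X+B)$-LMMP with scaling to reach a log minimal model $(Y/Z,B_Y)$ with $K_Y+B_Y$ nef$/Z$ (Minimal Model Conjecture), then invoke Abundance to get that $K_Y+B_Y$ is semi-ample$/Z$, say $K_Y+B_Y \sim_\Q h^*H/Z$ for $h\colon Y\to S/Z$ and $H$ ample$/Z$ on $S$. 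Finite generation of $\bigoplus_m h_*\mathcal{O}_S(\rddown{mH})$ over $\mathcal{O}_Z$ is then the classical fact about section rings of ample divisors (cf.\ the Example on ample divisors), and one checks that $\mathcal{R}(X/Z,K_X+B)$ agrees with a Veronese-type subalgebra of this ring up to a finitely generated module correction coming from the rounding $\rddown{\cdot}$ and from the birational map $\phi\colon X\bir Y$ — the discrepancy inequality $d(E,X,B)\le d(E,Y,B_Y)$ in Definition \ref{d-mmodel} guarantees that pushing forward along $\phi^{-1}$ only adds effective exceptional divisors, so $f_*\mathcal{O}_X(\rddown{m(K_X+B)})$ and $(h\circ\phi)_*\mathcal{O}(\rddown{mh^*H})$ coincide.

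The main obstacle is honest: this argument is not self-contained — it presupposes the Minimal Model and Abundance Conjectures in the ambient dimension, which are exactly the hard open problems flagged in the text. So what I am really proposing is the \emph{reduction}: Finite Generation follows from Minimal Model $+$ Abundance. The genuinely nontrivial technical step inside this reduction — the one I would expect to spend real effort on — is handling the rounding $\rddown{m(K_X+B)}$ uniformly in $m$ and matching the graded pieces across the birational map $\phi$: one needs that the "error" between $\sum_m H^0(\rddown{m(K_X+B)})$ and $\sum_m H^0(mh^*H)$ (after clearing denominators so everything is integral) is a finitely generated graded module over the finitely generated section ring, which is where the combinatorics of the fractional parts $\langle m(K_X+B)\rangle$ and a careful choice of $\Q$-Cartier index must be controlled. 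For the special case where $K_X+B$ is additionally assumed big$/Z$ — hinted at in the sentence preceding the conjecture — one can bypass the full Minimal Model Conjecture and instead cite Birkar–Cascini–Hacon–McKernan directly, since for pairs of general type the minimal model exists and is automatically good, giving an unconditional proof in that case.
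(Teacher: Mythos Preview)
The statement you are attempting to prove is labelled a \emph{Conjecture} in the paper, and the paper does not supply a proof of it in the stated generality (lc pairs). The paper explicitly remarks that the Finite Generation Conjecture is ``more or less understood'' to be equivalent to the Minimal Model and Abundance Conjectures combined, which is exactly the reduction you propose --- so your overall strategy is correct as a \emph{conditional} argument, and you were honest about this. There is therefore nothing to compare against for the lc case: neither you nor the paper gives an unconditional proof.

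What the paper \emph{does} prove is the klt case, later as Corollary~\ref{cor-fg}, and it is worth comparing your sketch to that proof. The paper's argument is cleaner than yours in two places. First, instead of your pseudo-effective/non-pseudo-effective dichotomy (where your treatment of the non-pseudo-effective case is a bit hand-wavy about why all higher graded pieces vanish over $Z$), the paper simply invokes a theorem of Fujino--Mori that reduces finite generation to the case $K_X+B$ big$/Z$; this bypasses your Mori fibre space discussion entirely. Second, your worry about ``handling the rounding $\rddown{m(K_X+B)}$ uniformly in $m$'' and matching graded pieces across the birational map is dissolved by the truncation principle (Exercise~\ref{exe-truncation-principle}): one passes to an index $I$ making $I(K_X+B)$ and $I(K_Y+B_Y)$ Cartier, so no fractional parts appear, and then the identification $\mathcal{R}(X/Z,I(K_X+B))\simeq\mathcal{R}(Y/Z,I(K_Y+B_Y))$ is immediate from $g^*(K_X+B)=h^*(K_Y+B_Y)+E$ with $E\ge 0$ exceptional$/Y$. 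Finite generation on $Y$ then follows from Zariski's Theorem~\ref{t-fg-Zariski} on free divisors, rather than from a direct appeal to section rings of ample divisors as you suggest. Your proposed dlt modification step to handle the lc case is reasonable in spirit but is not carried out in the paper, since the lc statement remains conjectural.
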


When $K_X+B$ is big$/Z$, the finite generation conjecture implies that 
the scheme $S'=\Proj \mathcal{R}(X/Z,K_X+B)$ is actually projective over $Z$. 
It turns out that $S'$ is very close to being a log minimal model for $(X/Z,B)$.
In fact, the minimal model and abundance conjectures imply that one can 
construct a log minimal model $(Y/Z,B_Y)$ for $(X/Z,B)$ and $S'$ is nothing but 
the $S$ appearing in the abundance conjecture. In particular, one has a 
birational contraction $Y\to S$ contracting those curves $C$ on $Y/Z$ 
which are $K_Y+B_Y$-numerically trivial.

It is worth to mention that the finite generation conjecture is very strong. 
It is more or less understood that it is actually equivalent to the minimal model and abundance 
conjectures combined.

The following conjecture of Iitaka has been a central problem in birational geometry 
since the 1970's. Kawamata [\ref{Kawamata-Iitaka}] proved that it follows from the minimal model and 
abundance conjectures.

\begin{conj}[Iitaka]
Let $f\colon X\to Z$ be a contraction of smooth projective varieties and $F$ the generic fibre.
Then,
$$
\kappa(K_X)\ge \kappa(K_Z)+\kappa(K_F)
$$
\end{conj}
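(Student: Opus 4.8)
The plan is to deduce the Iitaka conjecture $C_{n,m}$ from the two main conjectures (minimal model + abundance) rather than attack it directly. First I would reduce to the case where $K_X$ is pseudo-effective: if $K_X$ is not pseudo-effective, then by the minimal model conjecture $X$ has a Mori fibre space, so $\kappa(K_X)=-\infty$; and one checks (using the fact that a Mori fibre space structure on a variety dominating $Z$ restricts compatibly, or more simply that $\kappa(K_F)=-\infty$ too when $F$ also carries such a structure in the relative setting) that the inequality $-\infty\ge \kappa(K_Z)+\kappa(K_F)$ is either trivial or reduces to the relative nonvanishing. So assume $K_X$ pseudo-effective. Applying the minimal model and abundance conjectures to $X$, we obtain a log minimal model and a contraction realizing the Iitaka fibration of $K_X$ with $\kappa(K_X)=\dim$ of the base.

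The heart of the argument is the relative version: one should first prove $C_{n,m}$ in the form $\kappa(K_X)\ge \kappa(K_F)+\dim Z$ when $\kappa(K_Z)=\dim Z$ (i.e. $K_Z$ big, so $Z$ of general type), and then bootstrap to the general case via the Iitaka fibration of $Z$. For the case $K_Z$ big: I would pull back a Kodaira-type decomposition $K_Z\sim_{\Q} A+G$ with $A$ ample and $G\ge 0$ (Kodaira's Lemma \ref{l-Kodaira-lemma}), so $f^*K_Z$ becomes big-plus-effective on $X$, and compare $\kappa(K_X)$ with $\kappa(f^*K_Z + (\text{vertical part}))$ plus the fibre contribution $\kappa(K_F)$; the key analytic/geometric input is that sections of $m(K_X)$ restrict to sections of $m K_F$ on the generic fibre and conversely sections over the base multiply up, which is exactly the kind of restriction–extension statement governed by vanishing theorems discussed earlier. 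For the general base, take the Iitaka fibration $Z\bir Z'$, form the fibre product, and apply the general-type case to the induced fibration over $Z'$ together with the easy additivity for the fibration $Z\to Z'$ (where the base is of general type by construction of the Iitaka fibration).

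The hard part will be the relative statement with $Z$ of general type, i.e. essentially the semipositivity of the direct image sheaves $f_*\omega_{X/Z}^{\otimes m}$ — this is where Kawamata's weak positivity theorem (or Viehweg's) enters, and it is precisely the ingredient that does not follow formally from the minimal model and abundance conjectures but must be supplied from Hodge theory. In these lectures I would therefore either cite Kawamata's theorem that $f_*\omega_{X/Z}^{\otimes m}$ is weakly positive, or — following Kawamata's reduction — show that granting the minimal model and abundance conjectures in dimension $n$ one can run the LMMP over $Z$ to replace $X$ by a relative minimal model $X'/Z$ with $K_{X'/Z}$ semi-ample over $Z$, whence $K_{X'}\sim_{\Q} f'^*(K_Z + \Delta) + (\text{semiample over } Z)$ for an effective $\Delta$ coming from the canonical bundle formula, and then apply the easy addition $\kappa(K_{X'})\ge \kappa(f'^*(K_Z+\Delta)) \ge \kappa(K_Z) + \kappa(K_F)$ using that $\kappa$ of the fibre is read off from the semiample part. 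I expect the bookkeeping in the canonical bundle formula (ensuring the boundary $\Delta$ on $Z$ is effective and that $(Z,\Delta)$ is lc so the minimal model conjecture applies in lower dimension) to be the main technical obstacle, and I would flag that this is exactly Kawamata's theorem that $C_{n,m}$ follows from the LMMP.
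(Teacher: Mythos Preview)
The paper does not prove this statement: it is recorded as a \emph{conjecture}, with the sole commentary that Kawamata proved it follows from the minimal model and abundance conjectures (citing \cite{Kawamata-Iitaka}). There is therefore no proof in the paper to compare your proposal against.

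That said, your outline is broadly the shape of the Kawamata reduction the paper alludes to, so let me flag where your sketch is loose. In the case where $K_X$ is not pseudo-effective you assert the inequality becomes trivial, but $-\infty \ge \kappa(K_Z)+\kappa(K_F)$ forces $\kappa(K_F)=-\infty$, and this is not automatic from $X$ having a Mori fibre space over some base unrelated to $Z$; you need to argue that $K_F$ is not pseudo-effective (e.g.\ by running a relative MMP over $Z$ and restricting to the generic fibre). More substantially, in your final paragraph you write ``$\kappa(f'^*(K_Z+\Delta)) \ge \kappa(K_Z)+\kappa(K_F)$'' and call this ``easy addition'', but this inequality is neither easy nor the right shape: easy addition gives $\kappa(K_X)\le \kappa(K_F)+\dim Z$, the opposite direction. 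The actual mechanism in Kawamata's argument is that after obtaining a good relative minimal model with $K_{X'/Z}$ semi-ample over $Z$, the induced map $X'\to W$ over $Z$ has generic fibre of Kodaira dimension zero, and one applies the canonical bundle formula together with the log Iitaka conjecture in the lower-dimensional pair $(Z,\Delta)$ --- this is where induction on dimension and the lc property of $(Z,\Delta)$ genuinely matter. Your instinct that the canonical bundle formula bookkeeping is the crux is correct, but the inequality you wrote down as the endgame is not the one that actually closes the argument.
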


Another major problem due to Iitaka is the following.

\begin{thm}[Invariance of plurigenera]
Let $f\colon X\to Z$ be a smooth projective contraction of smooth varieties.
Then, the function
$$
h^0(X_z,mK_{X_z})
$$
is constant.
\end{thm}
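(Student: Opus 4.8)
The plan is to recast the statement as an extension problem for pluricanonical sections and then to solve it by an induction on $m$ driven by an $L^2$-extension theorem of Ohsawa--Takegoshi type, following Siu. We may assume $Z$ is connected; since the claim is local on $Z$ it is enough to prove constancy along every smooth curve through a chosen point, so after restricting to such a curve we may assume $\dim Z=1$. Then the fibre $X_0$ over $0\in Z$ is a smooth divisor in $X$, and adjunction gives $K_{X_0}=(K_X+X_0)|_{X_0}$; since $\mathcal O_X(X_0)|_{X_0}=f^*\mathcal O_Z(0)|_{X_0}$ is trivial, this yields $(mK_X)|_{X_0}=mK_{X_0}$ for all $m$. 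On the other hand $\mathcal O_X(mK_{X/Z})$ is flat over $Z$ with fibrewise restriction $\mathcal O_{X_z}(mK_{X_z})$, so by the semicontinuity theorem $z\mapsto h^0(X_z,mK_{X_z})$ is upper semicontinuous; hence it suffices to prove it is locally constant, and in view of the triviality just noted this comes down to showing that, after possibly shrinking $Z$ around $0$, the restriction map
$$
H^0(X,mK_X)\longrightarrow H^0(X_0,mK_{X_0})
$$
is surjective for every $m\ge 1$.

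I would first handle the case in which $X_0$ is of general type, i.e.\ $K_{X_0}$ is big. Fix a divisor $A\ge 0$ on $X$ that is ample over $Z$ and large enough that $A|_{X_0}$ and $A|_{X_0}+K_{X_0}$ are globally generated and the first few restriction maps $H^0(X,A+\ell K_X)\to H^0(X_0,A|_{X_0}+\ell K_{X_0})$ are surjective for elementary reasons. One then proves, by induction on $m$, that $H^0(X,A+mK_X)\to H^0(X_0,A|_{X_0}+mK_{X_0})$ is surjective for all $m$: the sections already extended at level $m$ define a singular hermitian metric of semipositive curvature current on $A+mK_X$, and writing $A+(m+1)K_X=K_X+X_0+(A+mK_X-X_0)$ one applies the extension theorem to the metrised bundle $A+mK_X-X_0$; the general-type hypothesis forces the relevant multiplier ideal along $X_0$ to be trivial, so any section of $A|_{X_0}+(m+1)K_{X_0}$ is $L^2$ on $X_0$ and extends. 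Finally one removes $A$: given $s\in H^0(X_0,mK_{X_0})$, extend $s^{\otimes k}$ tensored with a fixed section of $A|_{X_0}$ for every $k$, use the resulting sections of $A+kmK_X$ to build a semipositively curved metric on $mK_X$ against which $s$ is $L^2$ on $X_0$ (the integrand works out to $|s|^{2/m}$, which is integrable), and apply the extension theorem once more. This is in essence Siu's argument for general type; alternatively, in that case one can argue algebraically, exploiting the bigness of $K_{X_0}$ together with Kawamata--Viehweg vanishing to kill the relevant $H^1$ obstruction on $X$ after an auxiliary ample twist, which is the content of the Hacon--McKernan extension theorem.

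The case in which $K_{X_0}$ is not big is the genuine obstacle, and is where I would put the weight of the argument. Now there is no ample divisor intrinsic to $K_{X_0}$ to keep the curvature estimates under control, so the auxiliary twist $A$ must be carried through the whole induction, surjectivity of $H^0(X,A+mK_X)\to H^0(X_0,A|_{X_0}+mK_{X_0})$ has to be proved with bounds uniform in $m$, and the effect of $A$ is made negligible only through a careful asymptotic analysis rather than by a single limit. The crux is to show that the asymptotic multiplier ideal sheaves $\mathcal J(\|mK_X\|)$ do not jump along $X_0$ as $m$ grows, so that the inductive loop closes; this draws on the strict positivity produced by each application of the $L^2$-extension theorem together with a saturation and convergence argument for the associated positive currents, and has no known purely algebraic replacement at this level of generality. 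I would therefore organise the write-up so that the general-type case is complete and self-contained and the general case is reduced cleanly to the uniform-in-$m$ twisted-extension statement, with this non-jumping of multiplier ideals isolated as the single hard lemma — which is precisely the main obstacle.
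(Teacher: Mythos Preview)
The paper does not actually prove this theorem: it states it as a known result, attributes the proof to Siu and Tsuji via analytic methods, and remarks that the general-type case follows algebraically from \textsc{BCHM} together with semicontinuity arguments. There is therefore no ``paper's own proof'' to compare against.

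Your outline is a faithful sketch of Siu's $L^2$-extension approach, which is exactly what the paper cites. You correctly reduce to $\dim Z=1$, identify the key surjectivity of restriction, run the inductive Ohsawa--Takegoshi argument with an auxiliary ample twist, and isolate the genuine difficulty in the non-general-type case (uniform-in-$m$ control of multiplier ideals along the central fibre). Your remark that the general-type case admits an algebraic treatment via extension theorems of Hacon--M\textsuperscript{c}Kernan type also matches the paper's parenthetical comment. In short, your proposal is consistent with the literature the paper defers to, and since the paper supplies no argument of its own there is nothing further to compare.
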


The theorem was proved by Siu [\ref{Siu-invariance}] and Tsuji [\ref{Tsuji-invariance}] using analytic methods. However, it is very desirable to 
find an algebraic proof. Nakayama [\ref{Nakayama-invariance}] proved that 
the theorem follows from the minimal model and abundance conjectures. When the fibres are of 
general type, the theorem follows easily from [\ref{BCHM}] and Grothendieck's standard 
techniques used in the proof of the semi-continuity theorem (cf. 
Birkar [\ref{Birkar-topics}, Remark 3.9.2]) at least when $m\ge 2$.

Let $D$ be an $\R$-Cartier divisor on a normal variety $X/Z$. A Fujita-Zariski decomposition$/Z$ 
for $D$ is an expression $D=P+N$ such that 
\begin{enumerate}
\item $P$ and $N$ are $\R$-Cartier divisors, 
\item $P$ is nef$/Z$, $N\ge 0$, and 
\item if $f\colon W\to X$ is a projective birational morphism from a normal variety, and $f^*D=P'+N'$ 
with $P'$ nef$/Z$ and $N'\ge 0$, then $P'\le f^*P$.
\end{enumerate}

\begin{conj}[Zariski decomposition] Let $(X/Z,B)$ be a lc pair such that $K_X+B$ is 
pseudo-effective$/Z$. Then, there is a resolution of $X$ on which the pullback of 
$K_X+B$ admits a Fujita-Zariski decomposition.
\end{conj}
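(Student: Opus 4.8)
The plan is to deduce this from the Minimal Model and Abundance conjectures stated above; that is, I would establish the implication ``$(X/Z,B)$ has a log minimal model with $K_Y+B_Y$ nef$/Z$ and abundance holds $\Rightarrow$ the pullback of $K_X+B$ to a suitable resolution has a Fujita--Zariski decomposition''. The honest content of the problem is then exactly whatever portion of those two conjectures one can actually supply in the dimension at hand.

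First I would produce a minimal model. Since $K_X+B$ is pseudo-effective$/Z$, the Minimal Model conjecture gives $(X/Z,B)$ either a log minimal model or a Mori fibre space $(Y/Z,B_Y)$. The latter is impossible: a Mori fibre space carries a $(K_Y+B_Y)$-negative fibration $g\colon Y\to T/Z$, so $K_Y+B_Y$ is anti-ample, hence not pseudo-effective, over $T$ and therefore not over $Z$; but the defining birational map $\phi\colon X\bir Y$ has inverse contracting no divisors, so $K_Y+B_Y=\phi_*(K_X+B)$ is the pushforward of a pseudo-effective divisor under a birational contraction and is thus pseudo-effective$/Z$ --- a contradiction. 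So $(Y/Z,B_Y)$ is a log minimal model; by Abundance $K_Y+B_Y$ is semi-ample$/Z$, and in particular nef$/Z$, which is all I shall need.

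Next I would take a smooth variety $W$ with projective birational morphisms $p\colon W\to X$ and $q\colon W\to Y$ (a common resolution), and set $P:=q^*(K_Y+B_Y)$ and $N:=p^*(K_X+B)-q^*(K_Y+B_Y)$. Comparing discrepancies over $X$ and over $Y$, and using that $\phi^{-1}$ contracts no divisors, the definition of a log minimal model forces $N\ge 0$ with $N$ exceptional over $Y$, while $P$ is nef$/Z$ by the previous step; so $p^*(K_X+B)=P+N$ already meets conditions (1) and (2) of a Fujita--Zariski decomposition on the resolution $p\colon W\to X$. For the maximality condition (3), given any projective birational $f\colon V\to W$ from a normal variety and any decomposition $f^*p^*(K_X+B)=P'+N'$ with $P'$ nef$/Z$ and $N'\ge 0$, put $d:=q\circ f\colon V\to Y$. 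Then $f^*P=d^*(K_Y+B_Y)$ and $f^*N\ge 0$ is exceptional over $Y$, so the identity $P'-d^*(K_Y+B_Y)=f^*N-N'$, pushed forward by $d$, gives $d_*\bigl(d^*(K_Y+B_Y)-P'\bigr)=d_*N'\ge 0$. Since $P'$ is nef$/Y$ and $d^*(K_Y+B_Y)$ is numerically trivial$/Y$, the divisor $-\bigl(d^*(K_Y+B_Y)-P'\bigr)$ is nef over $Y$, so the Negativity Lemma~\ref{l-negativity} applied to $d$ yields $d^*(K_Y+B_Y)-P'\ge 0$, i.e. $P'\le f^*P$. Hence $P+N$ is the required decomposition.

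The hard part will not be this bookkeeping but the input: the Minimal Model and Abundance conjectures are themselves open, and everything difficult about the statement --- existence of the minimal model, and semi-ampleness (even just nefness) of $K_Y+B_Y$ --- is precisely what the LMMP is fighting for. An unconditional proof would have to construct that model, or at least extract a sufficiently strong weak version (a termination/nonvanishing statement in the relevant dimension) in order to build $P$ and $N$ directly; once the nef model is available the last step above is routine. Conversely, as Birkar has shown, this reduction essentially reverses, so the Zariski decomposition conjecture sits at the same depth as the minimal model problem.
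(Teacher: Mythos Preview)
Your approach is correct and matches the paper's: the paper does not prove this conjecture but simply remarks that it follows from the minimal model and abundance conjectures, and later (in the ``consequences'' section) carries out exactly your construction $p^*(K_X+B)=q^*(K_Y+B_Y)+N$ on a common resolution, calling it ``a Zariski decomposition of $K_X+B$ in a birational sense'' without spelling out the maximality check. Your verification of condition~(3) via the Negativity Lemma is more complete than what the paper writes down, and is the standard argument.

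One small redundancy: you invoke Abundance to obtain semi-ampleness and hence nefness of $K_Y+B_Y$, but nefness over $Z$ is already built into the definition of a log minimal model (Definition~\ref{d-mmodel}). So for this direction only the Minimal Model conjecture is needed; Abundance plays no role in your argument and can be dropped.
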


It is not difficult to see that this conjecture follows from the minimal model 
and abundance conjectures. For the inverse direction see Birkar [\ref{Birkar-WZD}].

\begin{defn}[Log flip]
 Let $(X/Z,B)$ be a lc pair and $f\colon X\to Y/Z$ the contraction of a 
 $K_X+B$-negative extremal ray of small type. The log flip of this flipping 
 contraction is a  diagram

$$
\xymatrix{ X\ar[rd]_{f}& \dashrightarrow & X^+\ar[ld]^{f^+} \\
&Y&}
$$

such that

\begin{itemize}
\item $X^+$ is a normal variety, projective$/Z$,
\item $f^+$ is a small projective birational contraction$/Z$,
\item $-(K_X+B)$ is ample over $Y$ (by assumption), and $K_{X^+}+B^+$ is ample over $Y$
where $B^+$ is the birational transform of $B$.
 \end{itemize}
\end{defn}

Existence of log flips occupied a central place in birational geometry for 
two decades. Mori proved its existence for terminal singularities in dimension 
$3$ [\ref{Mori-flip}]. Shokurov proved the existence in dimensions $3$ 
[\ref{Shokurov-log-flips}][\ref{Shokurov-log-models}] 
in the lc case and in dimension $4$ in the klt case [\ref{Shokurov-pl-flips}]. 
Shokurov also made major advances on the problem in any dimension in the above papers 
and on the way introduced some of the most important techniques in the field.
Fujino proved the existence in dimension $4$ in the lc case. Hacon-McKernan [\ref{Hacon-McKernan}]
filled in the main missing part of the Shokurov's flip program through their 
extension theorem which originated from Siu's methods in analytic geometry.
Birkar-Cascini-Hacon-McKernan 
[\ref{BCHM}] finally settled the problem in the klt case in any dimension.
It is worth to mention that the $\Q$-factorial dlt case follows immediately 
from the klt case.

\begin{defn}[Log minimal model program: LMMP]
Let $(X/Z,B)$ be a lc pair. The following process is called
the log minimal model program if every step exists:
If $K_X+B$ is not nef/Z, then there is an $(K_X+B)$-extremal ray $R/Z$ and its contraction $f\colon X\to Y/Z$.
If $\dim Y<\dim X$, then we get a Mori fibre space and we stop. If $f$ is a divisorial contraction,
we replace $(X/Z,B)$ with $(Y/Z,B_Y:=f_*B)$ and continue. If $f$ is a flipping contraction, 
we replace $(X/Z,B)$ with the
right hand side  $(X^+/Z,B^+)$ of the flip and continue. 
After finitely many steps, we get a log minimal model or a Mori fibre space. So, 
the LMMP implies the minimal model conjecture.
\end{defn}

The LMMP can be considered in different levels of generality. For example one can start with 
a klt pair or a dlt pair maybe with the extra assumption of $\Q$-factoriality. It turns out that 
these properties are preserved in the course of the program. In particular, in these cases, all 
the pieces of the program are already established with the exception of termination, that is, 
the expectation that the program stops after finitely many steps.

\begin{exa}
Classical MMP for smooth projective surfaces.
\end{exa}

\begin{conj}[Termination]
Let $(X/Z,B)$ be a lc pair. Any sequence of $(K_X+B)$-flips$/Z$ terminates.
\end{conj}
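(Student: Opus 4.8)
The plan is to attach to the pair a numerical invariant that can only decrease, in a well-founded order, under each flip, so that an infinite sequence $(X_1/Z,B_1)\bir(X_2/Z,B_2)\bir\cdots$ becomes impossible; throughout I argue inductively, assuming the conjecture — and in fact the whole LMMP — is known in dimension $\dim X-1$.

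The first stage is \emph{special termination}: after finitely many steps, no flip in the sequence touches the birational transform of any component $S$ of $\lfloor B\rfloor$, nor of any fixed lc center. Using adjunction $(K_{X_i}+B_i)|_{S_i}=K_{S_i}+B_{S_i}$, a flip over $Z$ whose flipping locus meets $S_i$ induces on $S_i$ an isomorphism in codimension one pushing discrepancies strictly upward, or a divisorial contraction, or a flip in dimension $\dim X-1$; meanwhile the negativity lemma (Lemma~\ref{l-negativity}) shows the discrepancies of all divisors over $S_i$ are non-decreasing along the entire sequence. Termination and the finiteness properties of the MMP in dimension $\dim X-1$, together with the fact that divisorial contractions drop the Picard number and so occur only finitely often, force the induced data on each $S_i$ to stabilise; running the argument down the strata of $\lfloor B\rfloor$ one concludes that, after finitely many steps, every flipping and flipped locus avoids all non-klt centers. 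One is thereby reduced to the case where each flipped locus lies in the open set on which the pair is klt.

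In this reduced situation the core difficulty is to convert the monotonicity of discrepancies into a genuinely decreasing well-founded quantity. Pulling $K_X+B$ and $K_{X^+}+B^+$ back to a common resolution $W$ and applying the negativity lemma over $Y$ to their difference (which is anti-nef over $Y$ with zero pushforward) yields $p^*(K_X+B)\ge q^*(K_{X^+}+B^+)$, hence $d(E,X,B)\le d(E,X^+,B^+)$ for every divisor $E$ over $X$, with strict inequality whenever $\operatorname{center}_X(E)$ lies in the flipping locus. Following Shokurov, one then tracks a ``difficulty'' of the form
\[
\operatorname{diff}(X,B)\;=\;\sum_{E\,:\;d(E,X,B)<1}\bigl(1-d(E,X,B)\bigr)
\]
(or a suitable variant counting valuations with discrepancy below fixed thresholds), which by the above is non-increasing under flips. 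The aim is to show $\operatorname{diff}$ is valued in a well-ordered set and strictly drops at each flip, so that an infinite sequence is excluded. As Shokurov showed, this reduction goes through provided (a) the ACC for log discrepancies holds, so the contributions admit no infinite strictly increasing chains, and (b) minimal log discrepancies are lower-semicontinuous and locally bounded, so that only finitely many $E$ contribute below any threshold $\delta<1$.

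The hard part, and the reason this stays a conjecture for $\dim X\ge 4$, is exactly inputs (a) and (b): the ACC and lower-semicontinuity conjectures for minimal log discrepancies are themselves unproved in general, and without them $\operatorname{diff}$ need not be well-founded, since discrepancies can creep toward $1$ along an infinite sequence without ever reaching it. In dimension $3$ the explicit classification of terminal and canonical singularities (via Riemann-Roch) supplies the needed finiteness, so termination is a theorem; in dimension $4$ it is known for klt pairs by refinements of the same strategy; and in every dimension the particular flips \emph{with scaling} produced by the construction in [\ref{BCHM}] terminate, which already suffices to run the MMP for pairs of general type. A complete proof of the conjecture therefore awaits either the ACC and semicontinuity conjectures for minimal log discrepancies, or a genuinely new monotone invariant; at present neither is in hand.
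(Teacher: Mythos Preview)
This statement is a \emph{conjecture}, not a theorem: the paper offers no proof, only a brief survey of partial results after the statement. Your write-up correctly recognises this and ends by acknowledging that a complete argument is not available; in that sense there is nothing to compare, and your overall assessment matches the paper's.

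A few corrections to the details of your survey. First, your claim that termination ``in dimension $4$ is known for klt pairs'' overstates what is proved: the paper records only special cases in dimension $4$ (Alexeev--Hacon--Kawamata, and Birkar's result assuming $\kappa(K_X+B)\ge 2$); full klt termination in dimension $4$ remains open. Second, the reduction you describe (to ACC and lower semicontinuity for \emph{minimal log discrepancies}) is Shokurov's; the paper instead highlights Birkar's reduction to the ACC for \emph{log canonical thresholds} together with lower-dimensional LMMP. Both are legitimate conditional approaches, but they invoke different open conjectures, and you should not conflate them. Finally, your difficulty function $\sum_{d(E,X,B)<1}(1-d(E,X,B))$ as written is an infinite sum over infinitely many valuations and is not the actual invariant Shokurov uses; the working versions count valuations with discrepancy below finitely many thresholds in a fixed set, and it is precisely the structure of that set (via ACC) that forces well-foundedness. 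If you intend this as an informal placeholder, say so; as stated it does not define a finite quantity.
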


This conjecture was established by Kawamata [\ref{Kawamata-termination}] for klt 
pairs in dimension $3$ and in full generality in dimension $3$ by Shokurov [\ref{Shokurov-log-models}].
Not much is known in higher dimension. Alexeev-Hacon-Kawamata [\ref{Alexeev-Hacon-Kawamata}] verified some 
special cases in dimension $4$ and upon this Birkar [\ref{Birkar-termination-4}] proved that 
the conjecture holds in dimension $4$ when $\kappa(K_X+B)\ge 2$. Moreover, Birkar 
[\ref{Birkar-ACC-termination}] proved that the conjecture follows, in most cases, from 
the LMMP in lower dimensions and the ACC conjecture on lc thresholds. Birkar-Cascini-Hacon-McKernan 
[\ref{BCHM}] proved the conjecture for some special kind of sequences of flips (i.e. with scaling)
in the klt case when $B$ is big$/Z$.
 
\begin{defn}[LMMP with scaling]\label{d-LMMP-scaling}
Let $(X/Z,B)$ be a lc pair. In the process of the LMMP, there are choices to be made, that is, 
in each step one needs to choose a negative extremal ray. However, there are usually more than 
one ray and sometimes infinitely many (see Example \ref{exa-rays-surfaces}). So, the LMMP 
is not a uniquely determined process. Different processes may lead to different outcomes.
However, there is a special version of the LMMP which is more restricted: it is directed by some divisor.

Assume that $C\ge 0$ is an $\R$-divisor such that $(X/Z,B+C)$ is lc and 
$K_X+B+C$ is nef$/Z$. For example, one can take $C$ to be a general and sufficiently 
ample divisor. 
 Suppose that either $K_X+B$ is nef/$Z$ or there is an extremal ray $R/Z$ such
that $(K_X+B)\cdot R<0$, $(K_X+B+\lambda_1 C)\cdot R=0$, and $K_X+B+\lambda_1 C$ is nef$/Z$ where
$$
\lambda_1:=\inf \{t\ge 0~|~K_X+B+tC~~\mbox{is nef/$Z$}\}
$$
 If $R$ defines a Mori fibre structure, we stop. Otherwise assume that $R$ gives a divisorial 
contraction or a log flip $X\bir X'$. We can now consider $(X'/Z,B'+\lambda_1 C')$  where $B'+\lambda_1 C'$ is 
the birational transform 
of $B+\lambda_1 C$ and continue the argument. That is, suppose that either $K_{X'}+B'$ is nef/$Z$ or 
there is an extremal ray $R'/Z$ such
that $(K_{X'}+B')\cdot R'<0$, $(K_{X'}+B'+\lambda_2 C')\cdot R'=0$, and $K_{X'}+B'+\lambda_2 C'$ is nef$/Z$ where
$$
\lambda_2:=\inf \{t\ge 0~|~K_{X'}+B'+tC'~~\mbox{is nef/$Z$}\}
$$
 By continuing this process, we obtain a 
special kind of LMMP$/Z$ which is called the \emph{LMMP$/Z$ on $K_X+B$ with scaling of $C$}; note that it is not unique. 
This kind of LMMP was first used by Shokurov [\ref{Shokurov-log-flips}].
When we refer to \emph{termination with scaling} we mean termination of such an LMMP.
\end{defn}

The existence of the extremal rays that we will need in the LMMP with scaling is 
ensured by the following lemma (see Birkar [\ref{Birkar-mmodel}, Lemma 3.1] for a more general 
statement). Let $D$ be an $\R$-divisor on a normal variety $X$ projective$/Z$. 
Recall from Definition \ref{d-free-ample-divisor/Z} that $D$ is \emph{big$/Z$} 
if $D\sim_\R G+A/Z$ for some $\R$-divisor $G\ge 0$ and some ample$/Z$ $\R$-divisor $A$. 

\begin{lem}\label{l-ray-scaling}
Let $(X/Z,B+C)$ be a $\Q$-factorial lc pair where $B,C\ge 0$, 
$K_X+B+C$ is nef/$Z$, and $(X/Z,B)$ is klt with $B$ big$/Z$. Then, either $K_X+B$ is also nef/$Z$ or there is an extremal ray $R/Z$ such
that $(K_X+B)\cdot R<0$, $(K_X+B+\lambda C)\cdot R=0$, and $K_X+B+\lambda C$ is nef$/Z$ where
$$
\lambda:=\inf \{t\ge 0~|~K_X+B+tC~~\mbox{is nef/$Z$}\}
$$
\end{lem}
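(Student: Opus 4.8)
The plan is to deduce the statement from the relative Cone Theorem for klt pairs, the crucial point being that the bigness of $B$ makes there be only \emph{finitely many} $(K_X+B)$-negative extremal rays of $\overline{NE}(X/Z)$; once that is known the conclusion is an elementary computation with intersection numbers.

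First I would dispose of trivialities: if $K_X+B$ is nef$/Z$ we are done, so assume not. Since the nef cone is closed in $N^1(X/Z)$, the set of $t\ge 0$ with $K_X+B+tC$ nef$/Z$ is closed; it contains $1$ by hypothesis, so it contains $\lambda$, and writing $K_X+B+tC$ for $t\in[\lambda,1]$ as a convex combination of the nef divisors $K_X+B+\lambda C$ and $K_X+B+C$ shows it contains all of $[\lambda,1]$; in particular $0<\lambda\le 1$ and $K_X+B+\lambda C$ is nef$/Z$. Next, since $B$ is big$/Z$ I write $B\sim_{\R,Z}A+G$ with $A$ ample$/Z$ and $G\ge 0$. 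Because $(X/Z,B)$ is klt every coefficient of $B$ is $<1$, so on a fixed log resolution of $(X,\Supp B\cup\Supp G)$ one checks that for $0<t\ll 1$ the pair $(X/Z,\Delta)$ with $\Delta:=(1-t)B+tG$ is klt (here $\Q$-factoriality is used so that $K_X+\Delta$ is $\R$-Cartier), and $K_X+B\sim_{\R,Z}K_X+\Delta+tA$ with $tA$ ample$/Z$. Applying the Cone Theorem to $(X/Z,\Delta)$ together with the ample$/Z$ divisor $tA$, only finitely many extremal rays of $\overline{NE}(X/Z)$ are negative against $K_X+\Delta+tA$, i.e. against $K_X+B$; and any $(K_X+B)$-negative extremal ray is among them since on $\overline{NE}(X/Z)$ one has $(K_X+\Delta)\cdot z<(K_X+\Delta+tA)\cdot z=(K_X+B)\cdot z$, $tA$ being positive on $\overline{NE}(X/Z)\setminus\{0\}$. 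So let $R_1,\dots,R_n$ be all the $(K_X+B)$-negative extremal rays, generated by curves $\Gamma_1,\dots,\Gamma_n$; since $\overline{NE}(X/Z)_{(K_X+\Delta)\ge 0}\subseteq\overline{NE}(X/Z)_{(K_X+B)\ge 0}$, the Cone Theorem decomposition rearranges to $\overline{NE}(X/Z)=\overline{NE}(X/Z)_{(K_X+B)\ge 0}+\sum_{i=1}^{n}R_i$.

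The endgame is then routine. For each $i$ one has $C\cdot\Gamma_i>0$, for otherwise $(K_X+B+tC)\cdot\Gamma_i=(K_X+B)\cdot\Gamma_i<0$ for all $t\ge 0$, contradicting nefness of $K_X+B+C$. Put $\mu:=\max_i\dfrac{-(K_X+B)\cdot\Gamma_i}{C\cdot\Gamma_i}$, which is positive and, since $(K_X+B+C)\cdot\Gamma_i\ge 0$, is at most $1$. For $t<\mu$ an index attaining the maximum gives $(K_X+B+tC)\cdot\Gamma_i<0$, so $\lambda\ge\mu$; conversely for $t\in[\mu,1]$ the divisor $K_X+B+tC$ is nonnegative on each $R_i$ (by definition of $\mu$) and, being a convex combination of $K_X+B$ and $K_X+B+C$, nonnegative on $\overline{NE}(X/Z)_{(K_X+B)\ge 0}$, hence nef$/Z$ by the displayed decomposition, so $\lambda\le\mu$. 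Thus $\lambda=\mu$, and an index $i$ attaining the maximum gives an extremal ray $R=R_i$ with $(K_X+B)\cdot R<0$ and $(K_X+B+\lambda C)\cdot R=0$, as required.

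The main obstacle is the middle step — the finiteness of $(K_X+B)$-negative extremal rays. This rests on the relative Cone Theorem for klt pairs (the one genuinely hard external input, which I would cite rather than prove), together with the perturbation that rewrites $K_X+B$ as $K_X+(\text{klt boundary})+(\text{ample})$; keeping the perturbed boundary klt with coefficients $\le 1$ is the delicate bookkeeping, and it is precisely here that $\Q$-factoriality of $X$ and the fact that a klt boundary has coefficients $<1$ are used. Everything after that — the closedness/convexity remarks and the maximum computation — is elementary.
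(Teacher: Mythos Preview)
Your proof is correct and follows essentially the same approach as the paper: perturb $B$ using its bigness to write $K_X+B\sim_{\R,Z}K_X+\Delta+(\text{ample})$ with $(X/Z,\Delta)$ klt, invoke the cone theorem to conclude that there are only finitely many $(K_X+B)$-negative extremal rays, and then identify $\lambda$ as the obvious maximum over these rays. The only cosmetic difference is that the paper extracts finiteness via the length bound $-2d\le (K_X+\Delta)\cdot\Gamma$ together with a bounded-family argument for the curves $\Gamma$, whereas you use the discreteness of the extremal rays plus the ample shift; both are standard routes to the same conclusion.
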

\begin{proof}
Suppose that $K_X+B$ is not nef$/Z$. Let $A$ be an ample$/Z$ divisor. Since $B$ is 
big$/Z$,  $B\sim_\R G+A/Z$ for some $\R$-divisor $G\ge 0$ and some ample$/Z$ $\R$-divisor $A$. 
We can write 
$$
K_X+B= K_X+(1-\epsilon) B+\epsilon B\sim_\R K_X+(1-\epsilon) B+\epsilon (G+A)/Z
$$
and if $\epsilon>0$ is small enough then $(X/Z,(1-\epsilon) B+\epsilon G)$ is klt.
Now if $R$ is a $K_X+B$-negative extremal ray$/Z$, then by the cone theorem there is 
some curve $\Gamma$ generating $R$ such that 
$$
-2\dim X\le (K_X+(1-\epsilon) B+\epsilon G)\cdot \Gamma<0
$$ 
hence $A\cdot \Gamma<2\dim X$. Since $A$ is ample, $\Gamma$ belongs to a bounded 
family of curves on $X$. So, such $\Gamma$ can generate only finitely many extremal 
rays. Therefore, there are only finitely many $K_X+B$-negative extremal rays$/Z$. 
Now the lemma is trivial by letting 
$$
\frac{1}{\lambda}:=\min \{\frac{C\cdot R_i}{-(K_X+B)\cdot R_i}\}
$$
where $R_i$ runs through the finitely many $K_X+B$-negative extremal rays$/Z$.
\end{proof}

\clearpage
%%%%%%%%%%%%%%%%%%%%%%%%%%%%%%%%%%%%%%%%%%%%%%%%%%%%%%%%%%%%
~ \vspace{2cm}
\section{\textbf{{Cone and contraction, vanishing, nonvanishing, and base point freeness}}}
\vspace{1cm}
%%%%%%%%%%%%%%%%%%%%%%%%%%%%%%%%%%%%%%%%%%%%%%%%%%%%%%%%%%%

The cone theorem allows us to perform the very first step of the LMMP, that is, 
to identify a negative extremal ray and to contract it. The formulation of the 
cone theorem was mainly inspired by Mori's work. However, the proof we present 
came from an entirely different set of ideas conceived and developed by Shokurov 
and Kawamata (except the existence of rational curves which relies on Mori's 
original work). These latter ideas proved to be fundamental, 
far beyond the proof of the cone theorem.

\begin{thm}[Cone and contraction]
Let $(X/Z,B)$ be a klt pair of dimension $d$ with $B$ rational.
Then, there is a countable set of $(K_X+B)$-negative extremal rays $\{R_i\}/Z$ such that\\
$\bullet$ $\overline{NE}(X/Z)=\overline{NE}(X/Z)_{K_X+B\ge 0}+\sum_iR_i$.\\
$\bullet$ $R_i$ can be contracted.\\
$\bullet$ $\{R_i\}$ is discrete in $\overline{NE}(X/Z)_{K_X+B< 0}$.\\
$\bullet$ Each $R_i$ contains the class of some rational curve $C_i$ satisfying 
$$
-2d\le (K_X+B)\cdot C_i
$$
$\bullet$ Let $f\colon X\to Y/Z$ be the contraction of a $K_X+B$-negative 
extremal ray $R/Z$, and let $L$ be a Cartier divisor on $X$ with $L\cdot R=0$. 
Then, there is a Cartier divisor $L_Y$ on $Y$ such that $L\sim f^*L_Y$. 
\end{thm}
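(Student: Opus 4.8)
\emph{Proof proposal.} The plan is to deduce the statement from the base point free theorem, applied over the base $Y$ to $L$. The first step is to record the relative geometry: since $f$ is \emph{the} contraction of the single extremal ray $R$, we have $\overline{NE}(X/Y)=\R_{\ge 0}[R]$ and $\rho(X/Y)=1$, so the hypothesis $L\cdot R=0$ says precisely that $L$ is numerically trivial over $Y$; in particular $L$ is nef$/Y$. Since $R$ is $(K_X+B)$-negative and spans $\overline{NE}(X/Y)$, Kleiman's criterion shows that $-(K_X+B)$ is ample$/Y$, hence $aL-(K_X+B)$ is ample$/Y$ — in particular nef and big$/Y$ — for every $a>0$.

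The second step is to apply the base point free theorem to the klt pair $(X/Y,B)$ together with the nef$/Y$ divisor $L$: there is an $m_0>0$ such that $mL$ is free$/Y$ for every $m\ge m_0$.

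The third step is the following descent lemma: a Cartier divisor $D$ which is free$/Y$ and numerically trivial$/Y$ is linearly equivalent to the pullback of a Cartier divisor on $Y$. Over an affine open $U\subseteq Y$, the morphism $f^{-1}U\to\PP^N_U$ defined by $D$ contracts every curve contracted by $f$, because $D$ has degree zero on each such curve; hence its Stein factorisation $f^{-1}U\to W_U\to\PP^N_U$ yields a finite morphism $W_U\to U$ which is bijective on points, the fibres of $f$ being connected. As $Y$, and therefore $W_U$, is normal and we work in characteristic zero, $W_U\to U$ is an isomorphism, so $D|_{f^{-1}U}$ is the pullback of a divisor on $U$. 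Using $f_*\mathcal{O}_X=\mathcal{O}_Y$ (via the projection formula) these local descents are unique, and they glue to give a global $D\sim f^*D_Y$.

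Finally, applying the descent lemma to $D=mL$ for two consecutive values $m$ and $m+1$ with $m\ge m_0$ gives $mL\sim f^*M_m$ and $(m+1)L\sim f^*M_{m+1}$ on $Y$, whence
$$
L=(m+1)L-mL\sim f^*(M_{m+1}-M_m),
$$
and one takes $L_Y:=M_{m+1}-M_m$. The substantive ingredient here is the base point free theorem; granting it, the only real work is the descent lemma, and the point to be careful about is that $W_U\to U$ really is an isomorphism, not merely a finite bijection — which is where normality of $Y$ enters.
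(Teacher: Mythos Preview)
Your argument addresses only the final bullet (descent of $L$ to $Y$), and for that claim it is correct and close in spirit to the paper's Step~8: both apply the base point free theorem to obtain two consecutive multiples that are free and hence pullbacks from $Y$, then subtract. The difference is in \emph{where} the base point free theorem is applied. The paper works over $Z$: it reuses the supporting divisor $D$ of the ray $R$ (nef, of the form $H+t(K_X+B)$, with $F_D=R$) constructed in the earlier steps, observes that $L+aD$ is nef with $F_{L+aD}=R$ for $a\gg 0$, applies base point freeness to $m(L+aD)$ and $(m+1)(L+aD)$, and then strips off $aD$, which is already known to be a pullback. You instead work relatively over $Y$: since $\overline{NE}(X/Y)=\R_{\ge 0}[R]$, the divisor $L$ is already nef (indeed numerically trivial) over $Y$ and $-(K_X+B)$ is ample over $Y$, so the relative base point free theorem applies directly to $L$ with no auxiliary divisor. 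Your route is a little more self-contained, in that it does not depend on having built the supporting divisor $D$ earlier in the proof; the paper's route stays within the absolute form of the base point free theorem that is actually proved in the notes. Your descent lemma is the relative analogue of the paper's implicit observation that a free divisor trivial on $R$ defines exactly the contraction $f\colon X\to Y$, and your normality argument for $W_U\to U$ is fine.
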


Let $(X/Z,B)$ be a klt pair. If $R$ is a $K_X+B$-negative extremal ray$/Z$, then 
we will see that it is not difficult   
to find an ample$/Z$ $\Q$-divisor $H$ such that $H+t(K_X+B)$ is nef$/Z$ and that $R$  
is the only extremal ray$/Z$ satisfying $(H+t(K_X+B))\cdot R=0$. 
The main idea is to prove that $t$ is a rational number and to prove that 
$H+t(K_X+B)$ is semi-ample$/Z$. The latter divisor then defines a contraction that 
happens to be exactly the contraction of $R$. This should make it clear why we are interested 
in the next two theorems. These theorems imply the cone and contraction (except the 
statement about rational curves and the boundedness $-2d\le (K_X+B)\cdot C_i$) theorem using  
a rather easy and combinatorial argument.

\begin{thm}[Base point free]\label{t-base-point-free}
Let $(X/Z,B)$ be a klt pair with $B$ rational. Suppose that for a Cartier
divisor $D$ which is nef$/Z$, there is some rational number $a>0$ such that $aD-(K_X+B)$ is nef and big$/Z$. Then,
$mD$ is free$/Z$ for any natural number $m\gg 0$.
\end{thm}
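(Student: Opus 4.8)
To prove Theorem~\ref{t-base-point-free}, the plan is to run a Noetherian induction on the base loci of the linear systems $|mD|$, at each stage producing a section that does not vanish at a prescribed point of the base locus; the engine is the interaction of a vanishing theorem with the Non-vanishing Theorem stated below. One may localise over $Z$ and assume $Z$ affine, so that ``free$/Z$'' simply means ``base point free''. The hypothesis that $aD-(K_X+B)$ is \emph{big}$/Z$ is essential here: by Kodaira's Lemma~\ref{l-Kodaira-lemma} and Corollary~\ref{c-nef-to-ample} it lets me write $aD-(K_X+B)\sim_{\Q} A+E$ with $A$ ample$/Z$ and $E\ge 0$, and this ``ample plus effective'' expression is what I will perturb.

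Now the induction. For $m$ sufficiently divisible the sets $\Bs|m!\,D|$ form a descending chain — a section of $m!D$ not vanishing at a point has $(m+1)$-st power a section of $(m+1)!D$ not vanishing there — so they stabilise to a closed set $\mathcal B$, and it suffices to show $\mathcal B=\emptyset$. Suppose not; fix a general point $x\in\mathcal B$ and choose $m_0$ large and divisible with $\Bs|m_0 D|=\mathcal B$. Take a general $\Gamma\in|m_0 D|$, which necessarily passes through $x$, and a log resolution $\pi\colon Y\to X$ of $(X,\,B+E+\Gamma)$. The core of the argument is to build a $\Q$-divisor
$$\Delta'=B+\delta E+c\,\Gamma+\delta'H$$
with $\delta,\delta'>0$ small, $H$ a small general ample$/Z$ $\Q$-divisor, and $c$ taken at the appropriate ``log canonical threshold'', so that $(X,\Delta')$ is log canonical with a \emph{unique} non-klt place $E_0$, whose centre $W:=\pi(E_0)$ contains $x$ and is minimal with this property, while at the same time $MD-(K_X+\Delta')$ is ample$/Z$ for every large $M$. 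The last point is where nefness of $D$ is used: up to $\Q$-linear equivalence $MD-(K_X+\Delta')$ is $(M-c\,m_0)D$ plus a positive multiple of $A$ plus a small error, and a nef divisor plus an ample one is ample as soon as the coefficient of $D$ is $\ge 0$.

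To conclude, write $K_Y=\pi^*(K_X+\Delta')+\sum_j d_jE_j$ with $d_0=-1$ and $d_j>-1$ for $j\ne 0$. Applying Kawamata--Viehweg vanishing on $Y$ — the boundary-divisor strengthening of the Kodaira vanishing quoted earlier — to a suitable round-up of $\pi^*(MD)-E_0$, which differs from $K_Y$ by the ample$/Z$ divisor $\pi^*(MD-(K_X+\Delta'))$ together with a simple normal crossing klt boundary, yields the vanishing of a first cohomology group and hence surjectivity of the restriction map from $H^0(X,MD)$ onto the corresponding space of sections on $W$. If $\dim W=0$ that space is $k\neq 0$, so we get a section of $MD$ not vanishing at $x$, contradicting $x\in\Bs|MD|$ for $M$ in our divisible sequence. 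If $\dim W>0$, subadjunction equips $W$ with a klt pair whose canonical class is, up to $\Q$-linear equivalence, the restriction of $K_X+\Delta'$; since $MD-(K_X+\Delta')$ is ample$/Z$ its restriction to $W$ is ample, and $D|_W$ is nef, so the Non-vanishing Theorem below produces a nonzero element in the target of the restriction map for $M\gg 0$, which lifts to a section of $MD$ not vanishing at $x$, again a contradiction. Therefore $\mathcal B=\emptyset$, i.e. $|mD|$ is free$/Z$ for all sufficiently divisible $m$, and a standard supplementary argument promotes this to freeness of $|mD|$ for every $m\gg 0$.

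The main obstacle is the middle step — the ``tie-breaking'': choosing $\delta$, $c$, $\delta'$ and the log resolution so that precisely one non-klt place survives, over a centre through $x$ of the smallest possible dimension, \emph{while} preserving ampleness of $MD-(K_X+\Delta')$. This delicate manipulation of coefficients on the resolution is the real content of the theorem. I should add that the Non-vanishing Theorem invoked in the positive-dimensional case is itself proved by a parallel induction on dimension with the very same vanishing machinery, so in practice Base point free and Non-vanishing are established together.
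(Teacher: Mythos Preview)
Your outline follows a recognisable strategy, but there is a genuine gap in the positive-dimensional case. When $\dim W>0$ you invoke the Non-vanishing Theorem on $W$, which produces a \emph{nonzero} element of $H^0(W,\,MD|_W)$; lifting it gives a section of $MD$ that does not vanish \emph{identically} on $W$. That is not the same as a section that does not vanish at the chosen point $x$, so the claimed contradiction with $x\in\mathcal B$ does not follow. To repair this you must either (a) argue separately that $W\subseteq\mathcal B$ --- which requires showing that $(X,B+\delta E+c\Gamma+\delta'H)$ is klt off $\mathcal B$, in particular that $c<1$ and the perturbations do not create non-klt centres outside $\mathcal B$ --- and then conclude $W\not\subseteq\Bs|MD|$ contradicts $W\subseteq\mathcal B$; or (b) invoke the \emph{Base Point Free} theorem on $W$ by induction on dimension (not Non-vanishing), so that $mD|_W$ is actually free and you can pick a section missing $x$. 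As written, neither step is present.

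The paper's proof sidesteps this entirely by a different organisation. It never fixes a point $x$; instead it passes to a log resolution $f\colon Y\to X$ on which $f^*bD\sim M+F$ with $M$ free and $\Supp F=\Bs|f^*bD|$, and arranges the non-klt place to be a \emph{smooth prime divisor} $S$ that is a component of $F$. Then ordinary adjunction to $S$ (no subadjunction needed), Kawamata--Viehweg vanishing, and Shokurov Non-vanishing on $S$ produce a section of $f^*(m+m_0)D+\lceil L\rceil$ not vanishing identically on $S$; since $S$ was a component of the fixed part, this strictly shrinks $\Bs|f^*mD|$, and Noetherian induction finishes. Two points of comparison: first, $S\subseteq\Bs|f^*bD|$ is automatic from the construction, so no separate argument about the location of the centre is needed; second, ordinary adjunction to a smooth divisor on a resolution is elementary, whereas the subadjunction formula you invoke for a higher-codimension centre $W$ is itself a deep theorem (positivity of direct images, moduli, etc.) and is overkill here.
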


\begin{thm}[Rationality]
Let $(X/Z,B)$ be a klt pair with $B$ rational. Let $H$ be an ample$/Z$ Cartier divisor on $X$.
Suppose that $K_X+B$ is not nef$/Z$. Then,
$$
\lambda=\max\{t>0\mid t(K_X+B)+H ~\mbox{is nef$/Z$}\}
$$
is a rational number. Moreover, one can write $\lambda=\frac{a}{b}$ where $a,b\in\N$
and $b$ is bounded depending only on $(X/Z,B)$.
\end{thm}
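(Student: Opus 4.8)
The plan is to follow the cohomological approach of Kawamata and Shokurov: combine asymptotic Riemann--Roch with a Kodaira-type vanishing theorem, and reduce the rationality of $\lambda$ to a combinatorial statement about two-variable polynomials. First I would make the standard reductions. Shrinking $Z$ we may assume $Z$ is affine, and then (using relative Riemann--Roch and relative vanishing, or simply working with the complete curves that are contracted) the problem reduces to $Z={\rm pt}$, so that $X$ is projective, $H$ is ample, and ``nef'' is meant absolutely. Since $B$ is rational and $K_X+B$ is $\R$-Cartier it is in fact $\Q$-Cartier; let $r\in\N$ be least with $M:=r(K_X+B)$ Cartier. Note $\lambda$ is finite --- otherwise $t(K_X+B)+H$ would be nef for all $t>0$, and letting $t\to\infty$ in $(K_X+B)+\tfrac1tH$ would put $K_X+B$ in the closed nef cone, against hypothesis --- that $\lambda>0$, and that the maximum is attained since the nef cone is closed. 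For $0\le t<\lambda$, $t(K_X+B)+H$ is ample, being a positive combination of the nef divisor $\lambda(K_X+B)+H$ and the ample divisor $H$, whereas for $t\ge\lambda$ it is not ample. Thus, writing $D(p,q):=pH+qM$ for $(p,q)\in\N^2$, the Cartier divisor $D(p,q)$ is nef iff $q\le\tfrac{\lambda}{r}p$, and $D(p,q)-(K_X+B)=pH+(qr-1)(K_X+B)$ is ample iff $q<\tfrac{\lambda}{r}p+\tfrac1r$.

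Next I would introduce the Euler characteristic polynomial. By the theory of Snapper and Kleiman, $P(x,y):=\chi\bigl(X,\mathcal{O}_X(xH+yM)\bigr)$ is a polynomial in $(x,y)$ of total degree $\le d:=\dim X$; it is not identically zero, since $P(x,0)=\chi(X,\mathcal{O}_X(xH))$ equals $h^0(X,xH)$ for $x\gg0$ and grows like $x^d$ by ampleness of $H$. Hence $P$ has degree exactly $d$, with degree-$d$ part the intersection form $\tfrac1{d!}(xH+yM)^d$, which is positive at any $(p,q)$ with $D(p,q)$ big. Now I would feed in vanishing: whenever $D(p,q)-(K_X+B)$ is ample --- i.e. at lattice points strictly below, or in the strip of height $\tfrac1r$ just above, the threshold line $q=\tfrac{\lambda}{r}p$ --- passing to a log resolution of $(X,B)$ and applying the Kawamata--Viehweg vanishing theorem gives $H^i(X,D(p,q))=0$ for all $i>0$, whence
$$
h^0(X,D(p,q))=P(p,q).
$$
In particular $P(p,q)\ge0$ on this region, and $P(p,q)>0$ there as soon as $D(p,q)$ is big, for $p\gg0$.

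The crux, and the step I expect to be the main obstacle, is purely combinatorial. Arguing by contradiction, suppose $\lambda$ is irrational, or $\lambda=\tfrac ab$ in lowest terms with $b$ exceeding a bound of the shape $r(d+1)$. By Diophantine approximation (equidistribution of the fractional parts of $\tfrac{\lambda}{r}p$, or a Dirichlet argument) one produces infinitely many lattice points $(p,q)$ in the thin strip $\tfrac{\lambda}{r}p<q<\tfrac{\lambda}{r}p+\tfrac1r$ just above the threshold line: at such points $D(p,q)$ is not nef, yet $D(p,q)-(K_X+B)$ is still ample, so the cohomological identity above holds and rigidly constrains the values $P(p,q)$. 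The standard combinatorial lemma of the Kawamata--Shokurov circle --- that a nonzero polynomial of degree $\le d$ in two variables cannot satisfy the resulting vanishing and positivity pattern along a family of lattice points accumulating at a line unless that line has rational slope with denominator at most $d+1$ --- then yields the contradiction, and quantitatively the bound $b\le r(d+1)$. The delicate points are: matching the geometric region where the cohomological identity holds with the hypotheses of the combinatorial lemma; carrying the index $r$ through the integrality bookkeeping; and choosing the approximating $(p,q)$ so that $D(p,q)$ lands exactly where its failure of nefness forces the needed constraint on $P$. Everything else --- the reductions, the construction of $P$, and the use of Kawamata--Viehweg vanishing --- is routine given the tools already assembled.
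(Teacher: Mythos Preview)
The paper does not actually prove this theorem: immediately after the statement it says ``The proof of this theorem is quite similar to the proof of the base point free theorem and Shokurov nonvanishing theorem. So, we omit its proof (cf.~[\ref{Kollar-Mori}]).'' Your outline is precisely the standard Kawamata argument recorded in that reference --- the two-variable Euler characteristic polynomial $P(x,y)=\chi(xH+yM)$, Kawamata--Viehweg vanishing in the strip just above the threshold line, and the combinatorial lemma bounding the denominator of the slope --- so you are following exactly the route the paper points to, and the outline is essentially correct.

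One small caution: the passage you flag as ``the main obstacle'' is indeed the heart of the matter, and your description of it is a bit impressionistic. In the actual argument one does not merely observe that $P$ satisfies some ``vanishing and positivity pattern''; rather, one uses the base point free theorem on the nef divisors $D(p,q)$ sitting \emph{on} or just \emph{below} the threshold line to produce morphisms, compares the resulting contractions for different $(p,q)$, and extracts from this that $P$ must vanish identically on a line of the wrong slope (or that infinitely many of the $D(p,q)$ in the strip share a common Stein factorisation, forcing rationality). The purely polynomial statement you invoke is correct but its input is more specific than ``positivity and vanishing''. When you write this up, make sure you state and prove (or cite) the lemma in the form: a polynomial of degree $\le d$ in two variables that vanishes at $d+1$ lattice points on each of infinitely many parallel lines of slope $\mu$ must vanish on all those lines, hence $\mu$ is rational with bounded denominator. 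That is the precise combinatorial core, and getting the quantifiers right there is what pins down the bound on $b$.
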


The proof of this theorem is quite similar to the proof of the base point free theorem and
Shokurov nonvanishing theorem. So, we omit its proof (cf. [\ref{Kollar-Mori}]).
Moreover, the proof of above theorems in the relative case is, conceptually, very much the same 
as their proof in the absolute projective case (i.e. $Z=\rm pt$) so for simplicity 
we just give the proofs in the projective case.

\begin{proof}(of Cone and Contraction theorem when $Z=\rm pt$)\\
\emph{Step 1.} We may assume that $K_X+B$ is not nef. For any nef $\Q$-Cartier divisor $D$ define
$$
F_D=\{c\in \overline{NE}(X)\mid D\cdot c=0\}
$$
which is an extremal face of $\overline{NE}(X)$. Let $\mathcal{C}$ be the closure of 
$$
\overline{NE}(X)_{K_X+B\ge 0}+{\sum_{D}F_D}
$$
where $D$ runs over nef $\Q$-Cartier divisors for which $\dim F_D=1$.
Suppose that $\mathcal{C}\neq \overline{NE}(X)$.
Choose a point $c\in\overline{NE}(X)$ which does not belong to $\mathcal{C}$. Now choose a rational
linear function $\alpha\colon N_1(X)\to \R$ which is positive on $\mathcal{C}-\{0\}$ but negative on
$c$. This linear function is defined by some $\Q$-Cartier divisor $G$.

\emph{Step 2.} If $t\gg 0$, then $G-t(K_X+B)$  is positive on $\overline{NE}(X)_{K_X+B\le 0}$.
\footnote{To see this note that it is enough to verify this positivity on the 
compact set $\Omega\cap \overline{NE}(X)_{K_X+B\le 0}$ 
where $\Omega$ is an appropriate hyperplane not passing through the origin.  
If $t\gg 0$ and if $(G-t(K_X+B))\cdot c_t=0$ for some 
$c_t\in \Omega\cap \overline{NE}(X)_{K_X+B< 0}$, then $G\cdot c_t<0$. But such $c_t$ are away from 
$\overline{NE}(X)_{K_X+B=0}$ hence $0<a_1<|(K_X+B)\cdot c_t|<a_2$ and $0<|G\cdot c_t|<a_3$ 
for certain $a_1,a_2,a_3$ independent of $t$. This implies that for $t\gg 0$, there cannot be 
such $c_t$.}
Now let
$$
\gamma=\min\{t>0\mid G-t(K_X+B) ~\mbox{is nef on}~ \overline{NE}(X)_{K_X+B\le 0}\}
$$
So 
$$
(G-\gamma(K_X+B))\cdot c' =0
$$ 
for some 
$$
c'\in \overline{NE}(X)_{K_X+B<0}
$$ 
In particular, this implies that $G-\gamma(K_X+B)$ is 
positive on $\overline{NE}(X)_{K_X+B\ge 0}$ otherwise there would be $c''\in \overline{NE}(X)_{K_X+B>0}$ 
such that 
$$(G-\gamma(K_X+B))\cdot c'' \le 0
$$
 which in turn implies that $G-\gamma(K_X+B)$ is non-positive 
on the point $[c',c'']\cap \overline{NE}(X)_{K_X+B=0}$, a contradiction.
 Therefore, $G-\gamma(K_X+B)$ is nef and 
 $$
 H=G-\gamma'(K_X+B)
 $$ 
 is ample for some rational
number $\gamma'>\gamma$ close to $\gamma$. After replacing $H$ with a multiple we could 
assume that $H$ is Cartier.

\emph{Step 3.} Now the rationality theorem shows that
$$
\lambda=\max\{t>0\mid H+t(K_X+B) ~\mbox{is nef}\}
$$
is a rational number (note that $\lambda=\gamma'-\gamma$). Put $D=H+\lambda(K_X+B)$. By the arguments in step 2, $F_D\cap\mathcal{C}=0$.

Here it may happen that $\dim F_{D}>1$.
In that case, let $H_1$ be an ample divisor which is linearly independent of $K_X+B$ on $F_D$. 
For $s>0$ let
$$
\lambda(s)=\max\{t>0\mid sD+ H_1+t(K_X+B) ~\mbox{is nef}\}
$$
 
 Obviously, $\lambda(s)$ is bounded from above where the bound does not depend on $s$.
 Moreover, since $\lambda(s)$ has bounded denominator and $\lambda(s')\ge \lambda(s)$ 
 when $s'>s$, $\lambda(s)$ is independent of $s$ when $s\gg 0$.
 
 Therefore, if $s\gg 0$, then
$$
 F_{sD+\epsilon H_1+\lambda(s)(K_X+B)}\subset F_{D}
$$
where the inclusion is strict because $H_1$ and $K_X+B$ are linearly independent on $F_D$.
Arguing inductively, there is a rational nef divisor $D'$ such that $F_{D'}\subseteq F_D$ 
and $\dim F_{D'}=1$. This contradicts the above assumptions. Therefore, $\mathcal{C}=\overline{NE}(X)$.

\emph{Step 5.} 
Let $D$ be a nef $\Q$-Cartier divisor such that $\dim F_D=1$ and such that $D=H+t(K_X+B)$ 
for some ample Cartier divisor $H$ and some rational number $t>0$. Then, by the base point 
free theorem $D$ is semi-ample. Therefore, the extremal ray $F_D$ can be contracted.

\emph{Step 6.}
Next we show that the set $\{F_D\}$, where $D$ is as in Step 5, does not have any accumulation 
in $\overline{NE}(X)_{K_X+B<0}$. Assume otherwise and let $D_1,D_2,\dots$ be a sequence 
such that $\{F_{D_i}\}$ has a limit away from $\overline{NE}(X)_{K_X+B=0}$. 
By assumptions, $D_i=H_i+t_i(K_X+B))$ for certain 
ample Cartier divisors $H_i$ and rational numbers $t_i>0$. Fix an ample Cartier divisor $H$. 
By the above arguments, for each $i$, there is some $s_i\gg 0$ such that if 
$$
D_i'=H+s_iD_i+t_i'(K_X+B)
$$
 where $t_i'$ is the largest number making $D_i'$ nef, then
$F_{D_i'}=F_{D_i}$. Now let $c_i\in F_{D_i}$ such that $(K_X+B)\cdot c_i=-1$. 
The $c_i$ converge to some $c$ with $(K_X+B)\cdot c=-1$
Then, 
$$
H\cdot c_i=-t_i'(K_X+B)\cdot c_i=t_i'
$$ 
By construction, $H\cdot c_i$ is a bounded number, and $t_i'$ is a rational number with 
bounded denominator. Therefore, there are only finitely many possibilities for the $t_i'$.
So, there are only finitely many possibilities for $H\cdot c_i$ and this is possible 
only if there are only finitely many $c_i$ if $H$ is chosen appropriately. 
This in particular, implies that
$$
 \overline{NE}(X)=\overline{NE}(X)_{K_X+B\ge 0}+\sum_D F_D
$$
where $D$ runs through nef $\Q$-Cartier divisors with $\dim F_D=1$. This in 
turn implies that 
$$
 \overline{NE}(X)=\overline{NE}(X)_{K_X+B\ge 0}+\sum R_i
$$
where $R_i$ run through the $(K_X+B)$-negative extremal rays because each 
such extremal ray has to be one of the above $F_D$. By construction, each 
$R_i$ is contractible and contains the class of some curve $C_i$.

\emph{Step 7.} 
The fact that $C_i$ can be chosen to be rational and satisfying $-2d\le (K_X+B)\cdot C_i$
is proved using very different arguments. We refer the interested reader to 
[\ref{Kollar-Mori}] and [\ref{Kawamata-ext-rays}]. 

\emph{Step 8.} Let $L$, $R$, and $f\colon X\to Y$ be as in the last claim of the theorem. 
Let $D$ be a nef Cartier divisor such that $F_D=R$ and $D=H+t(K_X+B)$ where $H$ is 
an ample Cartier divisor. If $a\gg 0$, then $L+aD$ is nef (because $D$ is positive 
on $\overline{NE}(X)_{K_X+B\ge 0}$ hence $L+aD$ is also positive on it when 
$a\gg 0$) and $F_{L+aD}=F_D$. 
By the base point free theorem, for some large $m$, $m(L+aD)$ and $(m+1)(L+aD)$ are both base point 
free hence both are pullbacks of Cartier divisors on $Y$. This implies that 
$L+aD$ is also the pullback of some Cartier divisor on $Y$. If we choose 
$a$ sufficiently divisible then $aD$ is the   pullback of some Cartier divisor on $Y$.
Therefore,  $L$ is the pullback of some Cartier divisor $L_Y$ on $Y$.\\
\end{proof}

\begin{thm}[Kamawata-Viehweg vanishing]
Let $(X/Z,B)$ be a klt pair. Let $N$ be an integral $\Q$-Cartier divisor on
$X$ such that $N\equiv K_X+B+M/Z$ where $M$ is nef and big$/Z$. Then,
$$
 R^if_*\mathcal{O}_X(N)=0
$$
for any $i>0$ where $f\colon X\to Z$ is the given morphism.
\end{thm}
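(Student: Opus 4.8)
The plan is to deduce the theorem from the classical Kodaira vanishing theorem above by a chain of reductions, the heart being the case where $X$ is smooth and $B$ has simple normal crossing support, handled by Kawamata's covering trick. First I would reduce to the absolute case $Z=\mathrm{pt}$. The assertion is local on $Z$, so take $Z$ affine with an ample divisor $A$. By relative Serre vanishing, for $m\gg 0$ each $R^jf_*\mathcal{O}_X(N)\otimes\mathcal{O}_Z(mA)$ is globally generated with vanishing higher cohomology, so the Leray spectral sequence gives $H^i(X,N+mf^*A)\cong H^0(Z,R^if_*\mathcal{O}_X(N)\otimes\mathcal{O}_Z(mA))$ for $i>0$, and it suffices to prove $H^i(X,N+mf^*A)=0$ for $m\gg 0$. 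Since $N-(K_X+B)\equiv M/Z$ with $M$ nef and big$/Z$, for $m\gg 0$ the divisor $N+mf^*A-(K_X+B)$ is nef and big on $X$: nefness by the relative Kleiman criterion, bigness because $M\sim_\R G+A_0/Z$ with $G\ge 0$ and $A_0$ ample$/Z$, so $N+mf^*A-(K_X+B)\sim_\R G+(\text{an ample divisor})$ once $m\gg 0$. Hence we may assume $Z=\mathrm{pt}$; a routine perturbation (using that $M$ is big and that klt is an open condition) lets us also assume $B$ has rational coefficients.

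Next I would reduce to $X$ smooth with $B$ snc. Take a log resolution $g\colon Y\to X$ of $(X,B)$ and write $K_Y=g^*(K_X+B)+A_Y$ with $A_Y=\sum a_iE_i$, each $a_i>-1$ and $\Supp A_Y$ snc. Let $N_Y$ be the log pullback $g^*N+\lceil A_Y\rceil$; then $\lceil A_Y\rceil\ge 0$, supported on $\exc(g)$ with coefficient $0$ along the rest, so $g_*\mathcal{O}_Y(N_Y)=\mathcal{O}_X(N)$, while $N_Y\equiv K_Y+B_Y+g^*M$ with $B_Y:=\lceil A_Y\rceil-A_Y$ an snc $\Q$-divisor of coefficients in $[0,1)$; thus $(Y,B_Y)$ is klt and log smooth and $g^*M$ is nef and big. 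Granting the local vanishing $R^jg_*\mathcal{O}_Y(N_Y)=0$ for $j>0$, Leray gives $H^i(X,N)\cong H^i(Y,N_Y)$ and we are in the log smooth case. The local vanishing itself I would deduce from that same log smooth case by compactifying $X$ and $Y$ to projective varieties, twisting $N_Y$ by the pullback of a large ample divisor from the compactification of $X$ (whose pullback is nef and big, being the pullback of an ample divisor under a birational morphism), and rerunning the relative Serre vanishing argument.

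It then remains to treat the log smooth case: $X$ smooth projective, $B=\sum b_jB_j$ snc with $0\le b_j<1$ rational, $N$ Cartier, $N\equiv K_X+B+M$ with $M$ nef and big $\Q$-Cartier. Here I would use Kawamata's covering trick. Letting $r$ be a common denominator of the $b_j$, and after adding a general very ample divisor so the relevant class is $r$-divisible in the Picard group, I would build a finite cyclic cover $\pi\colon Z\to X$ extracting $r$-th roots along the $B_j$, resolve its cyclic quotient singularities by $\mu\colon Z'\to Z$, and check that $\mathcal{O}_X(N)$ occurs as a direct summand of $\pi_*\mu_*(\omega_{Z'}\otimes\mathcal{L}')$ for a suitable line bundle $\mathcal{L}'$, via the eigensheaf decomposition of the Galois action; this exhibits $H^i(X,N)$ as a direct summand of $H^i(Z',K_{Z'}+L')$ for a nef and big divisor $L'$ built from $M$. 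One is thereby reduced to the vanishing $H^i(X',K_{X'}+L')=0$ for $X'$ smooth projective and $L'$ a nef and big Cartier divisor, which follows from the classical Kodaira vanishing theorem together with Grauert--Riemenschneider type vanishing (after writing $L'\sim_\Q(\text{ample})+(\text{effective})$ by Kodaira's lemma, Theorem \ref{l-Kodaira-lemma}, and resolving the effective part).

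The hard part is this last case, and inside it the bookkeeping of the covering trick: realizing $\pi$ as an honest finite morphism with the correct branch divisor, tracking $\omega$ and the fractional parts of $B$ through $\pi$ and through the resolution $\mu$, and identifying precisely which eigensheaf of $\pi_*(\,\cdot\,)$ recovers $\mathcal{O}_X(N)$, so that Kodaira vanishing upstairs yields exactly the stated vanishing downstairs. A secondary difficulty is keeping the chain of reductions non-circular — local vanishing is used to descend to the log smooth pair, while the log smooth case (and ultimately Grauert--Riemenschneider type vanishing) is what underlies local vanishing — together with the perturbation and rounding needed to pass from the numerical hypothesis with a merely nef-and-big $M$ to a $\Q$-linear situation involving an ample divisor.
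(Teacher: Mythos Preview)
The paper does not actually prove this theorem. Kawamata--Viehweg vanishing is stated as a known result (between the proof of the cone theorem and the statement of Shokurov nonvanishing) and then used as a black box in the proofs of the base point free theorem and the nonvanishing theorem; no argument is given, and the reader is implicitly referred to standard sources such as Koll\'ar--Mori [\ref{Kollar-Mori}].

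Your proposal is the standard proof and is essentially correct as a sketch: reduce from the relative to the absolute case via Leray and a twist by $f^*A$; pass to a log resolution so that $(X,B)$ is log smooth; then use Kawamata's cyclic covering trick to reduce to ordinary Kodaira vanishing (or, more precisely, to the nef-and-big case on a smooth variety, which one gets from Kodaira plus the Kodaira lemma). You have correctly flagged the two genuinely delicate points. First, the apparent circularity between local vanishing $R^jg_*\mathcal{O}_Y(N_Y)=0$ and the global log-smooth case: this is resolved exactly as you indicate, by compactifying and twisting, but one must be careful that the compactified pair is still log smooth and klt. Second, the bookkeeping in the covering construction: in practice one does not take a single cyclic cover but iterated Kawamata covers (or a Bloch--Gieseker type construction) to arrange that the preimage of $\Supp B$ is again snc and that the relevant eigensheaf is exactly $\mathcal{O}_X(N)$; getting this right requires some care with rounding and with the Hurwitz formula. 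None of this is a gap in your strategy, but these are the places where a full write-up requires real work rather than a sentence.
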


\begin{thm}[Shokurov Nonvanishing]\label{t-Shokurov-nonvanishing}
Let $(X,B)$ be a projective klt pair where $B$ is rational. Let $G\ge 0$ be a Cartier divisor such
that $aD+G-(K_X+B)$ is nef and big for some nef Cartier divisor $D$ and rational number $a>0$. Then,
$$
H^0(X,mD+G)\neq 0
$$
for $m\gg 0$.
\end{thm}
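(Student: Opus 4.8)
I would prove this by induction on $d=\dim X$ (the case $d=0$ being trivial), running the argument of Shokurov and Kawamata — essentially the same mechanism as in the base point free theorem \ref{t-base-point-free}, so that the two proofs are almost parallel. First I would reduce to a convenient situation. Passing to a log resolution $h\colon X'\to X$ of $(X,B)$ and of the auxiliary divisors appearing below costs nothing, because $D$ and $G$ are Cartier and hence $H^0(X',h^*(mD+G))=H^0(X,mD+G)$; on $X'$ the pair becomes log smooth (with $B$ now a sub-boundary, but the version of Kawamata--Viehweg vanishing we need survives after absorbing fractional parts). Then, since $M:=aD+G-(K_X+B)$ is nef and big, the Kodaira lemma \ref{l-Kodaira-lemma} lets us write $M\sim_\Q A+E$ with $A$ an ample $\Q$-divisor and $E\ge 0$, and (after one more resolution) $\Supp B\cup\Supp E$ simple normal crossing. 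The key observation is that for every integer $m\ge a$ one has
$$
mD+G-(K_X+B)=(m-a)D+M\sim_\Q (m-a)D+A+E ,
$$
which is nef and big (nef plus nef-and-big), so $mD+G$ is an integral Cartier divisor $\Q$-linearly equivalent to $K_X+B+(\text{nef and big})$.

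\textbf{Reduction to an Euler characteristic.} Because $mD+G\equiv K_X+B+M'$ with $M'$ nef and big, the Kawamata--Viehweg vanishing theorem gives $H^i(X,mD+G)=0$ for all $i>0$ once $m\ge a$. Hence $h^0(X,mD+G)=\chi(X,mD+G)$, which is a polynomial in $m$ of degree at most $d$. Since that polynomial equals $h^0\ge 0$ for all large $m$, it either tends to $+\infty$, or is a positive constant — in both cases we are done — or it vanishes for all $m\gg 0$, and it is precisely this last possibility that must be ruled out. When $D$ is big this is immediate: then $(D^d)>0$ and asymptotic Riemann--Roch gives $\chi(X,mD+G)=\frac{(D^d)}{d!}m^d+O(m^{d-1})\to+\infty$. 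So the real content is the case where $D$ is \emph{not} big, and there one must exploit the bigness of $M$ (which holds irrespective of $D$) to manufacture a section.

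\textbf{The heart of the proof.} Fix a very general closed point $x\in X$, away from $\Supp B\cup\Supp E\cup\Supp D\cup\Supp G$. Using that $mD+G-(K_X+B)$ is big, I would perform a tie-breaking argument of Shokurov's type — a suitable tower of blow-ups over $x$ together with a small perturbation of the boundary — producing a projective birational morphism $g\colon Y\to X$ and a $g$-exceptional prime divisor $S$ over $x$ with
$$
g^*(mD+G)-S\sim_\Q K_Y+\Gamma+\mathcal N ,
$$
where $(Y,S+\Gamma)$ is log smooth, $\Gamma\ge 0$ has all coefficients $<1$ and does not contain $S$, and $\mathcal N$ is nef and big. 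Then $g^*(mD+G)-S\equiv K_Y+(\text{klt boundary})+(\text{nef and big})$, so Kawamata--Viehweg gives $H^1(Y,g^*(mD+G)-S)=0$ and the restriction
$$
H^0\!\big(Y,g^*(mD+G)\big)\longrightarrow H^0\!\big(S,\,g^*(mD+G)|_S\big)
$$
is surjective. Now $S$ has dimension $d-1$; by adjunction $(K_Y+S+\Gamma)|_S=K_S+\Gamma_S$ with $(S,\Gamma_S)$ klt; $g^*D|_S\equiv 0$ since $S$ contracts to the point $x$ (so on $S$ we are in the ``$D=0$'' situation), and the positivity of $\mathcal N$ restricts to give nef-and-bigness on $S$, while $g^*(mD+G)|_S$ is $\Q$-linearly equivalent to an effective divisor (the strict transform of $G$ restricted to $S$, as $S$ is rational). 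Hence the hypotheses of the non-vanishing theorem hold on $S$ in dimension $d-1$, and by the induction hypothesis $H^0(S,g^*(mD+G)|_S)\neq 0$; lifting, $H^0(Y,g^*(mD+G))\neq 0$, and therefore $H^0(X,mD+G)\neq 0$ — the contradiction we wanted.

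\textbf{Expected main obstacle.} The hard part is the construction of $g$ and $S$: arranging the tower of blow-ups over $x$ so that \emph{exactly one} exceptional divisor acquires the coefficient needed to split off $K_Y+S$, while all other discrepancy contributions stay $<1$ and enough positivity is retained for the vanishing theorem. This is the tie-breaking/base-point-free mechanism, it is where the hypothesis $m\gg 0$ genuinely enters (a larger $m$ enlarges the big divisor $(m-a)D+M$ and so creates the room for the blow-ups), and it is where one must be most careful with discrepancies and with the ceiling/floor operations keeping Kawamata--Viehweg applicable. The degenerate case $D\equiv 0$ (where $D$ need not even be torsion) falls out of the same scheme, since then $G-(K_X+B)$ is itself nef and big.
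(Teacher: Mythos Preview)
Your outline --- induction on $\dim X$, reduce to log smooth, create a divisor $S$ of coefficient one via high multiplicity at a general point, apply Kawamata--Viehweg to lift sections from $S$ --- is exactly the paper's approach. But two steps do not go through as you have written them.

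The more serious one: \emph{bigness does not restrict}. You assert that ``the positivity of $\mathcal N$ restricts to give nef-and-bigness on $S$'', but a nef and big divisor can restrict to zero on a subvariety (think of the pullback of an ample divisor restricted to an exceptional divisor --- precisely your situation if $S$ really maps to $x$). The paper fixes this at the outset: using Corollary~\ref{c-nef-to-ample} it writes the nef and big part $aD+G-(K_X+B)$ as $H+E'$ with $H$ genuinely \emph{ample} and $E'\ge 0$ having small coefficients, then absorbs $E'$ into the boundary. One then carries an ample $H$ through the whole construction, and ampleness does restrict to $S$, so the induction hypothesis can be verified there. Without this trick your induction step fails.

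Second, your claim that $S$ contracts to the point $x$ (so that $g^*D|_S\equiv 0$) is not a consequence of the construction you describe. The log canonical centre of $(X,B+tL)$ through $x$ can be positive-dimensional --- nothing prevents $L$ from being singular along a curve through $x$ --- and then $S$ maps onto that centre, not onto $x$. The paper does not attempt to force $S$ over a point. It simply applies the full induction hypothesis on $S$ with $g^*D|_S$ as the new nef Cartier divisor (not assumed numerically trivial) and $(g^*G+G')|_S$ as the new effective Cartier divisor, where $G'\ge 0$ is the exceptional Cartier divisor coming from rounding up positive discrepancies; the required nef-and-big condition on $S$ is then the restriction of the ample $H$, hence ample. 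This is both cleaner and avoids the unjustified geometric claim about $S$.
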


\begin{rem}\label{rem-extension}
Let $S$ be a smooth prime divisor on a smooth projective variety $X$. Then, we have an exact
sequence
$$
0\to \mathcal{O}_X(-S)\to \mathcal{O}_X\to \mathcal{O}_S\to 0
$$

Moreover, if $D$ is a divisor on $X$ we get another exact sequence
$$
0\to \mathcal{O}_X(D-S)\to \mathcal{O}_X(D)\to \mathcal{O}_S(D\vert_S)\to 0
$$
which gives the following exact sequence of cohomologies
$$
0\to H^0(X, D-S)\to H^0(X, D)\toover{f} H^0(S, D\vert_S)\toover{g}
$$
$$
\hspace{2cm}  H^1(X, D-S)\toover{h} H^1(X, D)\to H^1(S, D\vert_S)
$$

It often occurs that we want to prove that $H^0(X, D)\neq 0$. The sequence above is
extremely useful in this case. However, in general $f$ is not surjective so
$H^0(S, D\vert_S)\neq 0$
does not necessarily imply $H^0(X, D)\neq 0$. But if $f$ is surjective 
$H^0(S, D\vert_S)\neq 0$ implies $H^0(X, D)\neq 0$. To prove that $f$ is surjective
we should prove that  $H^1(X, D-S)=0$ or the weaker condition that $h$ is injective. In particular,
if $f$ is surjective and $H^0(S, D\vert_S)\neq 0$, then $S$ is not a component of $\Fix D$.
\end{rem}

\begin{proof}(of the base point free theorem)
\emph{Step 1.} We will use induction on dimension. The case $\dim X=1$ is trivial so we assume 
that $\dim X\ge 2$.

\emph{Step 2.} By assumptions, $aD=K_X+B+A$ where $A$ is nef and big. By Corollary \ref{c-nef-to-ample}, 
we can write $A\sim_\Q A'+\frac{1}{n}C$ where $A'$ is ample, $C\ge 0$ 
with fixed support, and $n$ is arbitrarily large. Thus, we can choose $n$ 
so that $(X,B':=B+\frac{1}{n}C)$ is klt and $aD\sim_\Q K_X+B'+A'$. So, we 
could simply assume that $A$ is ample.

\emph{Step 3.} Applying the nonvanishing theorem with $G=0$ implies that $H^0(X,mD)\neq 0$ for $m\gg 0$.
Pick $b\in \N$ such that $H^0(X,bD)\neq 0$. We will prove that $\Bs |mD|\subset \Bs |mbD|$
 for any $m\gg 0$ where the inclusion is strict. 
  Now suppose that $bD$ is not free. By taking a log resolution
 $f\colon Y\to X$ where all the divisors involved are with simple normal crossings, we can
 further assume that $f^* bD\sim M+F$ where $M$ is free, $F$ is the fixed part, and
 $\Supp F=\Bs |f^*bD|$ (see Remark \ref{rem-decomposition}).  For any 
 natural number $l$, 
 $$
 lf^*bD\sim lM+lF\sim N+lF
 $$
  where we may assume that 
 $N\ge 0$  is reduced with smooth components.

\emph{Step 4.} 
If $F=0$ for any choice of $b$, then go to Step 7. We assume otherwise and we will prove that some
 component of $F$ is not in $\Bs |f^*mD|$ for any $m\gg 0$. For a fixed large $m_0\in\N$ we have

$$
m_0D=K_X+B+(m_0-a)D+A
$$
and by Corollary \ref{c-nef-to-ample} we can write 
$$
f^*((m_0-a)D+A)=E+H
$$
where $H$ is ample and $E\ge 0$ has fixed support with sufficiently small coefficients, 
and $\Supp (N+lF)\subseteq \Supp E$. So,

$$
 f^*(m+m_0)D=f^*(K_X+B+mD+(m_0-a)D+A)=
$$
$$
 f^*(K_X+B+tD)+f^*(m-t)D+f^*((m_0-a)D+A)=
$$
$$
 f^*(K_X+B+tD)+f^*(m-t)D+E+H
$$

Since $(X,B)$ is klt, we can write 
$$
f^*(K_X+B)=K_Y+B_Y'-L'
$$ 
where $(Y,B_Y')$ is klt, $L\ge 0$ is exceptional over $X$, and $L$ and $B_Y'$ 
have no common components.
 We can choose $t>0$ and possibly modify $E$ (by changing its coefficients slightly) 
 in a way that

$$
  f^*(K_X+B+tD)+E= K_Y+B_Y'-L'+\frac{t}{lb}N+\frac{t}{b}F+E=K_Y+B_Y+S-L
$$
where $(Y,B_Y)$ is klt and $S$ is reduced, irreducible and a component of $F$,  
$L\ge 0$ is exceptional over $X$, and $L$ and $B_Y+S$ 
have no common components
(note that we can take $l$ to be as large as we wish). Therefore,

$$
f^*(m+m_0)D+\lceil L \rceil-S= K_Y+B_Y+\lceil L\rceil-L+f^*(m-t)D+H
$$

\emph{Step 5.} 
Now $(Y, B_Y+\lceil L\rceil-L)$ is klt hence by applying Kawamata-Viehweg vanishing theorem, we get

$$
 H^i(Y, f^*(m+m_0)D+\lceil L \rceil-S)=0
$$
for all $i>0$. Therefore, by Remark \ref{rem-extension}, we get the following exact sequence

$$
0\to H^0(Y, f^*(m+m_0)D+\lceil L \rceil-S)\to H^0(Y, f^*(m+m_0)D+\lceil L \rceil)\to
$$
$$
\hspace{3cm} H^0(S, (f^*(m+m_0)D+\lceil L \rceil)\vert_S)\to 0
$$

\emph{Step 6.} On the other hand,

$$
H^0(S, (f^*(m+m_0)D+\lceil L \rceil)\vert_S)\neq 0
$$
for $m\gg 0$ by Shokurov nonvanishing theorem because
$$
  (f^*(m+m_0)D+\lceil L \rceil)\vert_S-(K_S+(B_Y+\lceil L \rceil-L)\vert_S)=(f^*(m-t)D+H)\vert_S
$$
is ample and $(S, (B_Y+\lceil L \rceil-L)\vert_S)$ is klt. Therefore,

$$
H^0(X,f^*(m+m_0)D)=H^0(Y, f^*(m+m_0)D+\lceil L \rceil)\neq 0
$$
for $m\gg 0$ because $\lceil L \rceil$ is effective and exceptional over $X$. This implies that 
$S$ is not a component of the fixed part of $f^*(m+m_0)D+\lceil L \rceil$
nor a component of the fixed part of $f^*(m+m_0)D$ for any $m\gg 0$. 

\emph{Step 7.} Note that $\Bs|mcD|\subseteq \Bs|cD|$ for any $m,c>0$. 
Now assume that $b$ is a prime number. Then, the results of Steps 1-6 show that 
$\Bs|b^{n}D|$ 
is strictly smaller than $\Bs|bD|$ for some $n>0$. Repeating this finitely 
many times we deduce that $\Bs|b^{n}D|$ is empty for some $n>0$.
Now pick another large prime number $b'$ and pick $n'>0$ so that 
$\Bs|b'^{n'}D|$ is empty. Any sufficiently large number $m$ can written as 
$m=\alpha b^{n}+\beta b'^{n'}$ for certain integers $\alpha,\beta\ge 0$.
Therefore, $mD$ is free for any $m\gg 0$.   
\end{proof}

\begin{rem}
An important corollary of the base point free theorem is that the abundance conjecture 
holds for klt pairs of general type. More precisely, let $(X,B)$ be a projective klt 
pair with $B$ rational and $K_X+B$ nef and big. Pick an integer $I>1$ such that 
$D:=I(K_X+B)$ is Cartier. Then, we can write 
$$
D=K_X+B+(I-1)(K_X+B)
$$
hence we can apply the base point free theorem to deduce that $mI(K_X+B)$ is base point free 
for some $m>0$. The same arguments also apply in the relative situation. When $B$ has 
real coefficients, one can reduce abundance to the rational case using methods that 
we will develop later.
\end{rem}

\begin{rem}[Riemann-Roch formula]
Let $D_1,\dots,D_n$ be cartier divisors on a projective variety $X$ of dimension $d$. Then,
$\mathcal{X}(\sum m_iD_i)$ is a polynomial in $m_i$ of degree $\le d$ [\ref{Kollar-Mori}, 1.36]. 
Moreover, one can write
$$
\mathcal{X}(\sum m_iD_i)=\frac{(\sum m_iD_i)^d}{d!}+(\mbox{lower degree terms})
$$
\end{rem}

\begin{rem}\label{rem-Euler}
If two Cartier divisors $D,D'$ on a smooth projective variety satisfy $D\equiv D'$, then
it is well-known that $\mathcal{X}(D)=\mathcal{X}(D')$ [\ref{Kollar-Mori}, Proposition 2.57].
\end{rem}

\begin{rem}[Multiplicity of linear systems]\label{r-multiplicity-linear-systems}
Let $D$ be a Cartier divisor on a smooth variety $X$ with $h^0(X,D)>0$, and $x\in X$ a closed point.
Maybe after replacing $D$ with some $D'\sim D$, we can assume that $x\notin \Supp D$.
Suppose that $\{h_1,\dots,h_n\}$ is a basis of $H^0(X,D)$ over $\C$. 
Under the above assumptions, each $h_i$ is regular at $x$.
Each
element of $h\in H^0(X,D)$ is uniquely written as
$$
h=\sum a_i h_i
$$
where $a_i\in \C$. On the other hand, $h$ can be described as a
formal power series $\Phi_h$ in a set of local parameters $t_1,\dots, t_d$
at $x$.  Now, the multiplicity of $(h)+D\ge 0$ at $x$ is the same as the multiplicity 
of $h$ at $x$ which can measured in the local ring $\mathcal{O}_x$ using the above local 
parameters. Let $m_x$ be the maximal ideal of $\mathcal{O}_x$. 
Then, the multiplicity of $h$ at $x$ is larger than  $l\in \N$ 
iff $h\in m_x^{l+1}$ iff every homogeneous term of $\Phi_h$ of degree $\le l$ vanishes. In particular,
this condition on multiplicity can be translated into
$$
e(l):=\#\{\mbox{monomials of degree $\le l$}\}=\frac{(l+1)^d}{d!}+(\mbox{lower degree terms})
$$
equations on the $a_i$ since $\Phi_h=\sum a_i\Phi_{h_i}$. In particular, if $e(l)<n$, then we know that there is 
at least some effective divisor $D'\sim  D$ having multiplicity $>l$ 
at $x$.
\end{rem}

\begin{proof}(of Shokurov nonvanishing theorem)
We will proceed by induction on dimension. The case $\dim X=1$ is a trivial exercise 
so we assume that $d=\dim X>1$.

\emph{Step 1.} 
We first reduce the problem to the smooth situation. Take a log resolution $f\colon Y\to X$
such that all the divisors involved have simple normal crossings. We can write

$$
f^*(aD+G-(K_X+B))=H+E'
$$
where $H$ is ample and $E'\ge 0$ has sufficiently small coefficients. On the other hand,
we can write
$$
 f^*(aD+G-(K_X+B))=f^*aD+f^*G-f^*(K_X+B)=
$$
$$
f^*aD+f^*G-(K_Y+B_Y-G')=f^*aD+f^*G+G'-(K_Y+B_Y)
$$
where $(Y,B_Y)$ is klt and $G'\ge 0$ is a Cartier divisor exceptional$/X$. Thus

$$
 f^*aD+f^*G+G'-(K_Y+B_Y+E')=H
$$
where $E'$ can be chosen such that $(Y,B_Y+E')$ is klt. So, we can assume that we are in the
smooth situation, i.e. $X$ is smooth, $B,G$ have simple normal crossing singularities, etc.

\emph{Step 2.} 
Now assume that $D\equiv 0$. Then, by the Kawamata-Viehweg vanishing theorem 
$$
 h^i(X,mD+G)=0 ~~~\mbox{and} ~~~h^i(X,G)=0
$$
if $i>0$ which implies that 
$$
 h^0(X,mD+G)=\mathcal{X}(mD+G)=\mathcal{X}(G)=h^0(X,G)\neq 0
$$
by Remark \ref{rem-Euler}. So, we can assume that
$D$ is not numerically zero.

\emph{Step 3.}
 Put $A:=aD+G-(K_X+B)$ which we may assume to be ample by Step 1. 
 Then,

$$
(nD+G-(K_X+B))^d=((n-a)D+A)^d\ge d(n-a)D\cdot A^{d-1}
$$
and since $A$ is ample and $D$ is nef but not numerically zero, $D\cdot A^{d-1}> 0$.
 Let $k\in\N$ such that $kA$ is Cartier.
  By the Riemann-Roch theorem and the Serre vanishing theorem, if $l\gg 0$, then 

$$
 h^0(X,lk(nD+G-(K_X+B)))= \mathcal{X}(lk(nD+G-(K_X+B)))= \hspace{2cm}
$$
$$
\hspace{3cm} \frac{l^dk^d(nD+G-(K_X+B))^d}{d!}+(\mbox{lower degree terms})
$$
where $d=\dim X$ and the latter is a polynomial in $l$.

  On the other hand, if $x\in X\setminus\Supp G$ is a closed point, then, by 
  Remark \ref{r-multiplicity-linear-systems}, 
  to find a divisor
$$
0\le N\sim lk(nD+G-(K_X+B))
$$
of multiplicity $>2lkd$ we need
$$
e(2lkd)=\frac{(2lkd+1)^d}{d!}+(\mbox{lower degree terms})
$$
conditions on $H^0(X,lk(nD+G-(K_X+B)))$. We can be sure that we can find such an $N$ 
for some $l\gg 0$ if we choose $n$ sufficiently large by the above Riemann-Roch formula.
Put $L_n:=\frac{1}{lk}N$. Therefore, 
$$
L_n\sim_{\Q} nD+G-(K_X+B)
$$ 
such that $\mu_x(L_n)> 2d$.
In particular, $K_X+B+L_n$ is not lc at $x$.

 Now by taking a log resolution $g\colon Y\to X$,
for $m\gg 0$ and some $t\in(0,1)$ we can write
$$
 g^*(mD+G)= g^*(K_X+B+tL_n+mD+G-(K_X+B)-tL_n)\sim_\Q
$$
$$
 g^*(K_X+B+tL_n+mD+G-(K_X+B)-tnD-tG+t(K_X+B))=
$$
$$
  g^*(K_X+B+tL_n+(m-tn)D+(1-t)G+(t-1)(K_X+B))=
$$
$$
 g^*(K_X+B+tL_n+(m-tn-(1-t)a)D+(1-t)aD+(1-t)G-(1-t)(K_X+B))=
$$
$$
 g^*(K_X+B+tL_n)+g^*((m-tn-(1-t)a)D+(1-t)A)\sim_\Q
$$
$$
 g^*(K_X+B+tL_n)+H+E'
$$
and perhaps after changing $E'$ and $H$ slightly the latter can be written as 
$$
 K_Y+B_Y+S-G'+H
$$
where $(Y,B_Y)$ is klt, $S$ is reduced and irreducible and not a component of 
$B_Y$ and $G'$, 
$G'\ge 0$ is a Cartier divisor exceptional over $X$,
$H$ is ample (this construction is similar to Step 4 in the proof 
of the base point free theorem). Hence,
$$
  g^*mD+g^*G+G'-S \sim_\Q K_Y+B_Y+H
$$

Now by applying Kawamata-Viehweg vanishing theorem

$$
H^i(Y, g^*mD+g^*G+G'-S)=0
$$
for $i>0$. From this we deduce that

$$
 H^0(S,(g^*mD+g^*G+G')\vert_S)\neq 0 ~\implies~  H^0(Y,g^*mD+g^*G+G')\neq 0
$$

On the other hand, $g^*D\vert_S$ is a nef Cartier divisor on $S$ such that

$$
 g^*mD\vert_S+(g^*G+G')\vert_S-(K_S+B_Y\vert_S)\sim_\Q H\vert_S
$$
is ample. Therefore, we are done by induction.\\
\end{proof}

\begin{rem}
The proof of Shokurov nonvanishing and the base point free theorem already contains 
some of the most important techniques of the LMMP. One can very clearly see the 
importance of vanishing theorems, pairs, etc. 
The nonvanishing theorem will 
reappear later in a different form but the proof is almost the same as the proof just given. 
\end{rem}

\begin{rem}[LMMP for $\Q$-factorial dlt pairs]
Let $(X/Z,B)$ be a $\Q$-factorial dlt pair with $B$ having real coefficients. 
If $K_X+B$ is not nef$/Z$, then there is an extremal ray $R/Z$ such that 
$(K_X+B)\cdot R<0$. Since  $(X/Z,B)$ is $\Q$-factorial dlt, 
we can find a rational $B'$ close to $B$ such that $(X/Z,B')$ is klt and
$(K_X+B')\cdot R<0$. Now the cone theorem gives the contraction $f\colon X\to Y/Z$ of $R$. 

$\bullet$ If $\dim Y<\dim X$, we have a Mori fibre space and we stop. If not,  
$f$ is birational. 

$\bullet$ If $f$ is a divisorial contraction we let $B_Y=f_*B$. 
By the cone theorem again, $Y$ is $\Q$-factorial, 
in particular, $K_Y+B_Y$ is $\R$-Cartier: indeed, if $D$ is any Weil divisor on $Y$, 
and if $D^\sim$ is its birational transform on $X$, then $a(D^\sim+eE)$ is Cartier 
and $a(D^\sim+eE)\cdot R=0$ for some $a>0$ and $e>0$ where $E$ is the exceptional 
divisor of $f$ (note that only one prime divisor is contracted by $f$); 
The cone theorem says that $a(D^\sim+eE)$ is pullback of some Cartier 
divisor on $Y$ which is simply $aD$ hence $D$ is $\Q$-Cartier. 
On the other hand, there is some real number $e'<0$ such that 
$K_X+B+e'E=f^*(K_Y+B_Y)$. In particular, this implies that 
$d(S,X,B)\le d(S,Y,B_Y)$ for any prime divisor $S$ on birational models of 
$X$ where strict inequality holds if the centre of $S$ on $X$ is inside $E$. 
In other words, the singularities of $(Y/Z,B_Y)$ is "better" than 
the singularities of $(X/Z,B)$. In particular, $(Y/Z,B_Y)$  is $\Q$-factorial dlt.
Now, by replacing $(X/Z,B)$ with $(Y/Z,B_Y)$ we can continue the process. 

$\bullet$ Suppose that $f$ is a flipping contraction. First of all, we will see that the 
flip exists for $(X/Z,B)$ iff if it exists for $(X/Z,B')$ (in fact the flip will be 
given by the variety $X^+$). Assume that the flip 
$(X^+/Z,B^+)$ exists. Then, $X^+$ is again $\Q$-factorial: indeed if $D$ is any 
Weil divisor on $X^+$ and if $D^\sim$ is its birational transform on $X$, then 
$a(D^\sim+e(K_X+B'))\cdot R=0$ where $a(D^\sim+e(K_X+B'))$ is Cartier 
for some $a>0$ and $e$. The cone theorem then implies that 
$a(D+e(K_{X^+}+B'^+))$ is Cartier which in turn implies that $D$ is $\Q$-Cartier because 
$K_{X^+}+B'^+$ is $\Q$-Cartier. Moreover, using the negativity lemma (\ref{l-negativity}), 
one can check that 
 $d(S,X,B)\le d(S,Y,B_Y)$ for any prime divisor $S$ on birational models of 
$X$ where strict inequality holds if the centre of $S$ on $X$ is inside the 
exceptional locus of $f$. In particular, $(X^+/Z,B^+)$ is $\Q$-factorial dlt.
Now, we can replace $(X/Z,B)$  with $(X^+/Z,B^+)$ and continue. 

$\bullet$ If the process terminates, we end up with a Mori fibre space or a log minimal model.
\end{rem}

\begin{rem}[LMMP for surfaces]
Since no flip occurs in dimension two, running the LMMP starting with a $\Q$-factorial 
dlt pair $(X/Z,B)$ of dimension two terminates. Indeed, each birational contraction 
contracts a curve and this drops the 
Picard number $\rho$. So, the process must stop at some point. We end up with a 
Mori fibre space or a log minimal model. 
\end{rem}

\clearpage
%%%%%%%%%%%%%%%%%%%%%%%%%%%%%%%%%%%%%%%%%%%%%%%%%%%%%%%
~ \vspace{2cm}
\section{\textbf{D-flips and local finite generation}}
\vspace{1cm}
%%%%%%%%%%%%%%%%%%%%%%%%%%%%%%%%%%%%%%%%%%%%%%%%%%%%%%%

We have already mentioned the finite generation problem for log canonical divisors 
(Conjecture \ref{conj-fg}). It is the connection of this conjecture with existence 
of log flips that makes it a lot more interesting. More generally we have the 
following problem.

\begin{prob}[Finite generation]\label{f.g.}
Let $f\colon X\to Z$ be a contraction of normal varieties and $D$ a $\Q$-divisor on $X$.
We associate
$$
 \mathcal{R}(X/Z,D):=\bigoplus_{m\in \Z_{\ge 0}} f_*\mathcal{O}_X(\lfloor mD\rfloor)
$$
to $D$ which is a $\mathcal{O}_Z$-algebra. When is $\mathcal{R}(X/Z,D)$ a finitely generated
$\mathcal{O}_Z$-algebra? Here by finite generation we mean that for each $P\in Z$,
there is an open affine set $U\subseteq Z$ containing $P$ such that $\mathcal{R}(X/Z,D)(U)$ is a
finitely generated $\mathcal{O}_Z(U)$-algebra. When $Z=pt.$, we usually drop $Z$. 
When $Z$ is affine we let ${R}(X/Z,D):=\mathcal{R}(X/Z,D)(Z)$.
\end{prob}

\begin{exe}[Truncation principle]\label{exe-truncation-principle}
Prove that  $\mathcal{R}(X/Z,D)$ is a finitely generated $\mathcal{O}_Z$-algebra iff 
$\mathcal{R}(X/Z,ID)$ is a finitely generated $\mathcal{O}_Z$-algebra 
for some $I\in \N$ [use the fact that for each open affine subset $U\subseteq Z$, 
$\mathcal{R}(X/Z,D)(U)$ is an integral domain]. More generally, prove that if 
$R=\oplus_{m\ge 0}R_m$ is a Noetherian graded integral domain, then $R$ is a finitely generated 
$R_0$-algebra iff $\oplus_{m\ge 0}R_{mI}$ is a finitely generated $R_0$-algebra.
\end{exe}

\begin{exe}
 Let $f\colon X'\to X$ be a contraction of normal varieties and $D'$ a divisor on $X'$ such that
 $D'=f^*D$ for a Cartier divisor $D$ on $X$. Prove that $H^0(X',mD')=H^0(X,mD)$ for
 any $m\in \N$. In particular, $\mathcal{R}(X/Z,D)$ is a finitely generated $\mathcal{O}_Z$-algebra 
 iff  $\mathcal{R}(X'/Z,D')$ is a finitely generated $\mathcal{O}_Z$-algebra. 
\end{exe}

Recall the definition of free and ample divisors in the relative situation from 
Definition \ref{d-free-ample-divisor/Z}.

\begin{exe}\label{exe-fg-ample}*
Let $f\colon X\to Z$ be a contraction of normal varieties and $D$ a divisor on $X$
which is ample over $Z$. Prove that $\mathcal{R}(X/Z,D)$
is a finitely generated $\mathcal{O}_Z$-algebra (Hint: reduce to the case: $Z$ is affine, 
$X=\PP_Z^n$, $D=\mathcal{O}_X(1)$).
\end{exe}

\begin{thm}[Zariski]\label{t-fg-Zariski}
Let $f\colon X\to Z$ be a contraction of normal varieties and $D$ a Cartier divisor on
$X$ which is free over $Z$. Then, $\mathcal{R}(X/Z,D)$ is a finitely generated
$\mathcal{O}_Z$-algebra.
\end{thm}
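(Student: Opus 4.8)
The plan is to use freeness of $D$ over $Z$ to produce a Stein factorization through which $D$ becomes the pullback of an ample divisor, and then to quote the already-established finite generation for ample divisors. Since finite generation is local on $Z$, I would first reduce to the case $Z=\Spec R$ affine. Because a contraction is in particular projective, $f_*\mathcal O_X(D)$ is coherent, hence a finitely generated $R$-module; choose generators $s_0,\dots,s_n\in H^0(X,\mathcal O_X(D))$. Freeness of $D$ over $Z$ says exactly that the $s_i$ generate $\mathcal O_X(D)$, so they determine a $Z$-morphism $\phi\colon X\to\PP^n_Z$ with $\phi^*\mathcal O(1)\cong\mathcal O_X(D)$. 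Since $f$ is proper and $\PP^n_Z\to Z$ is separated, $\phi$ is proper, so $\phi_*\mathcal O_X$ is a coherent sheaf of $\mathcal O_{\PP^n_Z}$-algebras.

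Next I would take the Stein factorization $X\xrightarrow{g}Y\xrightarrow{h}\PP^n_Z$ of $\phi$, that is $Y=\Spec_{\PP^n_Z}\phi_*\mathcal O_X$. Then $g_*\mathcal O_X=\mathcal O_Y$, the morphism $h$ is finite, and $Y$ is a normal variety: $\phi_*\mathcal O_X$ is an integral domain that is integrally closed in $K(X)$ because $X$ is normal. Moreover $Y$ is projective over $Z$, and since $f=(Y\to Z)\circ g$ is a contraction with $g_*\mathcal O_X=\mathcal O_Y$, the morphism $Y\to Z$ is a contraction as well. Set $A:=h^*\mathcal O(1)$, a Cartier divisor on $Y$; as $h$ is finite and $\mathcal O(1)$ is ample over $Z$, $A$ is ample over $Z$, and $g^*A=g^*h^*\mathcal O(1)=\phi^*\mathcal O(1)\cong\mathcal O_X(D)$, so $g^*A\sim D$.

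Now $D$ is Cartier, hence $\lfloor mD\rfloor=mD$ for all $m$, and the projection formula together with $g_*\mathcal O_X=\mathcal O_Y$ gives $g_*\mathcal O_X(mD)=g_*g^*\mathcal O_Y(mA)=\mathcal O_Y(mA)$, whence $f_*\mathcal O_X(mD)=(Y\to Z)_*\mathcal O_Y(mA)$ for every $m\ge 0$. Summing over $m$ yields an isomorphism of graded $\mathcal O_Z$-algebras $\mathcal R(X/Z,D)\cong\mathcal R(Y/Z,A)$. Finally, $\mathcal R(Y/Z,A)$ is a finitely generated $\mathcal O_Z$-algebra because $A$ is ample over $Z$; this is precisely Exercise \ref{exe-fg-ample}, which reduces to the model case $\PP^n_Z$, $\mathcal O(1)$, whose section ring is a polynomial ring. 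This completes the argument.

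The only points that need genuine care are the preservation of normality (and of the contraction property) under Stein factorization and the fact that the pullback of $\mathcal O(1)$ under a finite morphism stays ample over $Z$; both are standard. The single substantive input is the ample case, Exercise \ref{exe-fg-ample}, which has been granted, so once the Stein factorization is in hand there is no real obstacle beyond bookkeeping.
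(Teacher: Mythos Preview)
Your argument is correct and follows essentially the same route as the paper: both reduce to the affine base, use freeness of $D$ to produce a morphism to projective space, take the Stein factorisation to write $D$ as the pullback of an ample divisor, and then invoke Exercise~\ref{exe-fg-ample}. You spell out the Stein factorisation and the projection-formula step in more detail, but the strategy is identical.
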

\begin{proof}
By shrinking $Z$ we may assume that $D$ is a free divisor on $X$ and $Z$ is affine.
Now $D$ defines a contraction $\phi_D\colon X\to Z'$ such that $f$ is factored
 as $X\toover{\phi_D} Z'\to Z$ and there is a Cartier divisor $H$ on $Z'$ which is
ample over $Z$ and such that $D=f^*H$. In particular, this implies that
$$
 \bigoplus_{m\in \Z_{\ge 0}} H^0(X,mD)= \bigoplus_{m\in \Z_{\ge 0}} H^0(Z',mH)
$$
and so we are done.\\
\end{proof}

\begin{defn}[Log canonical ring]
Let $(X,B)$ be a pair and $f\colon X\to Z$ a contraction of normal varieties. The log canonical
ring of this pair over $Z$ is defined as

$$
 \mathcal{R}(X/Z,K_X+B)=\bigoplus_{m\in \Z_{\ge 0}} f_*\mathcal{O}_X(\lfloor m(K_X+B)\rfloor)
$$

\end{defn}

\begin{rem}
 Let $(X,B)$ be a projective klt pair where $B$ is rational. Then, the log canonical ring
 is a finitely generated $\C$-algebra. This is a major recent result of Birkar-Cascini-Hacon-McKernan 
 [\ref{BCHM}]. We will discuss the proof later in the course. But first we need to deal 
 with finite generation in the local situation.
\end{rem}

We now give a more general definition of a flip.

\begin{defn}[D-flip]\label{d-D-flip}
 Let $f\colon X\to Y/Z$ be the contraction of an extremal ray of small type where $X,Y$ are normal. 
 Let $D$ be an $\R$-Cartier divisor on $X$ which is numerically negative$/Y$, i.e. 
 $-D$ is ample$/Y$. We say that the flip of $D$ (or the $D$-flip) exists if there exists a  diagram
$$
\xymatrix{ X\ar[rd]_{f}& \dashrightarrow & X^+\ar[ld]^{f^+} \\
&Y&}
$$

such that

\begin{itemize}
\item $X^+$ is a normal variety, projective$/Z$,
\item $f^+$ is a small projective birational contraction$/Z$,
\item $D^+$, the birational transform of $D$, is ample over $Y$.
 \end{itemize}
\end{defn}

We will shortly see that if a $D$-flip exists, then it is unique.

\begin{thm}\label{t-flip-and-fg}
Under the notation of Definition \ref{d-D-flip}. If $D$ is a $\Q$-divisor, then the $D$-flip exists iff $\mathcal{R}(X/Z,D)$ is a finitely generated $\mathcal{O}_Z$-algebra;
\end{thm}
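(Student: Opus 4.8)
The plan is to prove the two implications separately, having first reduced to the case in which $Z$ is affine, since both finite generation and the existence of the flip are local questions over $Z$.

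Assume first that $\mathcal{R}(X/Z,D)$ is finitely generated over $\mathcal{O}_Z$. By the truncation principle (Exercise \ref{exe-truncation-principle}) we may assume $D$ is Cartier. Since $f$ is small and $Y$ is normal, $f_*\mathcal{O}_X(mD)=\mathcal{O}_Y(mD_Y)$ as reflexive sheaves, where $D_Y:=f_*D$, and $\mathcal{R}(X/Z,D)=g_*\mathcal{A}$ with $\mathcal{A}:=\bigoplus_{m\ge 0}\mathcal{O}_Y(mD_Y)$ and $g\colon Y\to Z$; one deduces from the hypothesis that $\mathcal{A}$ is a finitely generated $\mathcal{O}_Y$-algebra, using that $g$ is projective and birational. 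Put $X^+:=\Proj_Y\mathcal{A}$, let $f^+\colon X^+\to Y$ be the structure morphism, and let $D^+$ be the $\Q$-divisor with $\mathcal{O}_{X^+}(ID^+)=\mathcal{O}_{X^+}(1)$ for a suitable positive integer $I$. Then $X^+$ is projective over $Z$; $f^+$ is birational because the graded pieces of $\mathcal{A}$ are rank-one subsheaves of the constant sheaf $K(Y)$, so that $X^+$ is birational to $Y$; $f^+$ is small because $\Proj_Y\mathcal{A}$ coincides with $Y$ over the locus where $\mathcal{O}_Y(ID_Y)$ is invertible, whose complement has codimension $\ge 2$, and the rank-one nature of $\mathcal{O}_{X^+}(1)$ over $Y$ prevents any divisor from being contracted; $X^+$ is normal (after replacing it by its normalisation, if needed); and $D^+$ is $\Q$-Cartier with $\mathcal{O}_{X^+}(1)$ ample over $Y$ by the construction of the relative $\Proj$. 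Finally $D^+$ is forced to be the birational transform of $D$ because $(f^+)_*\mathcal{O}_{X^+}(mD^+)=\mathcal{O}_Y(mD_Y)$. Hence $X^+\to Y$ satisfies all the conditions of Definition \ref{d-D-flip} and is the $D$-flip.

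Conversely, suppose the $D$-flip $f^+\colon X^+\to Y$ exists. Since $f$ and $f^+$ are both small and agree with $Y$ over the complement of the flipping locus, the induced birational map $X\dashrightarrow X^+$ is an isomorphism in codimension one; it therefore identifies $\mathcal{O}_X(\lfloor mD\rfloor)$ with $\mathcal{O}_{X^+}(\lfloor mD^+\rfloor)$ for every $m$, so that $\mathcal{R}(X/Z,D)\cong\mathcal{R}(X^+/Z,D^+)$. Now $D^+$ is ample over $Y$, so after truncating we may assume $ID^+$ is Cartier and very ample over $Y$; then $\mathcal{R}(X^+/Y,ID^+)$ is a finitely generated $\mathcal{O}_Y$-algebra by Exercise \ref{exe-fg-ample} (equivalently Theorem \ref{t-fg-Zariski}). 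To pass to $\mathcal{O}_Z$, choose a divisor $A_Y$ on $Y$ ample over $Z$; then $ID^+ + N(f^+)^*A_Y$ is ample over $Z$ for $N\gg 0$, and finite generation of $\mathcal{R}(X^+/Z,D^+)$ follows from the standard fact that the bigraded section ring $\bigoplus_{a,b\ge 0}H^0(X^+,a(ID^+)+b(f^+)^*A_Y)$ is a finitely generated $\mathcal{O}_Z$-algebra when one of the two divisors is ample over $Z$ and the other is semi-ample over $Z$. Hence $\mathcal{R}(X/Z,D)$ is finitely generated over $\mathcal{O}_Z$.

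The two places where care is needed, and where the argument is not purely formal, are precisely the transitions from ``over $Y$'' to ``over $Z$'': in the first implication, verifying that $X^+=\Proj_Y\mathcal{A}$ is genuinely a small modification of $Y$ on which the transform of $D$ is $\Q$-Cartier and ample, that is, faithfully translating finite generation of the divisorial algebra $\mathcal{A}$ into the geometric data demanded of a flip; and in the second implication, upgrading finite generation over $\mathcal{O}_Y$, which is immediate from $f^+$-ampleness, to finite generation over $\mathcal{O}_Z$, which relies essentially on the projectivity of $Y$ over $Z$. I expect the first of these to be the main obstacle. Finally, the uniqueness of the $D$-flip (asserted after Definition \ref{d-D-flip}) drops out of the same description, since any $D$-flip must satisfy $(f^+)_*\mathcal{O}_{X^+}(\lfloor mD^+\rfloor)=\mathcal{O}_Y(\lfloor mD_Y\rfloor)$ and is therefore isomorphic over $Y$ to $\Proj_Y\mathcal{A}$.
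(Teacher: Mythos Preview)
Your overall strategy is the same as the paper's: identify the flip with $\Proj$ of the divisorial algebra, and for the converse use that $X$ and $X^+$ are isomorphic in codimension one so the section algebras agree. Two differences are worth noting.

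First, the paper works directly with $\Proj \mathcal{R}(X/Z,D)$ rather than with $\Proj_Y\mathcal{A}$, and in practice the question is local on $Z$ so one may as well take $Z=Y$; your worry about upgrading from $\mathcal{O}_Y$-finite generation to $\mathcal{O}_Z$-finite generation is then moot. In the direction ``flip exists $\Rightarrow$ finite generation'' the paper simply invokes Exercise~\ref{exe-fg-ample} for $D^+$ ample over $Y$ (writing $Z$, but meaning $Y$ or localising so they coincide), and your bigraded-ring detour is unnecessary.

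Second, and more importantly, your argument that $f^+$ is small is not complete: saying ``the rank-one nature of $\mathcal{O}_{X^+}(1)$ over $Y$ prevents any divisor from being contracted'' is not a proof, and you yourself flag this as the main obstacle. The paper's argument is short and concrete, and you should learn it. Suppose $E$ is a prime divisor on $X^+$ contracted by $f^+$, and let $H$ correspond to $\mathcal{O}_{X^+}(1)$. Since $H$ is ample over $Y$, for $m\gg 0$ the divisor $E$ is not in $\Bs|mH+E|$, hence $\mathcal{O}_{X^+}(mH)\subsetneq\mathcal{O}_{X^+}(mH+E)$ and so $f^+_*\mathcal{O}_{X^+}(mH)\subsetneq f^+_*\mathcal{O}_{X^+}(mH+E)$. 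On the other hand, since $E$ is exceptional one has $f^+_*\mathcal{O}_{X^+}(mH+E)\subseteq \mathcal{O}_Y(mH_Y)$ with $H_Y=f^+_*H=f_*D$, and the right-hand side equals $f^+_*\mathcal{O}_{X^+}(mH)$ for $m\gg 0$ by construction of $X^+$ as $\Proj$. This is a contradiction. Normality of $X^+$ is also proved directly in the paper (Remark~\ref{r-truncation-proj}) rather than by passing to the normalisation; your ``after replacing it by its normalisation, if needed'' would require an extra check that the normalisation is still small and that $D^+$ remains ample.
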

\begin{proof}
 Suppose that a $D$-flip exists. Since $X$ and $X^+$ are isomorphic in codimension one, 
$$
\mathcal{R}(X/Z,D)\simeq \mathcal{R}(X^+/Z,D^+)
$$
By Exercise \ref{exe-fg-ample}, $\mathcal{R}(X^+/Z,D^+)$ is a finitely generated 
$\mathcal{O}_Z$-algebra hence $\mathcal{R}(X/Z,D)$ is also finitely generated. 
By Remark \ref{r-truncation-proj},  for any $I\in \N$, 
$$
\Proj \mathcal{R}(X/Z,ID^+)\simeq \Proj \mathcal{R}(X/Z,D)
$$
Since $D^+$ is ample over $Z$, if we take $I$ sufficiently divisible, then 
$$
X^+\simeq \Proj \mathcal{R}(X/Z,ID^+)
$$ 
as can be checked locally over $Z$. Therefore, $f^+\colon X^+\to Z$ is uniquely determined
by the algebra  $\mathcal{R}(X/Z,D)$.

Conversely, assume that $\mathcal{R}(X/Z,D)$ is a finitely generated 
$\mathcal{O}_Z$-algebra. By replacing $D$ with a positive multiple, we 
can assume that $\mathcal{R}(X/Z,D)$ is generated by elements of degree one. 
Now, put 
$$
X^+:=\Proj \mathcal{R}(X/Z,ID)
$$ 
and let $f^+\colon X^+\to Z$ 
be the natural morphism and $\mathcal{O}_{X^+}(1)$ the invertible sheaf 
given by $\Proj \mathcal{R}(X/Z,ID)$ (cf. [\ref{Hart}, Chapter II, Proposition 7.10]). 
Moreover, for any $m\gg 0$, 
$$
f_*^+\mathcal{O}_{X^+}(m)=f_*\mathcal{O}_X(mID)
$$
(cf. [\ref{Hart}, Chapter II, Exercise 5.9]). By Remark \ref{r-truncation-proj}, 
$X^+$ is a normal variety. 

We show that $f^+$ does not contract any divisors. Assume otherwise and let 
$E$ be a prime divisor contracted by $f^+$. Let $H$ be the Cartier divisor 
corresponding to $\mathcal{O}_{X^+}(1)$. The problem is local so we may assume 
$Z$ to be affine.
Since $H$ is ample over $Z$, $E$ is not in the
  base locus of $|mH+E|$ for large $m\in\N$. Hence,
  $$
  \mathcal{O}_{X^+}(mH)\subsetneq \mathcal{O}_{X^+}(mH+E)
  $$ 
  otherwise $E$ would be in the
  base locus of $|mH+E|$ which is not possible.
  Thus, $f^+_*\mathcal{O}_{X^+}(mH)\subsetneq f^+_*\mathcal{O}_{X^+}(mH+E)$
  for large $m\in\N$. On the other hand, since $E$ is exceptional
  $$
  f^+_*\mathcal{O}_{X^+}(mH+E)\subseteq \mathcal{O}_Z(mH_Z)
  $$ 
  where $H_Z=f_*^+H$. But $f_*^+H=f_*D=:D_Z$ because $X$ and $X^+$ are isomorphic 
  over the smooth points of $Z$. So, $\mathcal{O}_Z(mH_Z)=\mathcal{O}_Z(mD_Z)$ 
  and 
  $$
  f^+_*\mathcal{O}_{X^+}(mH+E)\subseteq \mathcal{O}_Z(mD_Z)=f^+_*\mathcal{O}_{X^+}(mH)
  $$ 
    This is a  contradiction.
  So $f^+$ is small and, in particular, $H$ is the birational transform of $D$.
\end{proof}

\begin{rem}\label{r-truncation-proj}
Let $Z$ be a normal variety and $\mathcal{R}$ a finitely generated graded $\mathcal{O}_Z$-algebra 
with the degree $0$ piece $\mathcal{R}_0=\mathcal{O}_Z$. 
Let $I\in \N$, and let  $\mathcal{R}^{[I]}$ be the subalgebra of $\mathcal{R}$ consisting 
summands of degree divisible by $I$, i.e. the piece of degree $0$, $I$, $2I$, $\dots$. 
The injection  $\mathcal{R}^{[I]}\to \mathcal{R}$ induces a rational map 
$$
\phi\colon \Proj \mathcal{R} \bir \Proj \mathcal{R}^{[I]}
$$
over $Z$.
By replacing $Z$ with an open affine subset, from now on we assume that $Z$ is affine, 
and instead of the above sheaves we consider the corresponding algebras 
$R=\mathcal{R}(Z)$ and $R^{[I]}=\mathcal{R}^{[I]}(Z)$. 
Now, $\phi$ is actually a morphism. Indeed, if $P\in \Proj R$, 
then $P\cap R^{[I]}\in \Proj R^{[I]}$ because $P\cap R^{[I]}$ does not contain all 
elements of  $R^{[I]}$ of positive degree since $\alpha\in R$ implies that $\alpha^I\in R^{[I]}$.

Moreover, we show that $\phi$ is locally an isomorphism. If $\alpha\in R^{[I]}$ has positive 
degree $nI$, 
the induced localised map $R^{[I]}_{(\alpha)}\to R_{(\alpha)}$ is again injective. 
It is actually, also surjective. Indeed, let $\frac{\beta}{\alpha^r}$ be an element of 
$R_{(\alpha)}$. Then, be definition, $\deg \beta=\deg \alpha^r=rnI$. So, 
$\beta \in R^{[I]}_{(\alpha)}$ hence $\frac{\beta}{\alpha^r}$ is inside $R^{[I]}_{(\alpha)}$.
Now, let $\alpha_1,\dots,\alpha_l$ be elements of $R^{[I]}_{(\alpha)}$ which 
generate  $R^{[I]}_{(\alpha)}$ as an $R$-algebra where $R=\mathcal{O}_Z(Z)$.

Let $\mathcal{I}$ is the ideal of $R$ consisting of all elements of positive degree,  
$\mathcal{I}^{[I]}:=\mathcal{I}\cap R^{[I]}$, and $\mathcal{J}$ the ideal of $R$ 
generated by the $\alpha_1,\dots,\alpha_l$. Then, 
$\mathcal{I}=\sqrt{\mathcal{I}^{[I]}R}\subseteq \sqrt \mathcal{J}\subseteq \mathcal{I}$.
So, the principal open sets $D_+(\alpha_i)\subseteq \Proj R$ and 
$D_+^{[I]}(\alpha_i)\subseteq \Proj R^{[I]}$ defined by the $\alpha_i$ cover 
$\Proj R$ and $\Proj R^{[I]}$. Now, the isomorphisms  
$R^{[I]}_{(\alpha_i)}\to R_{(\alpha_i)}$ imply that 
$$
\phi\colon D_+(\alpha_i)\to D_+^{[I]}(\alpha_i)\
$$ 
are isomorphisms hence $\phi$ itself is an isomorphism. 

Let $\mathcal{S}$ be the graded $\mathcal{O}_Z$-algebra whose degree $n$ 
summand is the degree $nI$ summand of $\mathcal{R}^{[I]}$, and multiplication 
in $\mathcal{S}$ is the one induced by $\mathcal{R}^{[I]}$. Then, one can 
see that $\Proj \mathcal{S}$ is naturally isomorphic to $\Proj \mathcal{R}^{[I]}$ 
as schemes over $Z$.

If $\mathcal{R}=\mathcal{R}(X/Z,D)$ where $D$ is a $\Q$-divisor on some normal 
variety $X$ projective over $Z$, then $\mathcal{S}=\mathcal{R}(X/Z,ID)$. Moreover, 
in this case, $\Proj \mathcal{R}(X/Z,D)$ is a normal variety: we assume $Z$ is affine 
and that $D\ge 0$; 
note that since ${R}(X/Z,D)$ is an integral domain, $\Proj \mathcal{R}(X/Z,D)$ is an 
integral scheme with function field ${R}(X/Z,D)_{(0)}$. If $\alpha$ is any homogeneous 
element of positive degree $l$, then we show that ${R}(X/Z,D)_{(\alpha)}$ is integrally 
closed in ${R}(X/Z,D)_{(0)}$. Assume that $\frac{\beta}{\gamma}\in {R}(X/Z,D)_{(0)}$ 
satisfying an equation 
$$
(\frac{\beta}{\gamma})^n+\frac{\lambda_1}{\alpha^{r_1}}(\frac{\beta}{\gamma})^{n-1}+\cdots+
\frac{\lambda_n}{\alpha^{r_n}}=0
$$
where $\frac{\lambda_i}{\alpha^{r_i}}\in {R}(X/Z,D)_{(\alpha)}$. Let $r=\max r_i$. 
Multiplying the equation by $\alpha^{nr}$, and replacing 
$\frac{\beta\alpha^r}{\gamma}$ by $\theta$ and replacing $\lambda_i\alpha^{ir-r_i}$ by 
$\rho_i$ we get an equation 
$$
\theta^n+\rho_1\theta^{n-1}+\cdots+\rho_n=0
$$
and it enough to prove that $\theta\in {R}(X/Z,D)$ and that it has degree $rl$, 
i.e. $\theta\in H^0(X,rl D)$. Let $P$ be a prime divisor on $X$. Then, from the 
equation we can estimate:
$$
n\mu_P(\theta)=\mu_P(\theta^n)\ge \min\{ \mu_P(\rho_i)+(n-i)\mu_P(\theta)\}
$$
where $\mu$ stand for multiplicity. If the minimum is attained at 
$\mu_P(\rho_j)+(n-j)\mu_P(\theta)$, then since $\rho_i\in H^0(X,rljD)$,
$\mu_P(\rho_j)\ge rlj\mu_PD$. Thus, 
$$
j\mu_P(\theta)\ge lrj\mu_PD
$$
Therefore, $\theta\in H^0(X,rl D)$ and we are done.
\end{rem}

\begin{defn}[Log canonical model]
 Let $(X/Z,B)$, $(Y/Z,B_Y)$ be lc pairs and $f\colon X\bir Y/Z$ a birational map whose inverse does not
 contract any divisors such that $B_Y=f_*B$. We say that $(Y/Z,B_Y)$ is 
 a \emph{log canonical model} for $(X/Z,B)$
 if $K_Y+B_Y$ is ample$/Z$ and if $d(E,X,B)\le d(E,Y,B_Y)$ for any prime divisor 
 $E$ on $X$ contracted by $f$.
\end{defn}

\begin{thm}
Let $(X/Z,B)$ be a lc pair where $B$ is rational and $\kappa(K_X+B)=\dim X$.
Then, $(X/Z,B)$ has a log canonical model
iff $\mathcal{R}(X/Z,K_X+B)$ is a finitely generated $\mathcal{O}_Z$-algebra. 
Moreover, in this case the log
canonical model is given by $\Proj \mathcal{R}(X,K_X+B)$.
\end{thm}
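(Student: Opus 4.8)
The statement is the counterpart of Theorem~\ref{t-flip-and-fg} when the $\Q$-Cartier divisor in question is $K_X+B$ and is big, and the plan is to prove it by the same device: realize the log canonical model as $\Proj$ of the log canonical ring, and transfer finite generation back and forth across ampleness using the truncation principle (Exercise~\ref{exe-truncation-principle}, Remark~\ref{r-truncation-proj}) and Exercise~\ref{exe-fg-ample}. Throughout one may assume $Z$ affine, and since $B$ is rational $K_X+B$ is $\Q$-Cartier, so after replacing it by $I(K_X+B)$ for a suitable $I\in\N$ we may assume $I(K_X+B)$ is Cartier and that $\mathcal R:=\mathcal R(X/Z,K_X+B)$, when finitely generated, is generated in degree one.

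For the ($\Rightarrow$) direction, suppose $(Y/Z,B_Y)$ is a log canonical model, with birational map $f\colon X\bir Y$. Take a common log resolution $p\colon W\to X$, $q\colon W\to Y$ and write $K_W=p^*(K_X+B)+A_X=q^*(K_Y+B_Y)+A_Y$. The first step is to check that $E:=p^*(K_X+B)-q^*(K_Y+B_Y)=A_Y-A_X$ is effective and $q$-exceptional: the coefficient of a prime divisor $P\subset W$ in $E$ is $d(P,Y,B_Y)-d(P,X,B)$; if $P$ is not exceptional over $X$ then, because $B_Y=f_*B$ and the inverse of $f$ contracts no divisors, $P$ is not exceptional over $Y$ and the two discrepancies agree, while if $P$ is exceptional over $X$ it is contracted by $f$ and the defining inequality of a log canonical model gives $d(P,X,B)\le d(P,Y,B_Y)$; and any $P$ with nonzero coefficient is necessarily $q$-exceptional. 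Pushing forward first by $p$ and then by $q$ therefore yields
$$
\mathcal R(X/Z,K_X+B)\;\cong\;\mathcal R(W/Z,p^*(K_X+B))\;\cong\;\mathcal R(Y/Z,K_Y+B_Y),
$$
the first isomorphism because $p$ is a contraction, the second because $E\ge 0$ is $q$-exceptional, so that $q_*\mathcal O_W(\lfloor m\,p^*(K_X+B)\rfloor)=\mathcal O_Y(\lfloor m(K_Y+B_Y)\rfloor)$. Since $K_Y+B_Y$ is ample$/Z$, Exercise~\ref{exe-fg-ample} together with the truncation principle shows $\mathcal R(Y/Z,K_Y+B_Y)$ is a finitely generated $\mathcal O_Z$-algebra, hence so is $\mathcal R(X/Z,K_X+B)$; and since $\Proj$ of the homogeneous ring of an ample divisor recovers the variety, $\Proj\mathcal R(X/Z,K_X+B)\cong Y$.

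For the ($\Leftarrow$) direction, assume $\mathcal R$ is finitely generated; after the truncation above put $Y:=\Proj\mathcal R$, with structure morphism $f^+\colon Y\to Z$ and $H=\mathcal O_Y(1)$, so that $H$ is ample$/Z$ and $f^+_*\mathcal O_Y(m)=f_*\mathcal O_X(m\,I(K_X+B))$ for $m\gg 0$. By Remark~\ref{r-truncation-proj}, $Y$ is a normal variety projective over $Z$; because $\kappa(K_X+B)=\dim X$ the natural rational map $f\colon X\bir Y$ is birational (the images $\phi_{mI(K_X+B)}(X)$ have dimension $\dim X$ for $m\gg 0$). That the inverse of $f$ contracts no divisor is proved exactly as the smallness of the flipped contraction in Theorem~\ref{t-flip-and-fg}: if a prime divisor $E\subset Y$ were contracted by $f^{-1}$, ampleness$/Z$ of $H$ would force $\mathcal O_Y(mH)\subsetneq\mathcal O_Y(mH+E)$ for $m\gg 0$, while the exceptionality of $E$ gives $f^+_*\mathcal O_Y(mH+E)\subseteq f^+_*\mathcal O_Y(mH)$, a contradiction. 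Setting $B_Y:=f_*B$, it remains to see that $(Y/Z,B_Y)$ is lc with $K_Y+B_Y$ ample$/Z$ and $d(P,X,B)\le d(P,Y,B_Y)$ for every prime divisor $P$ on $X$ contracted by $f$. Choosing the common resolution $p\colon W\to X$, $q\colon W\to Y$ so that the movable part of $p^*(I(K_X+B))$ is free and equals $q^*H$, one identifies $I(K_Y+B_Y)$ with the Cartier ample divisor attached to $H$; in particular $K_Y+B_Y$ is $\Q$-Cartier and ample$/Z$. Then $p^*(K_X+B)-q^*(K_Y+B_Y)$ is anti-nef over $Y$, and the Negativity Lemma~\ref{l-negativity} shows it is effective and yields the discrepancy inequality, exactly as in the divisorial and flipping steps of the LMMP for $\Q$-factorial dlt pairs. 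Thus $(Y/Z,B_Y)$ is a log canonical model, and by the ($\Rightarrow$) direction $Y\cong\Proj\mathcal R(Y/Z,K_Y+B_Y)\cong\Proj\mathcal R(X/Z,K_X+B)$, as asserted.

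The routine part is the finite-generation bookkeeping: once Remark~\ref{r-truncation-proj} is available, passing between $\mathcal R$, its truncations and $\Proj$, and invoking Exercise~\ref{exe-fg-ample}, is formal. The main obstacle is the last paragraph of the ($\Leftarrow$) direction, namely showing that the variety $Y=\Proj\mathcal R$, equipped with $B_Y=f_*B$, genuinely is the log canonical model: that $K_Y+B_Y$ is $\Q$-Cartier and equals the ample class produced by the ring, and that discrepancies only improve. This is where bigness of $K_X+B$ is indispensable, since it forces $\dim Y=\dim X$, so that what the ring produces is $K_Y+B_Y$ itself rather than its pullback from a lower-dimensional Iitaka base.
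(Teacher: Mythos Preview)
Your approach is essentially the same as the paper's: use ampleness of $K_Y+B_Y$ plus Exercise~\ref{exe-fg-ample} and the truncation principle for $(\Rightarrow)$, and for $(\Leftarrow)$ take a log resolution on which the movable part of $I(K_X+B)$ is free, realize $Y=\Proj\mathcal{R}$ via that free part, and then argue as in Theorem~\ref{t-flip-and-fg} that $f^{-1}$ contracts no divisors. The paper's proof is in fact sketchier than yours in the $(\Leftarrow)$ direction; it does not spell out the verification that $K_Y+B_Y$ is $\Q$-Cartier and ample or that the discrepancy inequality holds, whereas you correctly identify these as the points requiring care.

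There is, however, a small gap in your case analysis for effectivity of $E=p^*(K_X+B)-q^*(K_Y+B_Y)$ in the $(\Rightarrow)$ direction. You write that if $P$ is exceptional over $X$ then ``it is contracted by $f$ and the defining inequality of a log canonical model gives $d(P,X,B)\le d(P,Y,B_Y)$''. But the definition of log canonical model only imposes this inequality for prime divisors \emph{on $X$} contracted by $f$, not for divisors exceptional over $X$. The fix is to use the Negativity Lemma~\ref{l-negativity} over $X$ rather than over $Y$: one has $-E\equiv q^*(K_Y+B_Y)/X$, which is nef$/X$ since $K_Y+B_Y$ is ample$/Z$, and $p_*E\ge 0$ follows from your (correct) analysis of the two cases where $P$ is a divisor on $X$; then $E\ge 0$ follows. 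The $q$-exceptionality of $E$ is then immediate from your observation that any component with nonzero coefficient must be exceptional over $Y$.
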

\begin{proof}
Suppose that $(X/Z,B)$ has a lc model $(Y/Z,B_Y)$. By definition,
$$
\mathcal{R}(X/Z, K_X+B)=\mathcal{R}(Y/Z, K_Y+Y)
$$
and since $K_Y+B_Y$ is ample$/Z$, the $\mathcal{R}(Y/Z, K_Y+Y)$ is a 
finitely generated $\mathcal{O}_Z$-algebra.

Conversely, assume that  $\mathcal{R}(X/Z,K_X+B)$ is a finitely generated 
$\mathcal{O}_Z$-algebra. We localise the problem so assume that $Z$ is affine.
Take $I\in\N$ such that ${R}(X,I(K_X+B))$ is generated by elements of degree 
one. Let $f\colon W\to X$ be a log resolution such that $f^*I(K_X+B)=M+F$ where $M$ is free
and $F$ is the fixed part. Since ${R}(X,I(K_X+B))$ is generated by elements of degree 
one, it is easy to check that 
$$
\Mov mf^*I(K_X+B)=mM ~~~\mbox{and}~~~ \Fix mf^*I(K_X+B)=mF
$$
for every $m>0$. In particular, 
$$
{R}(W/Z,M)\simeq {R}(X/Z,I(K_X+B))
$$
Since $M$ is free it defines a contraction $W\to Y:=\Proj {R}(W/Z,M)$ over $Z$.  
Arguments similar to those in the proof of Theorem \ref{t-flip-and-fg} 
show that the inverse of the induced birational map $X\bir Y$ does not 
contract any divisors. Finally, observe that  
$$
\Proj \mathcal{R}(X/Z,K_X+B)=\Proj \mathcal{R}(X/Z,I(K_X+B))
$$
\end{proof}

\clearpage
%%%%%%%%%%%%%%%%%%%%%%%%%%%%%%%%%%%%%%%%%%%%%%%%%%%%%%%
~ \vspace{2cm}
\section{\textbf{Pl flips and extension theorems}}
\vspace{1cm}
%%%%%%%%%%%%%%%%%%%%%%%%%%%%%%%%%%%%%%%%%%%%%%%%%%%%%%%

\begin{defn}
Let $(X/Z,B)$ be a $\Q$-factorial dlt pair, and $f\colon X\to Y/Z$ a 
$K_X+B$-negative flipping contraction. We say that $f$ is a \emph{pl flipping 
contraction} if there is a component $S$ of $\rddown{B}$ such that 
$S$ is numerically negative$/Y$, i.e. $-S$ is ample over $Y$. If the 
flip exists, we call it a pl flip.  Pl stands for pre-limiting.
\end{defn}

One of the most important insights of Shokurov was that to prove the existence 
of log flips for klt pairs (or $\Q$-factorial dlt pairs), it is enough to 
verify the existence of pl flips, up to certain standard assumptions in lower 
dimensions. We will discuss this in detail in later lecture. The crucial advantage 
of pl flipping contractions is that we have at least one component 
$S$ of $\rddown{B}$ which is numerically negative$/Y$ and this allows us 
to do induction by reducing finite generation of to  $\mathcal{R}(X/Z,K_X+B)$
to a subtle finite generation property on $S$ which is of smaller 
dimension hence induction.

\begin{lem}\label{l-fg-linear-change}
Let $(X/Z,B)$ be a $\Q$-factorial dlt pair, $f\colon X\to Y/Z$ a 
$K_X+B$-negative flipping contraction, and $D$ 
an $\R$-Cartier divisor on $X$ which is numerically negative$/Y$. Then, the 
$K_X+B$-flip exists iff the $D$-flip exists. 
\end{lem}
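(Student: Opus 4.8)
The plan is to exploit that a flipping contraction has relative Picard number one. Write $f\colon X\to Y/Z$ for the given contraction; it contracts the $(K_X+B)$-negative extremal ray and nothing else, so $\rho(X/Y)=1$ and $N^1(X/Y)$ is a one-dimensional $\R$-vector space. By hypothesis $-(K_X+B)$ is ample$/Y$ (since $f$ is a $K_X+B$-negative flipping contraction) and $-D$ is ample$/Y$; in particular neither class is zero in $N^1(X/Y)$ and both are negative on the contracted ray. Hence there is a real number $\lambda>0$ with $D\equiv_Y\lambda(K_X+B)$.

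The key step is to upgrade this to an $\R$-linear equivalence over $Y$: I claim $N:=D-\lambda(K_X+B)$ satisfies $N\sim_{\R}0/Y$. First perturb $B$: since $(X/Z,B)$ is $\Q$-factorial dlt we may choose a rational boundary $B'$ close to $B$ with $(X/Z,B')$ klt and with $f$ still the contraction of a $(K_X+B')$-negative extremal ray $R$, so the cone and contraction theorem applies to $(X/Z,B')$. Its last clause says that any Cartier divisor numerically trivial on $R$ is the pullback of a Cartier divisor on $Y$; tensoring with $\R$ and using $\rho(X/Y)=1$, every $\R$-Cartier divisor that is numerically trivial over $Y$ equals, up to $\R$-linear equivalence, the pullback of an $\R$-Cartier divisor on $Y$. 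Applying this to $N$ gives $N\sim_{\R}f^*G/Y$ for some $\R$-Cartier $G$ on $Y$, hence $N\sim_{\R}0/Y$ and $D\sim_{\R}\lambda(K_X+B)/Y$. This passage from numerical to $\R$-linear equivalence over $Y$ is the one place where something must be proved rather than manipulated formally — one cannot clear denominators for $\R$-divisors — and it is exactly where the relative contraction theorem and the reduction to an auxiliary klt pair are used.

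Granting this, the two flips are interchangeable. Suppose the $K_X+B$-flip $(X^+/Z,B^+)$ exists, with $f^+\colon X^+\to Y/Z$ small projective birational, $K_{X^+}+B^+$ ample$/Y$, and $B^+$ the birational transform of $B$. The birational map $X\bir X^+$ is an isomorphism in codimension one, and taking birational transforms commutes with $\R$-linear equivalence over $Y$ (it carries principal divisors to principal divisors and $f^*G$ to $f^{+*}G$). Therefore $D\sim_{\R}\lambda(K_X+B)/Y$ becomes $D^+\sim_{\R}\lambda(K_{X^+}+B^+)/Y$, where $D^+$ is the birational transform of $D$; in particular $D^+$ is $\R$-Cartier and, as $\lambda>0$, ample$/Y$. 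So $X^+\to Y$ together with $D^+$ meets every requirement of Definition \ref{d-D-flip}, and the $D$-flip exists. Conversely, if the $D$-flip $X^+\to Y$ exists (with $D^+$ ample$/Y$), the transported relation gives $K_{X^+}+B^+\sim_{\R}\tfrac1\lambda D^+/Y$, which is $\R$-Cartier and ample$/Y$; and, just as in the discussion of the LMMP for $\Q$-factorial dlt pairs, the negativity lemma (Lemma \ref{l-negativity}) applied on a common resolution shows that discrepancies do not decrease, so $(X^+/Z,B^+)$ is again $\Q$-factorial dlt and $X^+\to Y$ is the $K_X+B$-flip. Everything outside the second paragraph is formal bookkeeping with birational transforms and the definitions.
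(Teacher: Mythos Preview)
Your proof is correct and follows essentially the same approach as the paper: both establish $D\sim_\R\lambda(K_X+B)/Y$ via the last clause of the cone theorem (after perturbing $B$ to a nearby rational klt boundary so that the theorem applies), and then transport this relation through the small birational map to $X^+$. The paper obtains the $\R$-linear equivalence by writing $D=\sum\alpha_iD_i$ with rational $D_i$ close to $D$, applying the cone theorem to each $D_i$ to get $K_X+B\sim_\Q a_iD_i/Y$, and summing; your passage from $\equiv_Y$ to $\sim_{\R}/Y$ by tensoring the exact sequence $f^*\Pic(Y)\to\Pic(X)\to N^1(X/Y)$ with $\R$ is a cleaner repackaging of the same step rather than a genuinely different idea.
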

\begin{proof}
First note that we can choose a rational boundary $B'\le B$ close to $B$ 
so that $(X/Z,B')$ is $\Q$-factorial dlt pair and $f$ is a 
$K_X+B'$-negative flipping contraction. We may replace $B$ by $B'$ 
and assume that $B$ is rational. Moreover, there exist rational divisors 
$D_1,\dots,D_r$ sufficiently close to $D$ (having the same support as $D$), 
and positive numbers $\alpha_i$ such that $D=\sum \alpha_iD_i$ and 
$\sum \alpha_i=1$, and that $-D_i$ is ample$/Y$.  
By the cone theorem, for each $i$, there is $a_i>0$ such that 
$K_X+B\sim_\Q a_iD_i/Y$. Therefore, 
$$
\sum \frac{\alpha_i}{a_i}(K_X+B)\sim_\R \sum \alpha_iD_i=D/Y
$$
 which implies that 
$K_X+B\sim_\R aD/Y$ for some positive $a\in \R$.
 
Assume that the $D$-flip exists and given by a diagram $X\to Y\leftarrow X^+$.
By the above arguments, $K_{X^+}+B^+\sim_\R aD^+/Y$ which implies that 
$K_{X^+}+B^+$ is $\R$-Cartier and actually ample$/Y$ hence the diagram also 
gives the $K_X+B$-flip. The converse is proved similarly.
\end{proof}

\begin{cor}
Let $(X/Z,B)$ be a $\Q$-factorial dlt pair, $f\colon X\to Y/Z$ a pl flipping 
contraction and $S$ a component of $\rddown{B}$ which is numerically negative$/Y$.
Then, the $K_X+B$-flip exists iff the $S$-flip exists iff $\mathcal{R}(X/Y,S)$ 
is a finitely generated $\mathcal{O}_Y$-algebra. 
\end{cor}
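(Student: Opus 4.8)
The statement is a chain of two equivalences, and the plan is to obtain each link from a result already established above, with essentially no new work beyond checking hypotheses. First I would observe that $S$ is an admissible choice of divisor for Lemma \ref{l-fg-linear-change}: being a component of $\rddown{B}$, $S$ is a prime Weil divisor, hence $\Q$-Cartier since $X$ is $\Q$-factorial, and $-S$ is ample$/Y$ by the very definition of a pl flipping contraction, so $S$ is numerically negative$/Y$. Since a pl flipping contraction is in particular a $K_X+B$-negative flipping contraction, Lemma \ref{l-fg-linear-change} applied with $D=S$ yields at once that the $K_X+B$-flip exists if and only if the $S$-flip exists.

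For the second equivalence I would invoke Theorem \ref{t-flip-and-fg}. Here $D=S$ is a $\Q$-divisor, and $f\colon X\to Y$ is a small birational contraction with $\rho(X/Y)=1$ (true for any flipping contraction), so $f$ is the contraction of an extremal ray of $\overline{NE}(X/Y)$ of small type, and $-S$ is ample$/Y$ --- exactly the setting of Definition \ref{d-D-flip}. The one point that deserves a remark is that Theorem \ref{t-flip-and-fg} is stated with the base being the ambient variety $Z$, whereas the algebra appearing in the corollary is $\mathcal{R}(X/Y,S)$ over $\mathcal{O}_Y$. This is harmless: the existence of the $S$-flip is a question that is local over $Y$ (the flipping diagram, the normality and smallness of $f^+$, and the ampleness of the birational transform $S^+$ over $Y$ are all local over $Y$), and $X^+$ projective$/Y$ together with $Y$ projective$/Z$ gives $X^+$ projective$/Z$, so one may simply run Theorem \ref{t-flip-and-fg} with $Y$ in place of $Z$. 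It then gives that the $S$-flip exists if and only if $\mathcal{R}(X/Y,S)$ is a finitely generated $\mathcal{O}_Y$-algebra.

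Concatenating the two equivalences proves the corollary. I do not expect a genuine obstacle here: all the mathematical content sits in Lemma \ref{l-fg-linear-change} (itself a consequence of the cone theorem, which lets one replace $K_X+B$ by any divisor negative over $Y$) and in Theorem \ref{t-flip-and-fg} (the identification of a $\Q$-divisor flip with the $\Proj$ of its section ring, together with the proof that the resulting $f^+$ is automatically small). The only care required is the bookkeeping of relative bases noted above and the verification that $S$ meets the hypotheses of both cited results; once these are in place the argument is a two-line deduction, and it is this reduction of $K_X+B$-flips to the finite generation of $\mathcal{R}(X/Y,S)$ on the \emph{lower-dimensional} variety $S$ that will be exploited inductively later.
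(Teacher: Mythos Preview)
Your proposal is correct and matches the paper's approach: the corollary is stated without an explicit proof there, being immediate from Lemma~\ref{l-fg-linear-change} (for the first equivalence) and Theorem~\ref{t-flip-and-fg} applied with $Y$ as the base (for the second). Your bookkeeping about the relative base $Y$ versus $Z$ is the only point requiring comment, and you handle it correctly; one very small quibble is that your closing sentence speaks of ``$\mathcal{R}(X/Y,S)$ on the lower-dimensional variety $S$'', but this algebra lives over $Y$, not on $S$ --- the reduction to an algebra on $S$ is the \emph{next} step in the paper, via the restriction map.
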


By replacing $B$ with a rational boundary, we can from now on assume that $B$ is rational.

\begin{rem}[Adjunction for dlt pairs]\label{rem-adjunction-dlt-pairs}
Let $(X/Z,B)$ be a dlt pair and $S$ a component of $\rddown{B}$. 
It is well-known that $S$ is normal (cf. Koll\'ar-Mori [\ref{Kollar-Mori}, corollary 5.52]). 
More generally, the lc centres of 
$(X/Z,B)$ are normal and they are the irreducible components of $S_1\cap\cdots\cap S_r$ 
where $S_1,\cdots, S_r$ are among the irreducible components of $\rddown{B}$.
Moreover, there is a boundary $B_S$ on $S$ such that $(K_X+B)|_S=K_S+B_S$ 
and $(S,B_S)$ is dlt: indeed let $g\colon W \to X$ be a log resolution so that
any exceptional$/X$ prime divisor $E$ on $W$ satisfies $d(E,X,B)>-1$ (this exists 
by definition of dlt pairs). Write $K_W+B_W=g^*(K_X+B)$ and let $T\subset W$ be the birational 
transform of $S$ and $h\colon T\to S$ the induced morphism. Then, by letting $B_T=(B_W-T)|_T$ 
we have 
$$
K_T+B_T=(K_W+B_W)|_T=g^*(K_X+B)|_T=h^*(K_X+B)|_S
$$
If $B_S=h_*B_T$, then $K_S+B_S=(K_X+B)|_S$,  $K_T+B_T=h^*(K_S+B_S)$ and 
the properties of $B_T$ ensure that $(S,B_S)$ is dlt (we also need to use the 
fact that if $S'$ is another component of $\rddown{B}$, then $S\cap S'$ is empty 
or else its irreducible 
components  have dimension $\dim X-2$; see Fujino 
[\ref{Fujino-dlt-pairs}, Proposition 3.9.2] for more information). 
Another interesting fact is that if $B=\sum b_kB_k$, then the coefficients
 of any component of $B_S$ looks like
 
 $$
 \frac{m-1}{m}+\sum \frac{l_kb_k}{m}
 $$
 for certain $m\in\N$ and $l_k\in\N\cup\{0\}$ [\ref{Shokurov-log-flips}, Corollary 3.10].
Finally, note that if the irreducible components of $\rddown{B}$ are disjoint then $(S,B_S)$ is 
klt. 
\end{rem}

Let $(X/Z,B)$ be a $\Q$-factorial dlt pair, $f\colon X\to Y/Z$ a pl flipping 
contraction and $S$ a component of $\rddown{B}$ which is numerically negative$/Y$.
Moreover, since we are interested in the finite generation of $\mathcal{R}(X/Y,S)$ locally 
on $Y$, we can assume that $Z=Y=\Spec R$ is affine. Since $f$ is a contraction of small type, 
$S\sim D\ge 0$ such that $S$ is not in $\Supp D$. Note that $R(X/Y,S)$ is 
a finitely generated $R$-algebra iff $R(X/Y,D)$ is a finitely generated $R$-algebra.
Let $\tau$ be a rational function such that $D+(\tau)=S$. In particular, 
$\tau$ is an element of $R(X/Y,D)$ of degree one.

Now, for each $m$, we have an exact sequence 
$$
0\to \mathcal{O}_X(mD-S) \to \mathcal{O}_X(mD)\to \mathcal{F}_m\to 0
$$ 
where $\mathcal{F}_m$ is supported on $S$. 
Note that since $mD$ is not necessarily Cartier, $\mathcal{F}_m$
may not be isomorphic to $\mathcal{O}_S(mD|_S)$. Anyway, we get exact sequences
$$
0\to H^0(X,\mathcal{O}_X(mD-S)) \to H^0(X,\mathcal{O}_X(mD))\to H^0(X,\mathcal{F}_m)
$$ 
which in turn give an exact sequence  
$$
0\to \bigoplus_{m\ge 0}H^0(X,\mathcal{O}_X(mD-S)) \to 
\bigoplus_{m\ge 0}H^0(X,\mathcal{O}_X(mD))\to^\phi 
R(X/Y,D)|_S\to 0
$$
for a certain algebra $R(X/Y,D)|_S$ on $S$ which is simply the image of $R(X/Y,D)$. 
We will shortly see that $\ker \phi$ is the ideal of $R(X/Y,D)$ generated by the 
element $\tau$.

If $\alpha\in \ker \phi$ is homogeneous of degree $m$, then 
$$
0\le (\alpha)+mD-S=(\alpha)+mD-D-(\tau)=(\alpha\tau^{-1})+(m-1)D
$$
which means that $\beta:=\alpha\tau^{-1}\in R(X/Y,D)$ having degree $m-1$. 
So, $\alpha=\beta\tau$ hence $\ker \phi$ is inside the ideal of $R(X/Y,D)$ 
generated by $\tau$. Since $\tau$ vanishes on $S$ one sees 
that the ideal of $R(X/Y,D)$ generated by $\tau$ is inside $\ker \phi$ hence the claim.
The above arguments give the following important result.

\begin{lem}
$R(X/Y,S)$ is a finitely generated $R$-algebra iff $R(X/Y,D)$ is a finitely generated $R$-algebra iff the restriction $R(X/Y,D)|_S$ 
is a finitely generated $R$-algebra.
\end{lem}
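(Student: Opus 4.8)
The plan is to extract both equivalences from the exact sequence
$$
0\to \bigoplus_{m\ge 0}H^0(X,\mathcal{O}_X(mD-S)) \to R(X/Y,D)\xrightarrow{\phi} R(X/Y,D)|_S\to 0
$$
constructed just above, together with the identification $\ker\phi=\tau\cdot R(X/Y,D)$ (the ideal generated by the degree-one element $\tau$). The equivalence of finite generation of $R(X/Y,S)$ with that of $R(X/Y,D)$ is immediate and was already observed before the statement: since $S=D+(\tau)$, multiplication by $\tau^m$ identifies $H^0(X,\mathcal{O}_X(mS))$ with $H^0(X,\mathcal{O}_X(mD))$ compatibly with products, giving a graded $R$-algebra isomorphism $R(X/Y,S)\simeq R(X/Y,D)$. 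So the real content is the equivalence of finite generation of $A:=R(X/Y,D)$ with that of its restricted algebra $R(X/Y,D)|_S$, which by the exact sequence is precisely the quotient $A/\tau A$.

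One implication is purely formal: a quotient of a finitely generated $R$-algebra is finitely generated, so $A$ finitely generated forces $A/\tau A$ finitely generated. For the converse, recall that $A$ is graded with $A_0=R$ (because $f$ is a contraction) and $\tau\in A_1$, and that $A/\tau A$ is graded since $\phi$ is a graded homomorphism. Assuming $A/\tau A$ is a finitely generated $R$-algebra, I would pick finitely many homogeneous elements $y_1,\dots,y_n\in A$ whose images generate $A/\tau A$ over $R$ (possible: take any finite generating set, replace each generator by its finitely many homogeneous components, and lift these through the degree-preserving surjection $\phi$), and then claim $A=R[\tau,y_1,\dots,y_n]$. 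This I would prove by induction on the degree $m$ of a homogeneous element $a\in A_m$: for $m=0$ we have $A_0=R$; for $m>0$, expand the class of $a$ in $A/\tau A$ as a polynomial in the $y_i$ with coefficients in $R$ and discard all monomials of degree $\neq m$ (legitimate since that class is homogeneous of degree $m$), lift this polynomial to some $p(y_1,\dots,y_n)\in A_m$, and observe that $a-p(y_1,\dots,y_n)\in(\tau A)_m=\tau A_{m-1}$; since $A_{m-1}\subseteq R[\tau,y_1,\dots,y_n]$ by the inductive hypothesis and $\tau$ has positive degree, the induction is well founded and closes, so $A$ is a finitely generated $R$-algebra.

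The one genuinely delicate point is the identification $\ker\phi=\tau\cdot R(X/Y,D)$, which has already been established above: it uses precisely that $S$ is not a component of $D$ (so that $D=S-(\tau)$ with $\tau$ of degree one) and that $\tau$ vanishes along $S$, giving $\tau A\subseteq\ker\phi$, while for a homogeneous $\alpha\in\ker\phi$ of degree $m$ the relation $(\alpha\tau^{-1})+(m-1)D=(\alpha)+mD-S\ge 0$ exhibits $\alpha\in\tau A$. Granting that, the lemma reduces to the formal fact that a graded $R$-algebra with degree-zero piece $R$ is finitely generated if and only if its quotient by a homogeneous element of positive degree is, the generator-lifting step causing no trouble exactly because the degree strictly drops at each inductive step.
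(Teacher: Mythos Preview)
Your proposal is correct and follows essentially the same approach as the paper: both reduce to the exact sequence with $\ker\phi=\tau\cdot R(X/Y,D)$, lift finitely many homogeneous generators of the restricted algebra, and then show that these together with $\tau$ generate $R(X/Y,D)$. The paper's proof leaves the induction on degree implicit in its final ``Therefore'', whereas you spell it out explicitly, but the argument is the same.
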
 
\begin{proof}
It was mentioned that $R(X/Y,S)$ is 
a finitely generated $R$-algebra iff $R(X/Y,D)$ is a finitely generated $R$-algebra.
If $R(X/Y,D)$ is a finitely generated $R$-algebra then obviously the restriction $R(X/Y,D)|_S$ 
is a finitely generated $R$-algebra. Conversely, if  $R(X/Y,D)|_S$ is generated by the images 
of finitely many homogeneous elements $\alpha_1,\dots,\alpha_r\in R(X/Y,D)$, 
then each homogeneous $\alpha\in R(X/Y,D)$ 
of degree $m$ can be written as $\alpha=\alpha'+\alpha''$ where  $\alpha'$ belongs to the $R$-algebra 
generated by the   $\alpha_1,\dots,\alpha_r$, $\alpha''\in \ker \phi$, and 
$\deg \alpha=\deg \alpha''$. So, 
$\alpha''=\beta\tau$ for some $\beta$ of degree $m-1$. Therefore, $\alpha$ belongs to the 
algebra  generated by $\alpha_1,\dots,\alpha_r$ and $\tau$.\\
\end{proof}

By construction, $ID\sim J(K_X+B)$ for certain $0<I,J\in \Z$ such that $ID$ 
and $J(K_X+B)$ are Cartier. By the truncation principle (Exercise \ref{exe-truncation-principle}), 
$R(X/Y,ID)|_S$ is finitely generated iff $R(X/Y,D)|_S$ is finitely generated. 
Moreover, 
$$
R(X/Y,ID)|_S\subseteq R(S/Y,ID|_S)
$$
Unfortunately, in general, equality does not necessarily hold. If equality holds, 
then we could use the fact that $R(S/Y,ID|_S)$ is finitely generated iff 
$R(S/Y,J(K_X+B)|_S)=R(S/Y,J(K_S+B_S))$ is finitely generated where $K_S+B_S$ is 
given by adjunction. In a sense this is the ideal situation since inductively we 
assume that $R(S/Y,J(K_S+B_S))$ is finitely generated and we would be done. 
Though equality does not hold but we will see that we can essentially get equality 
by going on a high resolution. 

In any case, it is enough to prove that $R(X/Y,J(K_X+B))|_S$ is finitely generated.
Since we can perturb the coefficients of $B$, we can assume that $S=\rddown{B}$. 
Let $g\colon W\to X$ be a log resolution. Write $K_W+B_W=g^*(K_X+B)+E$ where 
$B_W,E\ge 0$ with no common components and $E$ is exceptional$/X$. 
Then,  
$$
R(X/Y,J(K_W+B_W))=R(X/Y,J(K_X+B))
$$ 
If $T\subset W$ is the birational 
transform of $S$ and $h\colon T\to S$ the induced morphism, then 
$$
h_*R(X/Y,J(K_W+B_W))|_T=R(X/Y,J(K_X+B))|_S
$$
hence it is enough to prove that $R(X/Y,J(K_W+B_W))|_T$ is finitely generated.

One of the most useful things that naturally appear in  our situation is that 
we can afford to have ample divisor around. More precisely, since $K_X+B$ is 
negative$/Y$, we can add a little ample divisor to $B$ hence assume that 
$A\le B$ for some general ample $\Q$-divisor $A$. Now, $g^*A\le B_W$ 
and $g^*A$ is nef and big. However, $g^*A\sim_\Q A_W+G_W$ where $A_W$ 
is a general ample $\Q$-divisor and $G_W\ge $ has sufficiently small 
coefficients (Corollary \ref{c-nef-to-ample}). The main point is that we can 
choose $A_W$ and $G_W$ so that $T$ is not a component of $A_W+B_W$. 
Moreover, we can assume that $(W/Z,\Delta_W)$ is plt and $T=\rddown{\Delta_W}$ where 
$$
B_W=B_W-g^*A+g^*A\simeq  B_W-g^*A+A_W+G_W:=\Delta_W
$$
So, by replacing $B_W$ by $\Delta_W$ we will assume that $T=\rddown{B_W}$ 
and that $B_W=A_W+B_W'$ where $A_W$ is ample and $B_W'\ge 0$. Moreover, 
by replacing $W$ with a sufficiently high resolution and putting 
$$
K_T+B_T:=(K_W+B_W)|_T
$$
 we can assume that 
$(T,B_T)$ is "canonical" meaning that the  discrepancy $d(N,T,B_T)\ge 0$ for 
any prime divisor $N$ on birational models of $T$ if $N$ is exceptional$/T$. 
Now we are in a position to apply the  extension theorem of Hacon-McKernan 
[\ref{Hacon-McKernan-II}, 7.1].
That is, up to certain assumptions in lower dimension 
(to be discussed) and maybe after replacing $J$ with some multiple we have: 
$$
R(X/Y,J(K_W+B_W))|_T\simeq R(S/Y,J(K_T+\Theta_T))
$$
for some rational boundary $\Theta_T\le B_T$. More precisely, $\Theta_T$ is calculated 
as follows: for each $m>0$ define 
$$
F_{m!}=\frac{1}{m!}\Fix|m!(K_W+B_W)|_T
$$
where $\Fix|m!(K_W+B_W)|_T$ is the largest divisor on $T$ such that 
$\Fix|m!(K_W+B_W)|_T\le M|_T$ for any $0\le M\sim m!(K_W+B_W)$ provided $T\nsubseteq M$. 
Put $F=\lim F_{m!}$. Then, $\Theta=B_T-B_T\wedge F$ where $B_T\wedge F$ is the largest 
divisor satisfying $B_T\wedge F\le B_T$ and $B_T\wedge F\le F$.

\clearpage
%%%%%%%%%%%%%%%%%%%%%%%%%%%%%%%%%%%%%%%%%%%%%%%%%%%%%%%
~ \vspace{2cm}
\section{\textbf{Existence of log minimal models and Mori fibre spaces}}
\vspace{1cm}
%%%%%%%%%%%%%%%%%%%%%%%%%%%%%%%%%%%%%%%%%%%%%%%%%%%%%%%

Though the minimal model conjecture is still widely open but some 
very important cases have already been established, in particular, 
the following theorem of Birkar-Cascini-Hacon-McKernan [\ref{BCHM}] (see also [\ref{Birkar-Paun}]).
We will work out its proof which involves many interesting features
of higher dimensional geometry.

\begin{thm}\label{t-main-BCHM}
Let $(X/Z,B)$ be a klt pair where $B$ is big$/Z$. Then: 

$(1)$  if $K_X+B$ is pseudo-effective$/Z$, then $(X/Z,B)$ has a log minimal model $(Y/Z,B_Y)$. 
Moreover, here abundance holds, that is, $K_Y+B_Y$ is semi-ample$/Z$;  

$(2)$ if $K_X+B$ is not pseudo-effective$/Z$, then  $(X/Z,B)$ has a Mori 
fibre space.
\end{thm}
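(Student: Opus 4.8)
The plan is to run the argument of Birkar--Cascini--Hacon--McKernan by induction on $d=\dim X$, proving the theorem simultaneously with the auxiliary statements that pl flips exist in dimension $d$, that the appropriate non-vanishing holds (a klt pair with big boundary and pseudo-effective log canonical divisor has $K+B\sim_{\R}M\ge0$), special termination in dimension $d$, and a \emph{finiteness of log minimal models} statement. For $d\le 2$ everything is classical (see the remarks on the LMMP for surfaces), so fix $d$ and assume all of this in dimensions $<d$. After perturbing $B$ to a $\Q$-boundary, passing to a $\Q$-factorial dlt model, and using that $B$ is big$/Z$ to arrange $A\le B$ for some ample$/Z$ divisor $A$ (as in the discussion of pl flips), the engine is the LMMP$/Z$ with scaling of a general ample divisor $C$ with $(X/Z,B+C)$ lc and $K_X+B+C$ nef$/Z$ (Definition \ref{d-LMMP-scaling}). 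By Lemma \ref{l-ray-scaling} each step of this LMMP with scaling exists provided the required flips exist, so the two things to supply are existence of the flips and termination.

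For the flips: a flipping step is a $K_X+B$-negative flipping contraction $f\colon X\to Y/Z$ of small type. By Lemma \ref{l-fg-linear-change} together with the cone and contraction theorem, its flip exists iff the $D$-flip exists for any $D$ numerically negative$/Y$, and reducing as in the pl-flip discussion this comes down to a pl flip, i.e.\ to finite generation of $\mathcal{R}(X/Y,S)$ for a component $S$ of $\lfloor B\rfloor$ with $-S$ ample$/Y$. Via the restriction exact sequence this reduces to finite generation of the restricted algebra $R(X/Y,D)|_S$, and by the Hacon--McKernan extension theorem (and the adjunction $K_T+B_T=(K_W+B_W)|_T$ on a sufficiently high log resolution) to finite generation of $\mathcal{R}(S/Y,J(K_T+\Theta_T))$ for a klt pair $(T,\Theta_T)$ of dimension $d-1$. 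Since $\dim T=d-1$, the inductive hypothesis gives $(T,\Theta_T)$ either a log minimal model with $K_T+\Theta_T$ semi-ample or a Mori fibre space; either way $\mathcal{R}(S/Y,J(K_T+\Theta_T))$ is finitely generated, producing the pl flip.

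For termination: divisorial contractions drop $\rho(X/Z)$, so only finitely many occur, and it remains to show the flips terminate. This is the heart of the matter. One invokes special termination in dimension $d$ (a consequence of the existence of log minimal models in dimension $<d$) to see that, after finitely many flips, the flipping and flipped loci are disjoint from $\lfloor B\rfloor$ and the strata of its components; and one combines this with the finiteness-of-models statement (established in the same induction, where the hypothesis $A\le B$ is used decisively) to conclude that the scaling constants $\lambda_i$ can assume only finitely many values on the tail of the sequence, and hence that the LMMP with scaling stops. I expect this termination-with-scaling step, interlocked as it is with non-vanishing and finiteness of models, to be the principal obstacle.

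Finally, one assembles the pieces. If $K_X+B$ is not pseudo-effective$/Z$ it can never become nef$/Z$, so the LMMP with scaling must terminate with a Mori fibre space, giving $(2)$. If $K_X+B$ is pseudo-effective$/Z$, then the non-vanishing input (available because $B$ is big, and ultimately resting on Shokurov's nonvanishing theorem) rules out termination with a fibration, so the process ends with a log minimal model $(Y/Z,B_Y)$. For abundance: since $B_Y$ is still big$/Z$, one reduces to the case where $K_Y+B_Y$ is big and nef$/Z$, where the base point free theorem (Theorem \ref{t-base-point-free}) applied to $I(K_Y+B_Y)=K_Y+B_Y+(I-1)(K_Y+B_Y)$ shows $K_Y+B_Y$ is semi-ample$/Z$; the degenerate case $K_Y+B_Y$ nef but not big is dispatched via the Iitaka fibration and the inductive hypothesis in lower dimension. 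This yields $(1)$.
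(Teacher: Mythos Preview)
Your overall inductive architecture is right and matches the paper's, but there is a genuine gap in your reduction to pl flips. You start from a klt pair $(X/Z,B)$, perturb, pass to a $\Q$-factorial dlt model, and then run an LMMP with scaling; you then assert that each flipping contraction ``comes down to a pl flip, i.e.\ to finite generation of $\mathcal{R}(X/Y,S)$ for a component $S$ of $\lfloor B\rfloor$''. But for a klt pair $\lfloor B\rfloor=0$, and a $\Q$-factorial dlt model of a klt pair still has $\lfloor B_Y\rfloor=0$ (every exceptional divisor has discrepancy $>-1$). So in your setup there is no $S$ available, and a generic klt flipping contraction is \emph{not} a pl flipping contraction. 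The reduction to pl flips is not automatic; it is the content of Shokurov's reduction and is precisely what Theorem~\ref{t-nonvanishing-to-mmodel} in the paper organises. The key manoeuvre you are missing is this: one first uses nonvanishing to produce $M\ge 0$ with $K_X+B\sim_\R M/Z$, then \emph{artificially} raises the coefficients of $B$ along $\Supp M$ to create a dlt pair $(X,B+C)$ with $\theta(X/Z,B+C,M+C)<\theta(X/Z,B,M)$ and nonempty $\lfloor B+C\rfloor$ supported in $\Supp M$. Only then does every extremal ray in the LMMP on $K_X+B+C$ meet $\lfloor B+C\rfloor$ negatively, so that only pl flips are required and special termination (in dimension $d$, resting on the theorem in dimension $d-1$) applies. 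One then descends from a minimal model of $(X,B+C)$ to one of $(X,B)$ by LMMP with scaling of $C$, again needing only pl flips. Without this $\theta$-invariant mechanism your flip step and your termination step both collapse.

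Two smaller points. First, for abundance in (1) you do not need an Iitaka fibration or lower-dimensional input: since $B_Y$ is big$/Z$ one perturbs as in Remark~\ref{rem-perturbation} to arrange an ample part in the boundary, and then the base point free theorem applies directly (this is how Lemma~\ref{l-ray-stability} handles the $\R$-coefficient case via the rational polytope $\mathcal{N}_A(V)$). Second, note the logical order in the paper: (1) in dimension $d$ is proved first (via nonvanishing and Theorem~\ref{t-nonvanishing-to-mmodel}), and only then does Theorem~\ref{t-term}, which \emph{assumes} (1) in dimension $d$, yield termination with scaling and hence (2). Your sketch runs a single LMMP with scaling to get both (1) and (2) simultaneously, but the termination you invoke (finiteness of models, Theorem~\ref{t-finiteness}) already presupposes (1) in the same dimension.
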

\begin{proof}
We do induction on $d$. So, assume that the theorem holds in dimension $d-1$.

(1) If $K_X+B\sim_\R M/Z$ for some $M\ge 0$, then by Theorem \ref{t-nonvanishing-to-mmodel}, 
$(X/Z,B)$ has a log minimal model. The semi-ampleness claim follows from the 
base point free theorem and Lemma \ref{l-ray-stability}. In general, 
by Theorem \ref{t-nonvanishing-induction}, we can always find such $M$.

(2) Let $C$ be an ample$/Z$ $\R$-divisor such that $K_X+B+C$ is klt and nef$/Z$. 
By Theorem \ref{t-term}, the LMMP$/Z$ on $K_X+B$ with scaling of $C$ terminates 
with a Mori fibre space. 
\end{proof}

Some immediate corollaries of the theorem are:

\begin{cor}[Finite generation]\label{cor-fg}
 Let $(X/Z,B)$ be a klt pair such that $K_X+B$ is a 
$\Q$-divisor. Then, the log canonical algebra 
$$
\mathcal{R}(X/Z,B):=\bigoplus_{m\ge 0} f_*\mathcal{O}_X(\rddown{m(K_X+B)}) 
$$
is a finitely generated $\mathcal{O}_Z$-algebra where $f$ is the given morphism 
$X\to Z$.
\end{cor}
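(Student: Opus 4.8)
The plan is to reduce the statement to the case already covered by Theorem~\ref{t-main-BCHM}, where the boundary is big$/Z$. First I would make the standard preliminary reductions. Finite generation is local on $Z$, so I may take $Z$ affine and work with the corresponding $R$-algebra, $R=\mathcal O_Z(Z)$. By the truncation principle (Exercise~\ref{exe-truncation-principle}) it suffices to prove finite generation of $\mathcal R(X/Z,I(K_X+B))$ for one convenient $I\in\N$, so I may assume $D:=K_X+B$ is an integral divisor. Taking a log resolution $p\colon W\to X$ and writing $K_W+B_W=p^*(K_X+B)+E$ with $B_W,E\ge 0$ having no common component, $E$ exceptional$/X$ and $(W,B_W)$ log smooth klt, the projection formula gives $p_*\mathcal O_W(\lfloor m(K_W+B_W)\rfloor)=\mathcal O_X(\lfloor m(K_X+B)\rfloor)$ for all $m$, hence $\mathcal R(X/Z,K_X+B)\cong\mathcal R(W/Z,K_W+B_W)$; so I may also assume $(X,B)$ is log smooth. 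Finally, if $f_*\mathcal O_X(\lfloor m(K_X+B)\rfloor)=0$ for every $m>0$ then the algebra is just $R$ and there is nothing to prove; so from now on $\kappa(K_X+B/Z)\ge 0$, and in particular $K_X+B$ is pseudo-effective$/Z$.

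When $B$ is big$/Z$ the corollary is immediate from Theorem~\ref{t-main-BCHM}: it produces a log minimal model $(Y/Z,B_Y)$ of $(X/Z,B)$ with $K_Y+B_Y$ semi-ample$/Z$. On a common resolution, the discrepancy inequality in Definition~\ref{d-mmodel} together with the negativity lemma (Lemma~\ref{l-negativity}) shows that the pullback of $K_X+B$ exceeds the pullback of $K_Y+B_Y$ by an effective divisor exceptional over $Y$, which contributes no sections, so the two log canonical algebras coincide. Semi-ampleness gives a contraction $h\colon Y\to S/Z$ and an ample$/Z$ $\Q$-divisor $H$ on $S$ with $K_Y+B_Y\sim_\Q h^*H$, whence, after one more truncation, $\mathcal R(X/Z,K_X+B)\cong\mathcal R(S/Z,H)$, which is finitely generated by Exercise~\ref{exe-fg-ample}.

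To reach a general klt boundary $B$ I would split on whether $K_X+B$ is big$/Z$. If it is, I can write $K_X+B\sim_\Q A+G$ with $A$ ample$/Z$ and $G\ge 0$ (possible since $K_X+B$ is big$/Z$, cf.\ the Kodaira lemma~\ref{l-Kodaira-lemma}); as $(X,B)$ is klt, all coefficients of $B$ are $<1$, so for $\varepsilon>0$ small the pair $(X,B')$ with $B':=B+\varepsilon(A+G)$ is still klt, $B'$ is big$/Z$, and $K_X+B'\sim_\Q(1+\varepsilon)(K_X+B)$, so by the truncation principle finite generation follows from the previous paragraph. If $K_X+B$ is not big$/Z$, I would pass to an Iitaka fibration $q\colon W\to T/Z$ of $K_W+B_W$ (which exists because $\kappa(K_X+B/Z)\ge 0$); its general fibre $V$ satisfies $\kappa\big((K_W+B_W)|_V\big)=0$, so by abundance in Kodaira dimension $0$ the fibre is log Calabi--Yau, and the canonical bundle formula (after replacing $W,T$ by higher models) gives a klt pair $(T,B_T)$ and a semi-ample moduli $\Q$-divisor $M_T$ with $K_W+B_W\sim_\Q q^*(K_T+B_T+M_T)$ and $\kappa(K_T+B_T+M_T/Z)=\dim T-\dim Z$, i.e.\ $K_T+B_T+M_T$ big$/Z$. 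Replacing $M_T$ by a general effective $\Q$-divisor $M_T'$ that is $\Q$-linearly equivalent to it with small coefficients (possible by semi-ampleness), $(T,B_T+M_T')$ is klt with $K_T+B_T+M_T'$ big$/Z$; and since $q$ is a contraction, $\mathcal R(X/Z,K_X+B)\cong\mathcal R(W/Z,K_W+B_W)\cong\mathcal R(T/Z,K_T+B_T+M_T)$ up to truncation, so the $K_X+B$-big case finishes the proof.

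The hard part is the Iitaka-fibration step for non-big $K_X+B$: everything else is formal manipulation of graded rings together with the birational-invariance and negativity bookkeeping. What that step genuinely requires is the canonical bundle formula with a \emph{semi-ample} moduli part in the situation at hand (fibres of log Kodaira dimension $0$), together with abundance in Kodaira dimension $0$ used to see those fibres are log Calabi--Yau; this is the one ingredient that is not ``immediate'', and the whole reduction of the non-big case rests on it.
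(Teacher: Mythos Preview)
Your overall strategy is the same as the paper's: reduce to the case where $K_X+B$ is big$/Z$ and then invoke Theorem~\ref{t-main-BCHM} together with semi-ampleness on the minimal model. Your treatment of the big case is in fact more explicit than the paper's own proof: the paper jumps straight from ``$K_X+B$ is big$/Z$'' to applying Theorem~\ref{t-main-BCHM}, which literally requires $B$ big$/Z$; you fill in the perturbation $B'=B+\varepsilon(A+G)$ with $K_X+B'\sim_\Q(1+\varepsilon)(K_X+B)$ that bridges this.

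The genuine gap is in your sketch of the non-big reduction. You invoke ``abundance in Kodaira dimension $0$'' to conclude that the general fibre $V$ of the Iitaka fibration is log Calabi--Yau, i.e.\ $(K_W+B_W)|_V\sim_\Q 0$; but this is an open problem for klt pairs in dimension $\ge 4$. The paper avoids the issue by citing Fujino--Mori [\ref{Fujino-Mori}] as a black box, and their argument does \emph{not} need abundance: for $b$ sufficiently divisible one writes $b(K_W+B_W)=M+F$ with $M=q^*A$ the free movable part and $F\ge 0$ the fixed part, so that $K_W+B_W-\tfrac{1}{b}F\sim_\Q \tfrac{1}{b}q^*A$ is $\Q$-trivial on fibres \emph{by construction}, and one applies the canonical bundle formula to the sub-pair $(W,\,B_W-\tfrac{1}{b}F)$ (coefficients possibly negative, but still sub-klt). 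Your further assumption that the moduli part $M_T$ is semi-ample is likewise not known in general; only $b$-nefness is established, and getting a klt pair $(T,\Delta_T)$ of log general type on the base uses this together with bigness rather than semi-ampleness. So while your outline points at the right machinery, the two ingredients you flag as ``hard but available'' are in fact open pieces of the abundance and $b$-semiampleness conjectures; what is actually known, and what Fujino--Mori use, sidesteps both.
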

\begin{proof}
If  $\mathcal{R}(X/Z,B)=\C$, finite generation is 
trivial in this case. So, we can assume that 
$\mathcal{R}(X/Z,B)\neq \C$ which in particular means that $f_*\mathcal{O}_X(\rddown{m(K_X+B)})\neq 0$
for some $m>0$.
Actually, a theorem of Fujino and Mori [\ref{Fujino-Mori}] reduces finite generation to the case 
when $K_X+B$ is big$/Z$. So, we assume that $K_X+B$ is big$/Z$.
Now, by Theorem \ref{t-main-BCHM}, $(X/Z,B)$ has a log minimal model 
$(Y/Z,B_Y)$ such that $K_Y+B_Y$ is semi-ample$/Z$. 
By definition of log minimal models, there is a common resolution $g\colon W\to X$ 
and $h\colon W\to Y$ such that we can write $g^*(K_X+B)=h^*(K_Y+B_Y)+E$ 
where $E\ge 0$ is exceptional$/Y$. If $I>0$ is an integer so that  
$I(K_X+B)$ and $I(K_Y+B_Y)$ are Cartier, then  
$$
\mathcal{R}(X/Z,I(K_X+B))\simeq \mathcal{R}(Y/Z,I(K_Y+B_Y))
$$ 
hence it is enough to have finite generation on $Y$. Since $K_Y+B_Y$ is semi-ample$/Z$, 
finite generation follows from 
 Zariski's Theorem \ref{t-fg-Zariski}.
\end{proof}

\begin{cor}[Log flips]\label{cor-log-flips}
Log flips exist for klt pairs.
\end{cor}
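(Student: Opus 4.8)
The plan is to read this off from the two preceding results: Theorem \ref{t-flip-and-fg} converts the existence of a flip into a finite generation statement, and Corollary \ref{cor-fg} supplies exactly that finite generation in the klt case. So, once the finite generation theorem is in hand, the corollary is essentially formal, and all I have to do is match up hypotheses.

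In detail, let $(X/Z,B)$ be a klt pair and $f\colon X\to Y/Z$ a $K_X+B$-negative flipping contraction, so that by definition $f$ is the contraction of a $K_X+B$-negative extremal ray of small type: $X$ and $Y$ are normal, $f$ is small and birational, and $-(K_X+B)$ is ample$/Y$. Suppose first that $B$ is a $\Q$-divisor. Then $K_X+B$ is a $\Q$-divisor which is $\R$-Cartier, hence $\Q$-Cartier, so $D:=K_X+B$ meets the hypotheses of Definition \ref{d-D-flip} (a $\Q$-Cartier divisor, numerically negative$/Y$). Since a small birational map which is an isomorphism in codimension one carries $K_X+B$ to the log canonical divisor of the target, the $D$-flip coincides with the log flip of $f$. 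By Theorem \ref{t-flip-and-fg} the $D$-flip exists if and only if $\mathcal{R}(X/Z,K_X+B)$ is a finitely generated $\mathcal{O}_Z$-algebra. But this algebra is precisely the log canonical algebra of the klt pair $(X/Z,B)$, taking as structure morphism the composite $X\to Y\to Z$, which is finitely generated by Corollary \ref{cor-fg}. Hence the flip exists; explicitly $X^+$ is the $\Proj$ over $Z$ of a suitable truncation of $\mathcal{R}(X/Z,K_X+B)$, exactly as constructed in the proof of Theorem \ref{t-flip-and-fg}.

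To remove the rationality hypothesis on $B$ one argues as in Lemma \ref{l-fg-linear-change}. The divisors $B'$ supported on $\Supp B$ for which $K_X+B'$ is $\R$-Cartier form a rational affine subspace, so I can write $K_X+B=\sum_i r_i(K_X+B_i)$ as a finite positive combination with $\sum_i r_i=1$, where each $B_i\ge 0$ is a $\Q$-divisor, each $K_X+B_i$ is $\Q$-Cartier, and the $B_i$ are close enough to $B$ that every $(X,B_i)$ is klt and every $-(K_X+B_i)$ is still ample$/Y$ (both klt-ness and ampleness$/Y$ being open conditions). Then each $f$ is also a $K_X+B_i$-flipping contraction, and since $\rho(X/Y)=1$ the cone theorem gives $K_X+B\sim_\R a_i(K_X+B_i)/Y$ for suitable $a_i>0$; running the argument of Lemma \ref{l-fg-linear-change} then shows the $K_X+B$-flip exists if and only if the $K_X+B_i$-flip does, which reduces us to the rational case just treated. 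The only genuinely hard input is Corollary \ref{cor-fg}, i.e. the finite generation theorem (obtained via Theorem \ref{t-main-BCHM} and the Fujino--Mori reduction to the big case); granting that, the remaining steps --- checking the hypotheses of Theorem \ref{t-flip-and-fg}, identifying the $D$-flip with the log flip, and the real-coefficient perturbation --- are routine, and I expect no real obstacle beyond what is already packaged in the earlier results.
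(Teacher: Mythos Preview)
Your argument is correct, and for rational $B$ it is exactly the paper's proof: invoke Corollary \ref{cor-fg} for finite generation and feed this into Theorem \ref{t-flip-and-fg}.

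For irrational $B$ the two proofs diverge. The paper does not perturb $B$; instead it applies Theorem \ref{t-main-BCHM}(1) directly to the klt pair $(X/Z',B)$ over the flipping base $Z'$ (note $B$ is automatically big$/Z'$ since $-(K_X+B)$ is ample$/Z'$), obtaining a log minimal model $(Y/Z',B_Y)$ with $K_Y+B_Y$ semi-ample$/Z'$. The induced contraction $Y\to X^+/Z'$ produces the flip in one stroke, with no rationality needed. Your route---approximating $B$ by rational $B_i$ in the rational affine space of boundaries with $K_X+B'$ $\R$-Cartier, then transferring the flip via the relation $K_X+B\sim_\R a_i(K_X+B_i)/Y$---is also valid, and has the virtue of being elementary once the rational case is done. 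One caveat: Lemma \ref{l-fg-linear-change} as stated assumes $\Q$-factorial dlt, which you do not have; the transfer step still works in your setting because the cone theorem's pullback property gives $K_X+B\sim_\R a_i(K_X+B_i)/Y$ with $a_i>0$, and this carries over to $X^+$ to make $K_{X^+}+B^+$ $\R$-Cartier and ample$/Y$, but you should say so rather than pointing to the lemma wholesale. The paper's approach is cleaner here since it avoids this bookkeeping entirely.
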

\begin{proof}
Let  $(X/Z,B)$ be a klt pair and $f\colon X\to Z'$ a $K_X+B$-negative 
extremal flipping contraction$/Z$. If $B$ is rational, then the flip exists 
simply because the 
algebra $\mathcal{R}(X/Z,K_X+B)$ is finitely generated and we can use 
Theorem \ref{t-flip-and-fg}. 
If $B$ is not rational we need a different argument. By Theorem \ref{t-main-BCHM}, 
$(X/Z',B)$ has a log minimal model $(Y/Z',B_Y)$ such that $K_Y+B_Y$ is 
semi-ample$/Z'$. Since $f$ does not contract divisors, $Y\to Z'$ also 
does not contract divisors. Since  $K_Y+B_Y$ is semi-ample$/Z'$, 
there is a morphism $\pi\colon Y\to X^+/Z'$ such that $K_Y+B_Y\sim_\R 
\pi^*A/Z'$ for some ample$/Z$ $\R$-divisor $A$ on $X^+$. Now, $X^+\to Z'$ 
gives the flip of $f$. 
\end{proof}

We briefly explain the basic strategy for proving Theorem \ref{t-main-BCHM}. 
We will go through these steps in an order that is pedagogically helpful.

$\bullet$ \emph{From nonvanishing to minimal models:} 
if $K_X+B\sim_\R M/Z$ for some $M\ge 0$, then we construct a 
log minimal model for $(X/Z,B)$ using induction;
The main idea  is that 
the effective divisor $M$ allows us to artificially add the components of 
$M$ to $B$ to create components in $B$ with coefficient one. Perhaps, 
after replacing $(X/Z,B)$ with a log smooth pair through some resolution 
of singularities, we can find $C$ supported on  $M$ such that every component of 
$M$ in $B+C$ has coefficient one. In this case, we first construct a 
log minimal model for $(X/Z,B+C)$. Here if we run an LMMP$/Z$ on $K_X+B+C$, 
we only need pl flips and termination of the program is reduced to lower 
dimension by restricting to components of $\rddown{B+C}$. We end up with 
a log minimal model $(Y/Z,B_Y+C_Y)$.

$\bullet$ \emph{Termination with scaling:} the LMMP we need is a special 
kind of LMMP explained in Definition \ref{d-LMMP-scaling}. The basic 
idea is that we can add a divisor $A$ so that $K_X+B+C+A$ is nef$/Z$. 
Now try to decrease $A$ but preserving the nefness of $K_X+B+C+A$. 
This naturally leads to a sequence of divisorial contractions and flip.
Termination of such a sequence is reduced to lower dimensions via 
$\rddown{B+C}$. Anyway, we construct a log minimal model $(Y/Z,B_Y+C_Y)$ 
for $(X/Z,B+C)$. Since $K_Y+B_Y+C_Y$ is nef$/Z$, we can
 try to decrease $C_Y$ but preserving the nefness. 
This again leads to a sequence of divisorial contractions and flip 
and termination in lower dimensions.
 So, we need termination in lower dimension that appear 
in LMMP with scaling.

$\bullet$ \emph{Finiteness of models:} the LMMP with scaling (in lower dimension) produces 
a sequence of birational klt pairs $(X_i'/Z,B_i'+\lambda_iC_i')$ such that 
$K_{X_i'}+B_i'+\lambda_iC_i'$ is nef$/Z$. An interesting phenomenon  is that 
there can be only finitely many $X_i$. This implies that there cannot be an infinite 
sequence of log flips in the LMMP with scaling hence termination.
 
$\bullet$ \emph{Nonvanishing:} the above steps rely on the fact that $K_X+B\sim_\R M/Z$ 
for some $M\ge 0$. If $K_X+B$ is pseudo-effective$/Z$, we will prove that 
in this case in fact there always is such an $M$. 
The proof of this fact is quite similar to Shokurov nonvanishing with some 
new ingredients. The proof again relies on creating components in $B$ with 
coefficients one and doing induction as usual.

$\bullet$ \emph{Pl flips:} as mentioned above we need pl flips to run the LMMP 
we need. We will show that existence of pl flips follows from the above 
statement in lower dimension hence completing the induction process.

\begin{rem}[Special termination]\label{rem-special-termination}
The following result of Shokurov is an important element of induction arguments in 
the LMMP. Let $(X/Z,B)$ be a dlt pair. Suppose that there is a sequence
$X_i\bir X_{i+1}/Z_i$ of $K_X+B$-negative log flips starting with $X_1=X$. 
For a divisor $M$ on $X$, we denote by  $M_i$ its birational transform on $X_i$.
Let $S$ be a component of $\rddown{B}$   
and $S_i\bir S_{i+1}$ the induced birational map. It is not difficult to see that 
$S_{i+1}\bir S_{i}$ does not contract any divisors when $i\gg 0$ (cf. Fujino 
[\ref{Fujino-dlt-pairs}]).  Obviously, we get induced birational maps 
$S_i\to T_i \leftarrow S_{i+1}$ where $T_i$ is the
 normalisation of the image of $S_i$ on $Z_i$.  By adjunction
$(K_{X_i}+B_i)\vert_{S_i}\sim_{\R} K_{S_i}+B_{S_i}$. The map 
$S_i\bir S_{i+1}/T_i$ is not necessarily a $K_{S_i}+B_{S_i}$-flip 
but 
one can decompose it into a sequence of log flips if we know 
 termination in dimension $<\dim X$. In that case,  
the sequence $X_i\bir X_{i+1}/Z_i$ terminates near $\rddown{B}$, that is, 
$\rddown{B_i}$ does not intersect the extremal ray defining the contraction 
$X_i\to Z_i$. 

One important thing to remember is that the termination we need in dimension 
$<\dim X$ depends on the kind of termination we try to prove on $X$. For example, 
if the sequence $X_i\bir X_{i+1}/Z_i$ is a sequence of log flips with scaling 
of some divisor $C$, then we only  need termination with scaling in dimension 
$<\dim X$. Moreover, if $B=B_1$ is big, usually this condition is inherited by 
the $B_{S_i}$ (at least this is the case when we apply special termination 
later on). 
\end{rem}

\begin{rem}\label{rem-perturbation}
Let $(X/Z,B)$ be a klt pair such that $B$ is big$/Z$.
The bigness ensures that $B\sim_\R G+A/Z$ for some $\R$-divisor 
$G\ge 0$ and some ample$/Z$ $\R$-divisor $H$. We can write 
$$
K_X+B= K_X+(1-\epsilon) B+\epsilon B\sim_\R K_X+(1-\epsilon) B+\epsilon (G+H)/Z
$$
and if $\epsilon>0$ is small enough then $(X/Z,B'=(1-\epsilon) B+\epsilon G)$ is klt.
Take  $A\sim_\R \epsilon H$ general so that $(X/Z,B'+A)$ is klt. In many places 
we can replace $(X/Z,B)$ with $(X/Z,B'+A)$ hence assume $B\ge A$ for some ample $\R$-divisor 
$A$. We use this repeatedly in the sequel.\\ 
\end{rem}

\clearpage
%%%%%%%%%%%%%%%%%%%%%%%%%%%%%%%%%%%%%%%%%%%%%%%%%%%%%%%
~ \vspace{2cm}
\section{\textbf{From nonvanishing to log minimal models}}\label{s-from-nonvanishing}
\vspace{1cm}
%%%%%%%%%%%%%%%%%%%%%%%%%%%%%%%%%%%%%%%%%%%%%%%%%%%%%%%

\begin{thm}\label{t-nonvanishing-to-mmodel}
Assume Theorem \ref{t-main-BCHM} in dimension $d-1$. 
Let $(X/Z,B)$ be a klt pair of dimension $d$ such that $B$ is big$/Z$, and  such that
$K_X+B\sim_\R M/Z$ for some $M\ge 0$. Then, $(X/Z,B)$ has a log minimal model.
\end{thm}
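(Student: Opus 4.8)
The plan is to present $(X/Z,B)$ birationally as a log smooth pair, use the effective divisor $M$ to manufacture a boundary all of whose ``extra'' components have coefficient one, run a log minimal model program that only needs pl flips and lower-dimensional termination, and finally degenerate that boundary back down to $B$.

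\emph{Step 1: reduction to the log smooth case.} Take a log resolution $g\colon W\to X$ of $(X,B+M)$ and choose $B_W\ge 0$ with $g_*B_W=B$, $(W,B_W)$ klt and log smooth, and $K_W+B_W=g^*(K_X+B)+E$ with $E\ge 0$ exceptional$/X$. Then $B_W$ is still big$/Z$, $K_W+B_W\sim_\R g^*M+E=:M_W\ge 0$, and $\Supp(B_W+M_W)$ is simple normal crossing. Since a log minimal model of $(W/Z,B_W)$ induces one of $(X/Z,B)$, I may replace $(X/Z,B)$ by $(W/Z,B_W)$; and invoking the bigness of $B$ as in Remark \ref{rem-perturbation}, I may further assume $B\ge A$ for a general ample$/Z$ $\R$-divisor $A\ge 0$, keeping $K_X+B\sim_\R M\ge 0$ with $\Supp(B+M)$ snc.

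\emph{Step 2: coefficient-one components and a pl log minimal model program.} Choose $C\ge 0$ supported on $\Supp M$ so that $(X,\Delta:=B+C)$ is log smooth (hence dlt) and every component of $M$ is a component of $\rddown{\Delta}$; then $K_X+\Delta\sim_\R M':=M+C\ge 0$ with $\Supp M'\subseteq\rddown{\Delta}$. Pick a general ample$/Z$ $\R$-divisor $H$ with $(X,\Delta+H)$ lc and $K_X+\Delta+H$ nef$/Z$, and run the LMMP$/Z$ on $K_X+\Delta$ with scaling of $H$ (Definition \ref{d-LMMP-scaling}). Each contracted extremal ray $R$ is $(K_X+\Delta)$-negative, so $M'\cdot R<0$, whence some component $S$ of $M'$ --- a component of $\rddown{\Delta}$ --- satisfies $S\cdot R<0$; thus every flipping contraction is of pl type, and the required flips exist because Theorem \ref{t-main-BCHM} in dimension $d-1$ yields pl flips in dimension $d$. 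By special termination (Remark \ref{rem-special-termination}), together with termination with scaling in dimension $d-1$ (a consequence of Theorem \ref{t-main-BCHM} in dimension $d-1$), the program terminates near $\rddown{\Delta}$; but every step is non-trivial over a point of $\Supp M'\subseteq\rddown{\Delta}$, so the program must in fact stop. This produces a log minimal model $(Y/Z,\Delta_Y=B_Y+C_Y)$ of $(X/Z,\Delta)$; write $\phi\colon X\bir Y$ for the resulting map. Then $K_Y+\Delta_Y$ is nef$/Z$, $Y$ is $\Q$-factorial, and $K_Y+B_Y\sim_\R \phi_*M\ge 0/Z$.

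\emph{Step 3: degenerating $C$ back --- the main obstacle.} It remains to pass from the log minimal model of $(X/Z,B+C)$ to one of $(X/Z,B)$. Since $K_Y+B_Y+C_Y$ is nef$/Z$ and $B_Y$ is big$/Z$, Lemma \ref{l-ray-scaling} lets me run the LMMP$/Z$ on $K_Y+B_Y$ with scaling of $C_Y$, i.e.\ decrease $t$ in $K_Y+B_Y+tC_Y$ from $1$ toward $0$; once this program terminates, its output ought to be a log minimal model of $(X/Z,B)$ after one verifies the discrepancy inequalities along the composed birational map. The hard part is precisely the termination of this second program: as $t$ drops below $1$ the components of $M$ need no longer lie in $\rddown{B+tC}$, so special termination near $\rddown{\,\cdot\,}$ no longer closes it off, and --- since in dimension $d$ only pl flips are available at this stage --- one cannot appeal to existence of arbitrary flips. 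The way through is to regard $t\mapsto(X/Z,B+tC)$, $t\in[0,1]$, as a one-parameter family of klt big pairs and to use finiteness of their log minimal models (the technical heart of this circle of statements, established by a parallel induction), which rules out the infinitely many distinct models that a non-terminating sequence of flips with scaling would generate. I expect essentially all of the difficulty of the theorem to be concentrated in this finiteness-of-models input (and in the compatibility check that makes the final pair a log minimal model of the original $(X/Z,B)$ rather than merely of the intermediate pair).
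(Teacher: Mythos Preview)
Your Steps 1--2 are essentially the paper's $\theta=0$ case and are fine. The gap is in Step 3: you propose to close the argument by invoking finiteness of log minimal models for the family $t\mapsto(X/Z,B+tC)$ in dimension $d$, but that finiteness statement (Theorem \ref{t-finiteness}) \emph{assumes} part (1) of Theorem \ref{t-main-BCHM} in dimension $d$, which is exactly what Theorem \ref{t-nonvanishing-to-mmodel} is a step toward proving. So the appeal is circular; at this point of the induction only Theorem \ref{t-main-BCHM} in dimension $d-1$ (hence pl flips in dimension $d$, and special termination reduced to $d-1$) is available.

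The paper avoids this in two linked ways. First, instead of pushing every component of $M$ to coefficient one at once, it argues by contradiction on a set $\mathfrak W$ of bad triples, picks one of minimal $\theta$, and chooses $C$ via $\alpha=\min\{t>0:\rddown{(B+tM)^{\le 1}}\neq\rddown{B}\}$. This yields the identity $\alpha M=C+M'$ with $M'$ supported in $\rddown{B}$, and that identity is what makes the LMMP on $K_Y+B_Y$ with scaling of $C_Y$ a genuine \emph{pl} LMMP: on any contracted ray $R$ one has $M_Y\cdot R<0$ and $C_Y\cdot R>0$, forcing $M'_Y\cdot R<0$, so some component of $\rddown{B_Y}$ is $R$-negative. (With your greedy $C$ this structure is lost, which is precisely why your Step 3 runs into non-pl flips.) Hence the LMMP terminates by special termination, producing $Y'$ with $K_{Y'}+B_{Y'}$ nef.

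Second, $(Y'/Z,B_{Y'})$ need not satisfy the discrepancy inequalities for a log minimal model of $(X/Z,B)$; the paper repairs this not with finiteness but with a limiting argument on $\mathcal T=\{t\in[0,1]:(X/Z,B+tC)\text{ has a log minimal model}\}$: Step 4 shows $\mathcal T$ is downward-open at each of its points (again using only pl flips and special termination), and Steps 5--6 show that if $\tau=\inf\mathcal T\notin\mathcal T$ then comparing discrepancies along a sequence $t_k\downarrow\tau$ and applying Lemma \ref{l-extraction-klt} (which needs only the $d-1$ input) produces a log minimal model at $\tau$, a contradiction. This is the replacement for the finiteness-of-models input you were reaching for.
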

\begin{proof}
We closely follow the proof of [\ref{Birkar-mmodel}, Theorem 1.3]. 
 
\emph{Step 1.}  Since $B$ is big/$Z$, 
by perturbing the coefficients we can assume that it has a general ample$/Z$ component 
which is not a component of $M$  (see Remark \ref{rem-perturbation}).  
 By taking a log resolution we can further 
assume that $(X/Z,B+M)$ is log smooth.
 To construct log minimal models in this situation we need 
to pass to a more general setting.

Let $\mathfrak{W}$ be the set of triples $(X/Z,B,M)$ which satisfy\\\\
(1) $(X/Z,B)$ is dlt of dimension $d$, $(X/Z,B+M)$ is log smooth, and $K_X+B\sim_\R M/Z$,\\
(2) $(X/Z,B)$ does not have a log minimal model,\\
(3) $B$ has a component which is ample$/Z$ but it is not a component of $\rddown{B}$ nor a component 
of $M$.\\

Note that, since $B$ has a general ample component, we have the following crucial property:

$\bullet$ for each component $S$ of $\rddown{B}$, we can write $K_X+B\sim_\R K_X+\Delta/Z$ with 
$(X/Z,\Delta)$ being dlt, $S=\rddown{\Delta}$, and $(S/Z,\Delta_S)$ klt 
where we define $\Delta_S$ by adjunction: $K_S+\Delta_S=(K_X+\Delta)|_S$ (see Remark 
\ref{rem-adjunction-dlt-pairs}). 

The property remains true even after divisorial 
contractions and log flips with respect to $K_X+B$.

Obviously, it is enough to prove that $\mathfrak W$ is empty. Assume otherwise and
choose  $(X/Z,B,M)\in\mathfrak W$ with minimal $\theta(X/Z,B,M)$ where 
$$
\theta(X/Z,B,M):=\#\{i~|~m_i\neq 0 ~~\mbox{and}~~ b_i\neq 1\}
$$
where $B=\sum b_iD_i$ and $M=\sum m_iD_i$ with the $D_i$ being distinct prime divisors.

If $\theta(X/Z,B,M)=0$, then every component of $M$ is a component of $\rddown{B}$. 
This is the ideal situation for induction. If $M=0$, we already have a log minimal model.
If $M\neq 0$, we use special termination as follows (see Remark \ref{rem-special-termination}).
Run the LMMP/$Z$ on $K_X+B$ with scaling of a suitable ample$/Z$ $\R$-divisor $H$. 
Let $\lambda_i$ be the numbers appearing in the LMMP: by definition, we have a 
sequence $X_i\bir X_{i+1}/Z_i$ of divisorial contractions and log flips$/Z$ 
such that $K_{X_i}+B_i+\lambda_iH_i$ is nef$/Z$ and $\sim_\R 0/Z_i$, and 
$X_1=X$. Here $X_i\to Z_i$ is a $K_{X_i}+B_i$-negative extremal contraction: 
if it is of flipping type, then $X_i^+=X_{i+1}$ gives the flip of $X_i\to Z_i$ 
but if it is of divisorial type, then $X_{i+1}=Z_i$. 
 
There are a few important points concerning the above LMMP. 
First of all, to run the program we only need pl flips. Secondly, since divsorial 
contractions drop the Picard number, we can assume that $X_i\bir X_{i+1}/Z_i$ 
is a log flip for any $i\ge j$ for some $j$. Moreover, by 
Remark \ref{rem-special-termination}, we can also assume that the induced 
map $S_i\bir S_{i+1}$ is an isomorphism in codimension one when $i\ge j$.
Thirdly, since $B$ has a general ample component, 
for each component $S$ of $\rddown{B}$, we can write $K_X+B\sim_\R K_X+\Delta/Z$ such that 
$(X_j/Z,\Delta_j)$ is dlt, $S_j=\rddown{\Delta_j}$, $(S_j/Z,\Delta_{S_j})$ is klt, and 
$\Delta_{S_j}$ is big$/T$ where $T$ is the image of $S$ in $Z$, and  
 we define $\Delta_{S_j}$ by adjunction: $K_{S_j}+\Delta_{S_j}=(K_{X_j}+\Delta)|_{S_j}$ (see Remark 
\ref{rem-adjunction-dlt-pairs}).  By  Remark \ref{rem-special-termination}, 
the LMMP induces an LMMP with scaling on $K_{S_j}+\Delta_{S_j}$ which we may assume to terminate 
by induction using Theorem \ref{t-s-term}.
So, the LMMP terminates near $\rddown{B}$. 
Finally, by the  $\theta(X/Z,B,M)=0$ assumption, the LMMP terminates everywhere.\\

\emph{Step 2.} We may then assume that $\theta(X/Z,B,M)> 0$. Notation: for an $\R$-divisor $D=\sum d_iD_i$ we define $D^{\le 1}:=\sum d_i'D_i$ in which  $d_i'=\min\{d_i,1\}$. Now put
$$
\alpha:=\min\{t>0~|~~\rddown{(B+tM)^{\le 1}}\neq \rddown{B}~\}
$$

In particular, $(B+\alpha M)^{\le 1}=B+C$ for some $C\ge 0$ supported in $\Supp M$, and $\alpha M=C+M'$ 
where $M'$ is supported in $\Supp \rddown{B}$. Thus, outside $\Supp \rddown{B}$ 
we have $C=\alpha M$. The pair $(X/Z,B+C)$ is obviously log smooth and
$(X/Z,B+C,M+C)$ is a triple which satisfies (1) and (3) above. By construction
$$
\theta(X/Z,B+C,M+C)<\theta(X/Z,B,M)
$$
 so $(X/Z,B+C,M+C)\notin \mathfrak W$. Therefore, $(X/Z,B+C)$ has a log minimal model,
say $(Y/Z,B_Y+C_Y)$. By definition, $K_Y+B_Y+C_Y$ is nef/$Z$.\\

\emph{Step 3.} Now run the LMMP$/Z$ on $K_Y+B_Y$ with scaling of $C_Y$. Note that we only need pl flips here 
because every extremal ray contracted in the process would have negative intersection with some component of $\rddown{B}$ by the properties of $C$ mentioned in Step 2. 
By using special termination as in Step 1, after finitely many steps, 
$\Supp \rddown{B}$ does not intersect the extremal rays
contracted by the LMMP hence the LMMP terminates on a model $Y'$ on which $K_{Y'}+B_{Y'}$ is nef/$Z$. 
Unfortunately, $(Y'/Z,B_{Y'})$ may not be a log minimal model of $(X/Z,B)$ because the condition on singularities 
in Definition \ref{d-mmodel} may not be satisfied, i.e. the singularities of 
$(Y'/Z,B_{Y'})$ might be worse than the singularities of $(X/Z,B)$. However, we can tackle this 
problem by a limiting argument that will be presented in the following steps.\\

\emph{Step 4.} Let
$$
\mathcal T=\{t\in [0,1]~|~ (X/Z,B+tC)~~\mbox{has a log minimal model}\}
$$
Since $1\in\mathcal T$, $\mathcal T\neq \emptyset$. Let $t\in\mathcal T\cap (0,1]$ and let 
$(Y_t/Z,B_{Y_t}+tC_{Y_t})$ be any log minimal model of $(X/Z,B+tC)$. Running the LMMP/$Z$ on $K_{Y_t}+B_{Y_t}$ with scaling
of $tC_{Y_t}$ shows that there is $t'\in(0,t)$ sufficiently close to $t$ such that $[t',t]\subset \mathcal{T}$
because the condition on singularities in Definition \ref{d-mmodel} is an open condition 
(strictly speaking, one needs to use the negativity lemma which implies that 
it is enough to compare discrepancies for those prime divisors on $X$ which are 
contracted over $Y_t$). The
LMMP terminates for the same reasons as in Step 1-3 and we note again that the log flips 
required are all pl flips.\\

\emph{Step 5.} Let $\tau=\inf \mathcal T$. If $\tau\in\mathcal{T}$, then by Step 4, $\tau=0$
and so we are done by deriving a contradiction. Thus, we may assume that $\tau\notin\mathcal{T}$. In this case, there is a sequence
$t_1>t_2>\cdots$ in $ \mathcal{T}\cap (\tau,1]$
such that $\lim_{k\to +\infty} t_k=\tau$. For each $t_k$ let $(Y_{t_k}/Z,B_{Y_{t_k}}+t_kC_{Y_{t_k}})$ be any log minimal model of
$(X/Z,B+t_kC)$ which exists by the definition of $\mathcal{T}$ and from which we get a model $(Y_{t_k}'/Z,B_{Y_{t_k}'}+\tau C_{Y_{t_k}'})$ by running the LMMP/$Z$ on 
$K_{Y_{t_k}}+B_{Y_{t_k}}+\tau C_{Y_{t_k}}$ with scaling of $(t_k-\tau)C_{Y_{t_k}}$: 
so $K_{Y_{t_k}'}+B_{Y_{t_k}'}+\tau C_{Y_{t_k}'}$ is nef$/Z$.

Let $D\subset X$ be a prime divisor contracted/$Y_{t_k}'$. 
If $D$ is contracted/$Y_{t_k}$, then 
$$
a(D,X,B+t_k C) < a(D,Y_{t_k},B_{Y_{t_k}}+t_kC_{Y_{t_k}})
$$
$$
\le a(D,Y_{t_k},B_{Y_{t_k}}+\tau C_{Y_{t_k}})\le a(D,Y_{t_k}',B_{Y_{t_k}'}+\tau C_{Y_{t_k}'})
$$
 but if $D$ is not contracted/$Y_{t_k}$ we have
$$
a(D,X,B+t_k C) =a(D,Y_{t_k},B_{Y_{t_k}}+t_kC_{Y_{t_k}})
$$
$$
\le a(D,Y_{t_k},B_{Y_{t_k}}+\tau C_{Y_{t_k}}) <a(D,Y_{t_k}',B_{Y_{t_k}'}+\tau C_{Y_{t_k}'})
$$
because $(Y_{t_k}/Z,B_{Y_{t_k}}+t_kC_{Y_{t_k}})$ is a log minimal model of $(X/Z,B+t_kC)$ and  $(Y_{t_k}'/Z,B_{Y_{t_k}'}+\tau C_{Y_{t_k}'})$
is a log minimal model of $(Y_{t_k}/Z,B_{Y_{t_k}}+\tau C_{Y_{t_k}})$. Thus, in any case we have 
$$
a(D,X,B+t_k C) < a(D,Y_{t_k}',B_{Y_{t_k}'}+\tau C_{Y_{t_k}'})
$$

Replacing the sequence $\{t_k\}_{k\in \N}$ with a subsequence, we can assume that all the induced rational maps $X\bir Y_{t_k}'$
contract the same components of $B+\tau C$. Now an easy application of the negativity lemma 
(cf. [\ref{Birkar-mmodel}, Claim 3.5]) implies that the log discrepancy $a(D,Y_{t_k}',B_{Y_{t_k}'}+\tau C_{Y_{t_k}'})$ is independent of $k$.
Therefore, each $(Y_{t_k}',B_{Y_{t_k}'}+\tau C_{Y_{t_k}'})$ 
satisfies
$$
 a(D,X,B+\tau C)=\lim_{l\to +\infty} a(D,X,B+t_l C)\leq a(D,Y_{t_k}',B_{Y_{t_k}'}+\tau C_{Y_{t_k}'})
$$
for any prime divisor $D\subset X$ contracted/$Y_{t_k}'$.\\

\emph{Step 6.} To get a log minimal model of  $(X/Z,B+\tau C)$ we just need to extract those prime divisors $D$ on $X$ 
contracted$/Y_{t_{k}}'$ for which 
$$
 a(D,X,B+\tau C)=a(D,Y_{t_k}',B_{Y_{t_k}'}+\tau C_{Y_{t_k}'})
$$
Since $B$ has a component which is ample$/Z$, we can find $\Gamma$ on $X$ such that 
$\Gamma\sim_{\R} B+\tau C/Z$ and
such that $(X/Z,\Gamma)$  and $(Y_{t_k}'/Z,\Gamma_{Y_{t_k}'})$ are klt. 
Now by Lemma \ref{l-extraction-klt} we can  construct a crepant model of $(Y_{t_k}'/Z,\Delta_{Y_{t_k}'})$ which would be a log minimal model of $(X/Z,\Delta)$. This in turn induces a 
log minimal model of $(X/Z,B+\tau C)$.  Thus, $\tau\in\mathcal T$ and this gives a contradiction. Therefore, $\mathfrak{W}=\emptyset$. $\Box$
\end{proof}

\clearpage
%%%%%%%%%%%%%%%%%%%%%%%%%%%%%%%%%%%%%%%%%%%%%%%%%%%%%%%
~ \vspace{2cm}
\section{\textbf{Finiteness of log minimal models}}
\vspace{1cm}
%%%%%%%%%%%%%%%%%%%%%%%%%%%%%%%%%%%%%%%%%%%%%%%%%%%%%%%

To construct log minimal models in dimension $d+1$ assuming the nonvanishing, 
as in Section \ref{s-from-nonvanishing}, one needs the 
special termination with scaling in dimension $d+1$ which is reduced to termination with 
scaling in dimension $d$ (see Remark \ref{rem-special-termination}). Thus, to do induction, we need to prove 
the latter termination in dimension $d$ assuming Theorem \ref{t-main-BCHM} in dimension $d$.
More precisely, we need termination in the following situation.
Let $(X/Z,B+C)$ be a klt pair of dimension $d$ 
where $B\ge 0$ is big$/Z$,  
$C\ge 0$ is $\R$-Cartier, and $K_X+B+C$ is nef$/Z$. Run the LMMP$/Z$ on $K_X+B$ with 
scaling of $C$. We need to prove that this terminates. Since there can 
be only finitely many divisorial contractions in the process, we can assume that 
the LMMP consists of only log flips. Assume that 
$X_i\bir X_{i+1}/Z_i$ is the sequence of log flips$/Z$ we get with $X=X_1$. 
Let $\lambda_i$ be as in Definition \ref{d-LMMP-scaling} and put $\lambda=\lim \lambda_i$. 
So, by definition, $K_{X_i}+B_i+\lambda_iC_i$ is nef$/Z$ and numerically zero over $Z_i$ 
where $B_i$ and $C_i$ are the birational transforms of $B$ and $C$ respectively. 

By construction, $(X_i/Z,B_i+\lambda_iC_i)$ is a log minimal model of $(X/Z,B+\lambda_iC)$.
The sequence terminates if we can prove that there are only finitely many possible 
log minimal models. More generally, we will try to prove that the set of log minimal models 
of all the pairs  $(X/Z,B+tC)$ with $t\in [0,1]$ is finite. To have a painless proof of 
such a finiteness result, it is actually better to go to an even more general setting as 
follows.\\

\noindent ({\bf P}) Let $X\to Z$ be a projective morphism of normal quasi-projective varieties, $A\ge 0$ a $\Q$-divisor on $X$, and 
$V$ a rational (i.e. with a basis consisting of rational divisors) finite dimensional affine subspace of the space of $\R$-Weil divisors on $X$. Define 
$$
\mathcal{L}_{A}(V)=\{B=L+A \mid 0\le L\in V, ~ \mbox{and $(X/Z,B)$ is lc} \}
$$
By Shokurov [\ref{Shokurov-log-flips}, 1.3.2][\ref{Shokurov-log-models}], $\mathcal{L}_{A}(V)$ is a rational polytope (i.e. a polytope with rational vertices) inside the rational affine space $A+V$. 
We will be interested in rational polytopes inside $\mathcal{L}_{A}(V)$.

\begin{rem}\label{r-local}
With the setting as in {\rm({\bf P})} above assume that $A$ is big$/Z$. 
Let $B\in \mathcal{L}_{A}(V)$ such that $(X/Z,B)$ is klt. 
We can write $A\sim_\R A'+G/Z$ where $A'\ge 0$ is an ample$/Z$ $\Q$-divisor 
and $G\ge 0$ is also a $\Q$-divisor. Then, there is a sufficiently small rational number $\epsilon>0$  
such that 
$$
(X/Z,\Delta_B:=B-\epsilon A+\epsilon A'+\epsilon G)
$$
 is klt. 
Note that 
$$
K_X+\Delta_B\sim_\R K_X+B/Z
$$
Moreover, there is a neighborhood of $B$ in $\mathcal{L}_{A}(V)$ such that for any $B'$ in that neighborhood 
$$
(X/Z,\Delta_{B'}:=B'-\epsilon A+\epsilon A'+\epsilon G)
$$ 
is klt. In particular, if 
$\mathcal{C}\subseteq \mathcal{L}_{A}(V)$ is a rational polytope containing $B$, 
then perhaps after shrinking $\mathcal{C}$ (but preserving its dimension) we can assume 
that $\mathcal{D}:=\{\Delta_{B'} \mid B'\in \mathcal{C}\}$ is a rational 
polytope of klt boundaries in $\mathcal{L}_{\epsilon A'}(W)$ where $W$ is the rational 
affine space $V+(1-\epsilon)A+\epsilon G$.  
The point is that we can change $A$ 
and get an ample part $\epsilon A'$ in the boundary. So, when we are concerned with a problem locally around $B$ we feel free to assume that $A$ is actually ample by replacing it with $\epsilon A'$.
\end{rem}

\begin{lem}[Stability of extremal rays]\label{l-ray-stability}
With the setting as in {\rm({\bf P})} above assume that $A$ is big$/Z$. 
Then, 

$(1)$ the set 
$$
\mathcal{N}_{A}(V)=\{B\in\mathcal{L}_A(V) \mid \mbox{$K_X+B$ is nef$/Z$} \}
$$
is a rational polytope. Moreover, if $B\in \mathcal{N}_{A}(V)$ and if $(X/Z,B)$ is klt, 
then $K_X+B$ is semi-ample$/Z$;

$(2)$ Fix  $B\in \mathcal{N}_{A}(V)$ with $(X/Z,B)$ klt. Then, 
there is $\epsilon>0$ (depending on $X\to Z,V,A,B$)
such that if $R$ is a $K_X+B'$-negative extremal ray$/Z$ for some $B'\in \mathcal{L}_{A}(V)$ with $||B-B'||<\epsilon$ 
then $(K_X+B)\cdot R=0$.   
\end{lem}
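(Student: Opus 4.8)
The plan is to prove $(2)$ first, since it carries the real content, and then deduce the two assertions in $(1)$. By Remark \ref{r-local}, for the study of $\mathcal N_A(V)$ near a fixed point $B$ we may assume $A$ is an ample$/Z$ $\Q$-divisor; then every $B'\in\mathcal L_A(V)$ satisfies $B'\ge A$. Fix $B\in\mathcal N_A(V)$ with $(X/Z,B)$ klt. Since klt is an open condition, there is a neighbourhood $U$ of $B$ in $\mathcal L_A(V)$ and a fixed small rational $\epsilon_0\in(0,1)$ such that for every $B'\in U$ the divisor $\Theta_{B'}:=B'-\epsilon_0 A$ is a boundary with $(X/Z,\Theta_{B'})$ klt, and we have the honest equality $K_X+B'=(K_X+\Theta_{B'})+\epsilon_0 A$ with $A$ ample$/Z$. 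The only purpose of this reduction is to manufacture a \emph{uniform} ample summand.

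Now let $B'\in U$ and let $R$ be a $K_X+B'$-negative extremal ray$/Z$; since $K_X+B'\equiv K_X+\Theta_{B'}$, $R$ is also $K_X+\Theta_{B'}$-negative, so by the cone theorem (in the generality of $\R$-boundaries, which in particular bounds the length of $(K_X+\Theta_{B'})$-negative extremal rays) there is a curve $C$ with $[C]$ generating $R$ and $-2d\le(K_X+\Theta_{B'})\cdot C<0$. Intersecting $K_X+B'=(K_X+\Theta_{B'})+\epsilon_0 A$ with $C$ gives $\epsilon_0\,A\cdot C<2d$, hence $A\cdot C<2d/\epsilon_0$, a bound independent of $B'$. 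By standard boundedness such curves form a bounded family, so their numerical classes lie in a fixed finite set $\gamma_1,\dots,\gamma_n\in\overline{NE}(X/Z)$. Since $K_X+B$ is nef$/Z$ we have $(K_X+B)\cdot\gamma_j\ge 0$ for all $j$; for each $j$ with $(K_X+B)\cdot\gamma_j>0$, the affine dependence of $(K_X+B')\cdot\gamma_j=(K_X+B)\cdot\gamma_j+(B'-B)\cdot\gamma_j$ on $B'$ yields $\epsilon_j>0$ with $(K_X+B')\cdot\gamma_j>0$ whenever $\|B-B'\|<\epsilon_j$. Choose $\epsilon>0$ smaller than each such $\epsilon_j$ and small enough that the $\epsilon$-ball about $B$ lies in $U$. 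Then if $\|B-B'\|<\epsilon$ and $R$ is a $K_X+B'$-negative extremal ray$/Z$, $R$ is generated by a curve $C$ with $[C]=\gamma_j$ for some $j$, whence $(K_X+B')\cdot\gamma_j<0$; this excludes every $j$ with $(K_X+B)\cdot\gamma_j>0$, so $(K_X+B)\cdot\gamma_j=0$ and therefore $(K_X+B)\cdot R=0$. This proves $(2)$.

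For $(1)$, semi-ampleness: with $(X/Z,B)$ klt, $B$ big$/Z$ and $K_X+B$ nef$/Z$, write $B\sim_\R A^\ast+G/Z$ with $A^\ast$ an ample$/Z$ $\Q$-divisor and $G\ge 0$; for small rational $t>0$ the pair $(X/Z,\Theta:=(1-t)B+tG)$ is klt and $(K_X+B)-(K_X+\Theta)\sim_\R tA^\ast$ is ample$/Z$, so (after the routine reduction to a $\Q$-Cartier $D=K_X+B$, or via the $\R$-version of the theorem) the base point free theorem applied with $a=1$ shows $m(K_X+B)$ is free$/Z$ for $m\gg 0$, i.e.\ $K_X+B$ is semi-ample$/Z$. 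For the polytope statement: $\mathcal N_A(V)$ is closed in the rational polytope $\mathcal L_A(V)$, hence compact; by $(2)$, near every klt point $B$ the set $\mathcal N_A(V)$ coincides with the finite intersection of rational half-spaces $\{B' : (K_X+B')\cdot\gamma_j\ge 0,\ j=1,\dots,n\}$ inside $\mathcal L_A(V)$, so it is locally a rational polytope there. Covering the klt part of $\mathcal N_A(V)$ by finitely many such pieces, and treating the lc-but-not-klt locus — which is contained in the finitely many faces of $\mathcal L_A(V)$ on which some divisor acquires coefficient one — by induction on dimension, via adjunction to that divisor, one assembles $\mathcal N_A(V)$ into a rational polytope.

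The main obstacle is the boundedness step in $(2)$: one must produce, \emph{uniformly} for all $B'$ near $B$, a single bounded family of curves that exhausts the generators of all $(K_X+B')$-negative extremal rays. This is precisely where the bigness of $A$ is essential, as it provides through Remark \ref{r-local} a fixed ample summand $\epsilon_0 A$ converting the length bound for negative extremal rays into a uniform bound on $A\cdot C$. Two ingredients must be used in their $\R$-coefficient form and should be flagged: the cone theorem with bounded length of negative extremal rays for $\R$-boundaries, and the $\R$-version of the base point free theorem (or the standard perturbation reducing it to the rational case) used for semi-ampleness.
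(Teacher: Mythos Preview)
Your argument for $(2)$ is correct in substance but follows a genuinely different route from the paper. The paper first invokes semi-ampleness of $K_X+B$ (part of $(1)$) to obtain a contraction $X\to S$ and hence a \emph{gap}: there is $\delta>0$ so that for every curve $C/Z$ either $(K_X+B)\cdot C=0$ or $(K_X+B)\cdot C>\delta$. It then pushes $B'$ out to a boundary point $B''$ of a small polytope, applies the length bound of the cone theorem at $B''$, and derives a contradiction from the gap when $\|B-B'\|$ is small. Your approach instead mimics Lemma~\ref{l-ray-scaling}: strip off a uniform ample piece $\epsilon_0A$, use the length bound at $\Theta_{B'}=B'-\epsilon_0A$ to get $A\cdot C<2d/\epsilon_0$, and conclude finiteness of the relevant curve classes directly. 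The payoff is that your proof of $(2)$ does not depend on $(1)$, so you can reverse the logical order and deduce the polytope statement from $(2)$; the paper, by contrast, imports the polytope statement from elsewhere and then uses it (via rational vertices) to get semi-ampleness, which in turn feeds into its proof of $(2)$.

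Two corrections are needed. First, the sentence ``since $K_X+B'\equiv K_X+\Theta_{B'}$'' is false: they differ by $\epsilon_0A$. What you actually need, and what is true, is that a $K_X+B'$-negative ray $R$ is automatically $K_X+\Theta_{B'}$-negative because $(K_X+\Theta_{B'})\cdot R=(K_X+B')\cdot R-\epsilon_0A\cdot R<0$. Second, your treatment of the non-klt boundary in the polytope argument is incomplete. You assert that the lc-but-not-klt locus in $\mathcal L_A(V)$ consists of the faces ``on which some divisor acquires coefficient one''; but in general it also contains faces where an \emph{exceptional} log discrepancy hits $-1$, with $\rddown{B}=0$. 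On such a face there is no component of $B$ to restrict to, so the adjunction-induction as you describe it does not apply directly; one must first extract the corresponding divisor (e.g.\ via a dlt blowup) before restricting. Since the paper itself does not prove the polytope statement and only cites it, your local argument at klt points already matches what is used downstream; just be aware that the global statement needs more than the sketch you give.
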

\begin{proof}
This is proved by Shokurov [\ref{Shokurov-log-flips}][\ref{Shokurov-ordered}, Corollary 9] 
in a more general situtation (see also Birkar [\ref{Birkar-mmodel-II}, \S 3] for a short treatment). 

(1) We will not give the proof of the fact that $\mathcal{N}_{A}(V)$ is a rational 
polytope. See Birkar [\ref{Birkar-mmodel-II}, Proposition 3.2]. Assume that 
$B\in \mathcal{N}_{A}(V)$ with $(X/Z,B)$ being klt. Since $\mathcal{N}_{A}(V)$ is a rational 
polytope, we can find boundaries $B_i\in \mathcal{N}_{A}(V)$ and real numbers $c_i\ge 0$ 
such that $\sum c_i=1$, $K_X+B=\sum c_i(K_X+B_i)$, and each $(X/Z,B_i)$ is klt.
By the base point free theorem \ref{t-base-point-free}, each $K_X+B_i$ is semi-ample$/Z$ hence 
$K_X+B$ is also semi-ample$/Z$.

(2) By (1), $K_X+B$ is semi-ample$/Z$ hence there is a contraction $f\colon X\to S/Z$ and an ample$/Z$ 
$\R$-Cartier divisor $H$ on $S$ such that $K_X+B\sim_\R f^*H/Z$. We can write $H\sim_\R \sum a_iH_i/Z$ where $a_i>0$ 
and the $H_i$ are ample$/Z$ Cartier divisors on $S$. Therefore, there is $\delta>0$ such that for any curve 
$C/Z$ in $X$ either $(K_X+B)\cdot C=0$ or $(K_X+B)\cdot C>\delta$. 

Now let $\mathcal{C} \subset \mathcal{L}_{A}(V)$ be a rational polytope of maximal dimension which contains an open 
neighborhood of $B$ in $\mathcal{L}_{A}(V)$ and such that $(X/Z,B')$ is klt for any $B'\in \mathcal{C}$. Pick
$B'\in \mathcal{C}$ and let $B''$ be the point on the boundary of $\mathcal{C}$ such that $B'$ belongs to the line segment 
determined by $B,B''$. Assume that $R$ is a $K_X+B'$-negative extremal ray$/Z$ such that $(K_X+B)\cdot R>0$. Then 
$(K_X+B'')\cdot R<0$ and there is a rational curve $\Gamma$ in $R$ such that 
$(K_X+B'')\cdot \Gamma\ge -2d$. Since $(K_X+B')\cdot \Gamma<0$, 
$$
(B'-B)\cdot \Gamma=(K_X+B')\cdot \Gamma-(K_X+B)\cdot \Gamma <-\delta
$$ 
Now, $||B''-B||>\alpha$ for some $\alpha>0$ independent of $B''$. Thus, if $||B-B'||$ is too small, then $(B''-B')\cdot \Gamma$ is too negative and we cannot have 
$$
(K_X+B'')\cdot \Gamma=(K_X+B')\cdot \Gamma+(B''-B')\cdot \Gamma\ge -2d
$$
So, we get a contradiction.\\
\end{proof}

\begin{thm}\label{t-finiteness}
Assume $\rm (1)$ of Theorem \ref{t-main-BCHM} in dimension $d$. With the setting as in {\rm({\bf P})} above assume that $A$ is big$/Z$. Let $\mathcal{C}\subseteq \mathcal{L}_{A}(V)$ 
be a rational polytope such that $(X/Z,B)$ is klt for any $B\in \mathcal{C}$ where $\dim X=d$. Then, there are finitely many birational 
maps $\phi_i\colon X\bir Y_i/Z$ such that for any $B\in \mathcal{C}$ with $K_X+B$ pseudo-effective$/Z$, there 
is $i$ such that $(Y_i/Z,B_{Y_i})$ is a log minimal model of $(X/Z,B)$.
\end{thm}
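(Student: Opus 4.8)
The plan is to follow the argument of Birkar--Cascini--Hacon--McKernan, proving the statement by induction on $\dim\mathcal{C}$ (with finiteness of log minimal models in dimension $<d$ available from the parallel induction on dimension). Since $\mathcal{C}$ is compact, it is enough to show that each $B_0\in\mathcal{C}$ has a relatively open rational subpolytope $\mathcal{C}_0\ni B_0$ for which the conclusion holds. If $K_X+B_0$ is not pseudo-effective$/Z$, then since the pseudo-effective cone is closed we may pick $\mathcal{C}_0$ so that $K_X+B$ is never pseudo-effective$/Z$, and there is nothing to prove. So assume $K_X+B_0$ is pseudo-effective$/Z$; as $B_0\ge A$ is big$/Z$ and $(X/Z,B_0)$ is klt, Theorem \ref{t-main-BCHM}$(1)$ applies in dimension $d$ and yields a log minimal model $\psi\colon X\bir Y$ of $(X/Z,B_0)$ with $K_Y+B_{0,Y}$ semi-ample$/Z$, say $K_Y+B_{0,Y}\sim_\R h^*H$ for a contraction $h\colon Y\to S/Z$ and an ample$/Z$ $\R$-divisor $H$ on $S$.

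The next step is to reduce to the case that $K_X+B_0$ is itself semi-ample$/Z$. The map $\psi$ is a birational contraction, a property independent of $B$, so $B_Y:=\psi_*B$ is always defined; and using the negativity lemma together with the openness of the strict discrepancy conditions in Definition \ref{d-mmodel} (exactly as in Steps 4--5 of the proof of Theorem \ref{t-nonvanishing-to-mmodel}), one checks that after shrinking $\mathcal{C}_0$ around $B_0$ every log minimal model of $(Y/Z,B_Y)$ is also a log minimal model of $(X/Z,B)$ for $B\in\mathcal{C}_0$. Replacing $(X,\mathcal{C}_0)$ by $(Y,\psi_*\mathcal{C}_0)$ --- again a rational polytope of klt pairs of the same dimension --- we may thus assume $K_X+B_0\sim_\R h^*H$ with $h\colon X\to S/Z$ a contraction and $H$ ample$/Z$ on $S$. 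The gain is that over $S$ the pair is numerically trivial at $B_0$: for $B$ near $B_0$, if $R$ is a $(K_X+B)$-negative extremal ray$/Z$ then Lemma \ref{l-ray-stability}$(2)$ forces $(K_X+B_0)\cdot R=0$, so $R$ is $h$-trivial, the corresponding step $X\bir X'$ of any LMMP$/Z$ on $K_X+B$ is a step over $S$, and $h$ descends to $h'\colon X'\to S$ with $K_{X'}+B_0'\sim_\R (h')^*H$ still holding. Hence every LMMP$/Z$ on $K_X+B$ with $B\in\mathcal{C}_0$ is an LMMP over $S$, ``directed'' only by the class of $B-B_0$, which varies in a fixed rational polytope inside the finite-dimensional space $V$.

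It remains to bound the outcomes of these LMMP's over $S$. Over the generic point of $S$ one has a klt pair of dimension $\dim X-\dim S$; if $\dim S<d$ this is governed by finiteness of models in lower dimension, while if $\dim S=d$ (equivalently $K_X+B_0$ big$/Z$) then $h$ is birational and, as $B\ge A$ with $A$ big$/Z$, only finitely many extremal rays$/S$ can occur for $B$ ranging over the polytope, by the boundedness argument of Lemma \ref{l-ray-scaling}. Combining this with the inductive hypothesis on $\dim\mathcal{C}$ applied to the proper faces of $\mathcal{C}_0$ through $B_0$, and with Lemma \ref{l-ray-stability}$(1)$ --- which realises the nef locus $\mathcal{N}_A(V)$ as a rational polytope, so that the ``chambers'' on which a fixed model is a log minimal model are themselves polyhedral --- one extracts finitely many birational maps $\phi_i\colon X\bir Y_i$ exhausting all log minimal models over $\mathcal{C}_0$, which completes the induction. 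The main obstacle, and the technical heart of the argument, is precisely this uniformity over a neighbourhood: the constant $\epsilon$ supplied by Lemma \ref{l-ray-stability}$(2)$ depends on the variety, which changes after every flip or divisorial contraction, so one cannot iterate stability naively along a single LMMP and then quantify over $B$. The reduction to an LMMP over $S$ --- where the direction of the program is controlled by the bounded datum $B-B_0$ and the rational polytope structure of Lemma \ref{l-ray-stability} --- together with the careful verification that log minimal models over $S$ remain log minimal models over $Z$ throughout the neighbourhood $\mathcal{C}_0$, is what makes everything fit together.
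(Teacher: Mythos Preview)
Your overall architecture is right: induction on $\dim\mathcal{C}$, localise at $B_0$, pass to a log minimal model so that $K_X+B_0$ is nef and semi-ample$/Z$, and use the resulting contraction $h\colon X\to S$. The gap is in the final paragraph, where the argument becomes vague precisely at the point you yourself flag as the ``main obstacle''.

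Two concrete problems. First, the appeal to ``finiteness of log minimal models in dimension $<d$'' and the separate treatment of the case $\dim S=d$ via boundedness of extremal rays are both red herrings: the paper's proof uses neither, and neither gives what you need. Finiteness on the generic fibre of $X\to S$ does not control models of $X/S$, and when $\dim S=0$ the fibre has dimension $d$ anyway. Second, and more importantly, you apply Lemma~\ref{l-ray-stability}(2) to $X$ and then try to propagate it along an LMMP; as you note, the $\epsilon$ depends on the variety, so this cannot be iterated.

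The fix is to reverse the order of ``induction'' and ``stability''. Apply the inductive hypothesis on $\dim\mathcal{C}$ to the \emph{boundary} of $\mathcal{C}_0$, but working over $S$ rather than over $Z$: this is legitimate because $K_X+B_0\sim_\R 0/S$ and $B_0\ge A$ big$/S$. You obtain finitely many models $\psi_j\colon X\bir Y_j/S$ covering all log minimal models over $S$ of $(X/S,B'')$ for $B''$ on the boundary. Now apply Lemma~\ref{l-ray-stability}(2) to each of the finitely many $Y_j$ (over $Z$, at the point $B_{0,Y_j}$) to get a single $\epsilon>0$. For $B'\in\mathcal{C}_0$ with $\|B'-B_0\|<\epsilon$, let $B''$ be the point where the ray from $B_0$ through $B'$ meets $\partial\mathcal{C}_0$; since $K_X+B_0\sim_\R 0/S$ one has $K_X+B''\sim_\R t(K_X+B')/S$ for some $t>0$, so $(Y_j/S,B''_{Y_j})$ being a log minimal model forces $(Y_j/S,B'_{Y_j})$ to be one too. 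Finally, any $(K_{Y_j}+B'_{Y_j})$-negative extremal ray$/Z$ is $(K_{Y_j}+B_{0,Y_j})$-trivial by the choice of $\epsilon$, hence lies over $S$, hence does not exist. So $(Y_j/Z,B'_{Y_j})$ is a log minimal model of $(X/Z,B')$. No LMMP is run and no lower-dimensional finiteness is invoked.
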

\begin{proof}
Remember that as usual $B_{Y_i}$ is the birational transform of $B$. 
We do induction on the dimension of $\mathcal{C}$. In particular, we may assume that the dimension of $\mathcal{C}$ is positive.
We may proceed locally, so fix $B\in \mathcal{C}$. If $K_X+B$ is not pseudo-effective$/Z$ then the same 
holds in a neighborhood of $B$ inside $\mathcal{C}$, so 
we may assume that $K_X+B$ is pseudo-effective$/Z$. By assumptions, $(X/Z,B)$ 
has a log minimal model $(Y/Z,B_Y)$. Moreover, the polytope $\mathcal{C}$ determines a rational polytope $\mathcal{C}_Y$ of 
$\R$-divisors on $Y$ by taking birational transforms of elements of $\mathcal{C}$. If we shrink $\mathcal{C}$ 
around $B$ we can assume that the inequality on discrepancies in Definition \ref{d-mmodel} is satisfied 
for prime divisors on $X$ excpetional$/Y$, for every $B'\in \mathcal{C}$. That is, 
$$
a(D,X,B')< a(D,Y,B_Y')
$$ 
for any prime divisor $D\subset X$ contracted$/Y$ and any $B'\in \mathcal{C}$. Moreover, for each $B'\in \mathcal{C}$, a log minimal model of 
 $(Y/Z,B_Y')$ is also a log minimal model of $(X/Z,B')$. 
Therefore, we can replace $(X/Z,B)$ by $(Y/Z,B_Y)$ and assume from now on that $(X/Z,B)$ 
is a log minimal model of itself, in particular, $K_X+B$ is nef$/Z$.

Since $B$ is big$/Z$, by Lemma \ref{l-ray-stability}, $K_X+B$ is semi-ample$/Z$ so there is a contraction $f\colon X\to S/Z$ such that $K_X+B\sim_\R f^*H/Z$ 
for some ample$/Z$ $\R$-divisor $H$ on $S$. Now by induction on the dimension 
of $\mathcal{C}$, we may assume that 
there are finitely many birational maps $\psi_j\colon X\bir Y_j/S$ such that 
for any $B''$ on the boundary of $\mathcal{C}$ with $K_X+B''$ pseudo-effective$/S$, there is $j$ such that 
$(Y_j/S,B_{Y_j}'')$ is a log minimal model of $(X/S,B'')$.

By Lemma \ref{l-ray-stability}, there is a sufficiently small $\epsilon>0$ such that for any $B'\in \mathcal{C}$ 
with $||B-B'||<\epsilon$, any 
$j$, and any $K_{Y_j}+B_{Y_j}'$-negative extremal ray $R/Z$ we have the equality $(K_{Y_j}+B_{Y_j})\cdot R=0$. 
Note that all the pairs $(Y_j/Z,B_{Y_j})$ are klt and $K_{Y_j}+B_{Y_j}\sim_\R 0/S$ and nef$/Z$ because $K_X+B\sim_\R 0/S$.

Pick $B'\in \mathcal{C}$ with $0<||B-B'||<\epsilon$ such that $K_X+B'$ is pseudo-effective$/Z$, and let $B''$ be the unique point on 
the boundary of $\mathcal{C}$ such that $B'$ belongs to the line segment given by $B$ and $B''$. Since $K_X+B\sim_\R 0/S$, there is some $t>0$ such that 
$$
K_X+B''=K_X+B+B''-B\sim_\R B''-B=t(B'-B)\sim_\R t(K_X+B')/S 
$$
hence 
$K_X+B''$ is pseudo-effective$/S$, and $(Y_j/S,B_{Y_j}'')$ is a log minimal model of $(X/S,B'')$ for some $j$. Moreover, $(Y_j/S,B_{Y_j}')$ is a log minimal model of $(X/S,B')$. Furthermore, $(Y_j/Z,B_{Y_j}')$ is a log minimal model of $(X/Z,B')$ because any $K_{Y_j}+B_{Y_j}'$-negative extremal ray $R/Z$ would be over $S$ by the choice of $\epsilon$. Finally, we just need to 
shrink $\mathcal{C}$ around $B$ appropriately.
\end{proof}

\clearpage
%%%%%%%%%%%%%%%%%%%%%%%%%%%%%%%%%%%%%%%%%%%%%%%%%%%%%%%
~ \vspace{2cm}
\section{\textbf{Termination with scaling}}
\vspace{1cm}
%%%%%%%%%%%%%%%%%%%%%%%%%%%%%%%%%%%%%%%%%%%%%%%%%%%%%%%

\begin{rem}[$\Q$-factorialisation]\label{r-klt-Q-factorialisation}
Assume $\rm (1)$ of Theorem \ref{t-main-BCHM} in dimension $d$ and let $(X/Z,B)$ 
be a klt pair of dimension $d$.  Let $f\colon W\to X$ be a log resolution of
$(X/Z,B)$ and let $G$ be the reduced exceptional divisor of $f$. Then, 
if we let $B_W=B^\sim+(1-\epsilon)G$ where $B^\sim$ is the birational transform of 
$B$ and $\epsilon>0$ is sufficiently small, then 
$$
K_W+B_W=f^*(K_X+B)+E
$$
 where $E\ge 0$ and $\Supp E=\Supp G$. Let $(Y/X,B_Y)$ be a 
log minimal model of $(W/X,B_W)$. By the negativity lemma \ref{l-negativity}, 
the morphism $Y\to X$ is small, and by definition of log minimal models, 
$Y$ is $\Q$-factorial. The pair $(Y/Z,B_Y)$ is often called a small $\Q$-factorialisation 
of $(X/ZB)$. 
\end{rem}

\begin{rem}\label{r-decompose-flips}
Assume $\rm (1)$ of Theorem \ref{t-main-BCHM} in dimension $d$ and let $(X/Z,B+C)$ be a 
klt pair of dimension $d$ with $B,C\ge 0$ being $\R$-Cartier, and $K_X+B+C\equiv 0/Z$. Assume that 
$K_X+B$ is big$/Z$ and let $X'/Z$ be its lc model and $X\bir X'/Z$ the induced rational map.  Let $Y\to Z$ be a small $\Q$-factorialisation of $X$ 
which exists by Remark \ref{r-klt-Q-factorialisation}, and let $B_Y,C_Y$ denote birational transforms 
as usual. Now run the LMMP$/Z$ on $K_Y+B_Y$ with scaling of $C_Y$. 
If the LMMP terminates with a log minimal model $(Y'/Z,B_{Y'})$, then $Y'\to Z$ factors through 
$X'$ because $X'/Z$ is 
the lc model of $(X/Z,B)$ as well as of $(Y/Z,B_{Y})$. We call the birational map $Y\bir Y'$ a \emph{$\Q$-factorial lift} 
of $X\bir X'$. By construction, $Y\bir Y'$ is an isomorphism or else it is decomposed into a finite  sequence $Y_i\bir Y_{i+1}/Z_i$ 
of divisorial contractions and log flips$/Z$ such that $K_{Y_i}+B_{Y_i}+C_{Y_i}\equiv 0/Z_i$ and $C_{Y_i}$ is ample$/Z_i$. If $X\bir X'$ is an isomorphism in codimension one (e.g. a log flip), 
then only log flips can occur in the sequence $Y_i\bir Y_{i+1}/Z_i$ . One can use this construction to lift a 
sequence of log flips in the non-$\Q$-factorial case to a sequence of log flips in the 
$\Q$-factorial case.
\end{rem}

\begin{lem}\label{l-term-1}
Let $(X/Z,B+C)$ be a klt pair of dimension $d$ where $B\ge 0$ is big$/Z$,  
$C\ge 0$ is $\R$-Cartier, and $K_X+B+C$ is nef$/Z$. Assume that there is an 
LMMP$/Z$ on $K_X+B$ with scaling of $C$ and let $\lambda_i$ be the numbers 
appearing in the LMMP, and $\lambda=\lim \lambda_i$. Assume that $(X/Z,B+\lambda C)$ 
has a log minimal model $(Y/Z,B_Y+\lambda C_Y)$ and that if $Y\to T$ is the 
contraction associated to the semi-ample$/Z$ divisor $K_Y+B_Y+\lambda C_Y$, 
then $(Y/T,B_Y+\lambda_i C_Y)$ has a log minimal model for any $i\gg 0$. 
Then, the LMMP terminates.
\end{lem}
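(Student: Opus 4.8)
The plan is to argue by contradiction and reduce the hypothetical infinite sequence of flips to one governed by the two given log‑minimal‑model hypotheses together with a finiteness statement over the base $T$.

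Suppose the LMMP$/Z$ on $K_X+B$ with scaling of $C$ does not terminate. As each divisorial contraction drops the Picard number, only finitely many occur; discarding them and relabelling we obtain an infinite sequence $X_i\bir X_{i+1}/Z_i$ of $(K_{X_i}+B_i)$-flips with $X_1=X$, $K_{X_i}+B_i+\lambda_iC_i$ nef$/Z$ and $\equiv 0/Z_i$, and $\lambda_1\ge\lambda_2\ge\cdots$ with $\lim\lambda_i=\lambda$. The first point I would record is the standard feature of LMMP with scaling: since the flipping ray $R_j$ at step $j$ has $(K_{X_j}+B_j)\cdot R_j<0$ and $(K_{X_j}+B_j+\lambda_jC_j)\cdot R_j=0$, so $C_j\cdot R_j>0$, and since $\lambda_i\le\lambda_j$ for $i\ge j$, the composite $X\bir X_i$ is a sequence of steps each of which is non-positive, and strictly negative when $\lambda_j>\lambda_i$, with respect to $K_X+B+\lambda_iC$. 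Hence $(X_i/Z,B_i+\lambda_iC_i)$ is a log minimal model of $(X/Z,B+\lambda_iC)$ in the sense of Definition \ref{d-mmodel}. Along the way, note that for each prime divisor $E$ over $X$ the sequence $d(E,X_i,B_i)$ is nondecreasing in $i$ and each flip raises $d(E,\cdot,\cdot)$ strictly for some $E$ over its flipping locus; this monotonicity will supply the final contradiction once the $X_i$ are shown to take only finitely many values up to isomorphism in codimension one. If $\lambda_i=\lambda$ for all $i\gg 0$, the tail consists of $(K+B+\lambda C)$-trivial flips; since $K_Y+B_Y+\lambda C_Y$ is semi-ample$/Z$ these are flips over $T$, and termination then follows directly from the hypothesis on $(Y/T,B_Y+\lambda C_Y)$ after passing to a $\Q$-factorialisation and lifting as in Remarks \ref{r-klt-Q-factorialisation} and \ref{r-decompose-flips}. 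So assume $\lambda_i>\lambda$ for all $i$.

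Next I would transport the tail of the sequence over $T$. Let $g\colon X\bir Y$ be the given log minimal model and $h\colon Y\to T$ the contraction with $K_Y+B_Y+\lambda C_Y\sim_\R h^*H/Z$, $H$ ample$/Z$; a curve over $Z$ on which $K_Y+B_Y+\lambda C_Y$ vanishes is contracted by $h$. By the stability of extremal rays (Lemma \ref{l-ray-stability}) applied to $(Y/Z,B_Y+\lambda C_Y)$, for $i\gg0$ every $(K_Y+B_Y+\lambda_iC_Y)$-negative extremal ray$/Z$ is contracted by $h$, so any LMMP$/Z$ on $K_Y+B_Y$ with scaling of $\lambda_iC_Y$ is automatically an LMMP$/T$ whose steps are $(K_Y+B_Y+\lambda C_Y)$-trivial, hence preserve $h^*H$. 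Using the hypothesis that $(Y/T,B_Y+\lambda_iC_Y)$ has a log minimal model for $i\gg0$, produced by such an LMMP$/T$ after a $\Q$-factorialisation of $Y$ (Remarks \ref{r-klt-Q-factorialisation}, \ref{r-decompose-flips}), I would obtain a pair $(Y_i/Z,B_{Y_i}+\lambda_iC_{Y_i})$ that is nef$/T$ by construction and nef$/Z$ on $h$-horizontal curves (because $h^*H$ has intersection number bounded below by a positive constant while $\lambda_i-\lambda$ is small), hence a log minimal model of $(Y/Z,B_Y+\lambda_iC_Y)$, and therefore of $(X/Z,B+\lambda_iC)$. Since $(X_i/Z,B_i+\lambda_iC_i)$ and $(Y_i/Z,B_{Y_i}+\lambda_iC_{Y_i})$ are then both log minimal models of $(X/Z,B+\lambda_iC)$ with nef$/Z$ log canonical divisors, the negativity lemma \ref{l-negativity} on a common resolution shows they are crepant and $X_i\bir Y_i$ is an isomorphism in codimension one.

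Finally, I would argue that only finitely many $Y_i$ arise: they are log minimal models of the pairs $(Y/T,B_Y+tC_Y)$ as $t$ ranges over the segment $[\lambda,\lambda_1]$, and finiteness for such a family — this is the place one invokes a finiteness-of-log-minimal-models statement in the spirit of Theorem \ref{t-finiteness}, made available here by the relative Calabi--Yau structure $K_Y+B_Y+\lambda C_Y\sim_\R 0/T$ together with the induction hypotheses — forces finiteness. Through the crepant identifications $X_i\bir Y_i$, the varieties $X_i$ then take only finitely many values up to isomorphism in codimension one, so infinitely many indices $i<i'$ give the same value; but between such indices the discrepancy function $d(E,\cdot,\cdot)$ is simultaneously forced to be unchanged and to strictly increase at each intervening flip — a contradiction. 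The hardest part is exactly the finiteness input over $T$: unlike the situation over $Z$, the boundary need not be big$/T$, so one cannot simply quote Theorem \ref{t-main-BCHM}; making the finiteness work, and carefully tracking $\Q$-factoriality and the crepant comparisons for the non-$\Q$-factorial models that may occur, is where the real content lies.
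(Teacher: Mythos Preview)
Your setup through the crepant comparison of $X_i$ with a model $Y_i$ obtained from the hypothesised log minimal model of $(Y/T,B_Y+\lambda_iC_Y)$ is essentially the paper's argument. The divergence, and the gap, is in how you conclude. You produce a \emph{different} model $Y_i$ for each $i$ and then appeal to a finiteness-of-models statement over $T$ to force the $Y_i$ to repeat. As you yourself note, this finiteness is not among the hypotheses of the lemma: the boundary is not big$/T$, so neither Theorem~\ref{t-main-BCHM} nor Theorem~\ref{t-finiteness} applies, and the lemma is meant to be a black box with only the two stated minimal-model hypotheses. So the proof as written is incomplete at exactly the point you flag.

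The paper avoids this entirely by observing that a \emph{single} model suffices for all large $j$. Fix one large $i$ and let $Y'$ be the log minimal model of $(Y/T,B_Y+\lambda_iC_Y)$. Since $K_{Y}+B_{Y}+\lambda C_{Y}\sim_\R 0/T$, the pair $(Y'/Z,B_{Y'}+\lambda C_{Y'})$ is again a log minimal model of $(X/Z,B+\lambda C)$, with the same contraction $Y'\to T$. Now $K_{Y'}+B_{Y'}+\lambda_iC_{Y'}$ is nef$/T$ by construction and $K_{Y'}+B_{Y'}+\lambda C_{Y'}$ is nef$/Z$; by convexity $K_{Y'}+B_{Y'}+\lambda_jC_{Y'}$ is nef$/T$ for every $j\ge i$, and then Lemma~\ref{l-ray-stability} (applied once, to the fixed $Y'$) upgrades this to nef$/Z$ for all $j\gg 0$. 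Thus $(Y'/Z,B_{Y'}+\lambda_jC_{Y'})$ and $(X_j/Z,B_j+\lambda_jC_j)$ are both log minimal models of $(X/Z,B+\lambda_jC)$ for all large $j$, with the \emph{same} $Y'$. The contractions $Y'\to T_j$ associated to $K_{Y'}+B_{Y'}+\lambda_jC_{Y'}$ stabilise to some $Y'\to T'$ factoring through $T$; transferring $K_{Y'}+B_{Y'}+\lambda C_{Y'}\sim_\R 0/T'$ across the codimension-one isomorphism $X_j\bir Y'$ gives $K_{X_j}+B_j+\lambda C_j\sim_\R 0/T'$, while the $j$-th flipping curve lies over $T'$ and has $(K_{X_j}+B_j+\lambda C_j)\cdot\Gamma<0$. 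This is the contradiction --- no finiteness needed. Concretely: replace your step~5 by the observation that all your $Y_i$ can be taken equal to $Y'$, and finish with this direct numerical contradiction rather than a repetition argument.
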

\begin{proof}
We may assume that the LMMP consists of only a sequence $X_i\bir X_{i+1}/Z_i$ of 
log flips$/Z$, and that $X_1=X$. 
Take $i$ sufficiently large so that $(Y/T,B_Y+\lambda_i C_Y)$ has a log minimal model 
$(Y'/T,B_{Y'}+\lambda_i C_{Y'})$. Since $K_{X_i}+B_i+\lambda_iC_i$ is semi-ample$/Z$, 
$K_{Y}+B_Y+\lambda_iC_Y$ is movable$/T$ hence $Y$ and $Y'$ are isomorphic in codimension 
one. Moreover, since $K_Y+B_Y+\lambda C_{Y}\sim_\R 0/T$, 
$K_{Y'}+B_{Y'}+\lambda C_{Y'}$ is nef$/Z$ and actually 
$(Y'/Z,B_{Y'}+\lambda C_{Y'})$ is a log minimal model of $(X/Z,B+\lambda C)$. 
So, by replacing $Y$ with $Y'$ we can assume that 
$K_{Y}+B_{Y}+\lambda_i C_{Y}$ is nef$/T$. Now by Lemma \ref{l-ray-stability}, 
if $i$ is large enough $K_{Y}+B_{Y}+\lambda_i C_{Y}$ is nef$/Z$ hence 
$(Y/Z,B_{Y}+\lambda_i C_{Y})$ is a log minimal model of 
$(X/Z,B+\lambda_i C)$.
Moreover, $K_{Y}+B_{Y}+\lambda_j C_{Y}$ is nef$/Z$ for any $j\ge i$ hence 
$(Y/Z,B_{Y}+\lambda_j C_{Y})$ and $({X_j}/Z+B_j+\lambda_jC_j)$ are both log minimal models of 
$(X/Z,B+\lambda_j C)$ for such $j$.
 
Let $Y\to T_j$ be the contraction associated to the semi-ample$/Z$ divisor
$K_{Y}+B_{Y}+\lambda_j C_{Y}$. If $j\gg 0$, $Y\to T_j$ is independent of $j$ 
and $T_j$ maps to $T$. Put $T'=T_j$ for  $j\gg 0$. 
By construction, $K_{X_j}+B_j+\lambda_jC_j\sim_\R 0/T'$ 
and $X_j\to Z$ factors through $T'$. 
But $K_{X_j}+B_j+\lambda C_j$ is negative 
on some curve $C_j/T'$ and on other hand $K_{X_j}+B_j+\lambda C_j\sim_\R 0/T'$ as 
$K_{Y}+B_{Y}+\lambda C_Y\sim_\R 0/T$. This is a contradiction.
\end{proof}

\begin{thm}\label{t-term}
Assume $\rm (1)$ of Theorem \ref{t-main-BCHM} in dimension $d$ and let $(X/Z,B+C)$ be a klt pair of dimension $d$ 
where $B\ge 0$ is big$/Z$,  
$C\ge 0$ is $\R$-Cartier, and $K_X+B+C$ is nef$/Z$. Then, any LMMP$/Z$ on $K_X+B$ with scaling of $C$ terminates.  
\end{thm}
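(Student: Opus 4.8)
The plan is to reduce the whole statement to the termination criterion of Lemma \ref{l-term-1}. First I would dispose of the trivial cases: if the given LMMP$/Z$ stops after finitely many steps---either because it reaches a Mori fibre space or because some $\lambda_i=0$, so that $K_{X_i}+B_i$ is already nef$/Z$---there is nothing to prove. So I may assume we are given an infinite sequence; since each divisorial contraction drops the Picard number there are only finitely many of them, hence after some point the LMMP is a sequence of log flips $X_i\bir X_{i+1}/Z_i$ with $X_1=X$ (and I rename so this starts at $i=1$). Let $\lambda_i$ be the numbers of Definition \ref{d-LMMP-scaling} and $\lambda=\lim\lambda_i\ge 0$. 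A key preliminary observation: because $\lambda_j\ge\lambda_i$ for $j<i$ and $C_j$ is positive on the ray $R_j$ contracted at the $j$-th step, each of the first $i-1$ steps is also a $(K_X+B+\lambda_i C)$-flip; hence on a common resolution the pullback of $K_{X_i}+B_i+\lambda_i C_i$ equals the pullback of $K_X+B+\lambda_i C$ plus an effective $X_i$-exceptional divisor. Since $K_{X_i}+B_i+\lambda_i C_i$ is nef$/Z$, this shows $K_X+B+\lambda_i C$ is pseudo-effective$/Z$ for every $i$, and passing to the limit that $K_X+B+\lambda C$ is pseudo-effective$/Z$.

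Next I would verify the two hypotheses of Lemma \ref{l-term-1}. For the first: $(X/Z,B+\lambda C)$ is klt (it lies between $(X/Z,B)$ and $(X/Z,B+C)$) and $B+\lambda C$ is big$/Z$ since $B$ is; combined with the pseudo-effectivity just established, Theorem \ref{t-main-BCHM}(1) in dimension $d$ produces a log minimal model $(Y/Z,B_Y+\lambda C_Y)$ with $K_Y+B_Y+\lambda C_Y$ semi-ample$/Z$. Let $g\colon Y\to T/Z$ be the associated contraction, so that $K_Y+B_Y+\lambda C_Y\sim_\R 0/T$. For the second hypothesis I must exhibit a log minimal model of $(Y/T,B_Y+\lambda_i C_Y)$ for $i\gg 0$. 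Over $T$ one has $K_Y+B_Y+\lambda_i C_Y\sim_\R(\lambda_i-\lambda)C_Y/T$, which is effective, hence pseudo-effective$/T$; and for $i\gg 0$ the pair $(Y/T,B_Y+\lambda_i C_Y)$ is still klt by openness of klt around $(Y/T,B_Y+\lambda C_Y)$. It remains to see $B_Y$ is big$/T$: since the inverse of $X\bir Y$ contracts no divisors, every prime divisor on $Y$ is the birational transform of one on $X$ with matching valuation, whence $H^0(X,mB)\hookrightarrow H^0(Y,mB_Y)$ for all $m$, so $B_Y$ is big$/Z$; and since the curves contracted by $g$ span a subcone of the relative cone of curves of $Y/Z$, bigness$/Z$ forces bigness$/T$. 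Then Theorem \ref{t-main-BCHM}(1) over $T$ (dimension $d$) gives the required log minimal model, and Lemma \ref{l-term-1} concludes that the LMMP terminates.

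The only genuinely delicate points are the two flagged above: tracking that $K_X+B+\lambda C$ is pseudo-effective$/Z$ as a limit of the nef classes $K_{X_i}+B_i+\lambda_i C_i$ together with the compatibility of the intermediate flips with the boundary $B+\lambda_i C$; and checking that the bigness of $B$ survives on the minimal model $Y$ and then relativises over $T$. After these, all the substantive content is handed off to Lemma \ref{l-term-1} (which in turn rests on stability of extremal rays, Lemma \ref{l-ray-stability}) and to the assumed Theorem \ref{t-main-BCHM}(1), so I do not expect any further obstruction. For the bookkeeping one may, as in Remark \ref{rem-perturbation}, assume from the outset that $B\ge A$ for a general ample$/Z$ divisor $A$, which makes the bigness of $B_Y$ (and hence of $B_Y$ over $T$) immediate since it dominates the transform of a general ample divisor.
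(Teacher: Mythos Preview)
Your approach is exactly the paper's: the paper's proof is the single line ``Apply Lemma \ref{l-term-1}'', and you have correctly supplied the verification of its hypotheses via the assumed Theorem \ref{t-main-BCHM}(1) in dimension $d$. One slip: in your preliminary observation the inequality goes the other way. Since each step is $(K_X+B+\lambda_i C)$-nonpositive, discrepancies do not decrease, so on a common resolution $p\colon W\to X$, $q\colon W\to X_i$ one has
\[
p^*(K_X+B+\lambda_i C)=q^*(K_{X_i}+B_i+\lambda_i C_i)+E,\qquad E\ge 0,
\]
not the reverse. With the correct sign your conclusion still follows: $p^*(K_X+B+\lambda_i C)$ is (nef)$+$(effective), hence pseudo-effective$/Z$, and the limit $K_X+B+\lambda C$ is pseudo-effective$/Z$. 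The remaining checks (klt for $i\gg 0$, effectivity of $(\lambda_i-\lambda)C_Y$ over $T$, and bigness of $B_Y$ over $Z$ hence over $T$) are fine.
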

\begin{proof}
Apply Lemma \ref{l-term-1}.

\end{proof}

Note that existence of klt log flips in dimension $d$ follows from the assumptions of Theorem 
\ref{t-term} (see the proof of Corollary \ref{cor-log-flips}). So, if $X$ is $\Q$-factorial, under the 
assumptions of the theorem, we can actually run an LMMP$/Z$ on $K_X+B$ with scaling 
of $C$ by Lemma \ref{l-ray-scaling}.

\begin{thm}\label{t-s-term}
Assume $\rm (1)$ of Theorem \ref{t-main-BCHM} in dimension $d-1$ and let $(X/Z,B+C)$ be a $\Q$-factorial dlt pair of dimension 
$d$ where $B-\rddown{B}-A\ge 0$ for some ample$/Z$ $\R$-divisor $A\ge 0$, and $C\ge 0$. Assume that 
$(Y/Z,B_Y+C_Y)$ is a log minimal model of $(X/Z,B+C)$. Then, the special termination holds for any LMMP$/Z$ on $K_Y+B_Y$ with scaling of $C_Y$.
\end{thm}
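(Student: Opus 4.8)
The plan is to run the standard special termination argument of Shokurov and Fujino (cf. Remark \ref{rem-special-termination}), the point being that the hypothesis $B-\rddown{B}-A\ge 0$ with $A$ ample$/Z$ supplies exactly the bigness needed to feed the induced lower-dimensional sequences into Theorem \ref{t-term}. First I would discard the divisorial contractions: there can be only finitely many of them since each drops the Picard number, so after truncating we may assume the LMMP is a sequence of log flips $Y_i\bir Y_{i+1}/Z_i$ with $Y_1=Y$, scaling numbers $\lambda_i$, and $\lambda=\lim\lambda_i$, so that $K_{Y_i}+B_{Y_i}+\lambda_iC_{Y_i}$ is nef$/Z$ and $\equiv 0/Z_i$ while $-(K_{Y_i}+B_{Y_i})$ is ample$/Z_i$. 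The goal (that special termination holds) is to produce $i_0$ such that for $i\ge i_0$ the flipping locus of $Y_i\bir Y_{i+1}$ is disjoint from $\rddown{B_{Y_i}}$.

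I would prove this by induction on the dimension $n$ of the lc centres of $(Y/Z,B_Y)$; note that the set of lc centres and the maps between their birational transforms stabilise for $i\gg 0$, and that for each lc centre $W$ of dimension $n$ the induced maps $W_i\bir W_{i+1}$ are isomorphisms in codimension one once $i\gg 0$ (this is the Fujino bookkeeping, cf.\ [\ref{Fujino-dlt-pairs}]). Iterating the adjunction of Remark \ref{rem-adjunction-dlt-pairs} gives dlt pairs $(W_i/Z,B_{W_i})$ of dimension $n\le d-1$ with $(K_{Y_i}+B_{Y_i})|_{W_i}=K_{W_i}+B_{W_i}$, and the restriction of $\lambda_iC_{Y_i}$ gives $\lambda_iC_{W_i}\ge 0$ with $K_{W_i}+B_{W_i}+\lambda_iC_{W_i}$ nef$/Z$ and $\equiv 0/T_i$, where $T_i$ is the normalisation of the image of $W_i$ in $Z_i$. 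The key point is that, because $A$ is ample$/Z$ and $A\le B-\rddown{B}$, the restrictions of $A$ survive the model and the adjunctions as ample$/Z$ summands of the $B_{W_i}$, so each $B_{W_i}$ is big$/Z$; and on the open part of $W_i$ away from the lower-dimensional lc centres of $(Y_i/Z,B_{Y_i})$ the pair $(W_i/Z,B_{W_i})$ is klt. By the induction hypothesis there is $i_1$ so that for $i\ge i_1$ the flipping locus of $Y_i\bir Y_{i+1}$ avoids all lc centres of dimension $<n$, so from then on we may treat $(W_i/Z,B_{W_i})$ as a klt pair with $B_{W_i}$ big$/Z$ for the purposes of the induced sequence.

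Next I would turn $W_i\bir W_{i+1}/T_i$ into an honest LMMP with scaling: after passing to a small $\Q$-factorialisation (Remark \ref{r-klt-Q-factorialisation}) and decomposing each step into divisorial contractions and log flips as in Remark \ref{r-decompose-flips} — both legitimate because we are assuming $(1)$ of Theorem \ref{t-main-BCHM} in dimension $\le d-1$ — the sequence refines to an LMMP$/Z$ on $K_{W_i}+B_{W_i}$ with scaling of $\lambda_iC_{W_i}$. Since $B_{W_i}$ is big$/Z$ and $(W_i/Z,B_{W_i})$ is klt, Theorem \ref{t-term} applied in dimension $n\le d-1$ (valid under the assumed $(1)$ of Theorem \ref{t-main-BCHM} in dimension $d-1$) shows this LMMP terminates; hence for $i\gg 0$ the map $W_i\bir W_{i+1}$ is an isomorphism, so the flipping locus of $Y_i\bir Y_{i+1}$ meets $W$ only along lc centres of dimension $<n$, which it already avoids for $i\ge i_1$. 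Thus the flipping locus eventually avoids $W$. The base case $n=0$ is trivial. Applying the conclusion to the finitely many components of $\rddown{B_Y}$, which are the top-dimensional lc centres, gives that the flipping locus eventually avoids $\rddown{B_Y}$, i.e.\ special termination holds.

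The step I expect to be the main obstacle is the interface between the three dimensions: precisely, (i) the Fujino bookkeeping that the restricted maps become isomorphisms in codimension one and that the lc-centre data stabilises, and (ii) verifying that the ample summand $A$, and hence bigness together with klt-ness, is genuinely inherited by every adjunction pair $(W_i/Z,B_{W_i})$ after the log minimal model step that produced $Y$ — this is exactly the role of the hypothesis $B-\rddown{B}-A\ge 0$, but making it rigorous through several layers of adjunction and through the passage from $X$ to $Y$ requires care. The actual termination input is then a black box: Theorem \ref{t-term} (equivalently Lemma \ref{l-term-1}), available by the inductive assumption of $(1)$ of Theorem \ref{t-main-BCHM} in dimension $d-1$.
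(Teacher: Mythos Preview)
Your outline is the standard Shokurov--Fujino special termination via induction on the dimension of lc centres, and it can be made to work; but the paper takes a shortcut that bypasses this induction entirely. The trick is a \emph{plt perturbation}: fixing one component $S$ of $\rddown{B}$, the paper uses the ampleness of $A$ on $X$ to absorb $\epsilon\rddown{B-S}$ (and, when $\lambda_i=1$ for all $i$, also $\epsilon C$) into a new ample$/Z$ divisor $A'\sim_\R A+\epsilon C+\epsilon\rddown{B-S}$, rewriting $B+C\sim_\R A'+B'+C'/Z$ with $\rddown{A'+B'+C'}=S$ and the resulting pair plt. Pushed to $Y$ and restricted to $S_Y$, adjunction for a plt pair gives a \emph{klt} pair directly, with boundary big$/Z$ since $A'_{Y_i}|_{S_{Y_i}}$ is the restriction of the pushdown of an ample divisor. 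Then the $\Q$-factorial lifts of $S_{Y_i}\bir S_{Y_{i+1}}$ (Remark~\ref{r-decompose-flips}) assemble into an honest klt LMMP with scaling, to which Theorem~\ref{t-term} applies immediately in dimension $d-1$. The paper handles each component $S$ of $\rddown{B}$ separately this way, and splits into the cases $\lambda_i=1$ for all $i$ versus $\lambda_i<1$ eventually (in the latter $\rddown{B+\lambda_iC}=\rddown{B}$, so $C$ needs no special treatment).

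What this buys over your route: you never iterate adjunction down through a tower of lc centres, and you never face the step you correctly flag as delicate---applying Theorem~\ref{t-term} to the a priori \emph{dlt} pair $(W_i,B_{W_i})$ by ``treating it as klt'' once the flipping locus misses $\rddown{B_{W_i}}$. That step is not as innocent as your sketch suggests: simply decreasing the coefficients of $\rddown{B_{W_i}}$ destroys the relation $K_{W_i}+B_{W_i}+\lambda_iC_{W_i}\equiv 0/T_i$, so the original sequence is no longer literally an LMMP with scaling of $C_{W_i}$ for the perturbed boundary, and some repackaging is required. (Also, a small slip: $A$ does not survive on $Y$ or on $W_i$ as \emph{ample}, only as the pushdown of ample, hence big---which is all you need, and is exactly what the paper uses.) The plt perturbation on $X$ sidesteps all of this by producing klt-ness on $S_Y$ in one stroke.
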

\begin{proof}
 Suppose that we have an LMMP$/Z$ on $K_Y+B_Y$ with scaling of $C_Y$
producing a sequence $Y_i\bir Y_{i+1}/Z_i$ of log flips$/Z$. Let $S$ be 
a component of $\rddown{B}$ and let $S_Y$ and $S_{Y_i}$ be its birational transforms on $Y$ and $Y_i$ respectively. 
Let $\lambda_i$ be as in Definition \ref{d-LMMP-scaling} for the sequence 
$Y_i\bir Y_{i+1}/Z_i$. That is, $K_{Y_i}+B_{Y_i}+\lambda_iC_{Y_i}$ is nef$/Z$ but 
numerically zero over $Z_i$, and $C_{Y_i}$ is ample$/Z_i$. 

First suppose that $\lambda_i=1$ for every $i$. Since $B-\rddown{B}-A\ge 0$ and since $A$ is ample$/Z$, using a simple perturbation of coefficients we can write 
$$
B+C\sim_\R A'+B'+C'/Z
$$ 
where for a sufficiently small rational number $\epsilon>0$  
$$
A'\sim_\R A+\epsilon {C}+\epsilon \rddown{B-S} 
$$ 
is ample/$Z$, and 
$$
B'=B-A-\epsilon \rddown{B-S}\ge 0,~~~ C'=(1-\epsilon)C,
$$ 
$$
\rddown{B'}=\rddown{A'+B'+C'}=S
$$ 
Moreover, perhaps after another small perturbation we may assume that $A'$ is a $\Q$-divisor and that the pairs 
$$
(X/Z,A'+B'+C')~~\mbox{and}~ ~(Y/Z,A'_Y+B_Y'+C_Y')
$$ 
are plt, 
 and that the above LMMP/$Z$ on $K_Y+B_Y$ with scaling of $C_Y$ is an LMMP/$Z$ on $K_Y+A_Y'+B_Y'$ 
with scaling of $C_Y'$. 

Assume that $S_{Y_1}\neq 0$ otherwise there is nothing to prove. Following some standard arguments (cf. [\ref{Fujino-st}]) we may assume that the induced birational maps $S_{Y_{i+1}}\bir S_{Y_{i}}$ do not contract divisors.
Since $A'_{Y_i}$ is the pushdown of 
an ample$/Z$ divisor, $A'_{Y_i}|_{S_{Y_i}}$ is big$/Z$. Moreover, if $T_i$ is the 
normalisation of the image of $S_{Y_i}$ in $Z_i$, then 
$$
(K_{Y_i}+A_{Y_i}'+B_{Y_i}'+\lambda_i C_{Y_i}')|_{S_{Y_i}}\sim_\R 0/T_i 
$$ 
Furthermore, by taking $\Q$-factorial lifts of the maps $S_{Y_i}\bir S_{Y_{i+1}}$ as in Remark \ref{r-decompose-flips} and applying Theorem \ref{t-term} in dimension $d-1$ we deduce that 
$S_{Y_i}\bir S_{Y_{i+1}}$ are isomorphisms for $i\gg 0$ hence the log flips in the sequence  $Y_i\bir Y_{i+1}/Z_i$ do not intersect $S_{Y_i}$ for $i\gg 0$.

Now assume that we have  $\lambda_i<1$ for some $i$. Then, $\rddown{B+\lambda_i C}=\rddown{B}$ for any $i\gg 0$. So, we may assume that $\rddown{B+C}=\rddown{B}$. Since $B-\rddown{B}-A\ge 0$ and since $A$ is ample/$Z$, similar to the above, we can write 
$B\sim_\R A'+B'/Z$ such that $A'\ge 0$ is an ample/$Z$ $\Q$-divisor, 
$$
B'\ge 0, ~~\rddown{B'}=\rddown{A'+B'+C}=S
$$
 and 
$$
(X/Z,A'+B'+C) ~~\mbox{and}~~ (Y/Z,A_Y'+B_Y'+C_Y)
$$ 
are plt. The rest goes as before by restricting to the birational transforms of $S_Y$.\\
\end{proof}

An application of the last theorem is the following "extraction" result which is useful
in many situations.

\begin{lem}\label{l-extraction-klt}
Assume $\rm (1)$ of Theorem \ref{t-main-BCHM} in dimension $d-1$ and assume existence of pl 
flips in dimension $d$. 
Let $(X/Z,B)$ be a klt pair of dimension
 $d$ and let $\{D_i\}_{i\in I}$ be a finite set of exceptional$/X$ prime divisors (on birational
 models of $X$) such that the log discrepancy $a(D_i,X,B)\le 1$. Then, there is a $\Q$-factorial klt pair $(Y/X,B_Y)$
 such that\\\\
 $\rm (1)$ $Y\to X$ is birational and $K_Y+B_Y$ is the crepant pullback of $K_X+B$,\\
 $\rm (2)$ the set of exceptional/$X$ prime divisors of $Y$ is exactly $\{D_i\}_{i\in I}$.\\
\end{lem}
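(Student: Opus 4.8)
The strategy is to realise $Y$ as (a model extracted from) a carefully chosen log resolution, using the LMMP with scaling that the previous sections have made available. First I would take a log resolution $f\colon W\to X$ of $(X/Z,B)$ on which all the $D_i$ appear as divisors, and let $\{E_j\}_{j\in J}$ denote the remaining $f$-exceptional prime divisors, i.e. those \emph{not} in $\{D_i\}_{i\in I}$. Write $K_W+B_W=f^*(K_X+B)+\sum_{j}e_jE_j-\sum_i d_iD_i$, where $B_W$ is the birational transform of $B$ together with the prime divisors $D_i$ with appropriate coefficients; the hypothesis $a(D_i,X,B)\le 1$ says precisely that the coefficient we must put on $D_i$ to make it part of a boundary is in $[0,1]$, so $(W/X,B_W)$ can be arranged to be klt (indeed, by perturbing, $\Q$-factorial dlt, and we may also throw in a small ample piece as in Remark \ref{rem-perturbation} so that bigness hypotheses are available).

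The key point is to run an LMMP$/X$ that contracts exactly the $E_j$ and nothing else. Concretely, choose the coefficients on the $E_j$ in $B_W$ close to $1$ (say $1-\epsilon$) so that $K_W+B_W=f^*(K_X+B)+E$ with $E\ge 0$ and $\Supp E=\bigcup_j E_j$ plus, possibly, some of the $D_i$'s; more precisely I want to arrange that $E$ is \emph{effective and exceptional$/X$ with $\Supp E=\bigcup_j E_j$} — this requires choosing the coefficient on each $D_i$ to equal the boundary coefficient forced by crepancy, which is legitimate exactly because $a(D_i,X,B)\le 1$. Then $K_W+B_W\equiv E/X$ with $E\ge 0$, so an LMMP$/X$ on $K_W+B_W$ is an LMMP$/X$ on $E$, and by the negativity lemma (Lemma \ref{l-negativity}) every step must contract some component in $\Supp E$; since the $D_i$ are crepant (their discrepancy contribution is zero in $E$), in fact only the $E_j$ get contracted. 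This LMMP$/X$ exists and terminates by Theorem \ref{t-term} (applied in dimension $d$, whose hypothesis we have assumed, after ensuring the boundary is big$/X$ by adding a small ample divisor supported away from the $D_i$ and $E_j$ and using Lemma \ref{l-ray-scaling}); alternatively, it is a $\Q$-factorial lift as in Remark \ref{r-decompose-flips}. Its output $(Y/X,B_Y)$ is $\Q$-factorial, $K_Y+B_Y$ is nef$/X$, and because $K_Y+B_Y$ is the birational transform of something that is $\equiv E\ge 0$ over $X$ and is now nef$/X$ while $Y\to X$ is birational, the negativity lemma forces $K_Y+B_Y=$ (crepant pullback of $K_X+B$), i.e. the pushed-down $E$ is $0$: all the $E_j$ have been contracted and none of the $D_i$ has. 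This gives (1) and (2).

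The main obstacle is making the bookkeeping of coefficients precise enough that the LMMP$/X$ contracts \emph{exactly} $\{E_j\}$ and \emph{all} of them, while leaving every $D_i$ on the model — i.e. ensuring $\Supp($pushed-down$E) = \bigcup E_j$ at the start and that termination does not stop early leaving some $E_j$ uncontracted. The first is handled by the coefficient choice above (crepant coefficient on $D_i$, coefficient $<1$ but positive and generic on $E_j$, so that $E\ge 0$ has support exactly $\bigcup_j E_j$ after accounting for crepancy of the $D_i$); the second follows from Theorem \ref{t-term}: once the LMMP$/X$ terminates with $K_Y+B_Y$ nef$/X$, negativity of $E$ over $X$ forces the final pushforward of $E$ to be $0$, so no $E_j$ can survive. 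One should also note that Lemma \ref{l-extraction-klt} is used in the proof of Theorem \ref{t-nonvanishing-to-mmodel} only via Theorem \ref{t-main-BCHM} in dimension $d-1$ plus pl flips in dimension $d$, which is consistent with the stated hypotheses, so there is no circularity; the LMMP$/X$ above needs only pl flips in dimension $d$ (the contracted locus sits over the exceptional set) together with the lower-dimensional input through special termination, Theorem \ref{t-s-term}.
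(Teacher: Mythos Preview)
Your overall architecture is right --- take a log resolution on which the $D_i$ appear, give the $D_i$ their crepant coefficients and the remaining exceptional divisors $E_j$ large coefficients so that $K_W+B_W\sim_{\R} E/X$ with $E\ge 0$ supported on the $E_j$, then run an LMMP$/X$ and use negativity to see that the outcome has exactly the $D_i$ as exceptional divisors. The gap is in how you run and terminate that LMMP.

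First, you invoke Theorem~\ref{t-term} in dimension $d$. Its hypothesis is (1) of Theorem~\ref{t-main-BCHM} in dimension $d$, whereas the present lemma only assumes it in dimension $d-1$; this is circular. Second, your fallback to special termination via Theorem~\ref{t-s-term} does not apply as stated: special termination is a statement about the LMMP near $\rddown{B}$, but with your choice of coefficients $1-\epsilon<1$ on the $E_j$ (and crepant, hence $<1$, coefficients on the $D_i$) you have $\rddown{B_W}=0$, so special termination is vacuous. For the same reason, your claim that ``only pl flips are needed'' is false: a pl flipping contraction requires a component of $\rddown{B}$ to be negative on the ray, and here there are none; the flips arising are genuine klt flips, whose existence in dimension $d$ is not among your hypotheses. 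Finally, the negativity lemma does not imply that each step contracts a component of $\Supp E$: flips contract nothing divisorial, so an infinite sequence of flips is not excluded by that argument.

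The paper fixes all of this by putting coefficient exactly $1$ on the unwanted exceptional divisors $E_j$ (and the crepant coefficient on the $D_i$), so that the resulting pair $(W/X,\overline{B}_W)$ is dlt with $\rddown{\overline{B}_W}=\sum_{j\notin J'}E_j$ and $K_W+\overline{B}_W\sim_\R \overline{M}_W/X$ where $\Supp\overline{M}_W\subseteq\rddown{\overline{B}_W}$, i.e.\ $\theta(W/X,\overline{B}_W,\overline{M}_W)=0$. Then every $(K_W+\overline{B}_W)$-negative extremal ray$/X$ is negative on some component of $\rddown{\overline{B}_W}$, so every flip is a pl flip (which you have assumed exist in dimension $d$), and special termination, which needs only Theorem~\ref{t-main-BCHM} in dimension $d-1$, terminates the process. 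The paper also builds in a small ample part of the boundary via an auxiliary $H+G\sim_\R 0/X$ so that the bigness hypothesis of Theorem~\ref{t-s-term} is met. Once you change your coefficient on each $E_j$ from $1-\epsilon$ to $1$, your sketch becomes essentially the paper's proof.
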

\begin{proof}
 Let $f\colon W\to X$ be a log resolution of
$(X/Z,B)$ and let $\{E_j\}_{j\in J}$
be the set of prime exceptional divisors of $f$. We can assume that
for some $J'\subseteq J$, $\{E_j\}_{j\in J'}=\{D_i\}_{i\in I}$. Since $f$ is birational, there is an ample$/X$ $\Q$-divisor $H\ge 0$ on $W$ whose 
support is irreducible smooth and distinct from the birational transform of the components of $B$, and an $\R$-divisor $G\ge 0$ such that $H+G\sim_\R 0/X$. 
Moreover, there is $\epsilon>0$ such that $(X/Z,B+\epsilon f_*H+\epsilon f_*G)$ is klt. Now define
$$
K_W+\overline{B}_W:=f^*(K_{X}+B+\epsilon f_*H+\epsilon f_*G)+\sum_{j\notin J'} a(E_j,X,B+\epsilon f_*H+\epsilon f_*G)E_j
$$
 for which obviously there is an exceptional$/X$ $\R$-divisor $\overline{M}_W\ge 0$ such that 
$$
K_W+\overline{B}_W\sim_\R \overline{M}_W/X 
~~\mbox{and}~~ \theta(W/X,\overline{B}_W,\overline{M}_W)=0
$$
Note that $\overline{B}_W-\rddown{\overline{B}_W}\ge \epsilon H$. We can run an LMMP/$X$ on $K_W+\overline{B}_W$ with scaling of a suitable ample$/X$ $\R$-divisor, and 
using the special termination of Theorem \ref{t-s-term} we get a log minimal model  of $(W/X,\overline{B}_W)$ which we 
may denote by $(Y/X,\overline{B}_Y)$. Note that here we only need pl flips to run the LMMP/$X$ because any extremal ray in the process 
 intersects some component of $\rddown{\overline{B}_W}$ negatively.

The exceptional divisor $E_j$ is contracted$/Y$ exactly when $j\notin J'$. By taking 
$K_Y+B_Y$ to be the crepant pullback of  $K_X+B$ we get the result.
\end{proof}

\clearpage
%%%%%%%%%%%%%%%%%%%%%%%%%%%%%%%%%%%%%%%%%%%%%%%%%%%%%%%
~ \vspace{2cm}
\section{\textbf{The nonvanishing}}
\vspace{1cm}
%%%%%%%%%%%%%%%%%%%%%%%%%%%%%%%%%%%%%%%%%%%%%%%%%%%%%%%

The constructions of Section \ref{s-from-nonvanishing} relied on having an 
effective divisor $M$ satisfying $K_X+B\sim_\R M/Z$.
To prove Theorem \ref{t-main-BCHM}, we come across divisors 
$K_X+B$ that are only pseudo-effective. We need to turn this pseudo-effectivity  
into a geometric effectivity as in the next theorem to complete the induction 
process.

\begin{thm}[Nonvanishing]\label{t-nonvanishing}
Let $(X/Z,B)$ be a klt pair where $B$ is big$/Z$. 
If $K_X+B$ is pseudo-effective$/Z$, then $K_X+B\sim_\R M/Z$ for some $M\ge 0$.
\end{thm}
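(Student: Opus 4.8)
The plan is to argue by induction on $d=\dim X$, broadly following the proof of Shokurov's nonvanishing theorem \ref{t-Shokurov-nonvanishing} while exploiting that Theorem \ref{t-main-BCHM} is available in dimension $d-1$. First I would carry out the usual reductions: after a log resolution we may assume $(X/Z,B)$ is log smooth, and since the conclusion only asserts $\sim_\R$ we may perturb $B$ freely; using that $B$ is big$/Z$ we write $B\sim_\R A+G/Z$ with $A$ ample$/Z$ and $G\ge 0$, and after a small perturbation (Remark \ref{rem-perturbation}) we may assume $B$ itself contains a general ample$/Z$ component while staying klt, and reduce $Z$ to a point for ease of exposition. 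If $K_X+B$ is already big$/Z$ there is nothing to prove, since a big $\R$-Cartier divisor is $\R$-linearly equivalent to an effective one; so from now on assume $K_X+B$ is pseudo-effective but not big$/Z$.

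The core of the argument is the Shokurov-type creation of a non-klt centre. Using the bigness of $B$ and the Riemann--Roch/multiplicity estimate of Remark \ref{r-multiplicity-linear-systems} exactly as in Step 3 of the proof of \ref{t-Shokurov-nonvanishing} (taking advantage of $A^d>0$), one produces at a general closed point $x\in X$ an effective divisor making a suitable perturbed boundary $\sim_\R B$ non-klt at $x$; a tie-breaking argument pins the non-klt locus down to a single subvariety $S$, which after passing to a high enough log resolution becomes a prime divisor. Adjunction then yields a klt pair $(S,B_S)$ with $B_S$ big on $S$; by Theorem \ref{t-main-BCHM} in dimension $d-1$ one gets $K_S+B_S\sim_\R M_S\ge 0$, and one lifts $M_S$ to an effective $M$ on $X$ via the restriction sequence of Remark \ref{rem-extension} together with Kawamata--Viehweg vanishing.

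The main obstacle — and the genuine departure from Shokurov's situation — is that Kawamata--Viehweg vanishing needs the auxiliary divisor entering the lifting step to be \emph{nef and big}, whereas here $K_X+B$ is only pseudo-effective. I would get around this by first proving the statement for $K_X+B+tH$, with $H$ a general ample$/Z$ $\Q$-divisor and $t>0$ rational: then $K_X+B+tH$ is big$/Z$, hence $\sim_\R$ effective, and $(X/Z,B+tH)$ is klt with big boundary, so by Theorem \ref{t-nonvanishing-to-mmodel} (which needs only Theorem \ref{t-main-BCHM} in dimension $d-1$) it has a log minimal model. Applying finiteness of log minimal models (Theorem \ref{t-finiteness}) to the segment $\{B+tH\}$ — restricting to $t\ge\delta>0$, where every pair in play already has big log canonical divisor, so that only dimension $d-1$ inputs are invoked — one extracts a single birational model $Y$ and a sequence $t_k\downarrow 0$ with $(Y/Z,B_Y+t_kH_Y)$ a log minimal model of $(X/Z,B+t_kH)$; a negativity-lemma limiting argument on discrepancies, as in Steps 5--6 of the proof of \ref{t-nonvanishing-to-mmodel}, shows $(Y/Z,B_Y)$ is then a log minimal model of $(X/Z,B)$, so $K_Y+B_Y$ is nef$/Z$, and since $B_Y$ is big$/Z$ Lemma \ref{l-ray-stability}(1) upgrades this to semi-ample$/Z$, in particular $\sim_\R$ effective; pushing the effective divisor back along $Y\bir X$ (whose inverse contracts no divisors) produces the desired $M\ge 0$ with $K_X+B\sim_\R M/Z$. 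The delicate bookkeeping throughout is to keep the induction honest: Theorem \ref{t-main-BCHM} itself is used only in dimension $d-1$, while the dimension-$d$ auxiliary results \ref{t-finiteness}, \ref{t-term}, \ref{l-ray-stability} are invoked only for pairs whose log canonical divisor is already big$/Z$, for which those results follow purely from the dimension $d-1$ hypothesis.
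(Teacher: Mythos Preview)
Your plan has a real gap in the limiting argument of the third paragraph. Finiteness of log minimal models (Theorem \ref{t-finiteness}) on a segment $[\delta,1]$ does yield finitely many models there, but nothing prevents the number of models from growing without bound as $\delta\to 0$; so you cannot extract a \emph{single} $Y$ working for a sequence $t_k\downarrow 0$. To include $t=0$ in the polytope you would need a log minimal model of $(X/Z,B)$ itself, which is circular. The same circularity infects any attempt to push termination with scaling (Theorem \ref{t-term}, Lemma \ref{l-term-1}) down to $\lambda=0$: Lemma \ref{l-term-1} explicitly requires a log minimal model of $(X/Z,B+\lambda C)$, and at $\lambda=0$ this is precisely what you are trying to produce. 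Your bookkeeping remark that the dimension-$d$ tools are invoked only when $K_X+B'$ is big is exactly the point: they are, but then they say nothing about $t=0$.

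The paper's route is closer to your second paragraph than your third, but with two ingredients you are missing. First, Nakayama's numerical Kodaira dimension $\kappa_\sigma$ separates off the case $\kappa_\sigma(K_X+B)=0$: there $K_X+B\equiv N_\sigma(K_X+B)\ge 0$, and a short trick (Lemma \ref{l-nv-bounded}) converts numerical to $\R$-linear effectivity. Second, when $\kappa_\sigma>0$ one constructs a plt centre $S$ that is \emph{not} a component of $N_\sigma(K_X+B)$ (this is where the growth of $h^0$ enters, Lemma \ref{l-nv-unbounded}), and then runs the LMMP on $K_X+B$ with scaling of an ample $A'$. Special termination (Theorem \ref{t-s-term}, needing only dimension $d-1$) makes the LMMP eventually an isomorphism near $S$; and if $\lambda:=\lim\lambda_i>0$ then Lemma \ref{l-term-1} applies \emph{non}-circularly (since $K_X+B+\lambda A'$ is big, hence $\sim_\R$ effective, so the needed dimension-$d$ minimal model exists by hypothesis) and forces termination, a contradiction; hence $\lambda=0$. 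Now on a model $Y$ far along the LMMP, $K_Y+B_Y+\lambda_i A'_Y$ is genuinely nef and big with $\lambda_i$ arbitrarily small, and \emph{this} supplies the nef-and-big divisor for Kawamata--Viehweg when lifting sections from $S_Y$ (Lemma \ref{l-nv-extension}). In short, the obstacle you correctly identified is overcome not by a limit of models but by running the LMMP with scaling and using the nef divisors it manufactures along the way.
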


If $K_X+B$ is nef$/Z$, then one can actually just apply the base point free theorem 
(see Theorem \ref{t-base-point-free} and Lemma \ref{l-ray-stability}) to show that such an 
$M$ exists and even to deduce that $M$ is semi-ample$/Z$. However, when $K_X+B$ is not 
nef$/Z$, we need to follow the general strategy of the proof of Shokurov nonvanishing 
Theorem \ref{t-Shokurov-nonvanishing} though some of the tools have to replaced.
First we will deal with the case $Z=\rm pt$ and at the end we prove the general 
statement which easily follows from the case $Z=\rm pt$.

\begin{rem}[Nakayama Kodaira dimension]\label{rem-Nakayama-decomposition}
Let $X$ be a smooth projective variety and $D$ a pseudo-effective $\R$-divisor on $X$. 
Nakayama [\ref{Nakayama}] studied the properties of such divisors in detail. 
In particular, he defined the numerical Kodaira dimension 
$\kappa_\sigma(D)$ of $D$ as 
the largest integer such that for some ample divisor $H$ 
$$
\limsup_{m\to +\infty} \frac{h^0(X,\rddown{mD}+H)}{m^{\kappa_\sigma(D)}}>0
$$ 
Moreover, he proved that one can give a decomposition 
$D=P_\sigma(D)+N_\sigma(D)$ where $P_\sigma(D)$ is pseudo-effective and
$N_\sigma(D)\ge 0$ is canonically defined in a limiting process. 
This resembles the classical Zariski decomposition. One of the important properties 
of the decomposition is that if $S$ is a smooth prime divisor on $X$ 
such that $S$ is not a component of $N_\sigma(D)$, then $D|_S$ is again 
pseudo-effective: more precisely, there is an ample divisor $H$ such that 
$S$ is not in $\Bs|\rddown{mD}+H|$ for any $m>0$ [\ref{Nakayama}, Theorem 6.1.3].

Some other properties that Nakayama proved are: (1) $\kappa_\sigma(D)=\kappa_\sigma(kD)$
for any $k\in \N$; (2) if $\kappa_\sigma(D)=0$, then  $P_\sigma(D)\equiv 0$ 
hence $D\equiv N_\sigma(D)$; (3) if $\kappa_\sigma(D)>0$, then we can choose 
$H$ so that $h^0(X,\rddown{mD}+H)\ge m\beta$ for a fixed $\beta>0$ and every $m\gg 0$. 

\end{rem}

\begin{lem}\label{l-nv-bounded} Assume Theorem \ref{t-main-BCHM} (1) for projective pairs 
$(X',B')$ of dimension $d$ such that $K_{X'}+B'\sim_\R M'$ for some $M'\ge 0$.  Let $(X,B)$ be a 
projective log smooth klt pair of dimension $d$ such that
$K_X+B$ is pseudo-effective and $B-A\geq 0$ for an ample $\mathbb{Q}$-divisor
$A$. If $\kappa_\sigma(K_X+B)=0$, 
 then there is an $\mathbb{R}$-divisor $M\ge 0$ such that $K_X+B\sim_{\mathbb{R}}M$.  
\end{lem}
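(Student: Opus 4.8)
The plan is to reduce to the case where $B$ has a boundary component of coefficient one, so that adjunction and induction on dimension become available — this is the same device used in the proof of Shokurov nonvanishing (Theorem \ref{t-Shokurov-nonvanishing}). Since $\kappa_\sigma(K_X+B)=0$, Remark \ref{rem-Nakayama-decomposition} gives $K_X+B\equiv N:=N_\sigma(K_X+B)\ge 0$. The goal is to upgrade this numerical equivalence to an honest $\R$-linear equivalence $K_X+B\sim_\R M$ with $M\ge 0$. The first step is to use the hypothesis $B\ge A$ to move the problem: write $A\sim_\R A'+G$ with $A'$ ample and $G\ge 0$ with small coefficients and, after a log resolution and perturbation, arrange that $(X,B)$ is log smooth with $B-A$ still effective and with some prescribed component. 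Then, using the ampleness sitting inside $B$, I would like to find an effective divisor $\R$-linearly equivalent to a small multiple of $A$ passing with high multiplicity through a general point, exactly as in Step 3 of the proof of Theorem \ref{t-Shokurov-nonvanishing}; tie-breaking (choosing $t\in(0,1)$ so that $K_X+B+tL$ has a unique non-klt centre which is a divisor $S$) then produces a pair with $S=\rddown{\Delta}$ for a boundary $\Delta\sim_\R B$.

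The second step is the inductive core. Having created the component $S$ with coefficient one, run adjunction: $(K_X+\Delta)|_S=K_S+\Delta_S$ where $(S,\Delta_S)$ is klt of dimension $d-1$, and $\Delta_S$ is big (indeed $\ge$ an ample divisor) because the ample part of $B$ restricts to something big on $S$. Since $S$ is chosen so that it is not a component of $N_\sigma(K_X+B)$, the Nakayama restriction property (Remark \ref{rem-Nakayama-decomposition}, last sentence of the first paragraph) guarantees $(K_X+\Delta)|_S$ is pseudo-effective, hence $K_S+\Delta_S$ is pseudo-effective. Now apply Theorem \ref{t-main-BCHM}(1) in dimension $d-1$ — which is available either by the way this lemma will be used in the induction, or, if one wants the clean statement, it is exactly the hypothesis ``Theorem \ref{t-main-BCHM}(1) for projective pairs of dimension $d$ with $K+B\sim_\R$ effective'' combined with descent in dimension — to conclude $K_S+\Delta_S\sim_\R M_S$ for some $M_S\ge 0$. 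In particular $H^0(S,\,m(K_S+\Delta_S))\neq 0$ for suitable $m$ (in the $\Q$-coefficient case; in general one argues with $\R$-divisors componentwise).

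The third step is to lift the section from $S$ to $X$. Take a log resolution and an appropriate twisting so that the relevant divisor on $X$ has the form $K_Y+B_Y+(\text{ample})-S$ up to the correction terms, exactly as in Steps 4–6 of the base point free theorem: then Kawamata–Viehweg vanishing (the klt vanishing theorem stated in the excerpt) kills $H^1$ of the twist-minus-$S$, so the restriction map $H^0(Y,\,\cdot\,)\to H^0(S,\,\cdot\,|_S)$ is surjective, and the nonzero section on $S$ produced in step two lifts. This shows $h^0$ of a suitable multiple of (a divisor $\R$-linearly equivalent to) $K_X+B$ is nonzero, i.e. $K_X+B\sim_\R M$ with $M\ge 0$. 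One extra wrinkle: the multiplicity-at-a-point / tie-breaking construction needs $\kappa_\sigma=0$ precisely to guarantee that the support of the non-klt locus we create actually sits inside (a model of) $N_\sigma(K_X+B)$, so that $S$ avoids $N_\sigma$ on the nose; this is where the hypothesis is genuinely used and is likely the most delicate bookkeeping.

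\textbf{Main obstacle.} The hard part will be arranging the geometry so that the divisor $S$ cut out by the tie-breaking procedure simultaneously (i) has coefficient one in a boundary $\R$-linearly equivalent to $B$, (ii) carries a big boundary under adjunction, and (iii) is \emph{not} a component of $N_\sigma(K_X+B)$ so that Nakayama's restriction theorem applies — all while staying log smooth after the necessary resolutions. Controlling $N_\sigma$ under the passage to a resolution, and ensuring the perturbations that introduce the ample part do not destroy the vanishing needed for the lifting step, is the technical heart; once $S$ is in place, the inductive descent and the Kawamata–Viehweg lifting are essentially as in the proofs already presented in the excerpt.
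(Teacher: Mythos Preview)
Your proposal describes the strategy for the complementary case $\kappa_\sigma(K_X+B)>0$ (this is Lemma \ref{l-nv-unbounded} and Theorem \ref{t-to-nonvanishing} in the paper), not the case $\kappa_\sigma=0$ at hand. The multiplicity/tie-breaking construction you sketch needs a divisor $L\sim_\R m(K_X+B)+A$ with $\mu_xL>d$; producing such an $L$ requires growth of $h^0(X,\rddown{mk(K_X+B)}+kA)$ in $m$, and this growth is precisely the statement $\kappa_\sigma(K_X+B)>0$. When $\kappa_\sigma=0$ these dimensions are bounded, so you cannot force high multiplicity in a divisor that is $\R$-linearly tied back to a multiple of $K_X+B$. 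Using a multiple of the ample $A$ alone to create the singularity, as you suggest, breaks the link $K_X+B_t\sim_\R(\text{multiple of})(K_X+B)$, so even if you lift a section you are not proving effectivity of $K_X+B$ itself. Your remark that ``$\kappa_\sigma=0$ guarantees the non-klt locus sits inside $N_\sigma$'' is also inverted: in the $\kappa_\sigma>0$ argument one uses the growth to arrange the non-klt centre to lie \emph{outside} $\Supp N_\sigma$.

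The paper's proof is a one-line trick that exploits $\kappa_\sigma=0$ directly and avoids all of the above. By Remark \ref{rem-Nakayama-decomposition}, $\kappa_\sigma(K_X+B)=0$ gives $K_X+B\equiv N_\sigma(K_X+B)=:M'\ge 0$, a \emph{numerical} equivalence to an effective divisor. Now absorb the numerically trivial error into the ample part of the boundary: set $A':=A+M'-(K_X+B)\equiv A$, which is still ample, and $B':=B-A+A'$. Then $(X,B')$ is klt (since $B'\equiv B$) and $K_X+B'=M'$ literally, so the standing hypothesis (Theorem \ref{t-main-BCHM}(1) in dimension $d$ for pairs with $K+B\sim_\R$ effective) produces a log minimal model $(Y,B_Y')$; because $K_X+B'\equiv K_X+B$, the same $Y$ is a log minimal model for $(X,B)$. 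Replacing $(X,B)$ by $(Y,B_Y)$ makes $K_X+B$ nef, and then the base point free theorem (via Lemma \ref{l-ray-stability}) gives semi-ampleness, hence $K_X+B\sim_\R M\ge 0$. No adjunction, no induction on dimension, no lifting is needed in this case.
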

\begin{proof} 

By Remark \ref{rem-Nakayama-decomposition}, $K_X+B\equiv M'$ for some $M'\ge 0$. 
 Since $M'-(K_X+B)\equiv 0$,
$$
A':=A+M'-(K_X+B)\equiv A
$$
is ample.  Thus 
$$
K_X+B':=K_X+A'+B-A
$$
satisfies $K_X+B'=M'$. So, by assumptions, $(X,B')$ has a log minimal model 
$(Y,B_Y')$ and the construction ensures that $(Y,B_Y)$ is a log minimal model 
of $(X,B)$. We can replace $(X,B)$ by $(Y,B_Y)$ hence assume that $K_X+B$ is 
nef. Now we simply use the base point free theorem (Theorem \ref{t-base-point-free} 
and Lemma \ref{l-ray-stability}) to finish the proof.
\end{proof}

When $\kappa_\sigma(K_X+B)>0$ the proof of the nonvanishing theorem is a lot more 
complicated. In this case we try to create a component with coefficient one in 
$B$ and use induction.

\begin{lem}\label{l-nv-unbounded} Let $(X,B)$ be a projective log smooth klt 
pair of dimension $d$ such that $B\ge A$ where $A$ is an ample $\mathbb{Q}$-divisor.
 Suppose that $\kappa_\sigma(K_X+B)>0$. 
Then we can find a projective, log smooth, plt pair $(W,B_W)$ and an
ample $\mathbb{Q}$-divisor $A_W$ on $W$ such that
\begin{itemize} 
\item $W$ is birational to $X$, 
\item $B_W-A_W\geq 0$, and
\item $S=\rddown{B_W}$ is an irreducible divisor, which is not a component of
$N_{\sigma}(K_W+B_W)$.  
\end{itemize} 

Moreover the pair $(W,B_W)$ has the property that $K_X+B \sim_{\mathbb{R}} M$ for some 
$\mathbb{R}$-divisor $M\ge 0$ iff $K_W+B_W \sim_{\mathbb{R}} M_W$ for some
$\mathbb{R}$-divisor $M_W\ge 0$.
\end{lem}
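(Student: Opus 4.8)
The plan is to use the growth of sections provided by $\kappa_\sigma(K_X+B)>0$ to produce an effective divisor $\mathbb{R}$-linearly equivalent to a small ample twist of $K_X+B$, push its coefficients up by a log canonical threshold so as to create a single prime log canonical place of coefficient one, extract that place, and keep an ample summand in the boundary throughout by the standard perturbation that $B\ge A$ allows.

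First I would invoke Remark \ref{rem-Nakayama-decomposition}(3): since $\kappa_\sigma(K_X+B)>0$ there are an ample $\mathbb{Q}$-divisor $H$ and a real number $\beta>0$ with $h^0(X,\rddown{m(K_X+B)}+H)\ge m\beta$ for all $m\gg 0$. Fix such an $m$. Scaling a general member of $|\rddown{m(K_X+B)}+H|$ by $\frac1m$ and adding the fixed tail $\frac1m\langle m(K_X+B)\rangle$ produces a general effective $\mathbb{Q}$-divisor $D'$ with $D'\sim_{\mathbb{R}}(K_X+B)+\frac1mH$; since $h^0\to\infty$ the system $|\rddown{m(K_X+B)}+H|$ is positive dimensional, so $D'$ can be taken to have a mobile component, which will matter at the end. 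Using $B\ge A$ together with Remarks \ref{rem-perturbation} and \ref{r-local} I may assume $A$ is a general ample $\mathbb{Q}$-divisor and absorb $\frac1mH$ into it, choosing the perturbation so that, setting $B^\dagger:=B_{\mathrm{pert}}+cD'$ with $c$ the log canonical threshold of $D'$ with respect to $(X,B)$, we get $K_X+B^\dagger\sim_{\mathbb{R}}(1+c)(K_X+B)$ with $(X,B^\dagger)$ log canonical but not klt, and (after a generic choice) with non-klt locus carried by a single prime divisor $S_0$ over $X$ of coefficient exactly one. Extracting $S_0$ --- by Lemma \ref{l-extraction-klt} when its hypotheses apply, or otherwise by passing to a sufficiently high log resolution $g\colon W\to X$ on which $S_0$ appears as a divisor, taking $B_W$ crepant over $(X,B^\dagger)$ and rounding the negative exceptional coefficients up to zero --- yields a $\mathbb{Q}$-factorial log smooth plt pair $(W,B_W)$ birational to $X$ with $S:=\rddown{B_W}$ the irreducible birational transform of $S_0$, and with $K_W+B_W\sim_{\mathbb{R}}g^*\bigl((1+c)(K_X+B)\bigr)+F$ for some $F\ge 0$ exceptional over $X$. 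The ample summand survives: the general ample component of $B^\dagger$ pulls back to a divisor dominating an ample $\mathbb{Q}$-divisor $A_W$ on $W$, so $B_W-A_W\ge 0$.

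The moreover clause is then bookkeeping, using that the perturbation left no residual ample term. From $K_W+B_W\sim_{\mathbb{R}}g^*((1+c)(K_X+B))+F$ with $F\ge 0$ exceptional, pushing forward shows $K_W+B_W\sim_{\mathbb{R}}M_W\ge 0$ forces $(1+c)(K_X+B)\sim_{\mathbb{R}}g_*M_W\ge 0$, whence $K_X+B\sim_{\mathbb{R}}\frac1{1+c}g_*M_W\ge 0$; conversely $K_X+B\sim_{\mathbb{R}}M\ge 0$ gives $K_W+B_W\sim_{\mathbb{R}}(1+c)g^*M+F\ge 0$. The small ample adjustments made along the way are cleared each time using $B\ge A$ and Remarks \ref{rem-perturbation} and \ref{r-local}.

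The step I expect to be the main obstacle is verifying that $S$ is not a component of $N_\sigma(K_W+B_W)$, and this is exactly where $\kappa_\sigma(K_X+B)>0$ (rather than merely $\ge 0$) is indispensable. If the threshold $c$ is attained by a mobile component of $D'$ the claim is immediate, since such a component lies outside the stable base locus; but if $c$ is attained by a divisor whose centre lies in the fixed locus of the systems $|\rddown{m(K_X+B)}+H|$, one must show this centre can be kept off $N_\sigma(K_X+B)$ and then transfer "not a component of $N_\sigma$" from the $X$-side to $W$. Here one uses the positive dimensionality of these systems (which is precisely $\kappa_\sigma>0$) together with Nakayama's property from Remark \ref{rem-Nakayama-decomposition} that $S\not\subseteq N_\sigma(D)$ forces $D|_S$ pseudo-effective with $S$ off $\Bs|\rddown{mD}+H|$; arranging the general member $D'$, the threshold $c$, and the resolution $g$ so that all of this is simultaneously compatible is the delicate heart of the argument.
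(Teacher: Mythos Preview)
Your overall outline is close to the paper's, but there is a genuine gap at exactly the point you flag as ``the delicate heart.'' By taking a \emph{general} member $D'$ of the linear system and then computing its log canonical threshold, you lose all control over where the resulting lc centre sits. The lc threshold of a general $D'$ is typically computed by the \emph{fixed} part of the system, and for divisors of the form $m(K_X+B)+(\text{ample})$ the fixed part is essentially $N_\sigma(K_X+B)$. So your $S_0$ is most likely to appear precisely as a component of $N_\sigma$, which is the one thing you must avoid. Your suggestion to ``arrange the general member $D'$, the threshold $c$, and the resolution $g$ so that all of this is simultaneously compatible'' is not a proof; nothing in your setup actually forces the lc centre off $N_\sigma$.

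The paper's proof supplies exactly the missing idea: do not take a general member at all. Instead, first choose a point $x\in X$ lying \emph{outside} $\Supp N_\sigma(K_X+B)$, and then use the growth of sections to \emph{impose} high multiplicity at $x$. Concretely, since $\kappa_\sigma(K_X+B)>0$ one can pick $k$ and then $m\gg 0$ with $h^0(X,\rddown{mk(K_X+B)}+kA)\gg (kd)^d/d!$; by the dimension count of Remark~\ref{r-multiplicity-linear-systems} there is then $0\le D\sim \rddown{mk(K_X+B)}+kA$ with $\mu_xD>kd$, hence an effective $L\sim_\R m(K_X+B)+A$ with $\mu_xL>d$. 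The one-parameter family $(t+1)(K_X+B)\sim_\R K_X+B_t$ with $B_t=B-\tfrac{t}{m}A+\tfrac{t}{m}L$ now interpolates from klt at $t=0$ to non-lc at $t$ near $m$, and the lc locus at the threshold contains $x\notin\Supp N_\sigma(K_X+B)$. Passing to a log resolution (no extraction lemma needed) and a standard perturbation then isolates an irreducible $S$ not in $N_\sigma$. The ample summand and the ``moreover'' equivalence come for free from the shape $B_t\ge A'$ and $(t+1)(K_X+B)\sim_\R K_X+B_t$, which is also cleaner than your bookkeeping via $g^*((1+c)(K_X+B))+F$.
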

\begin{proof} Since $\kappa_\sigma(K_X+B)>0$ and since $A$ is ample, 
there is some $k\in \N$ such that $kA$ is integral and 
$$
\limsup_{m\to +\infty} \frac{h^0(X,\rddown{mk(K_X+B)}+kA)}{m}>0
$$

Pick $m$ large enough so that
$$
h^0(X,\rddown{mk(K_X+B)}+kA) \gg \frac{(kd)^d}{d!}
$$
By Remark \ref{r-multiplicity-linear-systems}, given any fixed point $x\in X$, we may find
$$
0\le D\sim \rddown{mk(K_X+B)}+kA
$$
such that the multiplicity $\mu_xD >kd$.  In particular, we may find an effective
$\mathbb R$-divisor
$$
L\sim_{\mathbb{R}} m(K_X+B)+A,
$$
such that $\mu_xL>d$.  

Given $t\in [0,m]$, consider
\begin{align*} 
(t+1)(K_X+B) &= K_X-\frac{t}{m}A+B+t(K_X+B+\frac 1mA) \\ 
                  &\sim_{\mathbb{R}}K_X-\frac{t}{m}A+B+\frac t mL \\ 
                  &=:K_X+B_t
\end{align*} 

Fix $0<\epsilon\ll 1$, let $A'=\epsilon/m A$ and $u=m-\epsilon$.  We have:
\begin{enumerate} 
\item $K_X+B_0$ is klt,
\item $B_t \ge A'$,  for any $t\in[0,u]$ and
\item the locus of log canonical singularities of $(X,B_u)$ is not contained in
$\Supp N_{\sigma}(K_X+B_u)=\Supp N_{\sigma}(K_X+B)$.
\end{enumerate} 

Let $\pi\colon W\to X$ be a log resolution of the $(X,B+L)$.  We may write
$$
K_W+G_t=\pi^*(K_X+B_t)+E_t,
$$
where $E_t$ and $G_t$ are effective, with no common components, $\pi_*G_t=B_t$
and $E_t$ is exceptional.  Pick an effective exceptional divisor $F$ and a positive
integer $l$ such that $l(\pi^*A'-F)$ is very ample and let $lC$ be a very general element
of the linear system $|l(\pi^*A'-F)|$.  For any $t\in [0,u]$, let
$$
H_t=G_t-\pi^*A'+C+F\sim_\mathbb{R} G_t.
$$
After cancelling common components of $H_t$ and $N_{\sigma}(K_W+H_t)$,
properties (1-3) above become
\begin{enumerate} 
\item $K_W+H_0$ is kawamata log terminal,
\item $H_t \ge C$,  for any $t\in[0,u]$ and
\item the locus of log canonical singularities of $(W,H_u)$ is not contained in
$N_{\sigma}(K_W+H_u)$.
\end{enumerate} 
Moreover
\begin{enumerate}
\setcounter{enumi}{3}
\item $(W,H_t)$ is log smooth, for any $t\in [0,u]$.   
\end{enumerate} 

Let
$$
s=\sup \{\, t\in [0,u] \,|\, \text{$K_W+H_t$ is log canonical} \,\}.  
$$
Thus, setting $B_W=H_s$ and $A_W=C$, we may write 
$$
B_W=S+A_W+B_W'
$$
where $\rddown{B_W}=S$, $A_W$ is ample and $B_W'$ is effective.  Possibly perturbing
$B_W$, we may assume that $S$ is irreducible, so that $K_W+B_W$ is plt 
and we may assume that $A_W$ is $\mathbb{Q}$-Cartier.\\ 
 \end{proof}

We will need the following consequence of Kawamata-Viehweg vanishing:

\begin{lem}\label{l-nv-extension}
 Let $(X,B=S+A+B')$ be a $\mathbb{Q}$-factorial projective 
plt pair and let $m$ be a positive integer.  Suppose that
\begin{enumerate} 
\item $S=\rddown{B}$ is irreducible,
\item $m(K_X+B)$ is integral,
\item $m(K_X+B)$ is Cartier in a neighbourhood of $S$, 
\item $h^0(S,m(K_X+B)|_S)> 0$, 
\item $(X,A+B'-(m-1)tH)$ is klt for some $H\geq 0$ and $t$, 
\item $K_X+B+tH$ is big and nef.  
\end{enumerate} 

Then $h^0(X,m(K_X+B))>0$.
\end{lem}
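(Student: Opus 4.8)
The plan is to use the short exact sequence coming from restriction to $S$ together with Kawamata--Viehweg vanishing to lift a section from $S$ to $X$. The natural candidate divisor to restrict is $m(K_X+B)$, but this is not of the form $K_X + (\text{something klt}) + (\text{nef and big})$ directly, because the coefficient of $S$ in $B$ is $1$. The idea is that $m(K_X+B) - S = K_X + (m-1)(K_X+B) + (B - S)$, and I want to arrange that $(m-1)(K_X+B) + (B-S)$ (or a perturbation of it) is nef and big with klt complementary pair, so that $H^1(X, m(K_X+B) - S) = 0$ and hence the restriction map $H^0(X, m(K_X+B)) \to H^0(S, m(K_X+B)|_S)$ is surjective. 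Since the target is nonzero by hypothesis (4), this gives $h^0(X, m(K_X+B)) > 0$.

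First I would handle the fact that $m(K_X+B)$ need not be Cartier away from $S$: using hypothesis (3) we know it is Cartier near $S$, which is all that is needed for the exact sequence $0 \to \mathcal{O}_X(m(K_X+B) - S) \to \mathcal{O}_X(m(K_X+B)) \to \mathcal{O}_S(m(K_X+B)|_S) \to 0$ to make sense (one works with the reflexive sheaf $\mathcal{O}_X(m(K_X+B))$ and checks the cokernel is $\mathcal{O}_S(m(K_X+B)|_S)$ using that near $S$ everything is Cartier; $S$ being normal since $(X,B)$ is plt). Next, writing $D := m(K_X+B) - S$, I would express
$$
D = K_X + (m-1)(K_X+B) + A + B' = K_X + (m-1)tH + \big(A + B' - (m-1)tH\big) + (m-1)(K_X+B+tH).
$$
Here, by hypothesis (6), $(m-1)(K_X+B+tH)$ is nef and big (a nonnegative multiple of a nef and big divisor; if $m=1$ the statement is nearly trivial and I would dispose of it separately, or just note $0$ is nef and $H^1$ vanishing is not needed since $D = K_X + A + B'$ is already of the right shape via hypothesis (5) with $t$ possibly adjusted). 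By hypothesis (5), $(X, A + B' - (m-1)tH)$ is klt. So $D \equiv K_X + \Delta + N$ with $(X,\Delta)$ klt and $N$ nef and big, and Kawamata--Viehweg vanishing (the version stated in the excerpt, applied with $Z = \mathrm{pt}$) gives $H^i(X, D) = 0$ for all $i>0$, in particular $H^1(X, m(K_X+B) - S) = 0$.

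From the long exact sequence in cohomology, the map $H^0(X, m(K_X+B)) \to H^0(S, m(K_X+B)|_S)$ is then surjective, and since the right-hand group is nonzero by (4), we conclude $h^0(X, m(K_X+B)) > 0$. The main obstacle I anticipate is the bookkeeping around $\mathbb{Q}$-Cartier issues: making sure the exact sequence of sheaves is genuinely exact with the correct cokernel when $m(K_X+B)$ is only Cartier in a neighbourhood of $S$, and verifying that restricting the reflexive sheaf $\mathcal{O}_X(m(K_X+B))$ to the normal divisor $S$ yields exactly $\mathcal{O}_S(m(K_X+B)|_S)$ (rather than a sheaf differing in codimension $\ge 2$ on $S$). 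This is where hypotheses (1)--(3) are used essentially, and it is the only genuinely delicate point; the vanishing argument itself is routine once $D$ is written in the form $K_X + \text{klt} + \text{nef and big}$.
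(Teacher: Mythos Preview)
Your approach is exactly the one in the paper: use the restriction exact sequence and Kawamata--Viehweg vanishing applied to $m(K_X+B)-S$ written as $K_X+\big(A+B'-(m-1)tH\big)+(m-1)(K_X+B+tH)$. Note that your displayed decomposition has a stray $(m-1)tH$ term; the correct identity is simply $m(K_X+B)-S = K_X + \big(A+B'-(m-1)tH\big) + (m-1)(K_X+B+tH)$, which is what you actually use in the sentences that follow.
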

\begin{proof} Considering the exact 
sequence,
$$
 H^0(X,\mathcal{O}_X(m(K_X+B))) 
\to H^0(S,\mathcal{O}_S(m(K_X+B)|_S))) 
$$
$$
\to  H^1(X,\mathcal{O}_X(m(K_X+B)-S))
$$

it suffices to observe that
$$
H^1(X,\mathcal{O}_X(m(K_X+B)-S))=0
$$
by Kawamata-Viehweg vanishing, since
\begin{align*} 
m(K_X+B)-S &= (m-1)(K_X+B)+K_X+A+B'\\ 
                &= K_X+A+B'-(m-1)tH+(m-1)(K_X+B+tH)
\end{align*} 
and $K_X+B+tH$ is big and nef.\\  
\end{proof}

\begin{thm}\label{t-to-nonvanishing}
Assume  Theorem \ref{t-main-BCHM} (1) in dimension $d-1$, 
and also Theorem \ref{t-main-BCHM} (1) in dimension $d$ for projective pairs 
$(X',B')$ such that $K_{X'}+B'\sim_\R M'$ for some $M'\ge 0$.
Let $(X,B)$ be a projective  klt pair of dimension $d$ where $B$ is big. 
If $K_X+B$ is pseudo-effective, then $K_X+B\sim_\R M$ for some $M\ge 0$.
\end{thm}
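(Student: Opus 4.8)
The plan is to reduce to a convenient normal form and then split according to the Nakayama numerical dimension $\kappa_\sigma(K_X+B)$. First, since $B$ is big I may write $B\sim_\R A'+G'$ with $A'$ ample and $G'\ge 0$, and after replacing $B$ by $(1-\epsilon)B+\epsilon G'+\epsilon A'$ for small $\epsilon>0$ (Remark \ref{rem-perturbation}) assume $B\ge A$ for some ample $\Q$-divisor $A$. Then, passing to a log resolution $f\colon W\to X$ and writing $K_W+B_W=f^*(K_X+B)+E$ with $B_W,E\ge 0$ sharing no component and $E$ exceptional, I note that $K_W+B_W$ is still pseudo-effective, that $B_W$ still dominates an ample $\Q$-divisor, that $(W,B_W)$ is log smooth klt, and that any $M_W\ge 0$ with $K_W+B_W\sim_\R M_W$ pushes down to $M:=f_*M_W\ge 0$ with $K_X+B\sim_\R M$. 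So I may assume $(X,B)$ is log smooth and $B\ge A$.

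If $\kappa_\sigma(K_X+B)=0$ the statement is exactly Lemma \ref{l-nv-bounded}: Nakayama's decomposition (Remark \ref{rem-Nakayama-decomposition}) gives $K_X+B\equiv N_\sigma\ge 0$, and absorbing the numerically trivial difference $N_\sigma-(K_X+B)$ into an ample perturbation produces a klt pair $\R$-linearly equivalent to an effective divisor, to which the assumed instance of Theorem \ref{t-main-BCHM}(1) applies; the resulting log minimal model together with the base point free theorem (Theorem \ref{t-base-point-free}) and Lemma \ref{l-ray-stability} yields the desired $M$.

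So assume $\kappa_\sigma(K_X+B)>0$. Here I would apply Lemma \ref{l-nv-unbounded} to pass to a projective log smooth plt pair $(W,B_W)$ carrying an ample $\Q$-divisor $A_W$ with $B_W-A_W\ge 0$, such that $S:=\rddown{B_W}$ is irreducible and not a component of $N_\sigma(K_W+B_W)$, and such that the sought effectivity for $(X,B)$ is equivalent to that for $(W,B_W)$; write $B_W=S+A_W+B_W'$. Restricting to $S$ and using adjunction for plt pairs (Remark \ref{rem-adjunction-dlt-pairs}), $(K_W+B_W)|_S=K_S+B_S$ with $(S,B_S)$ klt and $B_S\ge A_W|_S$ big; and since $S$ is not a component of $N_\sigma(K_W+B_W)$, Remark \ref{rem-Nakayama-decomposition} shows $K_S+B_S$ is pseudo-effective and that, for a suitable ample $H$, $S\not\subset\Bs|\rddown{m(K_W+B_W)}+H|$ for every $m>0$. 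After a small rational perturbation of $B_W$ preserving $\rddown{B_W}=S$, the plt property, $B_W-A_W\ge 0$, and $S\not\subset N_\sigma$, I may take $B_W$ and hence $B_S$ rational; then the induction hypothesis (Theorem \ref{t-main-BCHM}(1) in dimension $d-1$) gives $K_S+B_S\sim_\Q M_S\ge 0$, so $h^0\big(S,m(K_W+B_W)|_S\big)\ne 0$ for all sufficiently divisible $m>0$.

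It remains to lift this nonvanishing from $S$ to $W$, and this is where I expect the real work to be. The vehicle is the Kawamata--Viehweg vanishing argument of Lemma \ref{l-nv-extension}, which upgrades $h^0(S,m(K_W+B_W)|_S)\ne 0$ to $h^0(W,m(K_W+B_W))\ne 0$ once one also has $H\ge 0$ and $t>0$ with $(W,A_W+B_W'-(m-1)tH)$ klt and $K_W+B_W+tH$ big and nef. Supplying such $H$ and $t$ is the main obstacle, since $K_W+B_W$ is only pseudo-effective: one must first replace $(W,B_W)$ by a birational model on which $K_W+B_W$ becomes nef (and big, using $\kappa_\sigma>0$ and the effectivity of $K_S+B_S$ already obtained). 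The delicate point is to arrange this without circularity — the model should be produced by a $(K_W+B_W)$-MMP with scaling of a general ample divisor whose termination is reduced, via special termination (Theorem \ref{t-s-term}) and a finiteness-of-models argument, to the dimension $d-1$ induction together with the instances of Theorem \ref{t-main-BCHM}(1) in dimension $d$ for classes $\R$-linearly equivalent to an effective divisor that are among the standing hypotheses; keeping track of exactly which minimal-model inputs are legitimately available is where I anticipate the genuine difficulty. Once $K_W+B_W$ is made nef and big, Lemma \ref{l-nv-extension} applies with $H$ a small ample divisor, giving $h^0(W,m(K_W+B_W))\ne 0$, hence $K_W+B_W\sim_\Q M_W\ge 0$ and finally $K_X+B\sim_\R M\ge 0$.
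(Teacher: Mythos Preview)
Your overall architecture matches the paper: reduce to log smooth with an ample part in $B$, split on $\kappa_\sigma(K_X+B)$, invoke Lemma \ref{l-nv-bounded} when $\kappa_\sigma=0$, and in the positive case use Lemma \ref{l-nv-unbounded} to pass to a plt pair $(W,B_W=S+A_W+B_W')$, obtain nonvanishing on $S$, and lift via Lemma \ref{l-nv-extension}. The gap is in how you supply the hypotheses of Lemma \ref{l-nv-extension}.

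You propose to replace $(W,B_W)$ by a model on which $K_W+B_W$ is nef and big. Neither is available. First, $\kappa_\sigma(K_W+B_W)>0$ does not mean big; bigness is $\kappa_\sigma=d$, and nothing in hand produces it. Second, and more seriously, you cannot make $K_W+B_W$ nef with the standing hypotheses: special termination (Theorem \ref{t-s-term}) only terminates the LMMP \emph{near} $S$, not globally, and a global log minimal model of $(W,B_W)$ is essentially the conclusion you are trying to reach --- the dimension-$d$ hypothesis is only granted for pairs already $\R$-linearly equivalent to an effective divisor.

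The paper avoids this entirely. One runs the LMMP on $K_W+B_W$ with scaling of $A'\sim_\Q aA_W$ for $a\gg 0$. For any $\lambda>0$ the divisor $K_W+B_W+\lambda A'$ is big, hence $\sim_\R$ effective, so after a klt perturbation the dimension-$d$ hypothesis together with Lemma \ref{l-term-1} forces the scaling numbers to satisfy $\lim\lambda_i=0$. Special termination stabilises the picture near $S$, so on $Y:=X_i$ for $i\gg 0$ the restriction $(K_Y+B_Y)|_{S_Y}=K_{S_Y}+B_{S_Y}$ is nef and independent of $i$; the base point free theorem in dimension $d-1$ then gives $h^0(S_Y,m(K_Y+B_Y)|_{S_Y})>0$ for a fixed $m$. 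Now apply Lemma \ref{l-nv-extension} on $Y$ with $H=A'_Y$ and $t=\lambda_i$: by construction $K_Y+B_Y+\lambda_i A'_Y$ is nef and big, and since $m$ is fixed while $\lambda_i$ can be taken as small as we like by increasing $i$, the klt condition on $A_Y+B'_Y-(m-1)\lambda_i A'_Y$ holds. Thus $K_Y+B_Y$ is never required to be nef; the arbitrarily small scaling remnant $\lambda_i A'_Y$ does all the work.
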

\begin{proof} 
 By taking a log resolution we may assume that $(X,B)$ is log smooth.  
By Remark \ref{r-local}, we may
assume that $B\ge A$ where $A$ is an ample $\Q$-divisor.  By Lemma \ref{l-nv-bounded} and
Lemma \ref{l-nv-unbounded}, we may pass to the plt situation 
in which $B=S+A+B'$, where $A$ is an ample
$\mathbb{Q}$-divisor, $B'$ is effective and $\rddown{B}=S$ is irreducible and not a
component of $N_{\sigma}(K_X+B)$ (of course $(X,B)$ is not klt any more). When 
$B$ is not rational the proof involves some technicalities. 
But if $B$ is rational the proof is more transparent and still contains 
the main points. For simplicity we then assume from now 
on that $B$ is rational (see [\ref{BCHM}] for the full story).

Let $a>0$ be a sufficiently large rational number. Since $A$ is ample, 
$aA\sim_\Q A'$ such that $K_X+B+A'$ is plt and nef. Now, by Theorem \ref{t-s-term}, 
the LMMP on $K_X+B$ with scaling of $A'$ terminates near $S$. Moreover, 
by Lemma \ref{l-term-1}, if $\lambda_i$ are the numbers appearing in the 
LMMP with scaling, then $\lambda=\lim \lambda_i=0$. Pick $i\gg 0$, 
and let $Y:X_i$ be the model in the LMMP corresponding to $\lambda_i$. 
So, $K_Y+B_Y+\lambda_iA_Y'$ is nef, and also $K_{S_Y}+B_{S_Y}:=(K_Y+B_Y)|_{S_Y}$ 
is nef and independent of $i$. Pick $m\in \N$  large enough so that 
$m(K_Y+B_Y)$ is integral, $m(K_Y+B_Y)$ is Cartier in a neighbourhood of $S_Y$, 
and $h^0(S_Y,m(K_{S_Y}+B_{S_Y}))> 0$. Moreover, we can choose $i$ so that 
$A_Y-(m-1)a\lambda_iA_Y\ge 0$ hence 
$$
m(K_Y+B_Y)-S_Y= 
$$
$$
K_Y+A_Y+B_Y'-(m-1)a\lambda_iA_Y+(m-1)(K_Y+B_Y+\lambda_iA_Y')             
$$
which allows us to apply the Kawamata-Viehweg vanishing as in Lemma \ref{l-nv-extension}.
\end{proof}

\begin{thm}\label{t-nonvanishing-induction}
Under the assumptions of Theorem \ref{t-to-nonvanishing}, Theorem \ref{t-nonvanishing} 
holds in dimension $d$.
\end{thm}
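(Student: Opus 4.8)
The statement with $Z$ a point is precisely Theorem \ref{t-to-nonvanishing}, so the entire content of this theorem is the passage from the projective nonvanishing to the relative one. The plan is therefore: take a klt pair $(X/Z,B)$ of dimension $d$ with $B$ big$/Z$, $K_X+B$ pseudo-effective$/Z$, and given morphism $f\colon X\to Z$, and produce $M\ge 0$ with $K_X+B\sim_\R M/Z$. First I would replace $(X/Z,B)$ by a log resolution, which changes neither the hypotheses nor the conclusion, so that $(X,B)$ is log smooth.

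Next I would compactify the morphism. Choose a smooth projective variety $W$ containing $X$ as a dense open subset, together with a projective morphism $\overline f\colon W\to\overline Z$ extending $f$ to a projective compactification $\overline Z\supseteq Z$; since $f$ is projective one can arrange $X=\overline f^{-1}(Z)$, so that the fibres of $\overline f$ over points of $Z$ coincide with the fibres of $f$, and since $(X,B)$ is log smooth one can take $W$ so that $\Gamma:=W\setminus X$, with reduced structure, together with the closure $B_W$ of $B$ and $K_W$, is simple normal crossing. Then fix a rational $0<\epsilon\ll 1$, pick a very ample $\overline H$ on $\overline Z$ and general members $H_1,\dots,H_t\in|\overline H|$, and set
$$\Delta_W:=B_W+(1-\epsilon)\Gamma+(1-\epsilon)\textstyle\sum_{j=1}^{t}\overline f^{*}H_j.$$
For general $H_j$ this is a boundary with simple normal crossing support and coefficients in $[0,1)$, so $(W,\Delta_W)$ is a projective klt pair.

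The heart of the argument is to check that $\Delta_W$ is big and that $K_W+\Delta_W$ is pseudo-effective, and then apply Theorem \ref{t-to-nonvanishing}. For a general $z\in Z$ the fibre $F=\overline f^{-1}(z)=f^{-1}(z)$ avoids $\Gamma$ and each $\overline f^{*}H_j$, so $\Delta_W|_F=B|_F$ and $(K_W+\Delta_W)|_F=(K_X+B)|_F$; the first is big on $F$ because $B$ is big$/Z$, and the second is pseudo-effective on $F$ because $K_X+B$ is pseudo-effective$/Z$. Since $\sum_j\overline f^{*}H_j\sim_\Q t\,\overline f^{*}\overline H$ with $\overline H$ ample on $\overline Z$, I would then invoke the standard principle that a divisor which is big (resp. pseudo-effective) on the generic fibre of a fibration becomes big (resp. pseudo-effective) on the total space after adding a sufficiently large pullback of an ample divisor from the base: for $t\gg 0$ this makes $\Delta_W\ge B_W+(1-\epsilon)t\,\overline f^{*}\overline H$ big and $K_W+\Delta_W\sim_\Q(K_W+B_W+(1-\epsilon)\Gamma)+(1-\epsilon)t\,\overline f^{*}\overline H$ pseudo-effective. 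Theorem \ref{t-to-nonvanishing} then gives $M_W\ge 0$ with $K_W+\Delta_W\sim_\R M_W$.

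Finally I would restrict back to $X$. Since $\Gamma|_X=0$ and $\overline f^{*}H_j|_X=f^{*}(H_j\cap Z)$ with $H_j\cap Z$ Cartier on $Z$, one gets $\Delta_W|_X=B+f^{*}G_Z$ where $G_Z=(1-\epsilon)\sum_j(H_j\cap Z)$ is $\Q$-Cartier on $Z$; restricting the equivalence $K_W+\Delta_W\sim_\R M_W$ yields $K_X+B+f^{*}G_Z\sim_\R M_W|_X$, hence $K_X+B\sim_\R M_W|_X/Z$ with $M_W|_X\ge 0$, which is exactly Theorem \ref{t-nonvanishing} in dimension $d$. I expect the main obstacle to be the positivity transfer in the third paragraph — both making precise that ``pseudo-effective$/Z$'' and ``big$/Z$'' restrict to honest positivity on a general fibre of $f$, and that a large pullback from $\overline Z$ promotes fibrewise positivity to global positivity on $W$ — together with the care needed to keep $(W,\Delta_W)$ log smooth and klt and to arrange $X=\overline f^{-1}(Z)$ with the $H_j$ general enough; everything else is formal bookkeeping. (Alternatively one could pass to the generic fibre $X_\eta$ over $K(Z)$ and use nonvanishing in dimension $<d$ there, but spreading the resulting effective divisor out over $Z$ reproduces the same positivity and vertical–divisor issues in a somewhat more delicate form.)
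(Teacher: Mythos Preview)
Your compactification approach differs from the paper's, which takes precisely the route you mention at the end as an alternative: restrict to the generic fibre $X_\eta$ of $f$, apply nonvanishing there, and spread out. You dismiss this as reproducing ``the same positivity and vertical-divisor issues in a somewhat more delicate form,'' but in fact the spreading-out is much easier than the positivity step in your main argument. Once one has $M_\eta\ge 0$ with $(K_X+B)|_{X_\eta}\sim_{\R}M_\eta$, take its closure $M\ge 0$ in $X$; then $K_X+B-M\sim_{\R}V$ where $V$ is supported on vertical divisors. Writing $V=V^+-V^-$ with $V^\pm\ge 0$, choose an effective Cartier divisor $G$ on $Z$ whose support contains $f(\Supp V^-)$; for a large enough multiple $cG$ one has $f^*(cG)\ge V^-$, and then $K_X+B\sim_{\R}(M+V^++f^*(cG)-V^-)/Z$, which is effective. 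No pseudo-effectivity transfer is needed---only the trivial fact that an effective vertical divisor is dominated by the pullback of a suitable Cartier divisor from $Z$.

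By contrast, your main approach has a genuine gap at exactly the point you flag. The ``standard principle'' that pseudo-effectivity on the generic fibre plus a large pullback of ample from the base gives global pseudo-effectivity is \emph{not} standard. The analogous statement for bigness is fine (via $\overline f_*\mathcal{O}_W(mD)\ne 0$ and Serre vanishing on $\overline Z$), but the pseudo-effective version amounts to asking that the constant $t_\epsilon$ needed to make $D+\epsilon A_W+t_\epsilon\,\overline f^*\overline H$ big stays bounded as $\epsilon\to 0$, and nothing you have arranged forces this. Concretely, $(K_W+\Delta_W)$ restricted to a component $F$ of a fibre over $\overline Z\setminus Z$ is roughly $K_F$ plus a boundary with coefficients $\le 1-\epsilon$, which can have negative degree (e.g.\ on a $\PP^1$-leaf of a degenerate fibre); the term $(1-\epsilon)t\,\overline f^*\overline H$ restricts to zero on fibres and does not help. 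Whether such curves are movable in $W$---and hence whether they obstruct pseudo-effectivity---is exactly the delicate point, and it does not follow from your setup. So the compactification route, as written, does not close; the generic-fibre route does.
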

\begin{proof}
Apply Theorem \ref{t-to-nonvanishing} to the generic fibre of $X\to Z$ from 
which one gets $M\ge 0$ such that $K_X+B\sim_\R M/Z$ by standard arguments.
\end{proof}

\clearpage
%%%%%%%%%%%%%%%%%%%%%%%%%%%%%%%%%%%%%%%%%%%%%%%%%%%%%%%
~ \vspace{2cm}
\section{\textbf{A few consequences of Theorem \ref{t-main-BCHM}}}
\vspace{1cm}
%%%%%%%%%%%%%%%%%%%%%%%%%%%%%%%%%%%%%%%%%%%%%%%%%%%%%%%

In this section, we give a few of the immediate consequences of 
 Theorem \ref{t-main-BCHM}. We have already mentioned finite generation of 
 log canonical rings  and existence of log flips for klt pairs (\ref{cor-fg} and 
 \ref{cor-log-flips}).\\ 

\emph{Log Fano varieties.} 
If $X\to Z$ is a projective morphism of normal varieties and $D$ 
an $\R$-Cartier divisor on $X$, then one can define the $D$-MMP 
similar to the usual LMMP (but in general there is no guarantee that the 
extremal rays exist or are contractible, etc). 

A lc pair $(X/Z,B)$ is called log Fano if 
$-(K_X+B)$ is ample$/Z$. It turns out that log Fano pairs are in a 
sense the ideal type of pairs as far as the LMMP is concerned.
One of the nice properties already follows from the cone theorem: 
if $(X/Z,B)$ is a klt log Fano then $\overline{NE}(X/Z)$ is 
a finite polyhedral cone, i.e. it is a cone generated by 
finitely many extremal rays.

\begin{thm}
Let $(X/Z,B)$ be a $\Q$-factorial dlt log Fano pair, and let 
$D$ be an $\R$-divisor on $X$. Then, some $D$-MMP holds. Moreover, 
if $D$ is nef$/Z$, then $D$ is semi-ample$/Z$.
\end{thm}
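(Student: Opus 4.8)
The plan is to turn a $D$-MMP into an ordinary LMMP for an auxiliary klt pair with big boundary, and then quote Theorem \ref{t-main-BCHM} together with the termination results already proved. First I would set $H:=-(K_X+B)$, which is ample$/Z$ by the log Fano hypothesis. Since $(X/Z,B)$ is $\Q$-factorial dlt, pick $B^\circ\le B$ with $B-B^\circ\ge 0$ small so that $(X/Z,B^\circ)$ is klt; then $-(K_X+B^\circ)=H+(B-B^\circ)$ is big$/Z$. Fix a sufficiently small rational $\lambda>0$; as ampleness is open, $\lambda D+H$ is ample$/Z$, so
$$
\lambda D-(K_X+B^\circ)\sim_{\R}(\lambda D+H)+(B-B^\circ)/Z
$$
is ample$/Z$ plus effective, hence big$/Z$. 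Using Lemma \ref{l-Kodaira-lemma} (relative form) and the perturbation trick of Remark \ref{rem-perturbation}, I would choose a general $0\le A'\sim_{\R}\lambda D-(K_X+B^\circ)/Z$ small enough that $\Delta:=B^\circ+A'$ is a klt boundary, $X$ being $\Q$-factorial throughout. By construction $(X/Z,\Delta)$ is klt, $\Delta$ is big$/Z$ (it dominates an ample$/Z$ piece of $A'$), and $K_X+\Delta\sim_{\R}\lambda D/Z$ with $\lambda>0$.

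Next I would identify the two programs: since $K_X+\Delta\sim_{\R}\lambda D/Z$ with $\lambda>0$, a curve$/Z$, hence an extremal ray, is $D$-negative iff it is $(K_X+\Delta)$-negative, and the divisorial contractions and flips occurring for one are exactly those for the other, carrying $\Delta$ along by push-forward. Thus an LMMP$/Z$ on $K_X+\Delta$ \emph{is} a $D$-MMP$/Z$. Now take a general ample$/Z$ $\R$-divisor $C\ge 0$ with $(X/Z,\Delta+C)$ klt and $K_X+\Delta+C$ nef$/Z$. By Lemma \ref{l-ray-scaling} (applicable since $\Delta$ is big$/Z$ and $X$ is $\Q$-factorial, and these properties persist along the program) we may run the LMMP$/Z$ on $K_X+\Delta$ with scaling of $C$, and by Theorem \ref{t-term} it terminates. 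If $K_X+\Delta$, equivalently $D$, is pseudo-effective$/Z$ it ends with a log minimal model (on which the push-forward of $D$ is even semi-ample$/Z$, by the abundance part of Theorem \ref{t-main-BCHM}(1)); otherwise it ends with a Mori fibre space. Either way a $D$-MMP$/Z$ has been run to its end, proving the first assertion.

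For the \textbf{Moreover}: if $D$ is nef$/Z$ then $K_X+\Delta\sim_{\R}\lambda D/Z$ is nef$/Z$, so the scaling LMMP above performs no contraction and $(X/Z,\Delta)$ is its own log minimal model; equivalently $\Delta$ lies in the nef chamber $\mathcal{N}_{A}(V)$ of Lemma \ref{l-ray-stability} for a suitable rational affine space $V$ containing the relevant divisors, with $A$ the ample$/Z$ part of $\Delta$. By Lemma \ref{l-ray-stability}(1) (which rests on Theorem \ref{t-base-point-free}) $K_X+\Delta$ is semi-ample$/Z$. Hence $\lambda D\sim_{\R}K_X+\Delta$ is semi-ample$/Z$, and since $\lambda>0$ so is $D$.

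The step I expect to be the only genuinely delicate one is the first paragraph: arranging \emph{simultaneously} that the auxiliary boundary $\Delta$ is klt, $\Q$-factorial, big$/Z$, and $\R$-linearly equivalent$/Z$ to a fixed positive multiple of $D$ — i.e. the bookkeeping with the perturbation $B-B^\circ$, the choice of $\lambda$, and the fact that $D$ may be irrational. Once that reduction is set up, everything else is a formal consequence of the cone theorem, Lemma \ref{l-ray-scaling}, Theorem \ref{t-term}, Theorem \ref{t-main-BCHM} and Lemma \ref{l-ray-stability}.
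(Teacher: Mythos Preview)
Your approach is essentially the same as the paper's: use the ampleness of $H=-(K_X+B)$ to build a klt pair $(X/Z,\Delta)$ with $\Delta$ big$/Z$ and $K_X+\Delta\sim_\R \lambda D/Z$ for some small $\lambda>0$, then invoke termination with scaling (Theorem \ref{t-term}, equivalently Theorem \ref{t-main-BCHM}) for the first assertion and Lemma \ref{l-ray-stability} with the base point free theorem for the semi-ampleness. One small simplification worth noting: in your first paragraph the class $\lambda D-(K_X+B^\circ)=H+\lambda D+(B-B^\circ)$ is not merely big$/Z$ but actually \emph{ample}$/Z$, being a small perturbation of the ample divisor $H$; this lets you pick a general $A'$ directly without appealing to Kodaira's lemma or Remark \ref{rem-perturbation}, and is exactly how the paper handles the construction of $\Delta$.
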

\begin{proof}
Under the assumptions, there is an ample$/Z$ $\R$-divisor $H$ such that 
$K_X+B+H\sim_\R 0/Z$ and $(X/Z,B+H)$ is dlt. Let $\epsilon>0$ be a 
sufficiently small number. Then, since $H$ is ample$/Z$ and  $(X/Z,B)$ is 
$\Q$-factorial dlt, there is $\Delta$ such that 
$$
K_X+\Delta\sim_\R K_X+B+H+\epsilon D\sim_\R \epsilon D/Z
$$
and $(X/Z,\Delta)$ is klt with $\Delta$ big$/Z$. Now by Theorem \ref{t-main-BCHM}, 
any LMMP$/Z$ with scaling on $K_X+\Delta$ terminates. This also gives an 
LMMP$/Z$ with scaling on $D$. 

The semi-ampleness claim follows from the base point free theorem and 
Lemma \ref{l-ray-stability}.\\
\end{proof}

\emph{$\Q$-factorial dlt blowups.} Let $(X/Z,B)$ be a lc pair. Assume that there is a projective birational 
morphism $f\colon Y\to X$ and a boundary $B_Y$ on $Y$ such that 
$K_Y+B_Y=f^*(K_X+B)$ and such that $(Y/Z,B_Y)$ is a $\Q$-factorial dlt pair. 
Moreover, assume that every exceptional$/X$ prime divisor on $Y$ 
has coefficient one in $B_Y$. We call   $(Y/Z,B_Y)$ 
a $\Q$-factorial dlt blowup of $(X/Z,B)$.

\begin{thm}\label{t-Q-factorial-blup}
Let $(X/Z,B)$ be a lc pair. Then, there is a $\Q$-factorial dlt blowup of 
$(X/Z,B)$.
\end{thm}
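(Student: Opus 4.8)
The plan is to build $Y$ by running a relative MMP over $X$ that contracts precisely the exceptional divisors we are not allowed to keep. First I would take a log resolution $g\colon W\to X$ of $(X,B)$ and let $\Sigma$ be the reduced sum of all $g$-exceptional prime divisors. Put $B_W:=B^\sim+\Sigma$, so that $(W,B_W)$ is log smooth, hence $\Q$-factorial dlt. Since $(X,B)$ is lc we may write
$$
K_W+B_W=g^*(K_X+B)+E,
$$
where $E\ge 0$ is $g$-exceptional: the coefficient of a $g$-exceptional prime divisor $D$ in $E$ equals $1+d(D,X,B)\ge 0$, and it vanishes exactly when $D$ is an lc place of $(X,B)$. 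Thus $\Supp E$ is the set of exceptional divisors we want to remove, every component of $E$ lies in $\lfloor B_W\rfloor$, and $K_W+B_W\sim_\R E\ge 0$ over $X$. As in the proof of Lemma \ref{l-extraction-klt}, and using that $g$ is birational (so there is an ample$/X$ effective divisor on $W$ together with an effective complement $\R$-linearly trivial over $X$), I would then perturb $B_W$ so as to acquire a general ample$/X$ component inside its fractional part, while keeping $(W/X,B_W)$ $\Q$-factorial dlt, keeping every $g$-exceptional divisor at coefficient one, and keeping $K_W+B_W\sim_\R \overline M_W/X$ with $\overline M_W\ge 0$ exceptional$/X$, $\Supp\overline M_W=\Supp E$, and $\theta(W/X,B_W,\overline M_W)=0$.

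Next I would run an LMMP$/X$ on $K_X+B$, i.e. on $K_W+B_W$, with scaling of a suitable ample$/X$ divisor. Any $(K_W+B_W)$-negative extremal ray $R/X$ has $\overline M_W\cdot R<0$, so $R$ is spanned by curves inside $\Supp\overline M_W\subseteq\lfloor B_W\rfloor$; consequently every required flip is a pl flip and every divisorial contraction contracts a component of $\overline M_W$. Such pl flips exist (Corollary \ref{cor-log-flips} together with the reduction of the $\Q$-factorial dlt case to the klt case), and, passing to a rational klt perturbation of $B_W$ whose boundary is still big$/X$, a log minimal model of the relevant pair exists by Theorem \ref{t-main-BCHM}(1); hence Theorem \ref{t-s-term} yields special termination, and since the only divisors ever affected are components of $\lfloor B_W\rfloor$ this is genuine termination. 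The LMMP thus produces a log minimal model $(Y/X,B_Y)$ of $(W/X,B_W)$: in particular $Y$ is $\Q$-factorial, $(Y/X,B_Y)$ is dlt (these properties persist along the program), and $K_Y+B_Y$ is nef$/X$.

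Finally I would recognise $(Y/X,B_Y)$ as the desired blowup. Writing $g_Y\colon Y\to X$, we have $K_Y+B_Y\sim_\R \overline M_Y/X$ with $\overline M_Y\ge 0$ the birational transform of $\overline M_W$, which is exceptional$/X$; an effective $g_Y$-exceptional divisor that is nef$/X$ must be zero by the negativity lemma (Lemma \ref{l-negativity}), so $\overline M_Y=0$ and $K_Y+B_Y\sim_\R 0/X$. As $K_Y+B_Y-g_Y^*(K_X+B)$ is $g_Y$-exceptional and $\R$-linearly trivial over $X$, the negativity lemma again forces $K_Y+B_Y=g_Y^*(K_X+B)$, so the blowup is crepant. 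Moreover the divisorial contractions of the LMMP only contract components of $\overline M_W$, all of which get contracted (otherwise $\overline M_Y\ne 0$), while flips contract nothing; hence the exceptional$/X$ prime divisors of $Y$ are exactly the lc places of $(X,B)$ appearing on $W$, each of which has coefficient one in $B_Y$. Since $Y\to X$ is projective and $X\to Z$ is projective, $Y\to Z$ is projective, and $(Y/Z,B_Y)$ is a $\Q$-factorial dlt blowup of $(X/Z,B)$. I expect the genuine difficulty to be concentrated in the perturbation step: arranging the ample$/X$ component, the effective exceptional representative $\overline M_W$, and the vanishing of $\theta$ all at once, so that Theorems \ref{t-s-term} and \ref{t-main-BCHM} both become applicable — exactly the kind of bookkeeping carried out in the proof of Lemma \ref{l-extraction-klt}.
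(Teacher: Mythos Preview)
Your overall architecture is sound and close to the paper's, but the perturbation step --- which you correctly flag as the crux --- does not go through when $(X,B)$ is lc but not klt. The bookkeeping of Lemma~\ref{l-extraction-klt} rests essentially on the klt hypothesis: there one perturbs $B$ on $X$ to $B+\epsilon f_*H+\epsilon f_*G$ and this stays klt for small $\epsilon$. In the lc case you cannot perturb on $X$, so you must work on $W$. But any effective $G$ with $H+G\sim_\R 0/X$ for an ample$/X$ effective $H$ is forced to have every $g$-exceptional prime in its support (otherwise $-G\equiv H/X$ would be nef$/X$ on some exceptional curve not meeting $G$, which is impossible for an effective divisor meeting that curve trivially while $H$ meets it positively; more concretely, the standard choice $H\in|g^*A-F|$, $G=F$, has $\Supp F=\exc(g)$). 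Now if some exceptional prime $E_j$ is an lc place of $(X,B)$, then its coefficient in $E$ is $0$, while its coefficient in $G$ is positive. Hence you cannot simultaneously (i) add $\epsilon H$ to the fractional part of $B_W$, (ii) keep every exceptional divisor at coefficient $1$, and (iii) keep $\overline M_W\sim_\R E-\epsilon G$ effective and exceptional: condition (iii) fails at $E_j$. Dropping (iii) and taking $\overline M_W=E+\epsilon H$ kills $\theta=0$, and after the first step $H$ is no longer ample$/X$, so the ``all negative rays meet $\lfloor B_W\rfloor$'' argument breaks down.

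The paper avoids this obstruction by a different mechanism. It runs the LMMP$/X$ on $K_W+B_W$ with scaling of an ample $C_W$ and makes a dichotomy on $\lambda=\lim\lambda_i$. If $\lambda>0$, the same sequence is an LMMP on $K_W+B_W+\tfrac{\lambda}{2}C_W$, and \emph{now} the ample $\tfrac{\lambda}{2}C_W$ lets one replace $B_W+\tfrac{\lambda}{2}C_W$ by an honestly klt $\Delta_W$ big$/X$, so Theorem~\ref{t-main-BCHM} gives termination outright; negativity then forces $E_Y=0$. If $\lambda=0$, termination is not proved at all: instead, writing $C_W\sim_\R -D_W/X$ with $D_W\ge 0$, one observes that for $i\gg 0$ the nef$/X$ divisor $E_{Y}-\lambda_i D_{Y}$ is $\le 0$ by negativity, and since $\lambda_i$ is as small as we like this forces $E_Y=0$ after finitely many steps. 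The point is that the ample part needed to pass to a klt pair is supplied by the \emph{scaling} divisor when $\lambda>0$, not by a static perturbation of $B_W$; and when $\lambda=0$, one sidesteps termination entirely. Your argument would be repaired by inserting this dichotomy in place of the perturbation.
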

\begin{proof}
Let $g\colon W\to X$ be a log resolution and let $B_W=B^\sim+G$ where 
$B^\sim$ is birational transform of $B$ and $G$ is the reduced exceptional 
divisor of $g$. In particular, $K_W+B_W=g^*(K_X+B)+E$ where $E\ge 0$ is 
exceptional$/X$. The components of $E$ are those exceptional$/X$ prime divisors 
$D$ on $W$ such that $d(D,X,B)>-1$.

Let $C_W$ be an ample$/X$ divisor such that $K_W+B_W+C_W$ is 
dlt and nef$/X$. Run the LMMP$/X$ on $K_W+B_W$ with scaling of $C_W$ 
and let $\lambda_i$ be the corresponding numbers as in Definition \ref{d-LMMP-scaling}. 
If $\lambda:=\lim \lambda_i>0$, then the LMMP is also an LMMP on  $K_W+B_W+\frac{1}{2}\lambda C_W$
with scaling of $(1-\frac{1}{2}\lambda)C_W$. Now since $C_W$ is ample$/Z$, 
there is a klt $(W/Z,\Delta_W)$ such that $K_W+\Delta_W\sim_\R  K_W+B_W+\frac{1}{2}\lambda C_W$ 
and such that $\Delta_W$ is big$/Z$ (cf. Remark \ref{r-local}). 
The LMMP then terminates with a log minimal model $(Y/X,B_Y)$ of $(W/X,B_W)$
 by Theorem \ref{t-main-BCHM}. So, $E_Y$ is nef$/X$. By the negativity lemma 
$E_Y$ and we are done. So, we can assume that $\lambda=0$.  

Pick $i\gg 0$ and let $Y$ be the model corresponding to $i$, that is, 
on $Y$ we have: $K_Y+B_Y+\lambda_i C_Y$ is nef$/X$ and $\lambda_i$ is the smallest 
number with this property. Since $W\to X$ is birational, there is an  
$\R$-divisor $D_W\ge 0$ such that $C_W\sim_\R -D_W/X$. So, $C_Y\sim_\R -D_Y/X$
and $K_Y+B_Y-\lambda_i D_Y\sim_\R E_Y-\lambda_i D_Y$ is nef$/X$. Now 
by the negativity lemma, $ E_Y-\lambda_i D_Y\le 0$. Since $\lambda_i$ is 
sufficiently small, this is possible only if $ E_Y\le 0$ hence $E_Y=0$. 
Therefore, $K_Y+B_Y\sim_\R 0/X$ which implies that $K_Y+B_Y=f^*(K_X+B)$ 
where $f$ is morphism $Y\to X$. Note that every exceptional$/X$ prime divisor on $Y$ 
has coefficient one in $B_Y$. So,  $(Y/Z,B_Y)$ is 
a $\Q$-factorial dlt blowup of $(X/Z,B)$.\\
\end{proof}

\emph{Relations among log minimal models.} Let $(X/Z,B)$ be a klt pair and let 
$(Y_1/Z,B_{Y_1})$ and $(Y_2/Z,B_{Y_2})$ be two log minimal models of $(X/Z,B)$. 

\begin{lem}\label{l-mmodels-isom-cdmnsn1}
The induced map $Y_1\bir Y_2$ is an isomorphism in codimension one. 
Moreover, for any common resolution $f_i\colon W\to Y_i$ we have
$f_1^*(K_{Y_1}+B_{Y_1})=f_2^*(K_{Y_2}+B_{Y_2})$.
\end{lem}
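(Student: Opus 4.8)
The plan is to deduce both claims from the negativity lemma (Lemma \ref{l-negativity}) applied on a common resolution, using the definition of log minimal model in Definition \ref{d-mmodel}. First I would fix a common resolution $f_i\colon W\to Y_i$ of the birational map $Y_1\bir Y_2$ and write
$$
f_1^*(K_{Y_1}+B_{Y_1})=f_2^*(K_{Y_2}+B_{Y_2})+E
$$
where $E$ is an $\R$-divisor on $W$ supported on divisors exceptional over at least one of the $Y_i$; the goal is to show $E=0$, which simultaneously gives the second assertion. The key input is that $(Y_i/Z,B_{Y_i})$ is a log minimal model of $(X/Z,B)$, so by Definition \ref{d-mmodel} the common resolution $W$ (after taking it to dominate $X$ as well) satisfies $d(D,X,B)\le d(D,Y_i,B_{Y_i})$ for every prime divisor $D$ on $W$, with strict inequality whenever $D$ is on $X$ and contracted by $X\bir Y_i$.

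Next I would analyze the coefficients of $E$ divisor by divisor. For a prime divisor $D$ on $W$, the coefficient of $D$ in $E$ is $a(D,Y_2,B_{Y_2})-a(D,Y_1,B_{Y_1})$ (up to the sign convention relating $d$ and $a$). The main case to handle is a divisor $D$ that is exceptional over $Y_1$ but not over $Y_2$, i.e. a prime divisor $D_2$ on $Y_2$ whose birational transform is contracted by $Y_1\bir Y_2$; I claim the inverse of $\phi_i$ not contracting divisors forces such a $D_2$ to have its center on $X$ and to be contracted by $X\bir Y_1$ (since $X\bir Y_1$ and $X\bir Y_2$ differ, and $\phi_2^{-1}$ contracts no divisors, $D_2$ must already be a divisor on $X$ or be extracted by $X\bir Y_2$; the ``inverse does not contract divisors'' hypothesis then pins it down). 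Then the strict inequality in Definition \ref{d-mmodel} applied to the model $Y_1$, combined with $d(D_2,X,B)\le d(D_2,Y_2,B_{Y_2})$, yields $a(D_2,X,B)> a(D_2,Y_1,B_{Y_1})$ and $a(D_2,X,B)\le a(D_2,Y_2,B_{Y_2})$, hence $a(D_2,Y_2,B_{Y_2})>a(D_2,Y_1,B_{Y_1})$; symmetrically for divisors exceptional over $Y_2$ but not $Y_1$ the inequality goes the other way.

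Finally I would feed this sign information into the negativity lemma. Observe that $K_{Y_1}+B_{Y_1}$ is nef$/Z$, so $-E = f_2^*(K_{Y_2}+B_{Y_2})-f_1^*(K_{Y_1}+B_{Y_1})$ is the difference of a nef$/Z$ divisor and a pullback; running the argument of the negativity lemma over each $Y_i$ in turn (first push forward to $Y_1$: $f_{1*}E = B_{Y_1}-B_{Y_1}=0$, and $-E$ is nef$/Y_1$ because $f_1^*(K_{Y_1}+B_{Y_1})$ is trivial$/Y_1$ while $f_2^*(K_{Y_2}+B_{Y_2})$ is... ) one concludes $E\ge 0$; then pushing forward to $Y_2$ and using that $f_{2*}E=0$ with the nefness of $K_{Y_2}+B_{Y_2}$ gives $E\le 0$. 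Hence $E=0$, which forces $f_1^*(K_{Y_1}+B_{Y_1})=f_2^*(K_{Y_2}+B_{Y_2})$, and moreover no divisor on $Y_1$ can be contracted by $Y_1\bir Y_2$ or vice versa, since such a divisor would have strictly positive (resp. negative) coefficient in $E$ by the discrepancy comparison above — so $Y_1\bir Y_2$ is an isomorphism in codimension one. The step I expect to be the main obstacle is the bookkeeping that makes the negativity lemma applicable in \emph{both} directions simultaneously: one has to be careful that $-E$ is antinef over $Y_1$ and that $E$ is antinef over $Y_2$, which requires noting that $f_1^*(K_{Y_1}+B_{Y_1})$ is numerically trivial over $Y_1$ (so $-E\equiv f_2^*(K_{Y_2}+B_{Y_2})/Y_1$, nef$/Y_1$) and symmetrically — getting the directions of nefness straight is the delicate point, everything else is formal.
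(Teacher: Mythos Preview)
Your approach is the same as the paper's: compare discrepancies using Definition \ref{d-mmodel}, then apply the negativity lemma twice to conclude $E=0$. However, there is one genuine error in your execution. You claim ``$f_{1*}E = B_{Y_1}-B_{Y_1}=0$'' and similarly $f_{2*}E=0$, but this is \emph{circular}: the equality $(f_1)_*f_2^*(K_{Y_2}+B_{Y_2})=K_{Y_1}+B_{Y_1}$ holds only if no prime divisor on $Y_1$ is exceptional over $Y_2$, which is exactly the isomorphism-in-codimension-one statement you are trying to prove. The negativity lemma does not need $f_{1*}E=0$; it needs only $f_{1*}E\ge 0$, and \emph{that} is precisely what your discrepancy comparison in the previous paragraph establishes (once the signs are straightened out --- see below). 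So the fix is simply to feed your paragraph-two inequalities into the negativity lemma as ``$f_{1*}E\ge 0$'' rather than asserting the stronger false equality.

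There is also a sign slip in your discrepancy computation. For $D_2$ a divisor on $X$ contracted by $X\bir Y_1$, Definition \ref{d-mmodel} gives $d(D_2,X,B)<d(D_2,Y_1,B_{Y_1})$, not the reverse; and since $D_2$ lies on both $X$ and $Y_2$ with $B_{Y_2}=\phi_{2*}B$, one has the \emph{equality} $d(D_2,X,B)=d(D_2,Y_2,B_{Y_2})$. Hence the coefficient of $D_2$ in $E$ is $d(D_2,Y_2,B_{Y_2})-d(D_2,Y_1,B_{Y_1})<0$, giving $f_{2*}(-E)\ge 0$; the symmetric case gives $f_{1*}E\ge 0$. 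With these two inequalities in hand the negativity lemma applies over $Y_1$ (since $-E\equiv f_2^*(K_{Y_2}+B_{Y_2})$ over $Y_1$, which is nef) and over $Y_2$, yielding $E=0$. The final deduction of isomorphism in codimension one from $E=0$ and the strict discrepancy inequality is correct as you wrote it.
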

\begin{proof}
Let $f_i\colon W\to Y_i$ be any common resolution, and let 
$$
E=f_1^*(K_{Y_1}+B_{Y_1})-f_2^*(K_{Y_2}+B_{Y_2})
$$ 
Then, $(f_1)_*E\ge 0$ for $i=1,2$: indeed let $D$ 
be a component of $E$ which is not exceptional$/Y_1$; if $D$ is also not 
exceptional$/Y_2$, then $D$ cannot be a component of $E$; if $D$ is 
exceptional$/Y_2$, then 
$$
d(D,Y_1,B_{Y_1})=a(D,X,B)<d(D,Y_1,B_{Y_1})
$$
hence $D$ should have non-negative coefficient in $E$.
Now $E$ is antinef$/Y_1$ so by the negativity lemma, $E\ge 0$. 
On the other hand, we can similarly prove that $-E\ge 0$. Therefore, 
$E=0$. In particular, this means that  $(Y_1/Z,B_{Y_1})$ and $(Y_2/Z,B_{Y_2})$ 
have the same discrepancy at any prime divisor on birational models 
of $Y_1,Y_2$. 

If $D$ is a prime divisor on $Y_1$ which is exceptional$/Y_2$, then 
a discrepancy calculation as above gives a contradiction. So, $Y_1\bir Y_2$ 
does not contract any divisors. Similarly, $Y_2\bir Y_1$ also does not 
contract any divisors.
\end{proof}

\begin{cor}
If $K_{Y_2}+B_{Y_2}$ is ample$/Z$, then $Y_1\bir Y_2$ is an isomorphism, i.e. 
the log minimal model is unique.
\end{cor}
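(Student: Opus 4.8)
The plan is to feed Lemma~\ref{l-mmodels-isom-cdmnsn1} into a short intersection-number computation on a common resolution, and to use the ampleness of $K_{Y_2}+B_{Y_2}$ together with the $\Q$-factoriality of log minimal models to promote ``isomorphism in codimension one'' to ``isomorphism''.

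First I would choose a common resolution: a normal variety $W$ with projective birational morphisms $f_1\colon W\to Y_1$ and $f_2\colon W\to Y_2$ over $Z$ (for instance the normalisation of the main component of $Y_1\times_Z Y_2$, resolved). Write $\phi\colon Y_1\bir Y_2$ for the induced birational map. By Lemma~\ref{l-mmodels-isom-cdmnsn1}, $\phi$ is an isomorphism in codimension one and $f_1^*(K_{Y_1}+B_{Y_1})=f_2^*(K_{Y_2}+B_{Y_2})=:D$. I claim $\phi$ is a morphism. If not, it is undefined at some point $y\in Y_1$, so the fibre $f_1^{-1}(y)$ contains a curve $C$ whose image $f_2(C)$ is again a curve; since $y$ lies over a closed point of $Z$, the curve $C$ is contracted over $Z$, and so is $f_2(C)$. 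Then on one hand $D\cdot C=(K_{Y_1}+B_{Y_1})\cdot f_{1*}C=0$ because $f_{1*}C=0$, while on the other hand $D\cdot C=(K_{Y_2}+B_{Y_2})\cdot f_{2*}C>0$ because $f_{2*}C$ is a nonzero curve contracted over $Z$ and $K_{Y_2}+B_{Y_2}$ is ample$/Z$ (Kleiman's criterion). This contradiction shows $\phi\colon Y_1\to Y_2$ is a morphism, necessarily small since $\phi$ is an isomorphism in codimension one; taking $W=Y_1$ then gives $\phi^*(K_{Y_2}+B_{Y_2})=K_{Y_1}+B_{Y_1}$.

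It remains to show that the small birational morphism $\phi\colon Y_1\to Y_2$ contracts no curve, so that $\phi$ is finite and, $Y_2$ being normal, an isomorphism; since $B_{Y_2}=\phi_*B_{Y_1}$ by hypothesis, this is then an isomorphism of pairs over $Z$, which is the assertion. Here the naive intersection argument fails, because $K_{Y_1}+B_{Y_1}$ is only nef$/Z$ and not ample$/Z$, and this is the main obstacle: the argument above bounds $\phi$ but not $\phi^{-1}$. The way around it is to use that the log minimal models in question are $\Q$-factorial (as are those produced by the LMMP and by Theorem~\ref{t-main-BCHM}, and as one may build into Definition~\ref{d-mmodel}): pick an ample$/Z$ divisor $A$ on $Y_1$; since $\phi$ is small and $Y_2$ is $\Q$-factorial, $\phi_*A$ is $\Q$-Cartier and $\phi^*\phi_*A=A$, so for any curve $C$ contracted by $\phi$ the projection formula gives $A\cdot C=\phi_*A\cdot\phi_*C=0$, contradicting the ampleness of $A$. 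Hence $\phi$ contracts no curve and is an isomorphism, and the log minimal model with ample log canonical divisor is unique.
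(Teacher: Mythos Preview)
Your proof is correct and follows essentially the same route as the paper: use Lemma~\ref{l-mmodels-isom-cdmnsn1} to get that $Y_1\bir Y_2$ is small and the pullbacks agree, deduce that $\phi$ is a morphism, and then invoke $\Q$-factoriality of both $Y_i$ to conclude that a small birational morphism between them is an isomorphism. The paper phrases the first step as ``$K_{Y_1}+B_{Y_1}$ is semi-ample$/Z$, hence the associated contraction is $Y_1\to Y_2$'', whereas you unwind this via the curve argument on $W$; and the paper simply asserts the last step as a well-known fact, whereas you spell out the $\phi^*\phi_*A=A$ argument. These are cosmetic differences rather than genuinely different approaches.
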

\begin{proof}
In this case, by the Lemma, $K_{Y_1}+B_{Y_1}$ is semi-ample$/Z$. In fact,  $Y_1\bir Y_2$ 
is a morphism and $K_{Y_1}+B_{Y_1}$ is the pullcack of $K_{Y_2}+B_{Y_2}$. 
However, $Y_1\bir Y_2$ is a small morphism and $Y_1,Y_2$ are both $\Q$-factorial. 
This is possible only if $Y_1\bir Y_2$ is an isomorphism.\\
\end{proof}

\begin{thm}
If $B$ is big$/Z$, then $Y_1\bir Y_2$ can be decomposed into a sequence of 
flops with respect to $(Y_1/Z,B_{Y_1})$.
\end{thm}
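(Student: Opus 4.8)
The plan is to realize the birational map $\phi\colon Y_1\bir Y_2$ as the outcome of a carefully chosen run of the LMMP over $Z$, where every step is a flop with respect to $(Y_1/Z,B_{Y_1})$. First I would record what Lemma \ref{l-mmodels-isom-cdmnsn1} already gives us: $\phi$ is an isomorphism in codimension one, and on a common resolution $f_i\colon W\to Y_i$ we have $f_1^*(K_{Y_1}+B_{Y_1})=f_2^*(K_{Y_2}+B_{Y_2})$. In particular $K_{Y_1}+B_{Y_1}$ and $K_{Y_2}+B_{Y_2}$ are both nef$/Z$, and $\phi$ does not extract or contract divisors, so any intermediate step we produce will automatically be small.

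The main construction is as follows. Since $B$ is big$/Z$ and $(Y_1/Z,B_{Y_1})$ is klt, pick an ample$/Z$ $\Q$-divisor $A$ such that $(Y_1/Z,B_{Y_1}+A)$ is klt; after perturbing, $B_{Y_1}+A\sim_\R B_{Y_1}+A'$ with good singularities, and we may choose $A$ so that $K_{Y_1}+B_{Y_1}+A\sim_\R f_{1*}$ of an ample divisor pulled back appropriately — more precisely, since $Y_2$ is the target, choose $H$ on $Y_2$ ample$/Z$ small and set $C=\phi_*^{-1}$ applied to... the cleanest route: take $A_2$ ample$/Z$ on $Y_2$ with $(Y_2/Z,B_{Y_2}+A_2)$ klt, and let $C$ be the birational transform of $A_2$ on $Y_1$. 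Then $(Y_1/Z,B_{Y_1}+C)$ is klt with $B_{Y_1}+C$ big$/Z$, and I claim $(Y_2/Z,B_{Y_2}+A_2)$ is a log minimal model of $(Y_1/Z,B_{Y_1}+C)$: indeed $K_{Y_2}+B_{Y_2}+A_2$ is ample$/Z$, hence nef$/Z$, and the discrepancy comparison follows from Lemma \ref{l-mmodels-isom-cdmnsn1} together with the fact that $\phi$ contracts no divisors. Now run the LMMP$/Z$ on $K_{Y_1}+B_{Y_1}$ with scaling of $C$, which is justified since $B_{Y_1}$ is big$/Z$ and exists by Lemma \ref{l-ray-scaling}; by Theorem \ref{t-term} it terminates, producing a log minimal model of $(Y_1/Z,B_{Y_1})$ together with, at each stage $i$, a log minimal model of $(Y_1/Z,B_{Y_1}+\lambda_i C)$. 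Because $K_{Y_1}+B_{Y_1}$ is already nef$/Z$, the scaling numbers satisfy $\lambda_i\to 0$, and by finiteness of log minimal models (Theorem \ref{t-finiteness}, applied to the segment $\{B_{Y_1}+tC: t\in[0,1]\}$) the program is forced to pass through $Y_2$ — or more precisely, the last model in the sequence is a log minimal model of $(Y_1/Z,B_{Y_1})$ whose associated lc/ample model over $Z$ coincides with that given by $Y_2$, and by the uniqueness corollary for the ample model together with $\Q$-factoriality it is isomorphic to $Y_2$.

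It then remains to check that each elementary step $X_i\bir X_{i+1}/Z_i$ in this sequence is a flop with respect to $(Y_1/Z,B_{Y_1})$, i.e. that $K_{X_i}+B_{X_i}$ (the birational transform of $K_{Y_1}+B_{Y_1}$) is numerically trivial over $Z_i$. This is where the condition $\lambda_i\to 0$ and the structure of the scaled LMMP do the work: the extremal ray $R_i$ contracted at step $i$ satisfies $(K_{X_i}+B_{X_i})\cdot R_i<0$ in the scaled LMMP, but in our situation $K_{Y_1}+B_{Y_1}$ is nef$/Z$ and is preserved as a nef divisor after each step when $\lambda_i$ is small — by Lemma \ref{l-ray-stability}(2) (stability of extremal rays), once we are close enough to $B_{Y_1}$, any $K_{X_i}+B_{X_i}+\lambda_i C_i$-negative ray $R_i$ has $(K_{X_i}+B_{X_i})\cdot R_i=0$. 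Hence every step is an isomorphism in codimension one (no divisor is contracted, since a divisorial contraction would change the Picard number and alter discrepancies, contradicting Lemma \ref{l-mmodels-isom-cdmnsn1}) and $K+B$-trivial over the base, which is exactly the definition of a flop.

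The hard part will be the bookkeeping needed to guarantee that the LMMP with scaling actually lands on $Y_2$ rather than on some other log minimal model: a priori the scaled LMMP produces \emph{some} log minimal model of $(Y_1/Z,B_{Y_1})$, and one must argue that by choosing $C$ (equivalently $A_2$) generically within the polytope $\mathcal{L}_A(V)$ and invoking Theorem \ref{t-finiteness}, the finitely many models that can appear include $Y_2$, and then that one can continue or re-choose the scaling so as to reach $Y_2$ precisely. An alternative that sidesteps part of this is to argue symmetrically — decompose both $Y_1\bir Y'$ and $Y_2\bir Y'$ into flops, where $Y'$ is a common log minimal model reached from both sides, using that flops are invertible; but this still requires the finiteness input to control the process. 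Either way the crux is combining termination with scaling (Theorem \ref{t-term}), finiteness of models (Theorem \ref{t-finiteness}), and stability of extremal rays (Lemma \ref{l-ray-stability}) to pin down both that the sequence reaches $Y_2$ and that each link is $(K_{Y_1}+B_{Y_1})$-trivial.
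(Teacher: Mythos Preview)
There is a genuine gap. You propose to run the LMMP$/Z$ on $K_{Y_1}+B_{Y_1}$ with scaling of $C$, but $K_{Y_1}+B_{Y_1}$ is already nef$/Z$ (that is what it means for $(Y_1/Z,B_{Y_1})$ to be a log minimal model), so $\lambda_1=0$ and the program terminates before taking a single step. Nothing moves, and you certainly do not reach $Y_2$. Your later discussion of ``$\lambda_i\to 0$'' and of $(K_{X_i}+B_{X_i}+\lambda_i C_i)$-negative rays suggests you were oscillating between two different programs without settling on one.

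The paper's argument avoids all of the difficulties you anticipated (landing on $Y_2$, and verifying that each step is $(K+B)$-trivial) with one move: it passes to the common ample model $T$. Since $B$ is big$/Z$, both $K_{Y_i}+B_{Y_i}$ are semi-ample$/Z$, and by Lemma~\ref{l-mmodels-isom-cdmnsn1} they define the \emph{same} contraction $g_i\colon Y_i\to T/Z$ with $K_{Y_i}+B_{Y_i}\sim_\R g_i^*H$. Now choose $A_{Y_2}$ ample$/Z$ on $Y_2$ keeping everything klt, let $A_{Y_1}$ be its birational transform, and run an LMMP$/T$ on $K_{Y_1}+B_{Y_1}+A_{Y_1}$ with scaling of an ample divisor. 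Over $T$ we have $K_{Y_1}+B_{Y_1}\sim_\R 0$, so every extremal ray contracted is automatically $(K_{Y_1}+B_{Y_1})$-trivial---each step is a flop, with no need for stability-of-rays arguments. Termination is Theorem~\ref{t-term}. And because $K_{Y_2}+B_{Y_2}+A_{Y_2}$ is ample$/Z$ (hence ample$/T$), the pair $(Y_2/T,B_{Y_2}+A_{Y_2})$ is the \emph{unique} log minimal model of $(Y_1/T,B_{Y_1}+A_{Y_1})$ by the uniqueness corollary, so the program is forced to terminate at $Y_2$. Working over $T$ rather than over $Z$ is the missing idea.
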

\begin{proof}
Since $B$ is big$/Z$, $B_{Y_i}$ are also big$/Z$. So, $K_{Y_i}+B_{Y_i}$ is 
semi-ample$/Z$ and by Lemma \ref{l-mmodels-isom-cdmnsn1}, they are the 
pullback of the same ample$/Z$ $\R$-divisor, i.e. there are contractions 
$g_i\colon Y_i\to T/Z$ and an ample$/Z$ $\R$-divisor $H$ on $T$ such that 
$K_{Y_i}+B_{Y_i}\sim_\R g_i^*H$ for $i=1,2$. 

Let $A_{Y_2}$ be an ample $\R$-divisor on $Y_2$ such that $K_{Y_i}+B_{Y_i}+A_{Y_i}$ 
are both klt. In particular, $K_{Y_2}+B_{Y_2}+A_{Y_2}$ is ample$/Z$.
Now, run an LMMP$/T$ on $K_{Y_1}+B_{Y_1}+A_{Y_1}$ with scaling of an ample divisor.
The LMMP terminates with $Y_2$. In each step, $K_{Y_1}+B_{Y_1}$ is numerically 
trivial on the extremal ray contracted so each step is a flop.
\end{proof}

The previous theorem is true even if $B$ is not big$/Z$ as verified by Kawamata [\ref{Kawamata-flops}] 
(see also Birkar [\ref{Birkar-mmodel-II}, Corollary 3.3]).\\

\emph{Polytopes.} We recall the following from the section on finiteness of models.
Let $X\to Z$ be a projective morphism of normal quasi-projective varieties, $A\ge 0$ a $\Q$-divisor on $X$, and 
$V$ a rational (i.e. with a basis consisting of rational divisors) finite dimensional affine subspace of the space of $\R$-Weil divisors on $X$. Define 
$$
\mathcal{L}_{A}(V)=\{B=L+A \mid 0\le L\in V, ~ \mbox{and $(X/Z,B)$ is lc} \}
$$
By Shokurov [\ref{Shokurov-log-flips}, 1.3.2][\ref{Shokurov-log-models}], $\mathcal{L}_{A}(V)$ is a rational polytope (i.e. a polytope with rational vertices) inside the rational affine space $A+V$. 
We will be interested in rational polytopes inside $\mathcal{L}_{A}(V)$. 
Define 
$$
\mathcal{E}_{A}(V)=\{B\in\mathcal{L}_A(V) \mid \mbox{$K_X+B$ is pseudo-effective$/Z$} \}
$$
which is a convex closed set.

\begin{thm}
Assume that $A$ is ample and that $(X,0)$ is $\Q$-factorial klt. 
Then, $\mathcal{E}_{A}(V)$ is a rational polytope.
\end{thm}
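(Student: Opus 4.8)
The plan is to show that $\mathcal{E}_{A}(V)$ is a compact convex set which is \emph{locally} a rational polytope at each of its points, and then to conclude globally by an extreme-point (Minkowski) argument. First I would record the soft facts: $\mathcal{E}_{A}(V)$ is bounded because it sits inside the rational polytope $\mathcal{L}_{A}(V)$; it is closed because the pseudo-effective classes form a closed cone in $N^1(X/Z)$ and $B\mapsto [K_X+B]$ is continuous; and it is convex because both $\mathcal{L}_{A}(V)$ and the pseudo-effective cone are convex. Granting the local statement — namely, for every $B_0\in\mathcal{E}_{A}(V)$ there is a neighbourhood $U$ of $B_0$ in $\mathcal{L}_{A}(V)$ and a rational polytope $Q$ with $\mathcal{E}_{A}(V)\cap U=Q\cap U$ — one finishes as follows: cover the compact set $\mathcal{E}_{A}(V)$ by finitely many such $U_1,\dots,U_n$ with associated polytopes $Q_1,\dots,Q_n$; each extreme point of $\mathcal{E}_{A}(V)$ lies in some $U_j$, is then an extreme point of $Q_j\cap U_j$ lying in the \emph{open} set $U_j$, hence is a vertex of $Q_j$; thus $\mathcal{E}_{A}(V)$ has finitely many extreme points, all rational, and being compact and convex it equals their convex hull, i.e.\ a rational polytope.

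Next I would establish the local statement. If $K_X+B_0$ is big, then $K_X+B$ is big, hence pseudo-effective$/Z$, for all $B$ in a neighbourhood $U$ of $B_0$ in $\mathcal{L}_{A}(V)$; so $\mathcal{E}_{A}(V)\cap U=\mathcal{L}_{A}(V)\cap U$ and one takes $Q=\mathcal{L}_{A}(V)$. The substantive case is $K_X+B_0$ pseudo-effective$/Z$ but not big. Here I would first pass, via Remark \ref{r-local} and the hypotheses that $A$ is ample and $(X,0)$ is $\Q$-factorial klt, to a small rational sub-polytope $\mathcal{C}\subseteq\mathcal{L}_{A}(V)$ containing $B_0$ all of whose members are klt boundaries with big$/Z$ part. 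For $B\in\mathcal{C}$, since $B$ is big$/Z$, Theorem \ref{t-main-BCHM}(1) shows $K_X+B$ is pseudo-effective$/Z$ if and only if $(X/Z,B)$ has a log minimal model — the converse being clear because on a common resolution $p\colon W\to X$, $q\colon W\to Y$ one has $p^*(K_X+B)=q^*(K_Y+B_Y)+E$ with $E\ge 0$, so $K_X+B$ is pseudo-effective$/Z$. Applying Theorem \ref{t-finiteness} to $\mathcal{C}$ produces finitely many $\phi_i\colon X\bir Y_i/Z$ such that each such $B$ admits some $(Y_i/Z,B_{Y_i})$ as a log minimal model; letting $\mathcal{C}_i=\{B\in\mathcal{C}\mid (Y_i/Z,B_{Y_i})\text{ is a log minimal model of }(X/Z,B)\}$, we get $\mathcal{E}_{A}(V)\cap\mathcal{C}=\bigcup_i\mathcal{C}_i$.

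It then remains to see each $\mathcal{C}_i$ has rational-polytope closure. Fixing a common resolution $p\colon W\to X$, $q_i\colon W\to Y_i$, the $\R$-divisor $E_{i,B}:=p^*(K_X+B)-q_i^*(K_{Y_i}+B_{Y_i})$ depends affinely and rationally on $B$; using the negativity lemma to reduce the definition of log minimal model (Definition \ref{d-mmodel}) to a check on $W$, the condition defining $\mathcal{C}_i$ becomes: finitely many rational affine inequalities $E_{i,B}\ge 0$, strict along the prime divisors contracted by $\phi_i$, together with $K_{Y_i}+B_{Y_i}$ nef$/Z$. By Lemma \ref{l-ray-stability}(1) the last condition cuts out a rational polytope, so $\overline{\mathcal{C}_i}$ is a rational polytope, and it lies in $\mathcal{E}_{A}(V)$ since $\mathcal{E}_{A}(V)$ is closed. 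Hence $\mathcal{E}_{A}(V)\cap\mathcal{C}=\bigcup_i\overline{\mathcal{C}_i}$ is a convex finite union of rational polytopes, so by the extreme-point argument of the first paragraph (applied to this union) it is a rational polytope; taking $U$ to be the interior of $\mathcal{C}$ and $Q=\mathcal{E}_{A}(V)\cap\mathcal{C}$ gives the local statement.

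The hard part is exactly this boundary case: honestly reducing the lc-but-possibly-not-klt situation near $B_0$ to the klt setting so that finiteness of log minimal models (Theorem \ref{t-finiteness}) is available, and verifying that ``being a log minimal model of $(X/Z,B)$'' is genuinely cut out by finitely many rational linear conditions on $B$ plus the rational-polyhedral nef locus of Lemma \ref{l-ray-stability}. Everything else — compactness, convexity, and the two passages from ``convex finite union of rational polytopes'' (resp.\ ``locally a rational polytope'') to ``rational polytope'' — is elementary convex geometry once the local description is in place.
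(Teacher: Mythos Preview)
Your argument is correct and takes a genuinely different route from the paper's. Both proofs localise near a fixed $B_0\in\mathcal{E}_A(V)$ and use the same perturbation (Remark~\ref{r-local}) to reduce to a small rational polytope $\mathcal{C}$ of klt boundaries. From there the paper passes to a log minimal model of $(X/Z,B_0)$ so that $K_X+B_0$ becomes nef, takes the associated contraction $X\to T$, uses Theorem~\ref{t-finiteness} and Lemma~\ref{l-ray-stability} to replace $Z$ by $T$, and then---because $K_X+B_0\sim_\R 0/T$---observes that pseudo-effectivity along any ray from $B_0$ is detected on $\partial\mathcal{C}$, reducing to the faces by induction on $\dim\mathcal{C}$. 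You instead apply Theorem~\ref{t-finiteness} directly to $\mathcal{C}$ to get finitely many models $Y_i$, and then argue that the locus $\mathcal{C}_i$ where $(Y_i/Z,B_{Y_i})$ is a log minimal model is cut out by finitely many rational affine (in)equalities in $B$ (the discrepancy comparison on a fixed common resolution) together with the nef condition on $Y_i$, which is a rational polytope by Lemma~\ref{l-ray-stability}(1); the convex union $\bigcup_i\overline{\mathcal{C}_i}=\mathcal{E}_A(V)\cap\mathcal{C}$ is then a rational polytope by an extreme-point argument. Your approach avoids the induction on $\dim\mathcal{C}$ and the passage to $T$, at the cost of unpacking Definition~\ref{d-mmodel} into explicit linear conditions; the paper's cone-over-the-faces reduction is slicker but less self-contained. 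Two small remarks: your case split big/non-big is harmless but unnecessary, since your second argument covers both; and the reduction of the discrepancy inequality to a single resolution uses only that pullback of an effective $\R$-Cartier divisor from a smooth variety is effective---the negativity lemma is not actually needed there.
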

\begin{proof}
It is enough to prove the statement locally, so fix $B\in \mathcal{E}_{A}(V)$. 
By a perturbation of coefficients, as in Remark \ref{r-local}, we can assume that 
$(X/Z,B)$ is klt, and that there is a rational polytope $\mathcal{C}\subseteq \mathcal{L}_{A}(V)$ 
containing an open neighbourhood of $B$ and such that $(X/Z,B')$ is klt for any 
$B'\in \mathcal{C}$. It is enough to prove that $\mathcal{C}\cap \mathcal{E}_{A}(V)$ 
is a rational polytope. By Theorem \ref{t-main-BCHM}, there is a a log minimal model
$(Y/Z,B_Y)$ for $(X/Z,B)$. Perhaps, after shrinking $\mathcal{C}$, we can 
replace  $(X/Z,B)$ by $(Y/Z,B_Y)$ hence assume that $K_X+B$ is nef$/Z$.
Thus, $K_X+B$ is semi-ample$/Z$. Let $X\to T/Z$ be the contraction associated 
to $K_X+B$.

Now by the finiteness Theorem \ref{t-finiteness} and Lemma \ref{l-ray-stability}, 
it is enough to prove that 
theorem over $T$ hence we can assume $Z=T$. Finally, we can use induction on 
dimension of $\mathcal{C}$ since it is enough to prove that result on the 
faces of $\mathcal{C}$.\\
\end{proof}

\emph{Zariski decomposition.} The Zariski decomposition problem for log divisors is closely 
related to the LMMP (see Birkar [\ref{Birkar-WZD}]). There are various definitions of 
Zariski decomposition in higher dimension (see the mentioned reference).  The 
next theorem is true using any of those definitions.

\begin{thm}
Let $(X/Z,B)$ be a klt pair such that $B$ is big$/Z$ and $K_X+B$ is pseudo-effective$/Z$.
Then, $K_X+B$ birationally has a Zariski decomposition. 
\end{thm}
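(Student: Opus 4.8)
The plan is to deduce this immediately from the existence of log minimal models together with abundance for big klt pairs, namely Theorem \ref{t-main-BCHM}(1), and then to verify the minimality clause of the definition using the negativity lemma. Since $(X/Z,B)$ is klt with $B$ big$/Z$ and $K_X+B$ pseudo-effective$/Z$, Theorem \ref{t-main-BCHM}(1) gives a log minimal model $(Y/Z,B_Y)$ with $K_Y+B_Y$ semi-ample$/Z$; in particular $K_Y+B_Y$ is nef$/Z$. Choose a common log resolution, i.e. a smooth $W$ with a projective birational $g\colon W\to X$ and a projective morphism $h\colon W\to Y$ (possible by Hironaka), and write $g^*(K_X+B)=h^*(K_Y+B_Y)+E$. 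The defining inequality $d(D,X,B)\le d(D,Y,B_Y)$ of a log minimal model shows that the coefficient of any prime divisor $D$ in $E$ equals $d(D,Y,B_Y)-d(D,X,B)\ge 0$; moreover, since $\phi^{-1}\colon Y\bir X$ contracts no divisors, any $D$ which is not $h$-exceptional is also not $g$-exceptional and hence has coefficient $0$ in $E$. Thus $E\ge 0$ and $E$ is exceptional$/Y$. Setting $P:=h^*(K_Y+B_Y)$ and $N:=E$, the decomposition $g^*(K_X+B)=P+N$ satisfies conditions (1) and (2) of a Fujita--Zariski decomposition$/Z$, with $P$ in fact semi-ample$/Z$.

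It remains to check the minimality condition (3): for any projective birational $\rho\colon W'\to W$ from a normal variety with $\rho^*g^*(K_X+B)=P'+N'$, $P'$ nef$/Z$ and $N'\ge 0$, one has $P'\le\rho^*P$. I would first replace $W'$ by a higher resolution so that $h'\colon W'\to Y$ is a morphism (this only strengthens the required inequality, by pushing forward along the modification), so $\rho^*P=h'^*(K_Y+B_Y)$. Because a morphism $Y\to Z$ exists, $P'$ is automatically nef$/Y$; as $h'^*(K_Y+B_Y)$ is numerically trivial$/Y$, the divisor $h'^*(K_Y+B_Y)-P'$ is anti-nef$/Y$. Pushing forward by $h'$ and using that the transform of $E$ is exceptional$/Y$, one computes $h'_*\rho^*g^*(K_X+B)=K_Y+B_Y$, whence $h'_*P'+h'_*N'=K_Y+B_Y$ with $h'_*N'\ge 0$, so $h'_*\bigl(h'^*(K_Y+B_Y)-P'\bigr)=(K_Y+B_Y)-h'_*P'\ge 0$. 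Applying the negativity lemma (Lemma \ref{l-negativity}) to the anti-nef$/Y$ divisor $h'^*(K_Y+B_Y)-P'$ gives $h'^*(K_Y+B_Y)-P'\ge 0$, i.e. $P'\le\rho^*P$. Hence $g^*(K_X+B)=P+N$ is a Fujita--Zariski decomposition$/Z$ on the resolution $g\colon W\to X$, which is exactly the statement that $K_X+B$ birationally admits a Zariski decomposition.

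The one place requiring care is this last verification of (3): justifying that one may assume $W'$ dominates $Y$, that nef$/Z$ implies nef$/Y$, that the numerically trivial$/Y$ twist by $h'^*(K_Y+B_Y)$ does not disturb anti-nefness, and the pushforward identity $h'_*\rho^*g^*(K_X+B)=K_Y+B_Y$; everything else is a direct citation of Theorem \ref{t-main-BCHM} and of the negativity lemma. For the variant notions of birational Zariski decomposition mentioned in the literature, the same model $(Y/Z,B_Y)$ works and one simply substitutes for (3) the corresponding rigidity property, each again a consequence of the negativity lemma combined with the semi-ampleness of $K_Y+B_Y$.
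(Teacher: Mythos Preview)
Your proof is correct and follows the same approach as the paper: construct a log minimal model $(Y/Z,B_Y)$ via Theorem~\ref{t-main-BCHM}, pass to a common resolution $W$, and take $P=h^*(K_Y+B_Y)$, $N=E$. The paper's proof simply asserts that this is a Zariski decomposition in a birational sense, whereas you go further and explicitly verify the maximality condition~(3) via the negativity lemma; note incidentally that your step of replacing $W'$ by a higher model is unnecessary, since $h'=h\circ\rho$ is already a morphism.
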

\begin{proof}
By Theorem \ref{t-main-BCHM}, $(X/Z,B)$ has a log minimal model $(Y/Z,B_Y)$. 
Let $f\colon W\to X$ and $g\colon W\to Y$ be a common resolution. 
Then, we can write $f^*(K_X+B)=g^*(K_Y+B_Y)+E$ where $E\ge 0$ is exceptional$/Y$. 
This expression is a Zariski decomposition of $K_X+B$ in a birational sense.\\
\end{proof}

\clearpage
%%%%%%%%%%%%%%%%%%%%%%
%%%%%%%%%%%%%%%%%%%%%%
%%%%%%%%%%%%%%%%%%%%%%
%%%%%%%%%%%%%%%%%%%%%%

\vspace{1cm}
%%%%%%%%%%%%%%%%%%%%%%%%%%%%%%%%%%%%%%%%%%%%%%%%%%%%%%%%

\end{document}